\documentclass[12pt]{article}
\usepackage{graphicx} 
\usepackage[a4paper, top=2cm, bottom=2cm, right=1.5cm, left=1.5cm]{geometry} 
\usepackage{amsfonts}
\usepackage{amsthm}
\usepackage{dsfont}
\usepackage{amsmath}
\usepackage{pdfpages}
\usepackage[toc,page]{appendix}

\usepackage{mathtools}
\usepackage{stmaryrd} 
\usepackage[sortcites = true, backend=biber, style=numeric, url = false, giveninits=true]{biblatex} 
\usepackage[shortlabels]{enumitem} 
\usepackage{hyperref}
\hypersetup{
    colorlinks=true,
    linkcolor=blue,
    filecolor=magenta,
    urlcolor=cyan,
    pdftitle={Overleaf Example},
    pdfpagemode=FullScreen,
    }
\usepackage[capitalize]{cleveref} 
\usepackage{xcolor} 
\usepackage{quiver} 
\usepackage{orcidlink}
\usepackage{authblk} 
\usepackage{physics} 

\usepackage{parskip}
\usepackage{commands}

\newenvironment{keywords}
{
\begin{center}
\textbf{Keywords}\\
\vspace{0.17cm}
\begin{minipage}{14.5cm}}
{\footnotesize
\end{minipage}
\end{center}}

\newenvironment{Abstract}
{
\begin{center}
\textbf{Abstract}\\
\vspace{0.25cm}
\begin{minipage}{14.5cm}}
{\footnotesize
\end{minipage}
\end{center}}

\renewcommand{\sp}[1]
{\operatorname{span}\left\{#1\right\}}
\renewcommand{\d}{\mathrm{d}}
\newcommand{\T}{\mathrm{T}}
\newcommand{\mail}[1]{\small\href{mailto:#1}{#1}}

\DeclareMathOperator{\Ima}{Im}
\newcommand{\iprodleft}{\mathbin{\scalebox{1.7}{$\lrcorner$}}}

\newtheoremstyle{spaced} 
  {10pt} 
  {10pt} 
  {\itshape} 
  {} 
  {\bfseries} 
  {.} 
  { } 
  {} 

\theoremstyle{spaced}

\newtheorem{theorem}{Theorem}[section]
\newtheorem{proposition}[theorem]{Proposition}
\newtheorem{lemma}[theorem]{Lemma}
\newtheorem{corollary}[theorem]{Corollary}

\newtheoremstyle{spaceddef} 
  {10pt} 
  {10pt} 
  {\normalfont} 
  {} 
  {\bfseries} 
  {.} 
  { } 
  {} 

\theoremstyle{spaceddef}

\newtheorem{example}{Example}[section]
\newtheorem{remark}{Remark}[section]

\newtheorem{definition}{Definition}[section]

\begin{document}
\title{\huge The Spencer cohomology and integrability of multisymplectic structures}

\author{Manuel de León \orcidlink{0000-0002-8028-2348} \mail{mdeleon@icmat.es}}
\affil{Instituto de Ciencias Matemáticas, Campus Cantoblanco, Consejo Superior de Investigaciones Científicas, C/Nicolás Cabrera, 13–15, Madrid 28049, Spain}
\affil{Real Academia de Ciencias Exactas, Físicas y Naturales de España, C/Valverde, 22, Madrid 28004, Spain}
\affil{Department of Mathematics, School of Sciences, Great Bay University, Dongguan, Guangdong, 523000 China}

\author{\\Rubén Izquierdo-López \orcidlink{0009-0007-8747-344X} \mail{ruben.izquierdo@icmat.es}}
\affil{Instituto de Ciencias Matemáticas, Campus Cantoblanco, Consejo Superior de Investigaciones Científicas, C/Nicolás Cabrera, 13–15, Madrid 28049, Spain}

\author{\\\vspace{3pt} Manuel Lainz \orcidlink{0000-0002-2368-5853} \mail{manuel.lainz@cunef.edu}}
\affil{Department of Mathematics, CUNEF Universidad, Calle Almansa 101, 28040 Madrid,
Spain}

\date{\today}

\maketitle

\begin{Abstract}
We study the integrability problem of multisymplectic structures, by identifying them as $G$-structures. Applying the theory of Spencer cohomology, we give conditions on a multisymplectic form for it to admit a chart in which it has constant coefficients. This general study allows for a rough classification of multisymplectic structures of constant linear type, depending on the natural action of the stabilizer group. The theory is illustrated by providing a scheme for proving a Darboux theorem, which is exemplified with several relevant cases. We also build linear types of multisymplectic forms $\varpi_j$ whose flatness strictly requires a condition of order $j$. Finally, the corresponding Lie algebras are computed in the case of field theories.
\end{Abstract}

\begin{keywords}
{ Multisymplectic manifolds, multisymplectic groups, Spencer cohomology, classical field theories, G-structures, Cartan tableaux}
\end{keywords}

\tableofcontents

\section{Introduction}

The geometric description of classical mechanics is given by symplectic manifolds, which consists of pairs $(M, \omega)$, where $M$ is an even dimensional manifold and $\omega$ is a closed and non-degenerate $2$-form \cite{lm1987, am1978}. Similarly, great effort has been dedicated to give a geometric description of classical field theories \cite{Goldschmidt1978, garcia1976, kijowski1979}. In this context, the $2$-form is generalized to a closed and non-degenerate $(n+1)$-form (see for instance \cite{cantrijn2,binz,david1,aitor1,gimmsy,narciso}). This is the setting of multisymplectic geometry.

In general, one may consider multisymplectic manifolds independently of their connection to variational problems \cite{cantrijn1, cantrijn2}. However, from a geometric standpoint, dealing with forms of degree higher than $2$ increases the difficulty. This may be traced back to several reasons. For this text we would like to hightlight two of them:

\begin{enumerate}[\rm (i)]
    \item Up to linear isomorphism, there is a unique symplectic vector space, since every pair of non-degenerate $2$-forms are isomorphic. Unsurprisingly, this is not the case for forms of higher degree. In fact, the classification of the orbits of the $\GL(V)$ action on $\bigwedge^3 V^\ast$ is a very active area of research. See for instance \cite{Revoy88} for classifications in $\dim V = 6$ and \cite{Westwick77,Westwick79,Westwick81} for classifications in $\dim V = 7$.
    \item In a more geometric vein, the closedness of a non-degenerate $2$-form $\omega$ is the necessary and sufficient condition for the existence of Darboux coordinates, namely coordinates $(q^i, p_i)$ where the symplectic form is of constant coefficients $\omega = \d q^i \wedge \d p_i$. This fails too for multisymplectic forms: closedness is not always sufficient for the existence of flat coordinates \cite{leonid, Ryvkin-Darboux-type-theorems-2025}.
\end{enumerate}

This article advances the understanding of the second point. Since general $(k+1)$-forms on vector spaces are not classified, results guaranteeing flatness (now to be referred to as flatness theorems or Darboux theorems) are given `per linear type' (see the recent review on the subject \cite{Ryvkin-Darboux-type-theorems-2025}). As far as we are aware, the first of these results in the context of multisymplectic manifolds was by G. Martin \cite{martin} for the linear type associated to the exterior bundle of a manifold $\bigwedge^k \T^\ast M$. This result was later generalized in \cite{deleon2003tulczyjews} for the linear type used in the study of classical field theories. For more linear types we refer to \cite{maxime, laura, leonid, vanzura1, vanzura2, glrr2024revistadelarealacademiadecienciasexactasfisicasynaturales.seriea.matematicas, fg2013rev.math.phys.}.

A study of this problem in general (finding flat coordinates for a multisymplectic manifold) has not been achieved. We study this subject, by providing a general framework using the theory of $G$-structures \cite{chern}. A $G$-structure, where $G \subseteq \GL(n)$ is a subgroup, is defined as the reduction of the frame bundle of a manifold $\mathcal{F}(M)$ to a principal $G$-bundle. A key observation is that multisymplectic manifolds of `constant linear type', for a fixed linear model given by $\varpi \in \bigwedge^{k+1} V^\ast$, may be interpreted as a $G_\varpi$-structure, where $G_\varpi$ is the stabilizer group of $\varpi$.

Precisely because multisymplectic structures incorporate different geometries, the general point of view of $G$-structures results highly effective. In particular, we employ the theory of the structure tensors \cite{bernard,sternberg,fujimoto,Goldschmidt1978} of a $G$-structure, which measure the obstruction of finding flat formal coordinates (namely coordinates given by a non-necessarily convergent series). These tensors are sections of a vector bundle over $M$ that has as standard fiber $H^{2,j}(\mathfrak{g})$, the Spencer cohomology of the Lie algebra $\mathfrak{g} \subseteq V^\ast \otimes V$. It is known \cite{guillemin} that the vanishing of these sections guarantee formal flatness of any $G$-structure. Hence, a natural question is the following:
\[
\text{Is every formally flat $G$-structure flat?}
\]
As far as we know, this question remains unsolved except when $G$ acts irreducibly, is of finite type, or some other conditions of similar nature are imposed (we refer to \cite{Cattafi2020AlmostStructures} as well as Subsection \ref{subsection:flatness_results} for a discussion regarding this matter). Our work extends the understanding of this question in the case of $G = G_\varpi$, for certain $\varpi \in \bigwedge^{k+1}V^\ast$. In particular, we would like to highlight the following contributions:

\begin{enumerate}[\rm (i)]
    \item To answer the question above, the study of the natural representation of $G \subseteq \GL(V)$ on $V$ is of vital importance. The study of the $G_\varpi$-action on $V$ is given in Theorem \ref{thm:casuistic_of_representation}, where we prove that if the action of $G_\varpi$ is not irreducible, and does not decompose as a direct sum (namely the to favorable scenarios for the integrability problem), then either there is an invariant $j$-isotropic space, or there is an invariant subspace $W$ which satisfies $W^{\perp, k} = \{0\}$. This yields a rough classification of multisymplectic forms.
    \item We then study the particular case where there is a $j$-isotropic invariant subspace, and we give a general result in Theorem \ref{thm:sufficient_condition_flatness_general}. This turns out to be sufficient for studying existence of flat coordinates in a lot of scenarios. The main drawback of this result it that it requires the existence of coordinates satisfying some property. This result is improved upon for the case of $j = 1$ in Theorem \ref{thm:Darboux_1_isotropic}, where we show that the hypotheses of the previous result always hold.
    \item Another one of the main contributions is the identification of a sequence of linear types $\varpi_j \in \bigwedge^3 V^\ast$ (see Theorem \ref{thm:Theorem_counterexamples}), where $\dim V = 5 + 2j$ such that the conditions on a multisymplectic manifold $(M, \omega_j)$ to admit flat coordinates are of order $j$. In terms of the Spencer cohomology, we find $\varpi_j$ that satisfies $H^{2,j}(\mathfrak{g}_{\varpi_j}) \neq \{0\}$. Explicit examples of multisymplectic manifolds realizing these obstructions are provided in Theorem \ref{thm:explicit_counterexample}.
    \item Finally, the stabilizers are computed for the linear types involving classical field theories.
\end{enumerate}

The paper is structured as follows. In Section \ref{section:Spencer_cohomology} we introduce the theory of Cartan tableaux and Spencer cohomology, which is used throughout the rest of the text \cite{bryant}. In Section \ref{section:General_G_structures} we summarize the relevant notions and results from the theory of $G$-structures so that, in Section \ref{section:multisymplectic_G_structures} we study multisymplectic structure as $G$-structures, emphasizing in flatness results. Then, in Section \ref{section:obstruction_of_fixed_order} we build the linear types with obstructions to integrability of arbitrary order. Finally, in Section \ref{section:polarized_form} we compute the Lie algebras associated to the linear types appearing in classical field theories. We conclude in Section \ref{section:conclusions} with some conclusions and discussion for further work. The reader may also find in Appendix \ref{Appendix:Malgrange_Theorem} the proof of a result on the existence of smooth solutions to non-homogeneous systems of partial differential equations of constant coefficients, which we did not find in the literature.

\section{The Spencer cohomology of a tableau}
\label{section:Spencer_cohomology}

In this section we will recall the theory of \emph{Cartan tableaux}. The definition of a tableau
is revisited in the light of the formalism of the Spencer cohomology. For more details on tableaux, we refer the reader to \cite{bryant} (see also \cite{ivey,musso}).

Throughout this section, $V$ and $W$ denote arbitrary, real, finite dimensional vector spaces.

\begin{definition}[Tableau] A \textbf{tableau} is a subspace $\mathcal{A} \subseteq V^\ast \otimes W$.
\end{definition}

\begin{remark}[Relation to PDEs]
   Cartan tableaux encode first order linear systems of partial differential equations. Indeed, in a basis $V = \sp{v_i}$ and $W = \sp{w_j}$, a first order linear system of partial differential equations with constant coefficients on a map
   $
   f \colon V \rightarrow W
   $
   reads as
   \begin{equation}
   \label{eq:induced_equation_by_tableau}
          a^i_{jk}  \pdv{f^j}{v^i} = 0\,, \qquad \forall k\,,
   \end{equation}
   for some $a^i_{jk} \in \mathbb{R}$. By identifying $\d_v f \in V^\ast \otimes W$, the condition on $f$ solving the previous equation is equivalent to $\d_v f \in \mathcal{A}$, for all $v \in V$, where $\mathcal{A}$ is the tableau defined by
   \[
   \mathcal{A} = \left\{ c^{j}_i v^i \otimes w_j \colon a^i_{jk}c^j_{i} = 0\right\}\,.
   \]
\end{remark}

\begin{remark}[The case of Lie algebras] The case which we will mostly focus on is when $W = V$ and $\mathcal{A} = \mathfrak{g} \subseteq V^\ast \otimes V$, where $\mathfrak{g}$ is a matrix Lie algebra of a closed Lie group $G\subseteq \GL(V)$. Here, the corresponding PDE on a map $f \colon V \rightarrow V$ represents the condition on a \emph{vector field} to be an infinitesimal symmetry of the trivial $G$-structure on $V$ (for more details we refer the reader to \cite{sternberg} and Section \ref{section:General_G_structures}).
\end{remark}

\begin{definition}[Prolongations of a tableau] Let $\mathcal{A} \subseteq V^\ast \otimes W$ be a tableau. For $j \geq 0$, we define its \textbf{$j$-th prolongation} as the subspace $\mathcal{A}^{(j)} \subseteq S^{j +1} (V^\ast) \otimes W$ given by
\[
\mathcal{A}^{(j)} := \left\{\alpha \in S^{j+1}(V^\ast) \otimes W \colon \iota_{v_j} \cdots \iota_{v_1} \alpha \in \mathcal{A} \,, \forall v_1, \dots, v_j \in V \right\}\,.
\]
\end{definition}

\begin{remark}[Interpretation of the prolongation] In terms of the associated PDE, $\mathcal{A}^{(j)}$ is precisely the space of all polynomial solutions of degree $j+1$. Indeed, a polynomial map $f \colon V \rightarrow W$ solves Eq. \eqref{eq:induced_equation_by_tableau} if and only if
\[
a^i_{jk} \pdv{^kf^j}{v^{i_1} \cdots \partial v^{i_j}} = 0 \,, \qquad \forall k\,,
\]
which is precisely the defining condition on $\mathcal{A}^{(j)}$, after identifying $S^{j+1}(V^\ast) \otimes W$ as the $W$-valued polynomial maps of degree $(j+1)$. When $\mathcal{A} = \mathfrak{g}$, where $\mathfrak{g}$ is the Lie algebra of some closed Lie group $G \subseteq \GL(V)$, $\mathfrak{g}^{(j)}$ is just the space of polynomial vector fields of degree $(j+1)$ that define an infinitesimal symmetry of the trivial $G$-structure.
\end{remark}

We now define the Spencer complex, which plays an important role in determining whether a $G$-structure is formally flat \cite{fujimoto}. It also has important implications for linear systems of PDEs \cite{bryant}. Even though the case we are interested in is that of $G$-structures, the general definition of Spencer cohomology in the context of Cartan tableaux will result usefull in Section \ref{section:obstruction_of_fixed_order}.

\begin{definition}[Spencer complex]
Define the \textbf{Spencer complex} of a tableau as follows; in degree $j$, it consists of the spaces of $l$-skew symmetric maps into $W$-valued polynomials of degree $j$ that solve the associated PDE, namely
\[C^{l, j} (\mathcal{A}) := \bigwedge^l V^\ast \otimes \mathcal{A}^{(j-1)}\,.\] The Spencer coboundary
$
\partial \colon C^{ l, j} \rightarrow C^{l+1, j-1}
$is defined on decomposable elements as
\[\partial \left ( \beta \otimes(\alpha_1 \odot\cdots \odot \alpha_j)  \otimes v  \right) := \sum_{i = 1}^j (\alpha_i  \wedge \beta) \otimes(\alpha_1 \odot \cdots \odot \widehat{\alpha}_i \odot \cdots \odot \alpha_j )\otimes  v\,,\]
and extended to $C^{l,j}(\mathcal{A})$ via linearity in the usual manner (here, $\odot$ represents the symmetric product).
\end{definition}

The Spencer coboundary of an element of the complex acts on polynomial vector valued forms as follows \cite{fujimoto}:
\begin{proposition}
\label{prop:explicit_description_of_partial}
Let $\beta \in C^{l,j}(\mathcal{A})$. Then, the evaluation of $\partial \beta \in C^{l+1, j-1}$ in elements in $\bigwedge^{l+1} V \otimes S^{j}(V)$ is given by
\[
(\partial \beta)(v_0 \wedge \cdots \wedge v_l, w_1 \odot \cdots \odot w_{j-1}) = \sum_{i = 0}^{l} (-1)^i \beta(v_0 \wedge \cdots \wedge \hat{v}_i \wedge \cdots \wedge v_l, v_0 \odot w_1 \odot \cdots \odot w_{j-1})\,.
\]
\end{proposition}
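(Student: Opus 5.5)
The plan is to check the formula on decomposable elements by a direct computation, and then extend to all of $C^{l,j}(\mathcal{A})$ by linearity. Both sides of the identity are linear in $\beta$. Since $\partial$ was defined on decomposable elements of $\bigwedge^l V^\ast \otimes S^j(V^\ast)\otimes V$ and extended linearly, it suffices to take
\[
\beta = \beta' \otimes (\alpha_1\odot\cdots\odot\alpha_j)\otimes v,
\]
with $\beta'\in\bigwedge^l V^\ast$ and $\alpha_i\in V^\ast$. Whether $\beta$ actually lies in $\mathcal{A}^{(j-1)}$ plays no role, because the identity is purely multilinear-algebraic. I also read the symmetric slot on the right-hand side as $v_i\odot w_1\odot\cdots\odot w_{j-1}$. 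The index $0$ printed there looks like a typo: only with $v_i$ does the sum come out alternating.

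First, fix the evaluation conventions and keep them consistent. An element of $\bigwedge^{l} V^\ast$ is paired with $v_0\wedge\cdots\wedge v_{l}$ via the determinant $\det(\cdot(v_\cdot))$, with no factorial normalisation. An element of $S^{j}(V^\ast)$ is paired with $w_1\odot\cdots\odot w_{j}$ via the permanent, i.e.
\[
(\alpha_1\odot\cdots\odot\alpha_j)(w_1,\dots,w_j)=\sum_{\sigma\in S_j}\prod_m \alpha_m(w_{\sigma(m)}).
\]
The main point needing care is that the statement holds exactly only for compatible normalisations. If a different convention is used (for instance a $1/j!$ in the symmetric product), I would check that the same constant appears on both sides, so that the identity is unaffected.

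With these conventions, evaluate the defining formula
\[
\partial\beta=\sum_i(\alpha_i\wedge\beta')\otimes(\alpha_1\odot\cdots\widehat{\alpha_i}\cdots\odot\alpha_j)\otimes v .
\]
Expanding the determinant defining $(\alpha_i\wedge\beta')(v_0,\dots,v_l)$ along its first row gives
\[
\sum_{k=0}^l(-1)^k\alpha_i(v_k)\,\beta'(v_0,\dots,\widehat{v_k},\dots,v_l).
\]
Substituting this into $\partial\beta$ and exchanging the sums over $i$ and $k$, the left-hand side becomes
\[
\sum_k(-1)^k\beta'(\dots\widehat{v_k}\dots)\Big[\sum_i\alpha_i(v_k)\,(\alpha_1\odot\cdots\widehat{\alpha_i}\cdots\odot\alpha_j)(w_1,\dots,w_{j-1})\Big]\,v .
\]
The bracket is exactly the expansion of the permanent $(\alpha_1\odot\cdots\odot\alpha_j)(v_k,w_1,\dots,w_{j-1})$ along the row corresponding to $v_k$. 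Hence the $k$-th term equals $(-1)^k\beta(v_0\wedge\cdots\widehat{v_k}\cdots\wedge v_l,\;v_k\odot w_1\odot\cdots\odot w_{j-1})$, which is the claimed formula.
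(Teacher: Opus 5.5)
Your computation is correct. The paper gives no argument of its own here; it only cites \cite{fujimoto}. Checking on decomposables, via cofactor expansion of the determinant along the row of $\alpha_i$ and row expansion of the permanent, is the standard verification. Your reading of the symmetric slot as $v_i\odot w_1\odot\cdots\odot w_{j-1}$ instead of the printed $v_0$ is the right one.

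One correction to your side remark: the identity is \emph{not} insensitive to normalisations. If the pairing of $S^{j}(V^\ast)$ with $S^{j}(V)$ carries a factor $1/j!$, the left-hand side picks up $1/(j-1)!$ while the right-hand side picks up $1/j!$. The two sides then differ by a factor $j$, whether you put the $1/j!$ into the evaluation or into the definition of $\odot$ used in $\partial$. Likewise, a $1/l!$ normalisation on forms introduces a factor $1/(l+1)$.

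So the formula holds exactly only for the determinant and permanent pairings you fixed. The permanent pairing is the same as evaluating the symmetric part by iterated contraction $\iota_{w_{j-1}}\cdots\iota_{w_1}$, as in the paper's definition of prolongation. This does not affect your proof. It also does not matter for the paper, which uses the proposition only to detect vanishing of a coboundary (Lemma~\ref{lemma:characterization_of_Imflat}), where such constants are irrelevant.
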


Furthermore, it is straight-forward to verify

\begin{proposition}[\cite{spencer1}] We have $\partial^2 = 0$.
\end{proposition}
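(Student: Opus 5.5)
We prove $\partial^2 = 0$ by computing directly on decomposable elements of $C^{l,j}(\mathcal{A})$ and checking that the terms cancel in pairs. By linearity it suffices to take $\beta \otimes (\alpha_1 \odot \cdots \odot \alpha_j) \otimes v$ with $\beta \in \bigwedge^l V^\ast$. One point needs checking first: $\partial$ really maps $C^{l,j}(\mathcal{A})$ into $C^{l+1,j-1}(\mathcal{A})$, even though the individual summands $\alpha_i \otimes \alpha_1 \odot \cdots \widehat{\alpha}_i \cdots \odot \alpha_j \otimes v$ need not lie in $\mathcal{A}^{(j-2)}$. To handle this, we extend $\partial$ to the ambient spaces $\bigwedge^l V^\ast \otimes S^j(V^\ast) \otimes W$ by the same formula. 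We then prove $\partial^2 = 0$ there, and separately check that $\partial$ preserves the subcomplex.

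For the identity itself, we apply the formula twice:
\[
\partial^2\bigl(\beta \otimes \alpha_1 \odot \cdots \odot \alpha_j \otimes v\bigr) = \sum_{i \neq k} (\alpha_k \wedge \alpha_i \wedge \beta) \otimes \bigl(\alpha_1 \odot \cdots \widehat{\alpha}_i \cdots \widehat{\alpha}_k \cdots \odot \alpha_j\bigr) \otimes v .
\]
The symmetric factor is symmetric in the pair $\{i,k\}$, while $\alpha_k \wedge \alpha_i = -\alpha_i \wedge \alpha_k$. So the terms indexed by $(i,k)$ and $(k,i)$ cancel, and the sum vanishes. Equivalently, in the language of Proposition \ref{prop:explicit_description_of_partial}, $\partial^2\beta$ evaluated on $(v_0 \wedge \cdots \wedge v_{l+1}, w)$ is a double sum of terms $\pm\,\beta(\ldots \hat v_a \ldots \hat v_b \ldots, v_a \odot v_b \odot w)$. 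The symmetric slot is symmetric in $(a,b)$, but the two orders of removal carry opposite signs $(-1)^{a+b}$ and $(-1)^{a+b-1}$, so again everything cancels.

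The only real obstacle is well-definedness on the subcomplex, i.e.\ that $\partial\beta \in \bigwedge^{l+1}V^\ast \otimes \mathcal{A}^{(j-2)}$. We read this off from the evaluation formula of Proposition \ref{prop:explicit_description_of_partial}. Fix the form arguments $v_0, \dots, v_l$. Then $\partial\beta(v_0 \wedge \cdots \wedge v_l, \cdot)$ is a signed sum of the polynomials $\iota_{v_i}\bigl(\beta(\ldots \hat v_i \ldots, \cdot)\bigr)$. Each $\beta(\ldots \hat v_i \ldots, \cdot)$ lies in $\mathcal{A}^{(j-1)}$, and contracting an element of $\mathcal{A}^{(j-1)}$ with a vector lands in $\mathcal{A}^{(j-2)}$, directly from the definition of prolongation. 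Since $\mathcal{A}^{(j-2)}$ is a linear subspace, the signed sum lies in it as well. This gives the claim, and hence $\partial^2 = 0$ on $C^{\bullet,\bullet}(\mathcal{A})$.
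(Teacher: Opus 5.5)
Your proof is correct and matches the paper's approach: the paper gives no argument beyond calling the identity straightforward (citing Spencer), and your pairwise cancellation of the $(i,k)$ and $(k,i)$ terms is exactly that direct verification. The cancellation works because $\alpha_k\wedge\alpha_i=-\alpha_i\wedge\alpha_k$ while the symmetric factor is unchanged. Your extra check that $\partial$ maps $\bigwedge^l V^\ast\otimes\mathcal{A}^{(j-1)}$ into $\bigwedge^{l+1}V^\ast\otimes\mathcal{A}^{(j-2)}$, using that contraction sends $\mathcal{A}^{(j-1)}$ into $\mathcal{A}^{(j-2)}$, fills in a point the paper takes for granted when it defines $\partial$ on $C^{l,j}(\mathcal{A})$.
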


\begin{remark}[De Rham differential]
\label{remark:De_Rham_identification} In the presence of the trivial tableau $\mathcal{A} = V^\ast \otimes W$,
we have a natural identification of $C^{l, j}(\mathcal{A})$ with the space of $l$-differential forms on $V$ with values in $W$ and with homogeneous polynomial coefficients of degree $j$. It is easy to check that with this identification, the Spencer coboundary $\partial$ corresponds to the de Rham differential $\dd$.
\end{remark}

\begin{definition}[The Spencer cohomology groups]
The cohomology of the Spencer complex
\[
H^{l,j} (\mathcal{A}) := \frac{\ker\left(C^{l,j}(\mathcal{A})\xrightarrow{\partial} C^{l+1, j-1}(\mathcal{A})\right)}{\operatorname{Im}\left(C^{l-1, j+1}(\mathcal{A}) \xrightarrow{\partial} C^{l,j}(\mathcal{A})\right)}\,.
\]
is called the {\bf Spencer cohomology}.
\end{definition}

We now give some elementary properties of the cohomology of tableaux that will be useful in the sequel.

\begin{definition}[Morphism of Cartan tableaux]
\label{def:morphism_of_tableux} Let $\mathcal{A}_1 \subseteq V_1^\ast \otimes W_1$ and $\mathcal{A}_2 \subseteq V_2^\ast \otimes W_2$ be two tableaux. A morphism $(f, g) \colon \mathcal{A}_1 \rightarrow \mathcal{A}_2$ consists of a pair of linear maps
\[
f \colon V_2 \rightarrow V_1 \qquad \text{and} \qquad g \colon W_1 \rightarrow W_2\,,
\]
such that the induced map $(f^\ast \otimes g) \colon V_1^\ast \otimes W_1 \rightarrow V_2^\ast \otimes W_2$ satisfies $(f^\ast \otimes g)(\mathcal{A}_1) \subseteq \mathcal{A}_2$.
\end{definition}

\begin{proposition}
\label{prop:properties_morphism_tableux} Let $(f, g) \colon \mathcal{A}_1 \rightarrow \mathcal{A}_2$ be a morphism of Cartan tableaux. Then, we have the following:
\begin{enumerate}[\rm (i)]
    \item The maps
    \[
    f^\ast \otimes g \colon S^j (V_1^\ast) \otimes W_1 \rightarrow S^j (V_2^\ast) \otimes W_2
    \]
satisfy $(f^\ast \otimes g)(\mathcal{A}_1^{(j)}) \subseteq \mathcal{A}_2^{(j)}$.
\item The induced maps
\[
f^\ast \otimes (f^\ast \otimes g) \colon \bigwedge^l V_1^\ast \otimes \mathcal{A}_1^{(j)} \rightarrow  \bigwedge^l V_2^\ast \otimes \mathcal{A}_2^{(j)}
\]
are cochain morphisms.
\end{enumerate}
\end{proposition}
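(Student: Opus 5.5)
For (i), I would argue by induction on $j$, working directly with the definition of the prolongation through contractions. The key identity is
\[
\iota_v\bigl((f^\ast\otimes g)\alpha\bigr) = (f^\ast\otimes g)(\iota_{f(v)}\alpha), \qquad v \in V_2,\ \alpha \in S^{j+1}(V_1^\ast)\otimes W_1.
\]
Here $f^\ast\otimes g$ on $S^{j+1}(V_1^\ast)\otimes W_1$ means $S^{j+1}f^\ast\otimes g$. To check the identity, it suffices to take decomposable elements $\alpha=\alpha_1\odot\cdots\odot\alpha_{j+1}\otimes w$ and use $(f^\ast\alpha_i)(v)=\alpha_i(f(v))$; both sides are then $\sum_i \alpha_i(f(v))\, f^\ast\alpha_1\odot\cdots\widehat{\ }\cdots\odot f^\ast\alpha_{j+1}\otimes g(w)$.

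Iterating the identity gives
\[
\iota_{v_j}\cdots\iota_{v_1}(f^\ast\otimes g)\alpha=(f^\ast\otimes g)\,\iota_{f(v_j)}\cdots\iota_{f(v_1)}\alpha .
\]
If $\alpha\in\mathcal{A}_1^{(j)}$, the inner contraction lies in $\mathcal{A}_1$. By the morphism hypothesis, its image then lies in $\mathcal{A}_2$, so $(f^\ast\otimes g)\alpha\in\mathcal{A}_2^{(j)}$. This step is direct and needs no induction; the index shift between $S^j$ and $\mathcal{A}^{(j)}\subseteq S^{j+1}$ in the statement is only notational.

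For (ii), part (i) shows that the map sends $C^{l,j}(\mathcal{A}_1)=\bigwedge^lV_1^\ast\otimes\mathcal{A}_1^{(j-1)}$ into $C^{l,j}(\mathcal{A}_2)$. It remains to check that it commutes with $\partial$. By linearity this reduces to decomposable elements $\beta\otimes(\alpha_1\odot\cdots\odot\alpha_j)\otimes v$ of the ambient space $\bigwedge^lV_1^\ast\otimes S^jV_1^\ast\otimes W_1$, because $\partial$ is the restriction of the ambient operator. Applying the map first and then $\partial$ gives
\[
\sum_i (f^\ast\alpha_i\wedge f^\ast\beta)\otimes(\cdots\widehat{f^\ast\alpha_i}\cdots)\otimes g(v).
\]
Applying $\partial$ first and then the map gives the same expression, since $f^\ast$ commutes with wedge products: $f^\ast(\alpha_i\wedge\beta)=f^\ast\alpha_i\wedge f^\ast\beta$. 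Alternatively, the evaluation formula of Proposition \ref{prop:explicit_description_of_partial} could be used, evaluating on $f(v_0),\dots$.

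The only point needing care is that $\partial$ on the ambient space restricts to the subcomplex. That is already implicit in the definition of the Spencer complex, so there is no real obstacle; the main step is the contraction identity in (i).
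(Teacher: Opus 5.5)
Your proof is correct. The paper states this proposition without proof, as a routine check, and your argument is exactly that check. The identity $\iota_v\circ(f^\ast\otimes g)=(f^\ast\otimes g)\circ\iota_{f(v)}$ gives (i) directly from the definition of the prolongations. For (ii), the relation $f^\ast(\alpha_i\wedge\beta)=f^\ast\alpha_i\wedge f^\ast\beta$, checked on the ambient space $\bigwedge^l V_1^\ast\otimes S^j(V_1^\ast)\otimes W_1$ and then restricted, gives commutation with $\partial$. Your reading of the index shift, working with $C^{l,j}=\bigwedge^l V^\ast\otimes\mathcal{A}^{(j-1)}$, is also the right one.
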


In particular, we get an induced morphism between the Spencer cohomologies:
\[
(f,g)_\ast \colon H^{l,j}(\mathcal{A}_1) \rightarrow H^{l,j}(\mathcal{A}_2)\,.\]
This morphism satisfies the usual functorial properties. In particular, and interestingly for us, we have the following corollary:

\begin{corollary}
\label{corollary:injective_mapping_homology}
Let $\mathcal{A} \subseteq V^\ast \otimes W$ and $\widetilde{\mathcal{A}} \subseteq \widetilde V^\ast \otimes \widetilde W$ be two Cartan tableaux. Suppose that there are linear maps
\begin{align*}
    &i_1 \colon V \rightarrow  \widetilde V\,,  &r_1 \colon \widetilde V \rightarrow V\\
    &i_2 \colon \widetilde W \rightarrow W \,,  &r_2 \colon W \rightarrow \widetilde W\,,
\end{align*}
satisfying $r_j \circ i_j = \operatorname{id}$, for $j = 1, 2$ and such that both
\[
(i_1, i_2) \colon \widetilde{\mathcal{A}} \rightarrow \mathcal{A} \qquad \text{and} \qquad (r_1, r_2) \colon \mathcal{A} \rightarrow \widetilde{\mathcal{A}}
\]
are well defined Cartan tableaux morphisms. Then, the induced map in cohomology
$
(i_1, i_2)_\ast \colon H^{l,j}(\widetilde{\mathcal{A}}) \rightarrow H^{l,j}(\mathcal{A})
$
is injective.
\end{corollary}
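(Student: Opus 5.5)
The plan is to use functoriality of the induced map in cohomology: the composite $(r_1,r_2)\circ(i_1,i_2)$ is the identity morphism of $\widetilde{\mathcal{A}}$. Then
\[
(r_1,r_2)_\ast \circ (i_1,i_2)_\ast = \big((r_1,r_2)\circ(i_1,i_2)\big)_\ast = \operatorname{id}_{H^{l,j}(\widetilde{\mathcal{A}})},
\]
so $(i_1,i_2)_\ast$ has a left inverse and is therefore injective.

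The steps are as follows.

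First, fix the composition convention. Because $f$ is contravariant in the $V$-slot, the composite of $(f,g)\colon \mathcal{A}_1\to\mathcal{A}_2$ and $(f',g')\colon\mathcal{A}_2\to\mathcal{A}_3$ should be $(f\circ f', g'\circ g)$. For our composite this gives $(i_1\circ r_1,\; r_2\circ i_2)$.

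Second, check the types. Here $(i_1,i_2)\colon\widetilde{\mathcal A}\to\mathcal A$ has $i_1\colon V\to\widetilde V$, where $V$ is the domain space of the target, matching Definition \ref{def:morphism_of_tableux}. Also $r_1\colon\widetilde V\to V$.

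This is where the main care is needed. Following $\widetilde{\mathcal A}\to\mathcal A\to\widetilde{\mathcal A}$, the first component of the composite is $\widetilde V\xrightarrow{r_1}V\xrightarrow{i_1}\widetilde V$ composed appropriately. The induced map on $\widetilde V^\ast$ is $i_1^\ast$ followed by $r_1^\ast$, that is, $r_1^\ast\circ i_1^\ast=(i_1\circ r_1)^\ast$. That is not obviously the identity. So I will instead compute directly at the cochain level which composite appears.

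The cochain map $\widetilde{\mathcal A}$-complex $\to\mathcal A$-complex is built from $i_1^\ast\colon\widetilde V^\ast\to V^\ast$ and $i_2$. The map back is built from $r_1^\ast\colon V^\ast\to\widetilde V^\ast$ and $r_2$. Their composite on $\bigwedge^l\widetilde V^\ast\otimes S^{j}\widetilde V^\ast\otimes\widetilde W$ uses $r_1^\ast\circ i_1^\ast=(i_1\circ r_1)^\ast$ on the form factors, and $r_2\circ i_2=\operatorname{id}$ on the value factor.

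Third, resolve the mismatch. If the hypothesis gives $r_1\circ i_1=\operatorname{id}_V$, I would run the argument in the other order: the composite $(i_1,i_2)\circ$-side map on the $\mathcal A$-complex uses $i_1^\ast\circ r_1^\ast=(r_1\circ i_1)^\ast=\operatorname{id}$. That order, however, pairs with $i_2\circ r_2$ on $W$. So I will read the hypothesis exactly as stated, with $r_j\circ i_j=\operatorname{id}$ for $j=1,2$, and verify that the contravariance makes the combination land correctly. Concretely, on the cochain of $\widetilde{\mathcal A}$ the relevant composite on the $V$-side is the pullback by $i_1\circ r_1$ read contravariantly as $(r_1\circ i_1)$ acting on arguments. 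I will check this with evaluation on vectors, using Proposition \ref{prop:explicit_description_of_partial}-style evaluation: a cochain evaluated on $\tilde v$'s goes to one evaluated on $r_1$-images... and adjust.

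Once the composite is identified as the identity cochain map on $\widetilde{\mathcal A}$'s complex, Proposition \ref{prop:properties_morphism_tableux}(ii) guarantees that both maps are cochain maps. Functoriality then gives the left inverse in cohomology, and injectivity follows.

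The expected obstacle is this bookkeeping of variance, namely confirming which composite is the identity given $r_j\circ i_j=\operatorname{id}$. Everything else is formal.
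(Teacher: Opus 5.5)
Your strategy is the paper's: its proof is the single line that functoriality gives $(r_1,r_2)_\ast\circ(i_1,i_2)_\ast=\operatorname{id}$. Your variance bookkeeping is also correct, and it exposes something that line passes over. The composite $(r_1,r_2)\circ(i_1,i_2)$ is $(i_1\circ r_1,\, r_2\circ i_2)$. On the cochains of $\widetilde{\mathcal{A}}$ it therefore acts by $(i_1\circ r_1)^\ast$ on every $\widetilde V^\ast$-factor and by $r_2\circ i_2=\operatorname{id}$ on $\widetilde W$.

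The gap is your third step. You propose to keep $r_1\circ i_1=\operatorname{id}_V$ literally and to read $(i_1\circ r_1)^\ast$ as ``$r_1\circ i_1$ acting on arguments''. That identification is not valid. Pulling $\xi\in\widetilde V^\ast$ back by $i_1\circ r_1$ evaluates $\xi$ at $i_1(r_1(\tilde v))$. The condition $r_1\circ i_1=\operatorname{id}_V$ only makes $i_1\circ r_1$ an idempotent of $\widetilde V$ with image $i_1(V)$, and that idempotent is the identity only if $\dim V=\dim\widetilde V$.

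No later adjustment can repair this, because with the hypothesis read literally the conclusion fails. Take $V=\{0\}$, $\mathcal{A}=\{0\}$, $W=\widetilde W$, $i_2=r_2=\operatorname{id}$, and $\widetilde{\mathcal{A}}=\mathcal{A}_j$ from Section \ref{section:obstruction_of_fixed_order}. Every hypothesis holds trivially. Yet $H^{2,j}(\mathcal{A})=0$ since $\bigwedge^2 V^\ast=0$, while $H^{2,j}(\mathcal{A}_j)\cong\mathbb{R}$ by Theorem \ref{theorem:cohomology_in_degree_2}.

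The missing idea is that the $V$-side condition must be taken in the other order. What the functoriality argument needs, and what the paper tacitly uses, is $i_1\circ r_1=\operatorname{id}_{\widetilde V}$. Equivalently, $r_1^\ast\circ i_1^\ast=\operatorname{id}_{\widetilde V^\ast}$ for the dual maps that actually act on tableaux. This is exactly what Lemma \ref{lemma:sufficient_retraction_condition} provides: there $i_1^\ast$ is the inclusion $\widetilde V^\ast\hookrightarrow V^\ast$ and $r_1^\ast$ is the projection along $\widehat V^\ast$, so in that setting $r_1\circ i_1\neq\operatorname{id}_V$ unless $\widehat V^\ast=0$.

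With this correction, $(i_1\circ r_1,\, r_2\circ i_2)$ is the identity morphism of $\widetilde{\mathcal{A}}$. Its induced cochain map is the identity on each $\bigwedge^l\widetilde V^\ast\otimes\widetilde{\mathcal{A}}^{(j-1)}$. Proposition \ref{prop:properties_morphism_tableux} together with functoriality then gives the left inverse $(r_1,r_2)_\ast$, and hence injectivity, as you intended.
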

\begin{proof} Indeed, by functoriality, we have $(r_1, r_2)_\ast \circ (i_1, i_2)_\ast = \operatorname{id}_\ast = \operatorname{id}$. Thus, $(i_1, i_2)$ is necessarily injective.
\end{proof}

\begin{definition}[Retraction] When we are in the hypotheses of Corollary \ref{corollary:injective_mapping_homology}, we say that $\widetilde{\mathcal{A}}$ is a \textbf{retraction} of $\mathcal{A}$.
\end{definition}

In practice, we will identify retractions of a Cartan tableau as follows:

\begin{lemma}
\label{lemma:sufficient_retraction_condition}
Let $\mathcal{A} \subseteq V^\ast \otimes W$ and let $\widetilde V^\ast \subseteq V^\ast$ and $\widetilde W \subseteq W$ be subspaces. Let $\widetilde{\mathcal{A}} := \mathcal{A} \cap (\widetilde V^\ast \otimes \widetilde W)$. Then, if there are complementary subspaces
\[
V^\ast = \widetilde V^\ast \oplus \widehat V^\ast \,, \qquad W = \widetilde W \oplus \widehat W
\]
such that
\[
\mathcal{A}  = \widetilde{\mathcal{A}} \oplus \mathcal{A} \cap \left(\widehat V^\ast \otimes \widetilde W + \widehat{V}^\ast \otimes \widehat W + \widetilde V^\ast \otimes \widehat{W} \right)\,,
\]
we have that $\widetilde{\mathcal{A}}$ is a retraction of $\mathcal{A}$.
\end{lemma}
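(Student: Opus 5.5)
We will exhibit the four maps required by Corollary \ref{corollary:injective_mapping_homology} explicitly, and check that both pairs are morphisms of tableaux in the sense of Definition \ref{def:morphism_of_tableux}.

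First, define the maps. For the target side, let $i_2 \colon \widetilde W \hookrightarrow W$ be the inclusion and $r_2 \colon W \to \widetilde W$ the projection along $\widehat W$. Then $r_2 \circ i_2 = \operatorname{id}$.

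For the source side, the decomposition $V^\ast = \widetilde V^\ast \oplus \widehat V^\ast$ dualizes to a decomposition of $V$. Set $\widetilde V := (\widetilde V^\ast)^\ast$, and identify it with the annihilator $(\widehat V^\ast)^0 \subseteq V$. Restriction of covectors gives an isomorphism $\widetilde V^\ast \cong ((\widehat V^\ast)^0)^\ast$. Take $r_1 \colon \widetilde V \to V$ to be the inclusion of $(\widehat V^\ast)^0$, and $i_1 \colon V \to \widetilde V$ to be the projection along $(\widetilde V^\ast)^0$. Then $i_1 \circ r_1 = \operatorname{id}$, which is the identity the corollary needs, with the roles of $V$ and $\widetilde V$ placed as in the corollary.

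With these choices, $r_1^\ast \colon V^\ast \to \widetilde V^\ast$ is the projection onto $\widetilde V^\ast$ along $\widehat V^\ast$. Likewise $i_1^\ast \colon \widetilde V^\ast \to V^\ast$ is the inclusion of $\widetilde V^\ast$. The one point to verify is that $i_1^\ast$ kills nothing and lands in $\widetilde V^\ast$. This holds because a covector vanishing on $(\widetilde V^\ast)^0$ lies in $\widetilde V^\ast$.

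Next, check the morphism conditions.
\begin{enumerate}[\rm (i)]
    \item The map $i_1^\ast \otimes i_2$ is just the inclusion $\widetilde V^\ast \otimes \widetilde W \hookrightarrow V^\ast \otimes W$. It sends $\widetilde{\mathcal{A}} = \mathcal{A} \cap (\widetilde V^\ast \otimes \widetilde W)$ into $\mathcal{A}$ trivially.
    \item The map $r_1^\ast \otimes r_2$ is the projection $\pi \colon V^\ast \otimes W \to \widetilde V^\ast \otimes \widetilde W$. Its kernel is exactly $\widehat V^\ast \otimes \widetilde W + \widehat V^\ast \otimes \widehat W + \widetilde V^\ast \otimes \widehat W$.
\end{enumerate}

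For (ii), take $a \in \mathcal{A}$ and use the hypothesis to write $a = \tilde a + b$. Here $\tilde a \in \widetilde{\mathcal{A}}$ and $b$ lies in $\mathcal{A}$ intersected with the kernel of $\pi$. Then $\pi(a) = \pi(\tilde a) = \tilde a \in \widetilde{\mathcal{A}}$. So $(r_1, r_2)$ is a morphism $\mathcal{A} \to \widetilde{\mathcal{A}}$.

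Finally, Corollary \ref{corollary:injective_mapping_homology} applies, so $\widetilde{\mathcal{A}}$ is a retraction of $\mathcal{A}$.

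The only delicate step is the dualization bookkeeping. The subspace $\widetilde V^\ast$ is given inside $V^\ast$, and not as the dual of a subspace of $V$. So one must choose $\widetilde V$ so that $\widetilde V^\ast$ really is its dual and the pullbacks are the expected inclusion and projection. Using annihilators as above handles this. Everything else is immediate from the direct-sum hypothesis.
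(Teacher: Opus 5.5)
Your proposal is correct and follows the same route as the paper. You take $i_1, i_2$ to be (duals of) the inclusions and $r_1, r_2$ the projections coming from the direct sums, so $(i_1,i_2)$ is a morphism by the definition of $\widetilde{\mathcal{A}}$ and $(r_1,r_2)$ is one by the decomposition hypothesis. Your annihilator bookkeeping, and your observation that functoriality actually needs $i_1\circ r_1=\operatorname{id}_{\widetilde V}$ (the corollary's literal $r_1\circ i_1=\operatorname{id}$ cannot hold when $\dim\widetilde V<\dim V$), simply make explicit what the paper's two-line proof leaves implicit.
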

\begin{proof} By defining $i_1$ and $i_2$ as the (dual of) the inclusions and $r_1$ and $r_2$ as the projections given naturally by the direct sum decomposition, we have that $(i_1, i_2) \colon \widetilde{\mathcal{A}} \rightarrow \mathcal{A}$ is a morphism of Cartan tableaux by definition and that $(r_1, r_2) \colon \mathcal{A} \rightarrow \widetilde{\mathcal{A}}$ is a morphism of Cartan tableaux by hypothesis.
 \end{proof}

\section{An introduction to the theory of \texorpdfstring{$G$}{G}-structures}
\label{section:General_G_structures}

In this section we summarize the theory of $G$-structures, in particular, defining the structure tensors. This notion will be applied in Section \ref{section:multisymplectic_G_structures} to the particular case of a $G$-structure defined by a skew-symmetric tensor. These correspond to multisymplectic manifolds of constant linear type (see \cite{fujimoto,leonid}).

\noindent For a smooth $n$-dimensional manifold $M$ and an $n$-dimensional vector space $V$, let us denote by $\pi \colon\mathcal{F}(M) \rightarrow M$ the frame bundle over $M$ defined as
\[
\mathcal{F}(M) := \{\phi \colon V \rightarrow T_x M : \phi \text{ is a linear isomorphism}, \forall x \in M\}\,.
\]
Notice that $\GL(V)$ acts on $\mathcal{F}(M)$ on the right freely and properly, defining a $\GL(V)$-principal bundle structure on $\pi \colon \mathcal{F}(M) \rightarrow M$. If $(x^\mu)$ denote coordinates on $M$, and $e^i$ denotes a basis of $V$, we have natural coordinates $(x^i, A^\mu_j)$ representing the frame
\[
\phi(e_i) = A_\mu^j \pdv{x^\mu}\,,
\]
where the coefficients $A^\mu_j$ are subject to the restriction $\det (A_\mu^j) \neq 0$, so that $\phi$ actually defines a linear isomorphism.

\begin{remark} If $V = \mathbb{R}^n$, the frame bundle $\mathcal{F}(M)$ can be equivalently identified as the bundle of bases of $\T_x M$ by considering the image of the canonical basis.
\end{remark}

\begin{definition}[$G$-structure]
    Given a subgroup $i\colon G \hookrightarrow \GL(V)$, a \textbf{$G$-structure} is a reduction of the frame bundle to a principal $G$-bundle. Namely, a subbundle $B \subseteq \mathcal{F}(M)$ which is a principal bundle with structure group $G$, where the action of $G$ on $B$ is given by the restriction of the action of $\GL(V)$ on $\mathcal{F}(M)$.
\end{definition}

\subsection{Structure tensors}

\noindent Let $M$ be a smooth manifold equipped with a $G$-structure. As we mentioned in the introduction, one of the fundamental problems is to decide whether $M$ is locally isomorphic to the flat $G$-structure on $V$. As we can find in \cite{sternberg}, the formal obstructions to integrability of a $G$-structure are tensors that take values in vector bundles with standard fibre $H^{2, j}(\mathfrak{g})$, for $j = 0, 1, \dots$ and so on, where $H^{2,j}(\mathfrak{g})$ denotes the Spencer cohomology of $\mathfrak{g} \subseteq V^\ast \otimes V$. It is the objective of the next subsection to present these obstructions. It is clear that a $G$-structure is flat if and only if there exists an adapted connection $\nabla$ on $M$ which is flat and torsionless. The structure tensors capture precisely this idea, an represent the formal obstructions for a $G$-structure to admit a torsionless connection which is flat to a certain (but finite) order at a particular point $x$.

\begin{definition}[Solder form] Let $\pi \colon B \rightarrow M$ be a $G$-structure. The \textbf{solder form} is the $V$-valued $1$-form defined for a frame $\phi \in B$ and $v \in T_\phi B$ as
\[
\theta|_\phi(v) := \phi^{-1}(\pi_\ast v)\,.
\]
\end{definition}

Notice that the solder form of a $G$-structure on $M$ is nothing but the restriction of the canonical form on the linear frame bundle $\mathcal{F}(M)$ (see \cite{nomizu}). Locally, we have that
\[
\theta = B^i_\mu \d x^\mu \otimes e_i \,,
\]
where $B^i_\mu = B^i_\mu(x, A)$ denotes the inverse to $A^\mu_i$, namely, it is the unique set of coefficients satisfying $
B^i_\mu A^\mu_j = \delta^i_j$.

\noindent Let us also note that, by definition $\theta$ satisfies the following properties:
\begin{enumerate}[\rm (i)]
    \item It is semi-basic, namely $\theta(X) = 0$, for every vertical vector field
$X$ on $B$.
    \item For $g \in G$, we have $R_g^\ast \,\theta = g ^{-1} \circ \theta$\,, where $R_g \colon B \rightarrow B$ denotes the action on the right, and the right hand side denotes the canonical action of $G$ on $V$.
\end{enumerate}

\begin{remark}[Induced form on $M$]
In particular, by standard theory of principal bundles \cite[sec. 14.11]{kolarNaturalOperationsDifferential1993}, $\theta$ induces a vector valued $1$-form on $M$, where the vector bundle where it takes values in is the induced bundle $B[V]$ by the action of $G$ on $V$. As it turns out, we have the identification $B[V] = \T M$, and the induced form $\widetilde \theta$ is the identity, identified as a section $\widetilde \theta \in \Gamma(\T^\ast M \otimes \T M)$.
\end{remark}

Principal connections on $\mathcal{F}(M) \rightarrow M$ correspond to linear connections on $\T M$; in general, we have:

\begin{definition}[$G$-connection] Let $\pi \colon B \rightarrow M$ be a $G$-structure. A principal connection on $B$ is referred to as a \textbf{$G$-connection}.
\end{definition}

\begin{remark} In fact, from a $G$-connection on $B$, given the morphism of principal bundles
\[
B \hookrightarrow \mathcal{F}(M)\qquad \text{and} \qquad G \hookrightarrow \GL(V)\,,
\]
we have an induced connection on $\mathcal{F}(M)$ and, hence, a linear connection. Conversely, if a linear connection on $M$ arises from a $G$-connection, we say that the connection is \emph{adapted} to the $G$-structure.
\end{remark}

\begin{definition}[Torsion] Let $\alpha \in \Omega^1(B, \mathfrak{g})$ be a principal connection. The \textbf{torsion} of $\alpha$ is the $V$-valued $2$-form given by the covariant derivative of $\theta$, namely $T_\alpha := \d_{\alpha} \theta$.
\end{definition}

\begin{remark}[Interpretation on $M$] Again, by standard theory of principal bundles, $\d_\alpha \theta$ is semi-basic and equivariant, so that it induces a $(B[V] = \T M)$-valued $2$-form on $M$. If we denote by $\nabla_\alpha$ the induced linear connection on $M$ by $\alpha$, the induced form is precisely $\d_{\nabla_\alpha} \operatorname{Id}$, where $\operatorname{Id} \in \Gamma(\T^\ast M \otimes \T M)$ denotes the identity.
\end{remark}

We will now discuss the local expressions of the solder form and the torsion. In the case of the natural $\GL(V)$-structure on $M$, in coordinates we have that
\[
\d \theta = \d B^i_\mu \wedge \d x^\mu \otimes e_i = \pdv{B^i_\mu}{A^{\nu}_k} \d A^{\nu}_k \wedge \d x^\mu \otimes e_i =  - B^k_\mu B^i_\nu \d A^{\nu}_k \wedge \d x^\mu \otimes e_i\,,
\]
where the identity $ \pdv{B^i_\mu}{A^{\nu}_k} = - B^k_\mu B^i_\nu$ follows by taking the derivative of $B^i_\mu A^\mu_k = \delta^i_k$.

Let $e^m_j = e^m \otimes e_j \in \gl(V)$ denote the natural set of generators and 
\[
\alpha = B^j_\rho \left(\d A^{\rho}_m + A^\nu_m\Gamma^{\rho}_{\mu \nu} \d x^\mu \right) \otimes e^m_j
\]
denotes a principal connection $1$-form, where $\Gamma^{\rho}_{\mu \nu}$ only depend on $(x^\mu)$. Then, a straightforward computation shows 
\[
\d _\alpha \theta = B^i_\rho \Gamma^{\rho}_{\nu \mu} \d x^\nu \wedge \d x^\mu \otimes e_i\,,
\]
so that we have $\d _\alpha \theta = 0$ if and only if $\Gamma^{\rho}_{\nu \mu} = \Gamma^\rho_{\mu \nu}$, so that we recover the usual definition of torsionless connection.

\noindent In general, the torsion of a principal connection is not intrinsic, since it depends on the chosen connection. However, we can quotient out $\bigwedge^2 \T^\ast M \otimes \T M$ by a particular subbundle in such a way that the \emph{projected} torsion is intrinsic and independent of the choice of connection made. This bundle is obtained as follows. First notice that the natural action of $G$ on $V^\ast \otimes V$ induces the adjoint action on $\mathfrak{g} \subseteq V^\ast \otimes V$. This implies that all of the notions defined in the previous section remain $G$-equivariant. In particular:
\begin{enumerate}[\rm (i)]
    \item The induced action of $G$ on $\bigwedge^l V^\ast \otimes S^{j+1}(V^\ast) \otimes V$ restricts to an action on the Spencer Complex $C^{l,j}(\mathfrak{g})$.
    \item The coboundaries $\partial \colon C^{l,j}(\mathfrak{g}) \rightarrow C^{l+1,j-1}(\mathfrak{g})$ are equivariant with respect to the $G$-action.
    \item Hence, $G$ acts on the Spencer cohomology groups $H^{l,j}(\mathfrak{g})$.
\end{enumerate}

\noindent From the inclusion $\mathfrak{g} \subseteq V^\ast \otimes V$, we get an inclusion of the induced vector bundles, so that the adjoint bundle $B[\mathfrak{g}]$ of $B$ is a subbundle of $\T^\ast M \otimes \T M$
\[
B[\mathfrak{g}] \hookrightarrow \T^\ast M \otimes \T M\,.
\]
In general, by the same argument and with the obvious notations, we have
\[
B[\mathfrak{g}^{(j)}]\hookrightarrow  S^{j+1}(\T^\ast M) \otimes \T M\,,
\]
and from the equivariance of the Spencer coboundary $\partial$, we get induced maps
\[
\widetilde \partial \colon \bigwedge^{l} \T^\ast M \otimes B[\mathfrak{g}^{(j)}] \longrightarrow  \bigwedge^{l+1} \T ^\ast M \otimes B[\mathfrak{g}^{(j-1)}]\,.
\]

\begin{proposition}[\cite{sternberg}] Let $\alpha_1$ and $\alpha_2$ be principal connections on $B \rightarrow M$. Then, $\alpha = \alpha_1 - \alpha_2$ is an equivariant semi-basic $1$-form on $B$. Let $\widetilde \alpha \in \Omega^1(B[\mathfrak{g}])$ be the induced $1$-form on $M$. Then, we have
\[
\widetilde T_{\alpha_1} - \widetilde T_{\alpha_2} = \widetilde \partial \widetilde \alpha\,.
\]
\end{proposition}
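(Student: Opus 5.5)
The plan is to reduce the statement to a pointwise identity upstairs on $B$ and then descend it to $M$ by the usual correspondence between equivariant semi-basic forms and forms with values in associated bundles.

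First, $\alpha = \alpha_1 - \alpha_2$ is equivariant and semi-basic. Both connection forms reproduce the fundamental vector fields: $\alpha_i(\xi_B) = \xi$ for $\xi \in \mathfrak{g}$. Hence their difference vanishes on vertical vectors. Both also satisfy $R_g^\ast \alpha_i = \operatorname{Ad}_{g^{-1}}\alpha_i$, so the difference does as well. It therefore descends to $\widetilde\alpha \in \Omega^1(M, B[\mathfrak{g}])$, by the same principle used in the remark on the induced form on $M$.

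Next I compute the difference of torsions on $B$. Using the structure-equation form of the covariant exterior derivative of the tensorial form $\theta$,
\[
\d_{\alpha_i}\theta = \d\theta + \alpha_i \wedge \theta ,
\]
where $\alpha_i\wedge\theta$ means $(\alpha_i\wedge\theta)(X,Y) = \alpha_i(X)\theta(Y) - \alpha_i(Y)\theta(X)$, with $\mathfrak{g}\subseteq V^\ast\otimes V$ acting on $V$. Subtracting gives $T_{\alpha_1} - T_{\alpha_2} = \alpha\wedge\theta$. Since $\d\theta$ is common to both, only this algebraic term remains. Alternatively, one can check this directly in the local coordinates written above: the $\d A$ terms cancel, and what is left is $B^i_\rho(\Gamma^{\rho}_{1,\nu\mu}-\Gamma^\rho_{2,\nu\mu})\,\d x^\nu\wedge\d x^\mu\otimes e_i$.

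The main step is to identify $\alpha\wedge\theta$ with $\partial$ applied to $\alpha$, and this is where a sign convention must be checked. Since $\alpha$ is semi-basic, at a frame $\phi$ we can write $\alpha = \widehat\alpha\circ\theta$ with $\widehat\alpha(\phi)\in V^\ast\otimes\mathfrak{g} = C^{1,1}(\mathfrak{g})$, so that $\widehat\alpha(u)\in\mathfrak{g}$ for $u\in V$. Then
\[
(\alpha\wedge\theta)(X,Y) = \widehat\alpha(u)v - \widehat\alpha(v)u, \qquad u=\theta(X),\ v=\theta(Y).
\]
By Proposition \ref{prop:explicit_description_of_partial} with $l=1$ and $j=1$, we have $(\partial\widehat\alpha)(u\wedge v) = \pm(\widehat\alpha(v)u - \widehat\alpha(u)v)$. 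This agrees with the expression above up to a global sign fixed by the conventions for $\partial$ and $\d_\alpha$. Following the paper's convention, the difference is exactly $\partial\widehat\alpha$; otherwise one replaces $\alpha$ by $-\alpha$ or adjusts orientation, which I will verify once against the coordinate formula.

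Finally, $\widehat\alpha\colon B\to C^{1,1}(\mathfrak{g})$ is $G$-equivariant and $\partial$ is $G$-equivariant. Hence $\partial\widehat\alpha$ is the equivariant function representing $\widetilde\partial\widetilde\alpha$, and descending to $M$ yields $\widetilde T_{\alpha_1}-\widetilde T_{\alpha_2}=\widetilde\partial\widetilde\alpha$.
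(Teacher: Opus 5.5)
Your argument is correct in substance, but it takes a different route from the paper. The paper proves the identity by a direct local computation on the frame bundle. It writes $\alpha_1,\alpha_2$ through Christoffel symbols $\Gamma,\widetilde\Gamma$, reads off $T_{\alpha_i}=B^i_\rho\Gamma^\rho_{\nu\mu}\,\d x^\nu\wedge\d x^\mu\otimes e_i$, writes $\widetilde\alpha$ with coefficients $\Gamma-\widetilde\Gamma$, and compares. Your ``alternatively, in coordinates'' remark is essentially that proof.

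Your main line instead uses the structure equation $\d_\alpha\theta=\d\theta+\alpha\wedge\theta$. Then $\d\theta$ cancels and the difference is the purely algebraic term $\alpha\wedge\theta$. Writing $\alpha=\widehat\alpha\circ\theta$, you identify this term frame by frame with a Spencer coboundary and descend using the equivariance of $\widehat\alpha$ and of $\partial$. This route is coordinate-free and shows the identity is pointwise linear algebra. It also actually justifies the first assertion (that $\alpha$ is semi-basic and equivariant) and the passage to $M$, both of which the paper's proof takes for granted. The paper's route is shorter and ties directly to the classical fact that torsion is the skew part of $\Gamma$.

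One correction: the sign check you defer does not come out as you assert. The paper defines $\partial(\beta\otimes\alpha_1\otimes v)=(\alpha_1\wedge\beta)\otimes v$. With this definition, $(\partial\widehat\alpha)(u\wedge v)=\widehat\alpha(v)u-\widehat\alpha(u)v$, hence $T_{\alpha_1}-T_{\alpha_2}=\alpha\wedge\theta=-\partial\widehat\alpha$. The paper's own coordinate expressions give the same minus sign; its proof just says the relation is clear. So ``following the paper's convention, the difference is exactly $\partial\widehat\alpha$'' is false. Replacing $\alpha$ by $-\alpha$ would not repair this, since it changes the statement. The honest conclusion is that the identity holds up to an overall sign, and holds exactly with the opposite convention $(\partial S)(u\wedge v)=S(u)v-S(v)u$. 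This is immaterial for how the proposition is used, since only $\operatorname{Im}\widetilde\partial$ enters the definition of $c_0$.
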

\begin{proof} The proposition follows straightforwardly in coordinates. Indeed, let $\alpha_1$ and $\alpha_2$ be given by
\[
\alpha_1 = B^j_\rho \left(\d A^{\rho}_m + A^\nu_m\Gamma^{\rho}_{\mu \nu} \d x^\mu \right) \otimes e^m_j \qquad \text{and} \qquad \alpha_2 = \widetilde B^j_\rho \left(\d A^{\rho}_m + A^\nu_m\widetilde \Gamma^{\rho}_{\mu \nu} \d x^\mu \right) \otimes e^m_j\,,
\]
in such a way that $B^j_\rho A^\nu_m\Gamma^{\rho}_{\mu \nu} e^m_j$ and $B^j_\rho A^\nu_m\widetilde \Gamma^{\rho}_{\mu \nu} e^m_j$ lie in $\mathfrak{g} \subseteq V^\ast \otimes V$. Their respective torsions are
\[
T_{\alpha_1} = \d _{\alpha_1} \theta = B^i_\rho \Gamma^{\rho}_{\nu \mu} \d x^\nu \wedge \d x^\mu \otimes e_i \qquad \text{and} \qquad T_{\alpha_2} = \d _{\alpha_2} \theta = B^i_\rho \widetilde \Gamma^{\rho}_{\nu \mu} \d x^\nu \wedge \d x^\mu \otimes e_i\,.
\]
The difference of the connections is $\alpha = B^j_\rho A^\nu_m\left( \Gamma_{\mu \nu}^\rho - \widetilde \Gamma_{\mu \nu}^\rho \right) \d x\mu \otimes e^m_j$. The induced sections of $\bigwedge \T^\ast M \otimes \T M$ and $B[\mathfrak{g}]$ are expressed in coordinates as
\begin{align*}
    \widetilde T_{\alpha_1} =  \Gamma^\rho_{\nu \mu} \d x^\nu \wedge \d x^\mu \otimes \pdv{x^\rho} \,, \qquad \widetilde T_{\alpha_2} =  \widetilde \Gamma^\rho_{\nu \mu} \d x^\nu \wedge \d x^\mu \otimes \pdv{x^\rho} \qquad \\
    \text{and} \qquad \widetilde \alpha = \left( \Gamma_{\nu \mu}^\rho - \widetilde \Gamma_{\nu \mu}^\rho \right) \d x^\nu \otimes \d x^\mu \otimes \pdv{x^\rho}\,,
\end{align*}

which clearly satisfy the desired relation.
\end{proof}

\noindent In particular, if $\alpha$ is a principal connection, the image of the torsion under the projection
\[
p_0 \colon \bigwedge^2 \T ^\ast M \otimes \T M \rightarrow \bigwedge^2 \T ^\ast M \otimes \T M / \Im \widetilde \partial \cong B[H^{2, 0} (\mathfrak{g})]
\]
is independent of $\alpha$.

\begin{definition}[Structure tensor of order zero] The obtained section, $c_0 := p_0(\widetilde T_\alpha) \in \Gamma(B[H^{2,0}(\mathfrak{g})])$ is called the \textbf{structure tensor of order zero} (also referred to as the \textbf{intrinsic torsion}).
\end{definition}

\noindent As a corollary of this discussion, if a $G$-structure on $M$ admits a torsionless adapted connection, we have $c_0 = 0$ identically. In fact, the converse also holds:

\begin{theorem}[\cite{nagano}] Let $M$ be a smooth manifold. Then, a $G$-structure admits a torsionless adapted connection if and only if the structure tensor of order zero vanishes identically.
\end{theorem}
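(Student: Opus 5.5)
One direction has already been observed above. If $\nabla$ is a torsionless adapted connection coming from a $G$-connection $\alpha$, then $\widetilde T_\alpha = 0$. Hence $c_0 = p_0(\widetilde T_\alpha) = 0$, and by the preceding proposition $c_0$ does not depend on which connection is used to compute it. So the work is in the converse: assuming $c_0 \equiv 0$, I will build a torsionless adapted connection. The plan is to start from an arbitrary $G$-connection and correct it by a $B[\mathfrak{g}]$-valued $1$-form whose Spencer coboundary cancels the torsion.

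First, I fix any principal connection $\alpha_0$ on $B$. Such a connection exists by a partition of unity argument, since $B$ is a principal $G$-bundle over a paracompact manifold. Its torsion gives a section $\widetilde T_{\alpha_0} \in \Gamma(\bigwedge^2 \T^\ast M \otimes \T M)$. Since $c_0 = p_0(\widetilde T_{\alpha_0}) = 0$, at every point $x$ the torsion lies in $\Im \widetilde\partial_x$, where
\[
\widetilde\partial \colon \T^\ast M \otimes B[\mathfrak{g}] \rightarrow \bigwedge^2 \T^\ast M \otimes \T M
\]
is the vector bundle morphism induced from $\partial \colon V^\ast \otimes \mathfrak{g} \rightarrow \bigwedge^2 V^\ast \otimes V$. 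The key structural point is that $\widetilde\partial$ has constant rank, because it is the associated-bundle map of a $G$-equivariant linear map of fixed rank. Consequently $\ker\widetilde\partial$ and $\Im\widetilde\partial$ are smooth subbundles. Choosing a complement $E$ to $\ker \widetilde\partial$, for instance the orthogonal complement with respect to an auxiliary bundle metric, $\widetilde\partial|_E \colon E \rightarrow \Im\widetilde\partial$ becomes a smooth vector bundle isomorphism. I then define
\[
\widetilde\beta := (\widetilde\partial|_E)^{-1}(\widetilde T_{\alpha_0}) \in \Omega^1(B[\mathfrak{g}]),
\]
which is a smooth section with $\widetilde\partial\widetilde\beta = \widetilde T_{\alpha_0}$.

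Second, I lift $\widetilde\beta$ to the equivariant semi-basic $\mathfrak{g}$-valued $1$-form $\beta$ on $B$ that it corresponds to. Then $\alpha_1 := \alpha_0 - \beta$ is again a principal connection on $B$. By the proposition comparing torsions of two connections, $\widetilde T_{\alpha_1} = \widetilde T_{\alpha_0} - \widetilde\partial\widetilde\beta = 0$. The connection induced by $\alpha_1$ is therefore adapted and torsionless, which proves the converse.

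The only delicate step is the smooth global choice of the primitive $\widetilde\beta$, which I handle through the constant-rank argument above. The rest is formal, provided one tracks the sign and normalisation conventions of $\widetilde\partial$ against those in the difference-of-torsions proposition, so that the correction term cancels the torsion exactly rather than doubling it.
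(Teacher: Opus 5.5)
Your proposal is correct. The forward direction is exactly the remark the paper makes just before the statement. For the converse, the paper gives no argument of its own and simply cites Nagano. Your construction is the standard proof behind that citation, and it makes the result self-contained using only the paper's difference-of-torsions proposition.

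The one non-formal ingredient is the one you isolate. You must pass from the pointwise condition $c_0(x)=0$ to a smooth global primitive $\widetilde\beta$ with $\widetilde\partial\widetilde\beta=\widetilde T_{\alpha_0}$, and your constant-rank observation handles this. The paper's identification of the quotient by the image of $\widetilde\partial$ with the vector bundle $B[H^{2,0}(\mathfrak{g})]$ already tacitly relies on that constant rank.

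An equivalent variant avoids the auxiliary bundle metric. Over an open set carrying a local section of $B$, $\widetilde\partial$ is represented by the fixed map $\partial\colon V^\ast\otimes\mathfrak{g}\to\bigwedge^2V^\ast\otimes V$. A fixed linear right inverse of $\partial$ on its image then gives a local torsion-free $G$-connection. These local connections glue by a partition of unity, because convex combinations of $G$-connections are $G$-connections and torsion depends affinely on the connection.

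Either way, the sign or normalisation convention of $\partial$ is immaterial, since rescaling $\widetilde\beta$ absorbs it.
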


\begin{remark} A different perspective on the existence of torsionless connections will result useful to generalize the structure tensor of order zero to higher order. Notice that $\mathcal{F}(M)$ is actually an open subbundle of the jet manifold
\[
\mathcal{F}(M) \hookrightarrow J^1_0(V, M)\,,
\]
where $J^1_0(V, M)$ denotes the first jets of maps $f \colon V \rightarrow M$ with source $0 \in V$. Hence, connections on the bundle $\mathcal{F}(M) \rightarrow M$ correspond to sections of an iterated jet bundle:
\[
J^1 \mathcal{F}(M) \hookrightarrow J^1 J^1_0(V, M)\,.
\]
However, we should remark that the first jet bundle corresponds to jets of maps $V \rightarrow M$ and the second jet refers to sections of $J^1_0(V, M)$.
\end{remark}

It turns out that there is a way of identifying a particular subspace of $J^{k} \mathcal{F}(M)$ as the invertible jets in $J^{k+1}_0(V, M)$.

\begin{definition}[Frame bundle of order $k$] Let us define the \emph{frame bundle of order $k$} as \[\mathcal{F}^{(k)}(M) := \{j^{k}_0 f \colon f \colon V \rightarrow M \text{ local diffeomorphism}\}\,.\]
\end{definition}

The frame bundle of order $k$ is naturally a principal bundle over $M$ (see \cite{deleon1989framebundles, kolarNaturalOperationsDifferential1993}).

\begin{proposition}[\cite{ Libermann1997}] Let $f \colon V \rightarrow M$ be a local diffeomorphism around $0 \in V$ and let $\psi := f^{-1}$ denote its local inverse. With the section $j^1 f \circ \psi^{-1} \in \Gamma(\mathcal{F}(M))$, define the map $\beta \colon \mathcal{F}^{(q+1)}(M) \rightarrow J^{q} \mathcal{F}(M)$ as
\[
\beta (j^{q+1}_0 f) := j^{q}_{f(x)} (j^1 f \circ \psi^{-1})\,.
\]
Then, $\beta$ is an embedding.
\end{proposition}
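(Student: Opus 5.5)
We must prove that $\beta(j^{q+1}_0 f) := j^q_{f(0)}(j^1 f\circ\psi)$ is well defined and is an embedding. (Here $\psi = f^{-1}$, and $j^1 f \circ \psi$ is the local section $y \mapsto j^1_{\psi(y)} f$ of $\mathcal{F}(M)$, read as a frame at $y$ after translating $V$ so that the source is $0$.) The plan is to work entirely in local coordinates, first for well-definedness and then for the embedding property.

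\textbf{Well-definedness.} Fix coordinates $(x^\mu)$ on $M$ and linear coordinates $(v^i)$ on $V$. Write $f = (f^\mu(v))$. The section $s_f := j^1 f\circ\psi$ has frame coefficients $A^\mu_i(y) = \pdv{f^\mu}{v^i}(\psi(y))$. Its $q$-jet at $f(0)$ is determined by the derivatives $\partial^\alpha_y A^\mu_i(f(0))$ for $|\alpha|\le q$. Using the chain rule and the inverse function theorem, each such derivative is a universal polynomial in the derivatives of $f$ at $0$ of order at most $q+1$, divided by powers of $\det(\pdv{f}{v})(0)$. Indeed, $\psi$'s derivatives up to order $q$ at $f(0)$ depend only on $f$'s derivatives up to order $q$ at $0$. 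Hence $\beta$ depends only on $j^{q+1}_0 f$, and it is smooth, being given by rational functions in the natural coordinates of $\mathcal{F}^{(q+1)}(M)$. It is also clearly fibered over $M$, since $j^{q+1}_0 f \mapsto f(0)$ on both sides.

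\textbf{Injectivity and immersivity.} This is the key step, which I would do by an explicit left inverse. From $j^q_{y_0} s_f$ we read off $y_0 = f(0)$ and the coefficients $\partial^\alpha_y A^\mu_i(y_0)$ for $|\alpha|\le q$. We recover $\partial_{v^i} f^\mu(0) = A^\mu_i(y_0)$ directly. Inductively, we recover $\partial^{\alpha}_v\partial_{v^i} f^\mu(0)$ from
\[
\partial_{v^j}\!\big(A^\mu_i(f(v))\big) = \pdv{A^\mu_i}{x^\nu}\,\pdv{f^\nu}{v^j}.
\]
Differentiating this relation $|\alpha|-1$ more times expresses the order-$(|\alpha|+1)$ derivatives of $f$ at $0$ polynomially in the $x$-derivatives of $A$ up to order $|\alpha|$ and the lower-order derivatives of $f$, which are already known. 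This gives a smooth map $\lambda$ defined on an open subset of $J^q\mathcal{F}(M)$ containing $\operatorname{Im}\beta$, with $\lambda\circ\beta = \operatorname{id}$. Therefore $\beta$ is injective and an immersion, since its differential has a left inverse.

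\textbf{Homeomorphism onto the image.} Since $\lambda$ is continuous and $\lambda\circ\beta=\operatorname{id}$, the inverse $\beta^{-1} = \lambda|_{\operatorname{Im}\beta}$ is continuous. Hence $\beta$ is a homeomorphism onto its image, and therefore an embedding.

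\textbf{Main obstacle.} The main difficulty is the bookkeeping in the inductive recovery step. In particular, one must check that the formula for $\lambda$ involves only data actually contained in the $q$-jet, namely $x$-derivatives of $A$ of order at most $q$. This needs care at the top order $q+1$, where exactly $q$ derivatives of $A$ appear, so the count matches. One must also track the inverse-matrix factors, which are smooth on the open set $\det A\neq 0$.
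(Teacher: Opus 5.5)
The paper gives no proof of this proposition: it is quoted from Libermann, and the displayed formula contains typos ($\psi^{-1}$ should be $\psi$, and $f(x)$ should be $f(0)$). You repaired these correctly by reading $j^1 f\circ\psi$ as the section $y\mapsto \d f_{\psi(y)}$ of $\mathcal{F}(M)$. So there is no argument in the paper to compare with, and your coordinate proof stands on its own as a correct one.

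Well-definedness and smoothness follow from the chain rule and the inverse function theorem. Differentiating $A^\mu_i(f(v))=\partial_{v^i}f^\mu(v)$ gives a recursion that recovers $j^{q+1}_0 f$ from $j^q_{f(0)}(j^1 f\circ\psi)$, and the order count matches exactly at the top. A smooth left inverse then yields injectivity, injectivity of $\d\beta$, and continuity of $\beta^{-1}$ on the image.

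Two small points should be made explicit.

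\begin{enumerate}[\rm (i)]
\item Off $\operatorname{Im}\beta$, the arrays produced by your recursion are symmetric in the differentiation indices $j_1,\dots,j_r$. They need not be symmetric between these and the frame index $i$. For $\lambda$ to take values in $\mathcal{F}^{(q+1)}(M)$ you must therefore symmetrize, or fix an index convention. This is harmless, because on $\operatorname{Im}\beta$ the arrays are already symmetric.
\item $\lambda$ depends on the chart and is only defined over the preimage of a chart domain $U\subseteq M$. Local left inverses suffice. Injectivity of $\d\beta$ is pointwise, and continuity of $\beta^{-1}$ is local: the sets $\operatorname{Im}\beta\cap(\text{preimage of }U)$ are open in $\operatorname{Im}\beta$ and cover it.
\end{enumerate}

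Your recursion actually proves slightly more. At order $r$, the top-order part of the map from the jet coordinates of $J^q\mathcal{F}(M)$ to the recovered arrays is
$A^\mu_{i,\nu_1\cdots\nu_r}\mapsto A^\mu_{i,\nu_1\cdots\nu_r}A^{\nu_1}_{j_1}\cdots A^{\nu_r}_{j_r}$, plus terms of lower order. Since $A$ is invertible, this map is a fibrewise diffeomorphism onto the partially symmetric arrays. $\operatorname{Im}\beta$ is exactly the locus where these arrays are fully symmetric, so $\beta$ is in fact a closed embedding.

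The more conceptual route in the jet literature factors $\beta$ through the canonical inclusion of holonomic jets into iterated jets, $j^{q+1}_0 f\mapsto j^q_0(j^1 f)$, followed by reparametrizing the source by $\psi$. Your inductive recovery is precisely the coordinate content of that factorization.
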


The image of $\beta$ is called the space of \emph{holonomic jets} and denoted by $H^{q}(M)$. Then, from this perspective, the structure tensor of order zero of a $G$-structure $\pi \colon B \rightarrow M$ at a point $x \in M$ measures whether we have $J^1 B \cap H^2(M) \neq \varnothing$ at that particular point $x$, when we identify $J^1 B \subset J^1 \mathcal{F}(M)$. Equivalently, whether there is a local diffeomorphism $f \colon V \rightarrow M$ in such a way that $j^1_0 f \in B$ and $j^2_0 f \in J^1 B$ (meaning that this chart makes the structure $1$-flat at $f(0)$, see \cite{bryant}).

The perspective above greatly simplifies the definition of the jets of higher order, which measure whether the intersections $J^k B \cap H^{k}(M)$ are empty or not.

\begin{definition}[$k$-flatness] A $G$-structure $\pi \colon M \rightarrow M$ is called \emph{$k$-flat} at $x \in M$ if the following intersection is non-vanishing:
\[
J^k B |_{x} \cap H^q(M)|_x \neq \varnothing\,.
\]
\end{definition}

Notice that if a $G$-structure is $k$-flat, it is $m$-flat, for every $m \leq k$.

\begin{definition}[Holonomic jets] The space of \emph{holonomic $k$-jets} of a $G$-structure is defined as
\[
H^k B := J^m B \cap H^k(M)\,.
\]
\end{definition}

Notice that the diffeomorphism $\beta \colon \mathcal{F}^{(k+1)}(M) \rightarrow H^{k} (M)$ gives a map
\[
\beta^{-1}|_{H^{k} B} \colon H^k B \rightarrow \mathcal{F}^{(k+1)}(M)\,.
\]
We shall denote the image of this map by $\mathcal{F}^{(k+1)}_G(M)$. It corresponds to jets of diffeomorphisms making the $G$-structure flat up to order $(k+1)$ at certain point $x$.

\begin{proposition} Let $\pi \colon B \rightarrow M$ be a $k$-flat $G$-structure. Then, $\mathcal{F}^{(m+1)}_G(M)$ is a manifold, and is a principal subbundle of $\mathcal{F}^{(m)}(M)$ with structure group $G \ltimes \left(\mathfrak{g}^{(1)} \oplus \cdots \oplus \mathfrak{g}^{(m)} \right)$.
\end{proposition}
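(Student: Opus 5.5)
Note first that the statement has index typos. I will read it as: for every $m \leq k$, $\mathcal{F}^{(m+1)}_G(M)$ is a manifold and a principal subbundle of the order-$(m+1)$ frame bundle $\mathcal{F}^{(m+1)}(M)$. Its structure group is the subgroup $G \ltimes (\mathfrak{g}^{(1)} \oplus \cdots \oplus \mathfrak{g}^{(m)})$ of the jet group $G^{(m+1)}(V) = \{ j^{m+1}_0 \phi : \phi(0)=0\}$. Here $\mathfrak{g}^{(i)} \subseteq S^{i+1}(V^\ast)\otimes V$ is identified with the homogeneous degree-$(i+1)$ part of a Taylor polynomial.

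I would argue by induction on $m$. The case $m=0$ is immediate: $\mathcal{F}^{(1)}_G(M)=B$, which is a principal $G$-bundle by hypothesis.

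For the inductive step, suppose $\mathcal{F}^{(m)}_G(M)$ is a principal subbundle with group $G_{m-1} := G \ltimes (\mathfrak{g}^{(1)}\oplus\cdots\oplus\mathfrak{g}^{(m-1)})$. Consider the natural projection $\rho \colon \mathcal{F}^{(m+1)}(M) \to \mathcal{F}^{(m)}(M)$. It is an affine bundle modelled on $S^{m+1}(V^\ast)\otimes V$: two $(m+1)$-jets with the same $m$-jet differ by composition on the right with $j^{m+1}_0(\mathrm{id} + P)$, where $P$ is homogeneous of degree $m+1$. I would then show two things, which together give a surjective submersion onto $\mathcal{F}^{(m)}_G(M)$ with affine fibers.
\begin{enumerate}[\rm (a)]
\item $\rho(\mathcal{F}^{(m+1)}_G(M)) = \mathcal{F}^{(m)}_G(M)$.
\item Over each point $u \in \mathcal{F}^{(m)}_G(M)$, the fiber $\rho^{-1}(u)\cap \mathcal{F}^{(m+1)}_G(M)$ is an affine subspace modelled on $\mathfrak{g}^{(m)}$.
\end{enumerate}

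For (b), I would unwind the definition of $H^m B = J^mB \cap H^m(M)$ in terms of $\beta$. A jet $j^{m+1}_0 f$ lies in $\mathcal{F}^{(m+1)}_G$ exactly when the induced local frame field $j^1 f\circ f^{-1}$ has its $m$-jet tangent to $B$. Changing $f$ to $f\circ(\mathrm{id}+P)$ with $P$ of degree $m+1$ changes the $m$-th derivative of the frame field, read back in $V$, by $\d P \in S^m(V^\ast)\otimes V^\ast\otimes V$. This change is tangent to $B$ precisely when all $m$-fold contractions of $P$ lie in $\mathfrak{g}$, that is, when $P \in \mathfrak{g}^{(m)}$. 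So whenever the fiber is nonempty it is a $\mathfrak{g}^{(m)}$-torsor.

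Surjectivity in (a) is where $k$-flatness (with $m \leq k$) is used. I expect this to be the main obstacle. Nonemptiness at the single point $x$ where the structure is assumed flat is given, but we need it above every element of $\mathcal{F}^{(m)}_G$ over every point. I would handle this with two observations. First, $k$-flatness is meant at all points (that is how the statement is used). Second, the group $G_{m-1}$ acts transitively on the fibers of $\mathcal{F}^{(m)}_G$ by the inductive hypothesis. Right composition by $j^{m+1}_0\phi$, where $\phi$ is a polynomial representative of an element of $G_{m-1}$, preserves $\mathcal{F}^{(m+1)}_G$; this holds because such a $\phi$ is an infinitesimal symmetry of the flat structure to order $m$. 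Hence one lift over one point of each fiber propagates to lifts over the entire fiber.

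Smoothness then follows because everything is local. Choose a smooth local section $\sigma$ of $\mathcal{F}^{(m+1)}_G$; this can be produced by choosing locally the coordinate charts witnessing $(m+1)$-flatness at each point, which depend smoothly on the point since they solve linear algebraic equations with constant-rank coefficients, by the torsor structure. The map $(\sigma, g, P) \mapsto \sigma\cdot g\cdot(\mathrm{id}+P)$ then gives local trivialisations of $\mathcal{F}^{(m+1)}_G$.

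Finally, the group $G_m$ is the stabilizer of the flat model, truncated at order $m+1$. It is generated by $G_{m-1}$ together with the normal abelian subgroup $\mathfrak{g}^{(m)}$, and the action computed in (b) shows this semidirect structure. Freeness and transitivity on fibers are inherited from the action of $G^{(m+1)}(V)$ on $\mathcal{F}^{(m+1)}(M)$, which completes the induction.
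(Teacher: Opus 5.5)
\textbf{There is a genuine gap: a missing lifting lemma.} Your step (b) is correct. Organising the proof as a tower $\mathcal{F}^{(m+1)}_G(M)\to\mathcal{F}^{(m)}_G(M)$ of $\mathfrak{g}^{(m)}$-torsors is a legitimate and more detailed alternative to the paper's argument. However, the step you single out as the main obstacle rests on a false claim.

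An element $a\in G_{m-1}$ is the $m$-jet of a map that is a symmetry of the flat structure only to order $m-1$. A polynomial representative $\phi$ of $a$ is in general \emph{not} a symmetry to order $m$. So right composition by $j^{m+1}_0\phi$ need not preserve $\mathcal{F}^{(m+1)}_G(M)$.

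For a concrete counterexample, take $G=\mathrm{SL}(2,\mathbb{R})$ on $V=\mathbb{R}^2$ and $X=x^2\partial_x-2xy\partial_y\in\mathfrak{g}^{(1)}$. Then $\det \mathrm{d}(\mathrm{id}+X)=1-4x^2$. Hence $j^2_0(\mathrm{id}+X)\in G_1$, but $j^3_0(\mathrm{id}+X)\notin G_2$, and right composition with it moves $j^3_0\mathrm{id}$ out of $\mathcal{F}^{(3)}_G(V)$.

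What your propagation argument actually needs is that every element of $G_{m-1}$ lifts to $G_m$, i.e.\ that $G_m\to G_{m-1}$ is surjective. This is the real content of (a), and your smooth local sections also depend on (a). The lifting is true, but it must be proved. One way is to show inductively that every element of $G_{m-1}$ is the $m$-jet of a genuine automorphism $g\circ\varphi_1\circ\cdots\circ\varphi_{m-1}$ of the flat structure. Here $g\in G$ is linear and $\varphi_i$ is the time-one flow of $P_i\in\mathfrak{g}^{(i)}$, which is an infinitesimal automorphism because $\mathrm{d}P_i(w)\in\mathfrak{g}$ for all $w$. The $(m+1)$-jet of such an automorphism lies in $G_m$ and lifts the given element. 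In the example, the flow of $X$ adds the cubic term $(x^3,x^2y)$, which restores $\det=1+O(|w|^3)$.

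\textbf{Transitivity is also not justified.} It is not simply ``inherited'' from $G^{(m+1)}(V)$. You must show that the unique element of the full jet group relating two points of $\mathcal{F}^{(m+1)}_G(M)$ over $x$ lies in $G_m$. This is exactly the paper's one-line argument. If $j^{m+1}_0f$ and $j^{m+1}_0g$ both lie in $\mathcal{F}^{(m+1)}_G(M)$ with the same target, then $g^{-1}\circ f$ carries a structure flat to order $m$ at $0$ onto another such structure, so $j^{m+1}_0(g^{-1}\circ f)\in G_m$.

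Combined with nonemptiness of each fibre over $M$ (project an order-$(k+1)$ witness of $k$-flatness at $x$), this gives the torsor structure directly, with no lifting needed. Your construction of smooth local sections, which the paper does not address, still needs (a) and hence the lifting lemma above.

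\textbf{A smaller point on the structure group.} $G_{m-1}$ is a quotient of $G_m$, not a subgroup: truncating representatives is not a homomorphism. So $G_m$ is an extension of $G_{m-1}$ by the abelian group $\mathfrak{g}^{(m)}$. The natural splitting exists only over $G$, and the kernel of $G_m\to G$ is in general a non-abelian unipotent group whose underlying space is $\mathfrak{g}^{(1)}\oplus\cdots\oplus\mathfrak{g}^{(m)}$.
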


\begin{proof} It follows from the observation that given two jets $j^{m+1}_0 f$ and $j^{m+1}_0 g$ with the same target, the map $g^{-1} \circ f \colon V \rightarrow V$ is a symmetry of the trivial $G$-structure $V \times G$ of order $(m+1)$. In particular, $j^{m+1}_0(g^{-1} \circ f) \in G \ltimes \left(\mathfrak{g}^{(1)} \oplus \cdots \oplus \mathfrak{g}^{(m)} \right)$.
\end{proof}

Now, the definitions of the higher order solder forms follow from the fact that on $\mathcal{F}^{(k)}(M)$ there is a canonical $1$-form generalizing the usual solder form. Let $j^{m+1}_0 f \in \mathcal{F}_G^{(m+1)}(M)$. Since $f$ is a diffeomorphism of $G$-structures to order $(m+1)$, composition induces a diffeomorphism
\[
j^{m}f \colon \mathcal{F}^{(m)}(V) \rightarrow \mathcal{F} ^{(m)} (M)\,,
\]
whose pushforward restricts to a linear isomorphism
\[
(j^m f)_\ast \colon \T_{(0, e, 0, \dots, 0)} \mathcal{F}^{(m)}_G (V) \rightarrow \T_{j^{m}_x f} \mathcal{F}^{m}_G(M)\,.
\]
Using the identification $\T_{(0, e, 0, \dots, 0)} \mathcal{F}^{(m)}_G (V) = V \oplus \mathfrak{g} \oplus\mathfrak{g}^{(1)} \oplus \cdots \oplus \mathfrak{g}^{(m-1)}$, we define:

\begin{definition}[Canonical form on $\mathcal{F}_G^{(k+1)}(M)$] Let $M$ be endowed with a $k$-flat $G$-structure. The \emph{solder form of order $k$} is defined as follows: For $j^{k+1}_0 f \in \mathcal{F}^{k+1}_G(M)$ and $v \in \T \mathcal{F}^{k+1}_G(M)$ we define
\[
\theta^{(k+1)}|_{j^{k+1}_0 f}(v) := (j^k f)_\ast^{-1} ((\pi^{k+1}_k)_\ast v)\,.
\]
We have that $\theta \in \Omega^1\left(\mathcal{F}^{(k+1)}(M), V \oplus \mathfrak{g} \oplus\mathfrak{g}^{(1)} \oplus \cdots \oplus \mathfrak{g}^{(k-1)}\right)$.
\end{definition}


Now the definition of the higher order structure tensor follows the same scheme as the definition of the structure tensor of order zero: We have that $ \theta^{(k)}$ is an equivariant $1$-form, so that we can take the covariant derivative with respect to any connection. The main difference with the discussion above, however, is that we will restrict to connections which are \emph{holonomic}, namely connection $1$-forms $\alpha$ on $\mathcal{F}_G^{(k)} (M)$ such that $\d_\alpha \theta^{(m-1)} = 0$.

\begin{proposition}[\cite{guillemin}] Let $\pi \colon B \rightarrow M$ be a $k$-flat $G$-structure and let $\alpha_1$ and $\alpha_2$ be holonomic principal connections on $\pi^{k}_{-1} \colon\mathcal{F}^{(k+1)}_B\rightarrow M$, identified as $(\mathfrak{g}\oplus \cdots \oplus \mathfrak{g}^{(k)})$-valued $1$-forms. Their difference $\alpha_1 - \alpha_2$ can be identified (such as before) as an element
\[
\alpha = \alpha_1 - \alpha_2 \in \Gamma(\T^\ast M \otimes \mathfrak{g}^{(m)})\,.
\]
Then, we have
\[
\d_{\alpha_1} \widetilde \theta^{(m)} - \d_{\alpha_2} \widetilde \theta^{(m)} = \partial \alpha\,,
\]
where $\partial$ is the Spencer coboundary.
\end{proposition}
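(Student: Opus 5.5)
The plan is to imitate the proof of the order-zero statement (the proposition attributed to \cite{sternberg}) one level up, working in coordinates adapted to the jet structure. First I will pin down what a holonomic connection on $\mathcal{F}^{(k+1)}_G(M)$ looks like. The structure group is $G \ltimes (\mathfrak{g}^{(1)} \oplus \cdots \oplus \mathfrak{g}^{(k)})$, and it is filtered, with the kernel of the projection to the order-$k$ group being the abelian normal subgroup $\mathfrak{g}^{(k)}$. Both $\alpha_1$ and $\alpha_2$ are holonomic, so they induce the same data on the lower-order components: $\d_{\alpha_i}\theta^{(m-1)}=0$ forces their components in $\mathfrak{g}\oplus\cdots\oplus\mathfrak{g}^{(k-1)}$ to be determined by $\theta^{(k)}$ itself. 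I will check this by the standard argument that the lower components of a holonomic connection are pulled back from the canonical form.

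From this, $\alpha_1-\alpha_2$ has nonzero component only in the top piece $\mathfrak{g}^{(k)}$. It is semi-basic, since both connections reproduce the fundamental vector fields. It is also equivariant, and because $\mathfrak{g}^{(k)}$ is abelian and normal, the action on it factors through $G$. Hence it descends to a section $\alpha\in\Gamma(\T^\ast M\otimes B[\mathfrak{g}^{(k)}])$, exactly as in the order-zero case.

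Next comes the computation. The covariant exterior derivative is $\d_{\alpha_i}\theta^{(k)}=\d\theta^{(k)}+[\alpha_i\wedge\theta^{(k)}]$, where the bracket is the algebraic action of the structure algebra on the target space $V\oplus\mathfrak{g}\oplus\cdots\oplus\mathfrak{g}^{(k-1)}$. Subtracting the two expressions gives $[(\alpha_1-\alpha_2)\wedge\theta^{(k)}]$.

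The key claim is that the action of $\xi\in\mathfrak{g}^{(k)}$ on this target space is nonzero only from the $V$-component into the $\mathfrak{g}^{(k-1)}$-component, and there it is contraction $v\mapsto\iota_v\xi$. This is the infinitesimal version of composing jets: a degree-$(k+1)$ polynomial vector field only shifts the top jet by its derivative along $v$. Granting this, the difference becomes the $\mathfrak{g}^{(k-1)}$-valued $2$-form $(X,Y)\mapsto \iota_{\theta(Y)}\alpha(X)-\iota_{\theta(X)}\alpha(Y)$. After identifying $\theta$ with the identity via the induced form on $M$, this is exactly Proposition~\ref{prop:explicit_description_of_partial} applied with $l=1$, so it equals $\partial\alpha$, up to the sign convention already fixed in the order-zero proof.

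The main obstacle is the key claim about the bracket, which amounts to computing the structure of $\T_{(0,e,0,\dots,0)}\mathcal{F}^{(k)}_G(V)$ and the adjoint action of $\mathfrak{g}^{(k)}$ on it. I would first try to verify it directly in jet coordinates $(x^\mu,A^\mu_i,A^\mu_{ij},\dots)$ on $V$. There, $\mathfrak{g}^{(k)}$ acts by translating the highest coordinates $A^\mu_{i_0\cdots i_k}$, and the lower coordinates are untouched. The only pairing with $V$ then comes from the Lie bracket of the polynomial vector field $\xi$ with the constant fields, which is precisely contraction. The remaining bookkeeping, namely the descent to $M$ and the signs, is routine and parallels the proof of the order-zero proposition.
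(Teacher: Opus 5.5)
The paper gives no proof of this proposition. It is quoted from Guillemin, and the nearest argument in the paper is the coordinate check of the order-zero case, which is the template you are lifting. So your plan has to stand on its own.

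Its algebraic core is correct. View $\eta\in\mathfrak{g}^{(k)}$ as a homogeneous vector field of degree $k+1$. Its bracket with a homogeneous field of degree $d$ has degree $k+d$, so after truncation at degree $k$ only $d=0$ survives, and this gives $v\mapsto\pm\iota_v\eta\in\mathfrak{g}^{(k-1)}$ (up to a constant fixed by how you identify $S^{k+1}V^\ast$ with polynomials). Two further steps are also fine: the descent of a semibasic $\mathfrak{g}^{(k)}$-valued difference to $\Gamma(T^\ast M\otimes B[\mathfrak{g}^{(k)}])$, and the identification with $\partial\alpha$ via the case $l=1$ of Proposition~\ref{prop:explicit_description_of_partial}.

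The gap is your first step. Holonomy does not make the components of $\alpha_1,\alpha_2$ in $\mathfrak{g}\oplus\cdots\oplus\mathfrak{g}^{(k-1)}$ coincide. Nor can those components be pulled back from the canonical form; equivariance forbids it. Since $[\mathfrak{g}^{(k)},\mathfrak{g}^{(j)}]$ has degree $\geq k+1$, the lower components of any connection form are invariant under the right action of the normal subgroup $\mathfrak{g}^{(k)}$. By your own key claim, that same action shifts the component $\theta_{k-1}$ of $\theta^{(k+1)}$ by $\iota_{\theta_{-1}}\eta$.

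What the torsion condition actually gives is an ambiguity, not a determination. For $k=1$, writing $\alpha^{[0]}$ for the $\mathfrak{g}$-component, it reads $(\alpha^{[0]}-\theta_0)\wedge\theta_{-1}=0$, i.e.\ $\alpha^{[0]}-\theta_0=\sigma(\theta_{-1})$ with $\sigma$ valued in $\mathfrak{g}^{(1)}$. Here is a concrete failure. Take two torsion-free $G$-connections on $B$ differing by $\tau(\theta)$, with $0\neq\tau\in\Gamma(B[\mathfrak{g}^{(1)}])$. Pull them back to $\mathcal{F}^{(2)}_G(M)$ and complete their $\mathfrak{g}^{(1)}$-components suitably. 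This gives two holonomic connections whose $\mathfrak{g}$-components differ.

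For this pair, $[(\alpha_1-\alpha_2)\wedge\theta^{(2)}]$ contains $[\tau(\theta_{-1})\wedge\theta_0]$. That term in general does not vanish on vertical vectors, since $\theta_0(\xi^\ast)=A$ for $\xi=(A,\eta)$. So the difference is not even a form on $M$. This also shows that $\mathrm{d}\theta+[\alpha\wedge\theta]$ is not the covariant derivative of a tensorial form here, because $\theta^{(k+1)}$ is not horizontal over $M$.

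Hence the $\mathfrak{g}^{(k)}$-valuedness of $\alpha_1-\alpha_2$ cannot be derived from holonomy; it has to be built in. There are two ways to do this:
\begin{enumerate}[\rm (i)]
\item Compare only holonomic connections that already agree in $\mathfrak{g}\oplus\cdots\oplus\mathfrak{g}^{(k-1)}$. This is what the statement's identification of the difference with a $\mathfrak{g}^{(m)}$-valued section tacitly assumes, and after it the rest of your argument goes through as written.
\item Work, as Singer--Sternberg and Guillemin do, with connections on the prolongation $B^{(k)}\to B^{(k-1)}$. Its structure group is the vector group $\mathfrak{g}^{(k)}$ and its solder form is semibasic, so there are no lower components at all. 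Your key claim then shows directly that the $\bigwedge^2V^\ast\otimes\mathfrak{g}^{(k-1)}$-part of the change of torsion is the Spencer coboundary.
\end{enumerate}
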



It can be shown that $\d_{\alpha} \widetilde \theta^{(m)}$ is actually a Spencer cocycle, for every holonomic $\alpha$. This gives that the following definition is independent of the connection:

\begin{definition}[Higher order structure tensor] Let $\pi \colon B \rightarrow M$ be a $G$-structure which is $k$-flat and let $\alpha$ be a holonomic principal connection on the bundle $H^m B \rightarrow M$. Then, the induced section
\[
c_k = [\d_\alpha \widetilde\theta^{(m)}]^{\widetilde{\,}} \in \Gamma\left(B[H^{2,k}(\mathfrak{g})]\right)
\]
is called the \emph{structure tensor of order $k$}.
\end{definition}

Now, if a $G$-structure is $(k+1)$-flat, its structure tensor of order $k$ vanishes. Conversely:

\begin{theorem}[Guillemin] Let $\pi \colon B\rightarrow M$ be a $G$-structure which is $k$-flat. Then, it is $(k+1)$-flat at a point $x$ if and only if its structure tensor of order $k$ satisfies $c_k(x) = 0$.
\end{theorem}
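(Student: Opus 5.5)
The idea is to work directly with the characterization of $k$-flatness through holonomic jets: $J^k B|_x \cap H^k(M)|_x \neq \varnothing$, equivalently $\mathcal{F}^{(k+1)}_G(M)|_x \neq \varnothing$. One direction is already explained in the text preceding the statement. If the structure is $(k+1)$-flat at $x$, choose a jet $j^{k+2}_0 f$ that makes the structure flat to order $k+2$. It determines a holonomic connection on $\mathcal{F}^{(k+1)}_G(M)$ that agrees at $x$ with the flat connection pulled back by $f$. The pulled-back structure agrees with the trivial $G$-structure on $V$ to this order, and the trivial structure has vanishing $\d_\alpha \widetilde\theta^{(k)}$. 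So the representative of $c_k$ vanishes at $x$, and hence so does $c_k(x)$, because the class does not depend on the holonomic connection chosen (the Guillemin proposition above).

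For the converse, suppose $c_k(x) = 0$. The plan is to start from an arbitrary jet $j^{k+1}_0 f \in \mathcal{F}^{(k+1)}_G(M)|_x$, which exists by $k$-flatness, and to correct its order-$(k+2)$ part. Any extension $j^{k+2}_0 f$ has Taylor term of order $k+2$ defined only up to an element $\tau \in S^{k+2}(V^\ast)\otimes V$. The failure of $j^{k+2}_0 f$ to lie in $J^{k+1}B$ is measured by the degree-$(k+1)$ Taylor coefficient of the pulled-back $G$-structure $f^\ast B$ relative to the flat one. Concretely, trivialize $f^\ast B$ near $0$ by a section $s = e\cdot\exp(\sigma)$, where $\sigma$ is a $V^\ast\otimes V$-valued function with $\sigma \equiv 0$ modulo $\mathfrak{g}$ up to order $k$. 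The obstruction is then the class of the order-$(k+1)$ coefficient
\[
\sigma_{k+1} \in \frac{S^{k+1}(V^\ast)\otimes V^\ast \otimes V}{S^{k+1}(V^\ast)\otimes \mathfrak{g}}.
\]
Changing $f$ by a diffeomorphism with $(k+2)$-jet $\operatorname{id} + \tau$ alters $\sigma_{k+1}$ by the symbol $\delta\tau$, i.e. the derivative map $S^{k+2}V^\ast\otimes V \to S^{k+1}V^\ast\otimes V^\ast\otimes V$.

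The resulting linear algebra is the core step. We must show that the class of $\sigma_{k+1}$ modulo $\delta(S^{k+2}V^\ast\otimes V) + S^{k+1}V^\ast\otimes\mathfrak{g}$ is identified with $c_k(x) \in H^{2,k}(\mathfrak{g})$. For this, take $\partial$ of $\sigma_{k+1}$ (skew-symmetrizing one symmetric slot). Using exactness of the full polynomial de Rham complex, i.e. Remark \ref{remark:De_Rham_identification} with the trivial tableau, this gives a map
\[
\frac{S^{k+1}V^\ast\otimes V^\ast\otimes V}{\delta(\cdot) + S^{k+1}V^\ast\otimes\mathfrak{g}} \;\longrightarrow\; \frac{\ker\partial \subseteq \bigwedge^2V^\ast\otimes S^{k}V^\ast\otimes V}{\ldots},
\]
which is exactly the diagram-chase isomorphism with $H^{2,k}(\mathfrak{g})$. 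That $\partial$ of the obstruction lands in $\bigwedge^2V^\ast\otimes\mathfrak{g}^{(k-1)}$ uses $k$-flatness. A further computation is needed to verify that this class coincides with $[\d_\alpha\widetilde\theta^{(k)}]$ for a holonomic connection built from $f$. Once the identification holds, $c_k(x)=0$ lets us choose $\tau$ and a $\mathfrak{g}$-valued correction killing $\sigma_{k+1}$. The corrected jet then lies in $J^{k+1}B\cap H^{k+1}(M)$ at $x$, which is $(k+1)$-flatness.

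The main obstacle is this identification: matching the jet-theoretic obstruction with the class of the higher solder-form torsion, keeping track of index conventions and of the holonomy condition $\d_\alpha\theta^{(k-1)}=0$. I would handle it in coordinates adapted by $f$, where $\theta^{(k)}$ reduces to the Maurer–Cartan-type forms of the trivial structure plus the single correction term $\sigma_{k+1}$, so that the torsion computes to $\partial\sigma_{k+1}$ directly.
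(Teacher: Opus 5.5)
The paper gives no proof of this statement. It quotes it from Guillemin and only remarks on the easy direction, so your sketch has to stand on its own. Your forward direction is acceptable, but the converse has a genuine gap exactly at the step you call the core. Your quotient
\[
Q=\frac{S^{k+1}(V^\ast)\otimes V^\ast\otimes V}{\partial\bigl(S^{k+2}(V^\ast)\otimes V\bigr)+S^{k+1}(V^\ast)\otimes\mathfrak g}
\]
is indeed the exact obstruction to extending a given $k$-flat jet at $x$. Via $\partial$ it is isomorphic to the closed elements of $\bigwedge^2V^\ast\otimes S^k(V^\ast)\otimes V$ modulo $\partial\bigl(S^{k+1}(V^\ast)\otimes\mathfrak g\bigr)$. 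It is not $H^{2,k}(\mathfrak g)$, however, and it is not what the diagram chase for $C(\mathfrak g)\subseteq C(V^\ast\otimes V)$ produces. In $S^{k+1}(V^\ast)\otimes\mathfrak g$ the condition ``lies in $\mathfrak g$'' is imposed on the pair (form slot, value). The Spencer cochains $V^\ast\otimes\mathfrak g^{(k)}$ impose it on (a polynomial slot, value), and the chase also needs the cocycle condition. In general $Q$ differs from, and can be strictly larger than, $H^{2,k}(\mathfrak g)$.

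Take $G=\operatorname{SO}(2)$, $\dim V=2$, $k=2$:
\begin{itemize}
\item $\dim\bigl(V^\ast\otimes S^3(V^\ast)\otimes V\bigr)=16$;
\item $\partial$ is injective on $S^4(V^\ast)\otimes V$, which has dimension $10$;
\item $S^3(V^\ast)\otimes\mathfrak{so}(2)$ has dimension $4$ and meets that image trivially, since $\mathfrak{so}(2)^{(3)}=0$.
\end{itemize}
So $\dim Q=2$, whereas $H^{2,2}(\mathfrak{so}(2))=0$ because $\mathfrak{so}(2)^{(1)}=0$. Geometrically, a surface metric with $K(x)=0$ and $\mathrm{d}K(x)\neq 0$ is $2$-flat at $x$ but not $3$-flat at $x$.

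This example shows where your argument breaks. Your converse only uses $k$-flatness \emph{at $x$}: you start from an arbitrary jet in $\mathcal F^{(k+1)}_G(M)|_x$. That imposes no condition whatsoever on $\sigma_{k+1}$, so your sentence ``that $\partial$ of the obstruction lands in $\bigwedge^2V^\ast\otimes\mathfrak g^{(k-1)}$ uses $k$-flatness'' is false as you have set things up.

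What is needed is the following:
\begin{itemize}
\item $k$-flatness on a whole neighbourhood of $x$, so that $c_0,\dots,c_{k-1}$ vanish identically there and hence so do their derivatives at $x$;
\item a gauge chosen using flat jets at nearby points;
\item a Bianchi-type identity.
\end{itemize}
In the paper's language, this is the assertion that $\mathrm{d}_\alpha\widetilde\theta^{(k)}$ is a Spencer cocycle for a holonomic connection $\alpha$ defined over a neighbourhood. A single chart $f$ does not produce such an $\alpha$. For an arbitrary adapted coframe, $\partial\sigma_{k+1}$ is not even a Spencer cochain. Already for $k=1$ and $G=\operatorname{O}(n)$, the radially parallel orthonormal coframe in normal coordinates gives, up to sign, $\partial\sigma_2(u_0,u_1)\colon v\mapsto\tfrac12\bigl(R(v,u_0)u_1-R(v,u_1)u_0\bigr)=-\tfrac12R(u_0,u_1)v$. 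This lies in $\bigwedge^2V^\ast\otimes\mathfrak{so}(n)$ only because of the first Bianchi identity. This step is the real content of Guillemin's theorem, and your proposal does not supply it.

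Once it is supplied, your converse does close. That is, you need $\partial\sigma_{k+1}=\Omega$ with $\Omega$ a Spencer $2$-cocycle representing $c_k(x)$. You then also have to reconcile the two subspaces. If $\Omega=\partial\beta$ with $\beta\in V^\ast\otimes\mathfrak g^{(k)}$, exactness gives $\sigma_{k+1}\in\beta+\partial\bigl(S^{k+2}(V^\ast)\otimes V\bigr)$. Moreover $V^\ast\otimes\mathfrak g^{(k)}\subseteq S^{k+1}(V^\ast)\otimes\mathfrak g+\partial\bigl(S^{k+2}(V^\ast)\otimes V\bigr)$: for $\beta=\eta\otimes P$ and $\tau=\eta\,P$ one has $\beta=\partial\tau-\eta\,DP$, with $\eta(v)\,DP(v)\in\mathfrak g$.
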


\subsection{Flatness and formal flatness}

A theorem due to Spencer \cite{spencer1,spencer2} proves that there are only finitely many $H^{2,k}(\mathfrak{g})$ which are nonzero. Hence, whether a $G$-structure is $k$-flat, for every $k$ can be decided in a finite amount of steps.

\begin{definition}[Formally flat] A $G$-structure is called \emph{formally flat} if it is $k$-flat, for every $k$.
\end{definition}

Then, the discussion above may be summarized as:

\begin{theorem}[\cite{guillemin}] Let $M$ be a smooth manifold. Then, a $G$-structure is \emph{formally flat} if and only if all structure tensors vanish.
\end{theorem}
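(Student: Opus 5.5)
The statement follows from Guillemin's theorem on $k$-flatness together with the definitions, and I would prove the two implications separately.

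\emph{Formally flat implies all structure tensors vanish.} Suppose the $G$-structure is formally flat, so it is $k$-flat at every point for every $k$. Fix $k\geq 0$. Since the structure is $k$-flat, the structure tensor $c_k$ is defined. Since it is also $(k+1)$-flat at each $x$, Guillemin's theorem gives $c_k(x)=0$ for every $x$, so $c_k\equiv 0$.

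\emph{All structure tensors vanish implies formally flat.} Here I would argue by induction on $k$, proving that the structure is $k$-flat at every $x\in M$.
\begin{enumerate}[\rm (i)]
\item \emph{Base case.} I would take $k=0$ as the base case. Every $G$-structure is $0$-flat: at a point $x$, choose any frame $\phi\in B_x$ and take $f(v)=\varphi^{-1}(\phi(v))$, where $\varphi$ is a chart centred at $x$ identified with $V$ via $\d\varphi$. Then $j^1_0 f=\phi\in B$.
\item \emph{Order zero.} Alternatively one can start at $k=1$: vanishing of $c_0$ gives a torsionless adapted connection by Nagano's theorem, hence $1$-flatness.
\item \emph{Inductive step.} Assume the structure is $k$-flat everywhere. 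Then $c_k$ is a well-defined section of $B[H^{2,k}(\mathfrak{g})]$, which vanishes by hypothesis. Guillemin's theorem then gives $(k+1)$-flatness at every $x$.
\end{enumerate}
Hence the structure is $k$-flat for all $k$, i.e.\ formally flat.

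The only delicate point is well-definedness: $c_k$ only makes sense once $k$-flatness is known. The hypothesis ``all structure tensors vanish'' should therefore be read inductively: each $c_k$ is defined because the preceding tensors vanished. I would state this explicitly in the proof.

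A second subtlety is the base of the induction. The definition of $k$-flatness uses $J^k B\cap H^k(M)$, and for $k=0$ this must be interpreted as $B$ itself, which is nonempty over every point. With that convention the induction above is complete.
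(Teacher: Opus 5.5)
Your proof is correct and takes essentially the paper's approach. The paper gives no separate argument: it presents the theorem as a summary of Guillemin's criterion ($k$-flat, and $c_k(x)=0$ $\Leftrightarrow$ $(k+1)$-flat at $x$) together with the definition of formal flatness, and your induction is exactly that. Your remarks that $c_k$ is only defined once $k$-flatness is known, and that the base case holds because $H^0(M)=\mathcal{F}(M)$ (so every $G$-structure is $0$-flat), tighten the paper's somewhat loose indexing.
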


\noindent Formal flatness may be equivalently defined as the existence of coordinates given by a formal power series which makes the structure flat. In particular, a necessary condition for a $G$-structure to be locally flat is for $c_0, c_1, \dots$ to be identically zero. The converse does not necessarily hold, but the issue simplifies for $\mathfrak{g}$ of \emph{finite type}.

\begin{definition}[Finite type] A Lie algebra $\mathfrak{g} \subseteq V^\ast \otimes V$ is called of \textbf{finite type} if there is a $j \geq 1$ such that $\mathfrak{g}^{(j)} = \{0\}$.
\end{definition}

Indeed, if $\mathfrak{g}$ is such that $\mathfrak{g}^{(j)} = 0$, for some $j$, in the procedure above we would have that the bundle
\[
\pi^j_{j-1} \colon \mathcal{F}^{(j)}_G(M) \rightarrow \mathcal{F}^{(j-1)}_G(M)
\]
is a diffeomorphism. In particular, there exists only one principal connection, which is adapted and with vanishing higher torsion. The last structure tensor is easily shown to be the curvature of this principal connection, which may be used to obtain one on $\pi \colon B \rightarrow M$ which is torsionless and flat. Hence,

\begin{theorem}
\label{thm:finite_type_integrability}
If $G$ is of finite type, any $G$-structure which is formally flat is flat.
\end{theorem}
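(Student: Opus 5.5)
The plan is to reduce the problem to the integrability of an absolute parallelism, via the canonical connection that finite type forces on a prolonged bundle, and then to finish with a Frobenius/Cartan argument.

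\textbf{Step 1: the tower collapses to a parallelism.} Suppose $\mathfrak{g}^{(j)} = \{0\}$, and assume the $G$-structure $\pi \colon B \rightarrow M$ is formally flat. By the theorem of Guillemin it is $k$-flat for every $k$, so each prolonged bundle $\mathcal{F}^{(k+1)}_G(M)$ is a principal bundle with structure group $G \ltimes \bigl(\mathfrak{g}^{(1)} \oplus \cdots \oplus \mathfrak{g}^{(k)}\bigr)$. Because $\mathfrak{g}^{(l)} = 0$ for $l \geq j$, the projection $\pi^{j}_{j-1}$ is a diffeomorphism. The space of holonomic connections on $P := \mathcal{F}^{(j)}_G(M)$ is then an affine space modelled on sections of $\T^\ast M \otimes \mathfrak{g}^{(j)} = 0$, so there is a unique holonomic connection. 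Concretely, the solder form of order $j$, with values in
\[
E := V \oplus \mathfrak{g} \oplus \mathfrak{g}^{(1)} \oplus \cdots \oplus \mathfrak{g}^{(j-1)},
\]
is pointwise an isomorphism $\T P \rightarrow E$, since $\dim P = \dim E$ and $\theta^{(j)}$ is injective on vertical and horizontal directions. So $P$ carries an $E$-valued absolute parallelism $\Theta$, and the $G$-structure determines $P$ together with $\Theta$.

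\textbf{Step 2: the structure equations.} Next I would write $\d\Theta + \tfrac12[\Theta,\Theta] = K(\Theta\wedge\Theta)$, where the bracket is the one making $E$ the graded Lie algebra of infinitesimal automorphisms of the flat model $V$. This bracket comes from the flat structure $\mathcal{F}^{(j)}_G(V)$, which is the local Lie group of symmetries of the flat structure. The structure function $K \colon P \rightarrow \bigwedge^2 E^\ast \otimes E$ decomposes by degree. Its components are built from the torsions $\d_\alpha \widetilde\theta^{(m)}$ and from the curvature of the unique connection. These components project to the structure tensors $c_0,\dots,c_{j}$, and possibly higher ones, which are the ones measuring the remaining curvature once the tower stabilises. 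Formal flatness makes all of these vanish. The point to establish is that, after using holonomicity and the fact that there is no freedom left to normalise, the vanishing of the $c_k$ forces $K \equiv 0$, and not merely $K$ modulo some image of $\partial$. My argument is that any component of $K$ in the image of $\partial$ could be absorbed by changing the connection, and no such change is possible, so the class and the component coincide. The last structure tensor is the curvature of the unique connection, as remarked before the statement.

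\textbf{Step 3: integration.} Once $K = 0$, $\Theta$ satisfies the Maurer–Cartan equation of the local Lie group $\mathcal{F}^{(j)}_G(V)$. Cartan's theorem on Maurer–Cartan forms then gives a local diffeomorphism $\Phi \colon P \rightarrow \mathcal{F}^{(j)}_G(V)$ with $\Phi^\ast\Theta_{V} = \Theta$. I would check that $\Phi$ is equivariant for the structure group, using the fact that both forms are equivariant and that the vertical vector fields are characterised by $\Theta$ taking values in the Lie algebra of the structure group. Equivariance lets $\Phi$ descend to a local diffeomorphism $\phi \colon M \rightarrow V$. Its compatibility with the $V$-components of the solder forms then shows that $\phi$ maps $B$ to the trivial $G$-structure, which is flatness.

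The main obstacle is Step 2: proving that the entire curvature of the parallelism is captured by the Spencer classes, so that formal flatness really gives $K = 0$. I would handle it by induction on the order $m$, using the equivariant normalisation of $\d_\alpha\widetilde\theta^{(m)}$ at each stage and uniqueness at the top stage.
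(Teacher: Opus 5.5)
Your proposal is correct and follows the paper's argument. Finite type collapses the tower to a unique holonomic connection, and since $\partial(V^\ast \otimes \mathfrak{g}^{(j)}) = 0$, the group $H^{2,j}(\mathfrak{g})$ consists of cocycles, so the last structure tensor is literally the curvature of that connection; this is what your Step 2 comes down to, with the lower components of $K$ vanishing by holonomicity. The one difference is the finish: you integrate the flat parallelism via Cartan's theorem on Maurer--Cartan forms and then descend to $M$, whereas the paper only asserts that the vanishing curvature yields a torsion-free flat adapted connection on $B$, and your version is the more explicit way to conclude when $j \geq 2$.
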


For $G$ which is not of finite type, we cannot guarantee the implication above. However, if $G$ acts irreducibly, it follows (essentially by characterizing all irreducible representations of Lie algebras which are of finite type):

\begin{theorem}[\cite{singerst}]
\label{thm:irreducible_integrability}
If $G$ acts on $V$ irreducibly, any $G$-structure which is formally flat is flat.
\end{theorem}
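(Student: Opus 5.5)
The plan is to split according to the Cartan classification of irreducible linear Lie algebras: either $\mathfrak{g}$ is of finite type, or it belongs to a short explicit list of infinite-type algebras whose structures are known to be integrable whenever they are formally flat.

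First, assume $G \subseteq \GL(V)$ acts irreducibly on $V$, and pass to the Lie algebra $\mathfrak{g} \subseteq V^\ast \otimes V$. Since $G$ is a closed subgroup, $\mathfrak{g}$ is a linear Lie algebra acting irreducibly on $V$, at least on the identity component. I will need irreducibility of $\mathfrak{g}$ itself, and I will either assume $G$ connected or argue that the identity component is what controls the prolongations $\mathfrak{g}^{(j)}$. By Cartan's theorem, as refined by Kobayashi--Nagano and Singer--Sternberg, an irreducible $\mathfrak{g}$ is either of finite type or is, up to conjugation, one of
\[
\gl(V),\ \mathfrak{sl}(V),\ \mathfrak{csp}(V),\ \mathfrak{sp}(V),\ \gl(n,\mathbb{C}),\ \mathfrak{sl}(n,\mathbb{C}),\ \mathfrak{csp}(n,\mathbb{C}),\ \mathfrak{sp}(n,\mathbb{C}),
\]
the last four viewed as real algebras acting on $V = \mathbb{C}^n$. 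The proof of this dichotomy runs as follows. One complexifies. Then $\mathfrak{g}^{(1)} \neq 0$ forces $\mathfrak{g}$ to be reductive with at most one-dimensional center. Finally, one uses highest weight theory to show that $\mathfrak{g}^{(j)} \neq 0$ for all $j$ only in the listed cases. I intend to quote this classification rather than reprove it.

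Second, in the finite type case, formal flatness gives flatness immediately by Theorem \ref{thm:finite_type_integrability}.

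Third, I treat the infinite type cases one by one. For $\gl(V)$ every structure is flat, since any coordinate frame is adapted. For $\mathfrak{sl}(V)$ the structure is a volume form, and it is flat by the Moser/Darboux argument: choose a coordinate $x^1$ with $\partial_1(\text{new }x^1) = $ density. For $\mathfrak{sp}$ and $\mathfrak{csp}$, formal flatness gives $c_0 = 0$, so the structure admits a torsion-free adapted connection. This makes the associated nondegenerate $2$-form closed, or conformally closed with a closed conformal factor in dimension $\geq 6$. Then the classical Darboux theorem, after a conformal rescaling, gives flat coordinates; in dimension $4$ one also has to use the vanishing of $c_1$. For the complex cases, $c_0 = 0$ gives vanishing of the Nijenhuis tensor. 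The Newlander--Nirenberg theorem then produces holomorphic coordinates, and the complex Darboux or volume theorem (holomorphic Moser) finishes the argument.

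The main obstacle is the complex cases. There integrability from formal data is genuinely analytic, resting on Newlander--Nirenberg, and we must check that the vanishing of the finitely many nonzero structure tensors (Spencer's finiteness) is exactly what these classical theorems require. I would first compute $H^{2,j}(\mathfrak{g})$ for each listed algebra, expecting only $H^{2,0}$ and, for $\mathfrak{csp}$, possibly $H^{2,1}$ to survive. That computation identifies the structure tensors with the Nijenhuis tensor and with $\d\omega$ modulo conformal terms.
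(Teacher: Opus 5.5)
Your route is the one the paper intends. The paper gives no argument beyond citing \cite{singerst} and saying the result follows from the classification of irreducible infinite-type algebras, with Theorem~\ref{thm:finite_type_integrability} covering finite type. Your treatment of the algebras you list is essentially sound, apart from one low-dimensional point below. The genuine problem is the classification input, in two places.

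\emph{First, irreducibility of $G$ does not pass to $\mathfrak{g}$.} The monomial matrices in $\mathrm{GL}(2,\mathbb{R})$ act irreducibly on $\mathbb{R}^2$, yet $\mathfrak{g}$ is the diagonal algebra. Arguing that ``the identity component controls the prolongations'' does not help, because Cartan's classification needs $\mathfrak{g}$ itself to be irreducible. You must state explicitly that you prove the Lie-algebra version, equivalently that $G$ is connected.

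\emph{Second, and more seriously, your list is not the real classification.} When $V$ is of complex type, $V\otimes\mathbb{C}=W\oplus\overline{W}$ is reducible, so ``one complexifies'' does not put you in Cartan's complex-irreducible setting. There the center of $\mathfrak{g}$ is only constrained to be a real subalgebra of $\mathbb{C}\,\mathrm{Id}$: it can be $0$, all of $\mathbb{C}\,\mathrm{Id}$, or a real line $\mathbb{R}\lambda\,\mathrm{Id}$ with $\lambda\in\mathbb{C}^\times$. The last option gives $\mathfrak{sl}(n,\mathbb{C})\oplus\mathbb{R}\lambda\,\mathrm{Id}$ and $\mathfrak{sp}(n,\mathbb{C})\oplus\mathbb{R}\lambda\,\mathrm{Id}$.
\begin{itemize}
\item They act irreducibly on $\mathbb{C}^n\cong\mathbb{R}^{2n}$ for $n\geq 2$ ($n$ even in the symplectic case).
\item They are of infinite type, since they contain $\mathfrak{sl}(n,\mathbb{C})$, resp.\ $\mathfrak{sp}(n,\mathbb{C})$, and $\mathfrak{h}\subseteq\mathfrak{g}$ implies $\mathfrak{h}^{(j)}\subseteq\mathfrak{g}^{(j)}$.
\item A real-dimension count shows they are not conjugate to anything on your list.
\end{itemize}

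The omission matters, because your complex-case recipe ($c_0=0$, then Newlander--Nirenberg, then holomorphic Moser) fails for $\mathfrak{sl}(n,\mathbb{C})\oplus\mathbb{R}\lambda\,\mathrm{Id}$. In holomorphic coordinates the structure is $J$ together with $h\,\mathrm{d}z^1\wedge\cdots\wedge\mathrm{d}z^n$, defined only up to factors $e^{t\lambda}$ with $t$ real. Here $c_0=0$ amounts to $N_J=0$ and nothing more. Indeed, $\bar\partial\log h=\lambda\,\theta^{0,1}$ can always be solved with $\theta$ a real $1$-form, and that is all a torsion-free adapted connection needs beyond integrability of $J$. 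Flatness, however, requires a real $t$ with $e^{t\lambda}h$ holomorphic. This holds exactly when $\operatorname{Im}(\lambda^{-1}\log h)$ is pluriharmonic, which is a second-order condition. So you must extract it from a higher structure tensor, and then use that pluriharmonic functions are locally real parts of holomorphic functions.

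The family $\mathfrak{sp}(n,\mathbb{C})\oplus\mathbb{R}\lambda\,\mathrm{Id}$ can be handled like your $\mathfrak{csp}$ argument. There $\mathrm{d}\Omega=\lambda\theta\wedge\Omega$ with $\theta$ real, and type decomposition plus Lefschetz injectivity give $\mathrm{d}\theta=0$. It still has to be listed and checked.

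Finally, the dimension-four caveat you noted for real $\mathfrak{csp}$ recurs for complex $\mathfrak{csp}$ on $\mathbb{C}^4$. A holomorphic $\Omega$ with $\mathrm{d}\Omega=\theta\wedge\Omega$ need not have $\mathrm{d}\theta=0$. For example, $\Omega=\mathrm{d}z^1\wedge\mathrm{d}z^2+e^{z^1z^3}\,\mathrm{d}z^3\wedge\mathrm{d}z^4$ has $\theta=z^3\,\mathrm{d}z^1$ and $\mathrm{d}\theta\neq 0$. So there too $c_1$, not just $c_0$, is needed before holomorphic Darboux applies.
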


\section{\texorpdfstring{$G$}{G}-structures defined by forms}
\label{section:multisymplectic_G_structures}

{
Let us fix a skew--symmetric $(k+1)$-tensor $\varpi \in \bigwedge^{k+1} V^\ast$ on a vector space $V$. The objective of this section is to study $G$-structures that are induced by $\varpi$.
\begin{definition}
We say that a manifold $M$ together with a differential $(k+1)$-form $\omega\in \Omega^{k+1}(M)$ \emph{has constant linear type $\varpi$} if at each $x \in M$ there exists a linear isomorphism $\phi: V \to \T_x M$ such that $\phi^* \omega|_x = \varpi$. See \cite{leonid}.
\end{definition}
\begin{remark} If a manifold $M$ together with a differential $(k+1)$-form $\omega \in \Omega^{k+1} (M)$ has constant linear type $\varpi \in \bigwedge^{k+1}V^\ast$, then the subbundle of frames
\[
\{\phi:V \rightarrow T_x M \text{ linear isomorphism} : \phi^\ast \omega|_x = \varpi\}
\]
defines a reduction of the $\GL(V)$-structure of frames to a $G_{\varpi}$-structure, where $G_{\varpi}$ is defined as
\[G_{\varpi} := \{g \in \GL(V) : g^\ast \varpi = \varpi\}\,.\]
Conversely, defining a $G_{\varpi}$-structure on $M$, say $\pi \colon B \rightarrow M$, defines a differential form $\omega \in \Omega^{k+1}(M)$ with constant linear type $\varpi$ as follows: For each $x \in M$, choose $\phi \in \pi^{-1}(x)$ and define $\omega|_x := (\phi^{-1})^\ast \varpi$. The fact that $\pi \colon B \rightarrow M$ is a $G_{\varpi}$-structure ensures that $\omega|_x$ is well defined and independent of the choice of $\phi \in \pi^{-1}(x)$. We will refer to $\pi \colon B \rightarrow M$ or $(M, \omega)$ as a $G_\varpi$-structure or as a multisymplectic manifold of constant linear type $\varpi$ throughout the text.
\end{remark}}
The Lie algebra $\mathfrak{g}_\varpi$ of $G_\varpi$ can be computed as follows:

\begin{proposition}
\label{prop:description_of_Lie_algebra}
We have $\mathfrak{g}_\varpi = \{T \in \operatorname{End}(V) = V^\ast \otimes V \colon \iota_T \varpi = 0\}$, where the contraction by $(1,1)$-tensors is defined as
\[
(\iota_T \varpi)(v_1, \dots, v_{k+1}) = \sum_{j = 1}^{k+1} \varpi\left(v_1, \dots, v_{j-1}, T(v_j), v_{j+1}, \dots, v_{k+1}\right)\,.
\]
\end{proposition}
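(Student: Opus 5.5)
The plan is to differentiate the defining condition of $G_\varpi$ along one-parameter subgroups, and then use that $G_\varpi$ is a closed subgroup of $\GL(V)$ to get the reverse inclusion. The action of $g \in \GL(V)$ on forms is $(g^\ast \varpi)(v_1, \dots, v_{k+1}) = \varpi(g v_1, \dots, g v_{k+1})$. Since $G_\varpi$ is the stabilizer of a point under a smooth (indeed polynomial) representation, it is a closed subgroup of $\GL(V)$, hence a Lie subgroup. Its Lie algebra is therefore
\[
\mathfrak{g}_\varpi = \{T \in \operatorname{End}(V) \colon \exp(tT) \in G_\varpi \text{ for all } t \in \mathbb{R}\}\,.
\]
We will use this characterization throughout.

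First, suppose $T \in \mathfrak{g}_\varpi$. Then $\exp(tT)^\ast \varpi = \varpi$ for all $t$. Differentiate at $t = 0$ and apply the Leibniz rule to the multilinear map $\varpi$, with $\frac{\d}{\d t}\big|_{t=0} \exp(tT) v_j = T v_j$. This gives exactly $\iota_T \varpi = 0$, with $\iota_T$ as defined in Proposition \ref{prop:description_of_Lie_algebra}.

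For the converse, let $\iota_T \varpi = 0$ and set $\varpi_t := \exp(tT)^\ast \varpi$. By the group property,
\[
\frac{\d}{\d t} \varpi_t = \frac{\d}{\d s}\Big|_{s=0} \exp(sT)^\ast \varpi_t\,.
\]
Since $\exp(sT)$ commutes with $\exp(tT)$, this equals
\[
\frac{\d}{\d s}\Big|_{s=0} \exp(tT)^\ast \exp(sT)^\ast \varpi = \exp(tT)^\ast (\iota_T \varpi) = 0\,.
\]
So $\varpi_t$ is constant, hence equal to $\varpi_0 = \varpi$. Therefore $\exp(tT) \in G_\varpi$ for all $t$, and $T \in \mathfrak{g}_\varpi$.

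The only delicate point is the commutation step together with the linearity of $g^\ast$, which lets the derivative pass through $\exp(tT)^\ast$. An equivalent route is to view $\varpi_t$ as the solution of the linear ODE $\dot \varpi_t = \exp(tT)^\ast \iota_T \varpi$, whose right-hand side vanishes identically. Both steps are routine, so no step is a serious obstacle.
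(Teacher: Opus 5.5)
Your proposal is correct and follows essentially the paper's route: characterize $\mathfrak{g}_\varpi$ via $\exp(tT)^\ast\varpi=\varpi$ for all $t$, then differentiate at $t=0$ with the Leibniz rule to obtain $\iota_T\varpi$. Your group-property argument, showing that $\frac{\d}{\d t}\exp(tT)^\ast\varpi = \exp(tT)^\ast(\iota_T\varpi)$ for every $t$, also supplies the justification for the converse, which the paper simply asserts when it calls the two conditions equivalent.
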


\begin{proof} By definition, $T \in \mathfrak{g}_\varpi$ if and only if $(e^{t T})^\ast \varpi = \varpi$, for every $t \in \mathbb{R}$. This is equivalent to
\[
\dv{t}\bigg |_{t = 0} \left( (e^{t T})^\ast \varpi\right) =  0\,.
\]
Notice that
\begin{align*}
    \dv{t}\bigg|_{t = 0} \left((e^{t T})^\ast \varpi \right) (v_1, \dots, v_{k+1}) &= \dv{t} \bigg |_{t   = 0} \left( \varpi ((e^{t T}) \cdot v_1, \dots, (e^{t T}) \cdot v_{k+1})\right)\\
    &= \sum_{j = 1}^{k+1} \varpi(v_1, \dots, v_{j-1}, T (v_j), v_{j+1}, \dots, v_{k+1})\\
    &= (\iota_T \varpi)(v_1, \dots, v_{k+1})\,,
\end{align*}
from which the result follows.
\end{proof}

\begin{remark}
It will be useful to know that for decomposable tensors $T = \eta \otimes v$, we have that $\iota_{\eta \otimes v} \varpi = \eta \wedge \iota_v \varpi$.
\end{remark}

\subsection{The Spencer cohomology}

The aim of this subsection is to give an interpretation of $H^{l,j}(\mathfrak{g}_\varpi)$ for the Lie algebra $\mathfrak{g}_\varpi \subseteq V^\ast \otimes V$ associated to $G_\varpi$, where $\varpi \in \bigwedge^{k+1} V^\ast$ is a fixed arbitrary form. In particular, we find an isomorphism with the cohomology of a different complex (see Proposition \ref{prop:cohomology_isomorphism}). First, define \[
\flat_\varpi \colon V \rightarrow \bigwedge^k V^\ast \,, \qquad v \mapsto \iota_v \varpi
\]
and, by making abuse of notation, let us denote similarly
\[
\flat_\varpi \colon C^{l,j} \rightarrow \bigwedge^{l} V^\ast \otimes S^{j}(V^\ast) \otimes \bigwedge^{k} V^\ast
\]
its natural extension, namely $\flat_\varpi(\beta \otimes \alpha \otimes v) := \beta \otimes \alpha \otimes \iota_v \varpi$. Notice that now we can define two Spencer coboundaries, which we denote by $\partial$ and $\dd$, which act on the left and right spaces, respectively, and are defined for decomposable elements $\beta = \beta_1 \wedge \cdots \wedge \beta_l \in \bigwedge^l V^\ast$, $\alpha = \alpha_1 \odot \cdots \odot \alpha_j \in S^{j}(V^\ast)$ and $\gamma = \gamma_1 \wedge \cdots \wedge \gamma_k \in \bigwedge^{k} V^\ast$ as
\begin{align*}
    \partial(\beta \otimes \alpha\otimes \gamma ) &:= \sum_{i = 0}^{l} (\alpha_i \wedge \beta) \otimes (\alpha_1 \odot \cdots \odot \widehat \alpha_i \odot \cdots \odot\alpha_l) \otimes \gamma\,, \\
    \dd (\beta \otimes \alpha \otimes \gamma) &:= \sum_{i = 0}^{l} \beta \otimes (\alpha_1 \odot \cdots \odot \widehat \alpha_i \odot \cdots \odot \alpha_l) \otimes (\alpha_i \wedge \gamma)\,.
\end{align*}
The following relation follows from a straight-forward computation:
\begin{proposition}
\label{prop:commutation_partial_dd}
We have $\partial \circ \dd = \dd \circ \partial$.
\end{proposition}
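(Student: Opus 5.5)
The plan is to verify the identity directly on decomposable elements, which is enough because both $\partial$ and $\dd$ are linear. I read the sums in the definitions of $\partial$ and $\dd$ as running over the symmetric factors, $i=1,\dots,j$, which is what makes them well defined. Both operators then lower the polynomial degree by one: $\partial$ removes a symmetric factor $\alpha_i$ and wedges it into the first exterior factor $\beta$, while $\dd$ removes a symmetric factor and wedges it into the last exterior factor $\gamma$. So both composites map $\bigwedge^l V^\ast\otimes S^j(V^\ast)\otimes\bigwedge^k V^\ast$ to $\bigwedge^{l+1} V^\ast\otimes S^{j-2}(V^\ast)\otimes\bigwedge^{k+1} V^\ast$.

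First I would check that the formulas define linear maps. The expressions are multilinear in $(\beta_1,\dots,\beta_l,\alpha_1,\dots,\alpha_j,\gamma_1,\dots,\gamma_k)$, alternating in the $\beta$'s and in the $\gamma$'s, and symmetric in the $\alpha$'s, since permuting the $\alpha$'s only permutes the summands. Hence they descend to the tensor product. A cleaner alternative is to identify the middle factor with homogeneous polynomials on $V$ and write, for a basis $e_a$ of $V$ with dual basis $e^a$,
\[
\partial = \sum_a (e^a\wedge\cdot)\otimes \tfrac{\partial}{\partial v^a}\otimes \mathrm{id}\,,\qquad \dd = \sum_b \mathrm{id}\otimes \tfrac{\partial}{\partial v^b}\otimes (e^b\wedge\cdot)\,.
\]
This matches the identification with the de Rham differential in Remark \ref{remark:De_Rham_identification}.

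Next, compute both composites on $\beta\otimes\alpha_1\odot\cdots\odot\alpha_j\otimes\gamma$:
\[
\partial\dd(\beta\otimes\alpha\otimes\gamma)=\sum_{i\neq m} (\alpha_i\wedge\beta)\otimes\Big(\textstyle\bigodot_{r\neq i,m}\alpha_r\Big)\otimes(\alpha_m\wedge\gamma)\,,
\]
and similarly for $\dd\partial$. Both give the same sum over ordered pairs $(i,m)$ of distinct indices. In the polynomial picture this is simply the statement that the partial derivatives $\partial/\partial v^a$ and $\partial/\partial v^b$ commute. The wedge operators $e^a\wedge\cdot$ and $e^b\wedge\cdot$ act on different tensor factors, so commuting them introduces no sign.

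The only point requiring care, and the main (minor) obstacle, is sign and index bookkeeping. I need to confirm that wedging into $\beta$ and into $\gamma$ happen in separate tensor slots, so no Koszul sign appears, and that the double sum is symmetric under relabelling $i\leftrightarrow m$. Once that is checked, the identity $\partial\circ\dd=\dd\circ\partial$ follows immediately.
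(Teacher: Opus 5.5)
Your proposal is correct and is essentially the ``straightforward computation'' the paper invokes without writing it out. On decomposable elements both composites equal $\sum_{i\neq m}(\alpha_i\wedge\beta)\otimes\bigl(\bigodot_{r\neq i,m}\alpha_r\bigr)\otimes(\alpha_m\wedge\gamma)$. Equivalently, the partial derivatives commute and the two wedge operators act on different tensor slots. One small correction: you do not need an $i\leftrightarrow m$ relabelling symmetry, and it does not hold, because the two composites already give literally the same indexed sum.
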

\noindent Let us define
\[
\mathcal{I}^{l,j} := \left\{\widetilde \alpha \in \flat_\varpi\left(\bigwedge^l V^\ast \otimes S^{j}(V^\ast) \otimes V \right)  \colon \dd \widetilde \alpha = 0\right\}\,.
\]
\begin{remark} In the context of field theories (see \cite{cantrijn2}) it is interesting to study the space of Hamiltonian forms, which is defined as follows. For a closed $(k+1)$-form $\omega$ on a manifold $M$, a form $\alpha \in \Omega^{k-1}(M)$ is called \emph{Hamiltonian} if $\d \alpha = \iota_{X_\alpha} \omega$, for some vector field $X_\alpha$. These forms represent the classical Noether correspondence between observables and symmetries. The above space $\mathcal{I}^{l,j}$ can then be identified with the space of $l$-forms with values in `the differentials of Hamiltonian forms'. The rest of this section proves that the cohomology of the Spencer complex of $\mathfrak{g}_\varpi$ can be identified with the cohomology of "forms with values in Hamiltonian forms".
\end{remark}
In light of Proposition \ref{prop:commutation_partial_dd}, we get a complex:
\[\begin{tikzcd}[cramped]
	\cdots & {\mathcal{I}^{l,j}} & {\mathcal{I}^{l+1,j-1}} & \cdots
	\arrow["\partial", from=1-1, to=1-2]
	\arrow["\partial", from=1-2, to=1-3]
	\arrow["\partial", from=1-3, to=1-4]
\end{tikzcd}\,.\]

\begin{proposition}
\label{prop:cohomology_isomorphism} The map $\flat_\varpi$ defines a homomorphism between complexes
\[\begin{tikzcd}[cramped]
	\cdots & {\mathcal{I}^{l,j}} & {\mathcal{I}^{l+1,j-1}} & \cdots \\
	\cdots & {C^{l,j}(\mathfrak{g}_\varpi)} & {C^{l+1,j-1}(\mathfrak{g}_\varpi)} & \cdots
	\arrow["\partial", from=1-1, to=1-2]
	\arrow["\partial", from=1-2, to=1-3]
	\arrow["\partial", from=1-3, to=1-4]
	\arrow["\partial", from=2-1, to=2-2]
	\arrow["{\flat_\varpi}"{description}, from=2-2, to=1-2]
	\arrow["\partial", from=2-2, to=2-3]
	\arrow["{\flat_\varpi}"{description}, from=2-3, to=1-3]
	\arrow["\partial", from=2-3, to=2-4]
\end{tikzcd}\,.\]
Furthermore, the induced morphism in cohomology is an isomorphism.
\end{proposition}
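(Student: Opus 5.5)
The plan is to show that $\flat_\varpi$ maps $C^{l,j}(\mathfrak{g}_\varpi)$ \emph{onto} $\mathcal{I}^{l,j}$, that it intertwines the two coboundaries, and that its kernel contributes nothing to cohomology.

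\emph{Step 1: the image of $\flat_\varpi$ is exactly $\mathcal{I}^{l,j}$.} Take $\beta \in \bigwedge^l V^\ast \otimes S^j(V^\ast)\otimes V$, with $j \geq 1$. Fix $v_1,\dots,v_l$ in the skew slots and $w_1,\dots,w_{j-1}$ in $j-1$ of the symmetric slots. What remains of $\beta$ is an endomorphism
\[
T_{v,w} := \beta(v_1\wedge\cdots\wedge v_l, w_1\odot\cdots\odot w_{j-1}, \,\cdot\,) \in V^\ast\otimes V .
\]
On decomposables, $\dd$ removes one symmetric factor $\alpha_i$ and wedges it into the $\bigwedge^k V^\ast$ slot. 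Combining this with the Remark $\iota_{\eta\otimes v}\varpi = \eta\wedge \iota_v\varpi$ gives, after the same evaluation,
\[
(\dd\, \flat_\varpi \beta)(v_1\wedge\cdots\wedge v_l, w_1\odot\cdots\odot w_{j-1}) = \iota_{T_{v,w}}\varpi ,
\]
up to a harmless combinatorial constant coming from the symmetrization. By Proposition \ref{prop:description_of_Lie_algebra}, the right-hand side vanishes for all $v,w$ if and only if every $T_{v,w}$ lies in $\mathfrak{g}_\varpi$. That is exactly the condition $\beta \in \bigwedge^l V^\ast\otimes \mathfrak{g}_\varpi^{(j-1)}$. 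Hence
\[
\beta\in C^{l,j}(\mathfrak{g}_\varpi) \iff \dd\,\flat_\varpi\beta = 0 ,
\]
so $\flat_\varpi(C^{l,j}(\mathfrak{g}_\varpi)) = \mathcal{I}^{l,j}$. In the degenerate bidegree $j=0$, where $\mathfrak{g}^{(-1)} = V$ and $\dd$ is zero, the same statement holds trivially.

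\emph{Step 2: $\flat_\varpi$ is a chain map.} The coboundary $\partial$ acts only on the $\bigwedge^l V^\ast \otimes S^j(V^\ast)$ factors. The map $\flat_\varpi$ acts only on the last factor, $V \to \bigwedge^k V^\ast$. Therefore $\partial\circ\flat_\varpi = \flat_\varpi\circ\partial$ holds on decomposables, and hence everywhere. This gives the commutative diagram.

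\emph{Step 3: the kernel is acyclic.} The kernel of $\flat_\varpi$ on $C^{l,j}(\mathfrak{g}_\varpi)$ is $\bigwedge^l V^\ast\otimes S^j(V^\ast)\otimes K$, where $K = \ker(v\mapsto \iota_v\varpi)$. If $\varpi$ is non-degenerate, which is the multisymplectic case, then $K = 0$ and $\flat_\varpi$ is an isomorphism of complexes, so the induced map in cohomology is an isomorphism. In general, note that $\eta\otimes v\in\mathfrak{g}_\varpi$ whenever $v\in K$, so the whole space $\bigwedge^l V^\ast\otimes S^j(V^\ast)\otimes K$ lies in $C^{l,j}(\mathfrak{g}_\varpi)$. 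The kernel is therefore a subcomplex whose differential is $\partial$ acting on the first two factors only. By Remark \ref{remark:De_Rham_identification}, this is the polynomial de Rham complex on $V$ tensored with $K$. That complex is exact by the polynomial Poincaré lemma (the Koszul/Euler homotopy), except in bidegree $(0,0)$. The short exact sequence
\[
0\to \ker\flat_\varpi\to C(\mathfrak{g}_\varpi)\to \mathcal{I}\to 0
\]
then yields, via its long exact sequence, the isomorphism in all bidegrees of interest, in particular $H^{2,j}$.

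I expect the main obstacle to be Step 1. The conventions for $\dd$ (the summation index and the placement of the wedge) and the normalizations of $S^j$ must be matched carefully, so that $\dd\flat_\varpi$ reproduces exactly $\iota_T\varpi$ and not some other contraction. I would verify this first on decomposable tensors $\beta\otimes(\alpha_1\odot\cdots\odot\alpha_j)\otimes v$, and extend by linearity.
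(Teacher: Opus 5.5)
Your proof is correct, and Steps 1 and 2 are the paper's Lemma \ref{lemma:characterization_of_Imflat}. In the non-degenerate case ($K=0$), your observation that $\flat_\varpi$ is then an isomorphism of complexes is what the paper's lifting argument amounts to.

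You genuinely depart from the paper in the degenerate case. The paper passes to the quotient $V/K$ with its induced non-degenerate form $\varpi_0$. It builds the tableau morphisms $(p,i)\colon \mathfrak{g}_{\varpi_0}\to\mathfrak{g}_\varpi$ and $(i,p)\colon\mathfrak{g}_\varpi\to\mathfrak{g}_{\varpi_0}$ from a splitting $V=L\oplus K$. It then uses exactness of the $K$-valued polynomial de Rham complex to show these induce mutually inverse maps in cohomology. This gives $H(\mathfrak{g}_\varpi)\cong H(\mathfrak{g}_{\varpi_0})$, and the non-degenerate case applies.

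You feed the same exactness input into the long exact sequence of $0\to\ker\flat_\varpi\to C(\mathfrak{g}_\varpi)\to\mathcal{I}\to 0$ instead. This buys two things:
\begin{itemize}
\item It proves directly that $\flat_\varpi$ itself is a quasi-isomorphism onto $\mathcal{I}$. The reduction to $\varpi_0$ still requires comparing the complex $\mathcal{I}$ built from $\varpi$ on $V$ with the one built from $\varpi_0$ on $V/K$, and the paper does not spell that out.
\item It locates exactly where the claim fails: only in bidegree $(0,0)$. There $H^{0,0}(\mathfrak{g}_\varpi)=V\to\flat_\varpi(V)\cong V/K$ is surjective with kernel $K$. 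This is harmless for the structure tensors, which live in $H^{2,j}$.
\end{itemize}
The paper's route, in turn, records the independently useful fact that a degenerate form has the same Spencer cohomology as its non-degenerate reduction.

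A small improvement: in your last sentence, say explicitly that the isomorphism holds for every $(l,j)\neq(0,0)$, rather than ``all bidegrees of interest''.
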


\noindent The proof of Proposition \ref{prop:cohomology_isomorphism} follows from the following lemma

\begin{lemma}
\label{lemma:characterization_of_Imflat}
For every $\widetilde \beta \in C^{l,j}(\mathfrak{g}_\varpi)$ we have $\partial \flat_\varpi(\widetilde \beta) = \flat_{\varpi} (\partial \widetilde \beta)$. Furthermore, if $\widetilde \alpha \in \Ima \flat_\varpi \subseteq \bigwedge^l V^\ast\otimes S^{j}(V^\ast) \otimes \bigwedge^k V^\ast$, we have \[
\widetilde \alpha \in \flat_\varpi \left( C^{l,j}(\mathfrak{g}_\varpi)\right)
\]
if and only if $\dd \widetilde \alpha = 0$.
\end{lemma}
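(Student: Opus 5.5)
The plan is to work on the full spaces $\bigwedge^l V^\ast \otimes S^{j}(V^\ast)\otimes V$ rather than only on $C^{l,j}(\mathfrak{g}_\varpi)$, and to use that $\flat_\varpi$ only acts on the last tensor factor. For the first claim, note that $\partial$ on $\bigwedge^l V^\ast \otimes S^j(V^\ast)\otimes V$ only touches the first two factors, moving one symmetric factor $\alpha_i$ into the exterior part. Likewise, $\partial$ on $\bigwedge^l V^\ast \otimes S^j(V^\ast)\otimes \bigwedge^k V^\ast$ does the same and leaves the last factor untouched. Since $\flat_\varpi = \operatorname{id}\otimes\operatorname{id}\otimes \flat_\varpi$, the two operators commute on decomposable elements, hence everywhere by linearity. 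In particular $\partial\flat_\varpi(\widetilde\beta)=\flat_\varpi(\partial\widetilde\beta)$ for $\widetilde\beta\in C^{l,j}(\mathfrak{g}_\varpi)$.

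For the characterization, I would first reduce to the case $l=0$. Fix a basis $e^{I}$ of $\bigwedge^l V^\ast$ and write $\widetilde\alpha=\sum_I e^I\otimes\widetilde\alpha_I$ with $\widetilde\alpha_I\in S^j(V^\ast)\otimes\bigwedge^kV^\ast$. The operator $\dd$ acts only on the last two factors, so $\dd\widetilde\alpha=0$ if and only if $\dd\widetilde\alpha_I=0$ for each $I$. Similarly, $C^{l,j}(\mathfrak{g}_\varpi)=\bigwedge^lV^\ast\otimes\mathfrak{g}_\varpi^{(j-1)}$. So it suffices to show the following: for $\widetilde\alpha=\flat_\varpi(\alpha)$ with $\alpha\in S^j(V^\ast)\otimes V$, we have $\alpha\in\mathfrak{g}_\varpi^{(j-1)}$ if and only if $\dd\flat_\varpi(\alpha)=0$. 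The direction in which $\widetilde\alpha=\flat_\varpi(\alpha')$ for some other preimage $\alpha'$ causes no trouble, because the condition $\dd\widetilde\alpha=0$ depends only on $\widetilde\alpha$.

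The key computation is to contract $\dd\flat_\varpi(\alpha)$ with $j-1$ vectors. Write $\alpha=\sum \alpha_1\odot\cdots\odot\alpha_j\otimes v$. Then $\dd\flat_\varpi(\alpha)=\sum_i(\cdots\widehat{\alpha}_i\cdots)\otimes \alpha_i\wedge\iota_v\varpi$. Evaluating the $S^{j-1}$ slot on $w_1,\dots,w_{j-1}$ (polarization lets us take $w_1=\cdots=w_{j-1}=w$) gives, up to a nonzero combinatorial factor,
\[
\sum \eta\wedge\iota_v\varpi = \iota_{T}\varpi ,
\]
where $T=\iota_{w}\cdots\iota_{w}\alpha\in V^\ast\otimes V$ and $\eta$ denotes the remaining covector. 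This uses the remark $\iota_{\eta\otimes v}\varpi=\eta\wedge\iota_v\varpi$. Hence $\dd\flat_\varpi(\alpha)=0$ if and only if $\iota_{T}\varpi=0$ for all such contractions $T$. By Proposition \ref{prop:description_of_Lie_algebra}, this says every $(j-1)$-fold contraction of $\alpha$ lies in $\mathfrak{g}_\varpi$, which is precisely the definition of $\alpha\in\mathfrak{g}_\varpi^{(j-1)}$.

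The main obstacle is the bookkeeping in this last step. One must check that the normalization of $\odot$ and of the contraction $\iota_w$ on symmetric tensors matches, so that the contraction of $\dd\flat_\varpi(\alpha)$ equals a nonzero multiple of $\iota_T\varpi$. One must also check that vanishing on all diagonal arguments $w,\dots,w$ suffices; polarization, valid over $\mathbb{R}$, handles this. I would carry it out explicitly on monomials $\alpha_1\odot\cdots\odot\alpha_j\otimes v$, verifying that both sides produce the same sum $\sum_i \prod_{m\ne i}\alpha_m(w)\,\alpha_i\otimes v$.
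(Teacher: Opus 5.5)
Your proposal is correct and follows essentially the same route as the paper: the commutation $\partial\flat_\varpi=\flat_\varpi\partial$ holds because $\flat_\varpi$ acts only on the last factor, you reduce to $l=0$ via $\flat_\varpi(\bigwedge^l V^\ast\otimes\mathfrak{g}_\varpi^{(j-1)})=\bigwedge^l V^\ast\otimes\flat_\varpi(\mathfrak{g}_\varpi^{(j-1)})$, and you identify the $(j-1)$-fold contractions of $\dd\flat_\varpi(\alpha)$ with $\iota_T\varpi$, where $T$ is the corresponding contraction of $\alpha$, concluding with Proposition \ref{prop:description_of_Lie_algebra}. The differences are only cosmetic: you polarize to diagonal arguments and use $\iota_{\eta\otimes v}\varpi=\eta\wedge\iota_v\varpi$ on monomials, whereas the paper evaluates directly on arbitrary $v_1,\dots,v_{j-1},u_1,\dots,u_{k+1}$ through Proposition \ref{prop:explicit_description_of_partial}.
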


\begin{proof} The fact that $\flat_\varpi$ satisfies $\partial \circ \flat_\varpi = \flat_\varpi \circ \partial$ follows easily from the definition. In order to prove the second claim, it is enough to show that
\[
\flat_\varpi \left( \mathfrak{g}_{\varpi}^{(j-1)}\right) = \mathcal{I}^{0, j}\,,\] since
\[
\flat_\varpi \left( \bigwedge^l V^\ast \otimes \mathfrak{g}^{(j-1)}_\varpi\right) = \bigwedge^l V^\ast \otimes \flat_\varpi \left( \mathfrak{g}^{(j-1)}_\varpi\right)\,.
\]
Indeed, let $\widetilde \alpha \in \Ima \flat_\varpi\left( S^{j}(V^\ast) \otimes V \right)$ and let $\widetilde \beta \in S^{j}(V^\ast) \otimes V$ be such that $\flat_\varpi(\widetilde \beta) = \widetilde \alpha$. Then, $\widetilde \beta \in \mathfrak{g}^{(j-1)}$ if and only if for every $v_1, \dots, v_{j-1} \in V$ we have $\iota_{v_{j-1}} \cdots \iota_{v_1} \beta \in \mathfrak{g}_\varpi$ which, by Proposition \ref{prop:description_of_Lie_algebra}, is equivalent to
\[
\left( \iota_{v_{j-1}} \cdots \iota_{v_1} \beta\right) \iprodleft \varpi = 0\,.
\]
Namely, that for every $u_1, \dots, u_{k+1} \in V$
\begin{equation}
\label{eq:condition_on_beta}
    \varpi (\beta(v_1, \dots, v_{j-1}, u_1), u_2, \dots, u_{k+1}) + \cdots + \varpi(u_1, \dots, u_{k}, \beta(v_1, \dots, v_{j-1}, u_{k+1})) = 0\,.
\end{equation}
Now, we may rewrite Eq. \eqref{eq:condition_on_beta} as
\[
\sum_{i = 1} ^{k+1} (-1)^{i-1} \flat_\varpi(\beta)(v_1, \dots, v_{j-1}, u_i) (u_1, \dots, \widehat u_i, \dots, u_{k+1}) = 0\,,
\]
which using the formula given in Proposition \ref{prop:explicit_description_of_partial}, is equivalent to $\dd \flat_\varpi(\beta) = 0$.
\end{proof}

\begin{proof}[Proof of Proposition \ref{prop:cohomology_isomorphism}] We will first prove it when $\varpi$ is non-degenerate. By Lemma \ref{lemma:characterization_of_Imflat}, $\flat_\varpi$ takes values in $\mathcal{I}^{l,j}$ and it defines a homomorphism of complexes. Let us show that it is actually an isomorphism. Indeed, it is already surjective at the cochain level, so we only need to check injectivity. Let $\widetilde \beta \in C^{l,j} (\mathfrak{g}_\varpi)$ be such that $\flat_\varpi(\widetilde \beta)$ is a coboundary. Then, using again surjectivity of $\flat_\varpi$ at the cochain level, there exists $\widetilde \gamma \in C^{l-1, j+1}(\mathfrak{g}_\varpi)$ satisfying
\[
\flat_\varpi(\widetilde \beta) = \partial \flat_\varpi(\widetilde \gamma) = \flat_\varpi(\partial \widetilde \gamma) \,.
\]
In particular, $\flat_\varpi(\widetilde\beta - \partial \widetilde \gamma) = 0$ and it follows that
\[
\widetilde \beta -\partial \widetilde \gamma \in \bigwedge^l V^\ast \otimes S^{j}(V^\ast) \otimes \{v \in V \colon \iota_v \varpi = 0\}\,.
\]
Since $\varpi$ is non-degenerate, we obtain $\widetilde \beta = \partial \widetilde \gamma$.

For the degenerate case, we simply study the form induced in $V/\{v \in V : \iota_v \varpi = 0\}$, which we denote by $\varpi_0$. We have that $W = \{v \in V : \iota_v \varpi = 0\}$ is an invariant subspace of $V$ by the induced $\mathfrak{g}_\varpi$-action, so that we get an induced representation of $\mathfrak{g}_\varpi$ on $V/W$ which is, in fact, the representation of $\mathfrak{g}_{\varpi_0}$. This implies that these Lie algebras fit into a short exact sequence
\[\begin{tikzcd}[cramped]
	{\{0\}} & {V^\ast \otimes W} & {\mathfrak{g}_\varpi} & {\mathfrak{g}_{\varpi_0}} & {\{0\}} \\
	&& {V^\ast \otimes V} & {(V/W)^\ast \otimes (V/W)}
	\arrow[from=1-1, to=1-2]
	\arrow[from=1-2, to=1-3]
	\arrow[from=1-3, to=1-4]
	\arrow[hook', from=1-3, to=2-3]
	\arrow[from=1-4, to=1-5]
	\arrow[hook', from=1-4, to=2-4]
\end{tikzcd}\,.\]
Let $L$ be an arbitrary complement to $W$, $V = L \oplus W$, then we can identify $V/W \cong L$. Let us denote by
\[
i \colon V/W \rightarrow V \,, \qquad p \colon V \rightarrow V/W\,,
\]
the inclusion and projection given by the above decomposition. Then, an elementary computation shows that
\[
(p,i) \colon \mathfrak{g}_{\varpi_0} \rightarrow \mathfrak{g}_{\varpi} \,, \qquad (i,p) \colon \mathfrak{g}_{\varpi} \rightarrow \mathfrak{g}_{\varpi_0}
\]
are morphisms of tableaux. We will show that the induced morphisms in cohomology are respective inverses, which will finish the proof. It is clear that $(i,p)_\ast \circ (p,i)_\ast = \operatorname{id}$, so it only remains to show $h = (p,i)_\ast \circ (i,p)_\ast = \operatorname{id}$. Indeed, we have that for every $\alpha \in \bigwedge^l(V^\ast) \otimes \mathfrak{g}^{(j)}$, $\alpha - h(\alpha)$ takes values in $\bigwedge^l(V^\ast) \otimes S^{j+1}(V^\ast) \otimes W$. If $\alpha$ is closed, so is $\alpha - h(\alpha)$. However, the previous complex is exact so $\alpha - h(\alpha) = \partial \beta$, for some $\beta \in \bigwedge^{l-1}(V^\ast) \otimes S^{j+2}(V^\ast) \otimes W$. Since it takes values in $W$, it is trivial to show that $\beta \in \bigwedge^{l-1}(V^\ast) \otimes \mathfrak{g}^{(j+1)}_\varpi$, which proves that $h$ induces an isomorphism in the cohomology, finishing the proof.
\end{proof}

\subsection{Structure of multisymplectic representations}

Taking into account the results on the integrability of $G$-structures (Theorems \ref{thm:finite_type_integrability} and \ref{thm:irreducible_integrability}), it is natural to study representations of Lie groups which leave a form invariant.

\begin{definition}[Multisymplectic representation] A representation of a Lie group $\rho \colon G \rightarrow \GL(V)$ is called a \textbf{multisymplectic representation} if $\rho(G) \subseteq G_\varpi$, for some nondegenerate $\varpi \in \bigwedge^{k+1}V^\ast$. Equivalently, if there is a nonzero and nondegenerate invariant element of the induced representation on $\bigwedge^{k+1} V^\ast$.
\end{definition}

In this section we prove a general result of multisymplectic actions which will be helpful to adopt a general strategy to prove flatness theorems.

\begin{remark} Notice that if a representation $\rho_V: G \rightarrow \GL(V)$ of a Lie group on $V$ has an invariant subspace $W \subseteq V$, it induces two representations $\rho_W \colon G \rightarrow \GL(W)$ and $\rho_{V/W} \colon G \rightarrow \GL(V/W)$ defined in the natural way.
\end{remark}

\begin{definition} Let $\varpi \in \bigwedge^{k+1}V^\ast$ be a form and $W \subseteq V$ be a subspace. We define $W^{\perp, j}:= \{v \in V \colon \iota_{v \wedge w_1 \wedge \cdots \wedge w_{j}} \varpi = 0 \,, \forall w_i \in W\}$. A subspace is called \textbf{$j$-isotropic} if $W \subseteq W^{\perp, j}$.
\end{definition}

\begin{theorem}
\label{thm:casuistic_of_representation}
Let $\rho_V \colon G \rightarrow \GL(V)$ be a multisymplectic representation, with invariant form $\varpi \in \bigwedge^{k+1}V^\ast$. Then, if the action is not irreducible and does not decompose as the direct sum of representations, at least one of the following hold:
\begin{enumerate}[\rm (i)]
    \item There is a $j$-isotropic invariant subspace. Furthermore, if $j = 1$, we have $\mathfrak{g}_\varpi^{(1)} \cap \left( S^2(V^\ast) \otimes W\right) = \{0\}$.
    \item The action has a proper invariant subspace $\{0\} \subsetneq W \subsetneq V$ such that $W^{\perp, k} = 0$. Furthermore, $\mathfrak{g}_\varpi^{(1)} \cap \left( S^2(\operatorname{Ann}(W)) \otimes V\right) = \{0\}$, where $\operatorname{Ann}(W) \subseteq V^\ast$ denotes the annihilator of $W$.

\end{enumerate}
\end{theorem}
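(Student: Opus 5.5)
Since the action is neither irreducible nor a direct sum, fix a proper invariant subspace $\{0\} \subsetneq W \subsetneq V$. The key observation is that the spaces $W^{\perp, j}$ are themselves $G$-invariant, because $\varpi$ is invariant and $W$ is invariant: if $v \in W^{\perp,j}$ and $g \in G$, then $\iota_{gv \wedge w_1 \wedge \cdots \wedge w_j}\varpi = g^{-1\ast}\iota_{v \wedge g^{-1}w_1 \wedge \cdots}\varpi = 0$. The spaces $W^{\perp,j}$ form a chain $W^{\perp,0} \subseteq W^{\perp,1} \subseteq \cdots \subseteq W^{\perp,k} \subseteq W^{\perp,k+1}=V$, where $W^{\perp,0} = \ker \flat_\varpi = \{0\}$ by nondegeneracy. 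I would then split into cases according to the position of $W$ relative to this chain. If $W \subseteq W^{\perp,j}$ for some $j \le k$, we are in case (i). Otherwise $W \not\subseteq W^{\perp,j}$ for every $j \leq k$, and in particular $W \not\subseteq W^{\perp,k}$. Then consider $W \cap W^{\perp,k}$, which is an invariant subspace. If it is nonzero, it is a proper invariant subspace $W'$ with $W' \subseteq W'^{\perp,k}$ (since $W'^{\perp,k} \supseteq W^{\perp,k}$), giving case (i) with $W'$. If it is zero and $W^{\perp,k} \ne 0$, then $W^{\perp,k}$ is an invariant complement candidate; I would check whether $W \oplus W^{\perp,k} = V$, which would contradict indecomposability, and otherwise iterate with $W^{\perp,k}$ in place of $W$. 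If $W^{\perp,k}=0$, we are in case (ii). The delicate part is arranging these reductions so that a clean dichotomy emerges; I would organize it by taking $W$ minimal (or maximal) among invariant subspaces, which simplifies the intersections.

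For the infinitesimal ``furthermore'' claims I would work with $\mathfrak{g}_\varpi$ and Lemma~\ref{lemma:characterization_of_Imflat}. For (i) with $j=1$, take $\beta \in \mathfrak{g}^{(1)}_\varpi \cap (S^2(V^\ast)\otimes W)$. The condition is that $\dd \flat_\varpi(\beta) = 0$, where $\flat_\varpi(\beta)(u,\cdot) = \iota_{\beta(u,\cdot)}\varpi$. Since $\beta$ takes values in a $1$-isotropic $W$, we get $\varpi(\beta(a,b),\beta(c,d),\ldots)=0$. The plan is to use the symmetric/skew interplay: $\flat_\varpi \beta$ is symmetric in its first two slots, while closedness forces $\varpi(\beta(u,v_1), v_2, \dots) $ to be skew in $(v_1,v_2)$ for fixed $u$. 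A symmetric-in-$(u,v_1)$, skew-in-$(v_1,v_2)$ tensor vanishes by the standard $S_3$ argument, so $\iota_{\beta(u,v)}\varpi = 0$, hence $\beta = 0$ by nondegeneracy. The real content here is showing the skew-symmetry in $(v_1,v_2)$. For this I would contract the defining identity of $\mathfrak{g}_\varpi$ with vectors and exploit the isotropy to kill the cross terms.

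For (ii), take $\beta \in \mathfrak{g}^{(1)}_\varpi \cap (S^2(\operatorname{Ann}W)\otimes V)$, so $\beta(w,\cdot)=0$ for all $w\in W$. The condition $\iota_{\beta(u,\cdot)}\varpi=0$, evaluated with $k$ arguments from $W$ and one free argument $x$, yields $\varpi(\beta(u,x), w_1,\dots,w_k) = 0$ once the terms with $\beta(u,w_i)=0$ drop. Hence $\beta(u,x) \in W^{\perp,k} = 0$. I expect this step to be routine. The main obstacle is the structural case analysis in the first paragraph, namely ensuring that the non-isotropic, non-split situation really forces $W^{\perp,k}=0$ for some invariant $W$, and ensuring in case (i) with $j=1$ that the chosen $W$ is the same subspace on which the vanishing statement is made.
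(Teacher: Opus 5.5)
Your case split uses the same ingredients as the paper: invariance of $W^{\perp,k}$, the $k$-isotropic invariant subspace $W\cap W^{\perp,k}$, and indecomposability. However, the iteration you actually propose does not close. Replacing $W$ by $W^{\perp,k}$ improves no quantity. Moreover, for $k\geq 2$ the relation defining $W^{\perp,k}$ is not symmetric, so $(W^{\perp,k})^{\perp,k}$ need not contain $W$, and termination is not guaranteed.

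The paper enlarges instead. If $W\cap W^{\perp,k}=\{0\}\neq W^{\perp,k}$, then $W\oplus W^{\perp,k}$ is invariant, proper (otherwise $V$ splits), and strictly larger. So after finitely many steps one reaches an invariant $W$ with $W^{\perp,k}=\{0\}$. Equivalently, choose $W$ \emph{maximal} among proper invariant subspaces from the start; then the case split is immediate. Minimality does not help.

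Your argument for the clause in (ii) is correct and more direct than the coordinate computation in Lemma~\ref{lemma:semi_finite_condition}(ii). Each $\iota_u\beta$ lies in $\mathfrak{g}_\varpi$ and vanishes on $W$, hence maps $V$ into $W^{\perp,k}=\{0\}$. This is the observation behind Proposition~\ref{prop:injective_mapping}, and it does not even need invariance of $W$.

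The clause in (i) is where the proposal breaks. The identity $\iota_{\iota_u\beta}\varpi=0$ reads $\varpi(\beta(u,v_1),v_2,\dots)+\varpi(v_1,\beta(u,v_2),\dots)+\cdots=0$. For $k=1$ this says $\varpi(\beta(u,\cdot),\cdot)$ is \emph{symmetric}. The $1$-isotropy only kills terms in which a second argument lies in $W$, so the skew-symmetry you need is not available.

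In fact the clause cannot be proved. Take $V=\mathbb{R}^3$, $\varpi=e^1\wedge e^2\wedge e^3$, $W=\langle e_3\rangle$ and $G=\{g\in\operatorname{SL}(3,\mathbb{R})\colon g(W)=W\}$.
\begin{itemize}
\item The only proper invariant subspace is $W$, so the action is reducible and indecomposable.
\item $W^{\perp,1}=W$, so $W$ is $1$-isotropic, while $W^{\perp,2}=V$, so alternative (ii) is unavailable.
\item Yet $(e^1)^2\otimes e_3$ is a nonzero element of $\mathfrak{g}_\varpi^{(1)}\cap(S^2(V^\ast)\otimes W)=\mathfrak{sl}(3)^{(1)}\cap(S^2(V^\ast)\otimes W)$, since its contractions are multiples of the traceless $e^1\otimes e_3$. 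It even lies in the prolongation of the Lie algebra of $G$.
\end{itemize}
The same happens in Example~\ref{example:Multicotangent_bundle}, where quadratic vertical fields given by closed $(k-1)$-forms on $L$ are such elements. They make up the second summand in Corollary~\ref{cor:extensions_in_1_isotropic_case}.

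The paper's route through Lemma~\ref{lemma:semi_finite_condition}(i) has the same defect. Its computation only kills the $w^jw^k$-coefficients of $f$ in an adapted basis, and the final appeal to nondegeneracy does not yield $f^i=0$.

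What is true, and provable in one line, is weaker. Let $\beta\in\mathfrak{g}_\varpi^{(1)}\cap(S^2(V^\ast)\otimes W)$. Evaluating $\iota_{\iota_x\beta}\varpi=0$ on $(w,x_1,\dots,x_k)$ with $w\in W$, and using $1$-isotropy, gives $\iota_{\beta(x,w)}\varpi=0$. Hence $\beta(x,w)=0$, and $\mathfrak{g}_\varpi^{(1)}\cap(S^2(V^\ast)\otimes W)\subseteq S^2(\operatorname{Ann}(W))\otimes W$. So $W$-valued prolongations depend only on the transverse directions, but they need not vanish.
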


This motivates the following definition:
\begin{definition}[Type of a form] A form $\varpi \in \bigwedge^{k+1} V^\ast$ is said to be of
\begin{enumerate}[\rm (i)]
    \item \textbf{irreducible type}, if $G_\varpi$ acts irreducibly on $V$.
    \item \textbf{product type}, if the action of $G_\varpi$ on $V$ decomposes as a direct sum of representations $V = W_1 \oplus W_2$.
    \item \textbf{$j$-isotropic type}, if there is an invariant $j$-isotropic subspace.
    \item \textbf{{semi-finite}} type, if there is an invariant subspace $W$ such that $W^{\perp, k} = \{0\}$.
\end{enumerate}
These types are not exclusive.
\end{definition}

Then, Theorem \ref{thm:casuistic_of_representation} may be reinterpreted as stating that a form $\varpi \in \bigwedge^{k+1} V^\ast$ is of irreducible, product, $j$-isotropic or {semi-finite} type. This gives a rough characterization of the possible geometries defined by a $G_\varpi$-structure. Indeed, let us discuss case by case.

\begin{enumerate}[\rm (i)]
    \item If $\varpi$ is of irreducible type, nothing much can be said regarding the inner structure of a $G_\varpi$-structure. However, this is precisely the hypotheses of Theorem \ref{thm:irreducible_integrability}, so that any $G_\varpi$-structure which is formally flat is flat. This is the case of symplectic and volume forms, as we recover $\operatorname{Sp}(n)$ and $\operatorname{SL}(n)$-structures, respectively.
    \item If $\varpi$ is of product type, any $G_\varpi$-structure has an associated almost product structure. Then, integrability of $G_\varpi$ implies that this almost product structure is, in fact, a product structure.
    \item If $\varpi$ is of $j$-isotropic type, any $G_\varpi$-structure has a distinguished $j$-isotropic distribution. Of course, if the $G_\varpi$-structure is integrable, then the distribution is involutive. This is the case where field theories are included. Indeed, multisymplectic forms associated to a classical field theory are of $1$-isotropic type. As we shall show, in some important cases (namely when the distribution can be chosen to be $1$-isotropic), the induced representation $\rho_W$ is of finite type. Furthermore, the integrability of the leaf geometry, the integrability of the geometry transverse to the leaves, and a certain compatibility condition, which we study in the following section.
    \item If $\varpi$ is of semi-finite type, we can show that the map $\mathfrak{g} \rightarrow \rho_{W}(\mathfrak{g})$ given by restriction is injective (see Proposition \ref{prop:injective_mapping}). This means that given a frame in a leaf, there exists a unique frame extending it adapted to the $G_\varpi$-structure.
\end{enumerate}
We will continue this discussion later on.

Before giving the proof to the result above, we would like to give soem results regarding the extensions of $\mathfrak{g}_\varpi$:

\begin{lemma}
\label{lemma:semi_finite_condition}
Let $\rho \colon G \rightarrow \GL(V)$ be a multisymplectic representation, with multisymplectic form $\varpi$. Let $\{0\} \subsetneq W \subsetneq V$ be an invariant subspace. Then, we have the following:
\begin{enumerate}[\rm (i)]
    \item If $W$ is $1$-isotropic, we have $\mathfrak{g}^{(1)} \cap \left(S^2(V^\ast) \otimes W \right) = \{0\}$.
    \item If $W$ is such that $W^{\perp, k} = \{0\}$, we have that $\mathfrak{g}^{(1)} \cap \left( S^2(\operatorname{Ann}(W)) \otimes V\right) = \{0\}$.
\end{enumerate}
\end{lemma}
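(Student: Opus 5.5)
The plan is to argue directly from the defining property of the first prolongation. An element $\beta \in \mathfrak{g}^{(1)}$ is a symmetric bilinear map $\beta \colon V \times V \to V$ such that, for every $u \in V$, the endomorphism $T_u := \beta(u,\cdot)$ lies in $\mathfrak{g}$. Hence $\iota_{T_u}\varpi = 0$ by Proposition \ref{prop:description_of_Lie_algebra}, i.e.
\[
\sum_{i=1}^{k+1} \varpi\left(v_1,\dots, T_u v_i, \dots, v_{k+1}\right) = 0 \qquad \forall\, v_1,\dots,v_{k+1} \in V\,.
\]
Both parts come from substituting vectors of $W$ into some of the slots of this identity. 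We then use the hypothesis on $W$ to kill most of the terms, and finally use non-degeneracy of $\varpi$ (or the condition $W^{\perp,k} = \{0\}$) to conclude.

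\emph{Part (ii).} This is the easy case, and I would do it first. If $\beta \in S^2(\operatorname{Ann}(W)) \otimes V$, then $\beta(w,\cdot) = 0$ for $w \in W$, so by symmetry every $T_u$ vanishes on $W$. Plug $v_1 = v$ arbitrary and $v_2 = w_1, \dots, v_{k+1} = w_k \in W$ into the identity. All terms containing some $T_u w_i$ vanish, leaving $\varpi(T_u v, w_1, \dots, w_k) = 0$ for all $w_i \in W$. This says $T_u v \in W^{\perp,k} = \{0\}$. Thus $T_u = 0$ for all $u$, so $\beta = 0$. The same computation shows that an element of $\mathfrak{g}$ vanishing on $W$ is zero, which is the content of the later Proposition \ref{prop:injective_mapping}.

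\emph{Part (i).} Suppose $\beta$ takes values in $W$, with $W$ $1$-isotropic, so that $\varpi$ vanishes whenever two of its arguments lie in $W$.

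The first step is to show $\beta(u,w) = 0$ for $w \in W$. Plug the slots $(v, w, x_3, \dots, x_{k+1})$ into the identity for $T_u$. The term $\varpi(T_u v, w, \dots)$ has two entries in $W$ and vanishes. Every term $\varpi(v, w, \dots, T_u x_i, \dots)$ with $i \geq 3$ vanishes for the same reason. The only survivor is $\varpi(v, T_u w, x_3, \dots, x_{k+1}) = 0$ for all $v, x_i$. Non-degeneracy then gives $T_u w = 0$. So $\beta$ descends to a symmetric map $(V/W) \times (V/W) \to W$, and each $T_u$ is a nilpotent element of $\mathfrak{g}$ of the form $V \to V/W \to W$.

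The second step, which I expect to be the main obstacle, is to show this residual $\beta$ is zero. Substituting more vectors of $W$ now gives no information, so one must exploit the symmetry $T_u v = T_v u$. My plan is to introduce the tensor
\[
\tau(u, v, x_2, \dots, x_{k+1}) := \varpi\left(\beta(u,v), x_2, \dots, x_{k+1}\right),
\]
which is symmetric in $(u,v)$ and skew in the $x$'s. The identity for $T_u$ with slots $(v, x_2, \dots, x_{k+1})$ expresses $\tau(u,v,x_2,\dots)$ as minus a signed sum of terms obtained by exchanging $v$ with some $x_i$. Writing the analogous identity for $T_v$ with slots $(u, x_2, \dots)$, I would try the usual symmetric-versus-skew cancellation (as in the proof that $\mathfrak{sp}^{(1)}$ consists of totally symmetric cubic forms). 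The aim is to show that $\tau$ is symmetric in $(u,v,x_2)$ and skew in $(x_2,\dots)$, and hence, for $k \geq 2$, zero, which by non-degeneracy forces $\beta = 0$.

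If this purely algebraic cancellation does not close (it cannot for $k=1$, where symplectic $\mathfrak{sp}^{(1)} \neq 0$), I would additionally use that the $T_u$ preserve $W$ and take values in $W$. This restricts $\tau$ to vanish when any $x_i \in W$, and I would try to combine that vanishing with the exchange relation to reach the conclusion. It is at this step that I expect the real difficulty, and possibly extra hypotheses on $k$, to appear.
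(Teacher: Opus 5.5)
The gap is Step 2 of (i), and it cannot be filled, because (i) is false as stated for every $k\geq 1$. Suppose $0\neq w\in W$ and $\eta\in\operatorname{Ann}(W)$ satisfy $\eta\wedge\iota_w\varpi=0$. Put $\beta(u,v):=\eta(u)\eta(v)\,w$. Then $T_u=\eta(u)\,\eta\otimes w$ kills $W$, and $\iota_{T_u}\varpi=\eta(u)\,\eta\wedge\iota_w\varpi=0$. Hence $0\neq\beta\in\mathfrak{g}^{(1)}\cap(S^2(V^\ast)\otimes W)$ for $\mathfrak{g}=\{T\in\mathfrak{g}_\varpi : T(W)\subseteq W\}$, the Lie algebra of the stabilizer of $W$ in $G_\varpi$.

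Such data exist in the multicotangent model of Example \ref{example:Multicotangent_bundle}, with degrees shifted so that $V=L\oplus\bigwedge^kL^\ast$, $W=\bigwedge^kL^\ast$ and $1\leq k\leq\dim L$. Let $x^1,\ldots,x^{\dim L}$ be a basis of $L^\ast$, and take $\eta=x^1$ and $w=x^1\wedge\cdots\wedge x^k$. Then $\iota_w\varpi$ is $x^1\wedge\cdots\wedge x^k$ pulled back along $V\to L$, so $\eta\wedge\iota_w\varpi=0$.

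These $\beta$ are exactly elements of the second summand in Corollary \ref{cor:extensions_in_1_isotropic_case}, so the statement also conflicts with that corollary. The paper's own proof of (i) has the same hole as yours: it only extracts $f(W,W)=0$ from the $w^j\wedge u^{i_1}\wedge\cdots\wedge u^{i_k}$-components and then asserts ``$f^i=0$ by nondegeneracy''. Your symmetrization plan also fails on this example. Here $\tau(u,v,x_2,\ldots,x_{k+1})=\eta(u)\eta(v)\,(x^1\wedge\cdots\wedge x^k)(x_2,\ldots,x_{k+1})$, which for $k\geq 2$ is not symmetric under $u\leftrightarrow x_2$. No extra hypothesis on $k$ rescues the claim.

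The rest of your proposal is sound. Step 1 of (i) is correct. For $W$-valued $\beta$ it gives $\beta(V,W)=0$, which is more than the paper obtains. What it really establishes is that $\beta\mapsto\tau$, with $\tau(u,v):=\iota_{\beta(u,v)}\varpi$, embeds $\mathfrak{g}^{(1)}\cap(S^2(V^\ast)\otimes W)$ into
\[
\{\tau\in S^2((V/W)^\ast)\otimes S : \partial\tau=0\}, \qquad S=\{\iota_w\varpi : w\in W\}\subseteq \bigwedge^k(V/W)^\ast .
\]
This is an equality when $\mathfrak{g}$ is the full stabilizer of $W$. So a correct version of (i) needs a hypothesis on $S$ forcing this space to vanish, not a hypothesis on $k$; it does not vanish, for instance, when $S=\bigwedge^k(V/W)^\ast\neq 0$, as the example shows.

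Your part (ii) is correct and shorter than the paper's argument. The paper expands $\varpi$ in an adapted basis and tracks the $u^i$-components of $\iota_T\varpi$ block by block. You instead observe that any $T\in\mathfrak{g}_\varpi$ with $T|_W=0$ satisfies $\varpi(Tv,w_1,\ldots,w_k)=0$, hence $T=0$ when $W^{\perp,k}=\{0\}$. This is coordinate-free, does not use invariance of $W$, and gives Proposition \ref{prop:injective_mapping} at the same time.
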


\begin{proof}
Without loss of generality, we may assume $G \subseteq G_\varpi$. Let us prove both scenarios:\begin{enumerate}[\rm (i)]
    \item Let $\{w_1, \dots, w_m, u_{1}, \dots, u_n\}$ denote a basis of $V$, where $W$ is generated by $w_1, \dots, w_m$. Then, since $W$ is $1$-isotropic, we have that we may write
    \[
    \varpi = \alpha_{i_1, \dots, i_{k+1}} u^{i_1} \wedge \cdots \wedge u^{i_{k+1}} + \alpha_{i_1, \dots, i_{k}} \wedge u^{i_1} \wedge \cdots \wedge u^{i_{k}}\,.
    \]
    where $u^i$ denote elements of the dual basis and
    \[
    \alpha_{i_1, \dots, i_{k+1}} \in \mathbb{R}\,, \qquad \alpha_{i_1, \dots, i_{k}} \in W^\ast\,.
    \]
    Let us show that the induced representation $\rho_W$ is of finite type. Indeed, let $f \in S^{2} V^\ast \otimes V$ denote an element $f \in \mathfrak{g}^{(1)} \subseteq \mathfrak{g}_\varpi^{(1)}$. Let us write it, in the basis above as
    \[
    f = f^{i} (u, w)\otimes  w_i + g^j(u) \otimes u_j\,,
    \]
    where $f^i$ and $g^j$ denote polynomials of degree $2$. Indeed, $g^j = g^j(u)$ since
    $W$ is invariant. By contracting $w^j$, if $f \in \mathfrak{g}_\varpi$ we have that
    \[
    \iota_{w^j} f^i \otimes w_i \in \mathfrak{g}_\varpi \,,
    \]
    which is equivalent to the following
    \[
    0 = (\iota_{w^j} \iota_{w^k} f^i) \alpha_{i_1, \dots, i_{k}} (w_i) w ^j \wedge u^{i_1} \wedge \cdots \wedge u^{i_{k}}\,.
    \]
    Hence, $(\iota_{w^j} \iota_{w^k} f^i) \alpha_{i_1, \dots, i_{k}} (w_i) = 0$, for every $j, k$ and $i_{1}, \dots, i_k$. Since $\varpi$ is nondegenerate, we obtain $f^i = 0$.
    \item
    Choose a basis $(w_j, u_i)$ of $V$, where $W = \langle w_j \rangle$. Then, \[
\varpi = \alpha + u^i \wedge \alpha_i + \cdots + u^{i_1} \wedge \cdots \wedge u^{i_m} \wedge \alpha_{i_1 \dots i_m} + \cdots + \alpha_{i_1, \dots, i_k} \wedge u^{i_1} \wedge \cdots \wedge u^{i_k}\,.
\]
Here, $(w^i, u^j)$ denotes the dual basis on $V$, and $\alpha$, $\alpha_i$, $\alpha_{i_1i_2}$, and so on, denote a $(k+1)$-form, $k$-form, $(k-1)$-form and so on on $W$. We will take $T \in \mathfrak{g} \subseteq \mathfrak{g}_\varpi \subseteq V^\ast \otimes V$ and impose $\iota_T \varpi = 0$. By requiring that every such element is such that $T(W) \subseteq W$ and that $W^{\perp, k} = \{0\}$, we will conclude the result. Indeed, the most general $T \in \mathfrak{g}_\varpi$ reads as
\[
T =B^{j_2}_{j_1} w^{j_1} \otimes w_{j_2} + u^{i_1} \otimes \left( C^{i_2}_{i_1} u_{i_2} + D^{j_1}_{i_1} w_{j_1}\right)\,,
\]
and its contraction with $\varpi$ is
\begin{align*}
    \iota_T \varpi =& +B^{j_2}_{j_1} w^{j} \wedge \iota_{w_{j_1}} \alpha\\
    &+ C^{i_2}_{i_1} u^{i_1} \wedge \alpha_{i_2} + D^{j_1}_{i_1} u^{i_1} \wedge \iota_{w_{j_1}} \alpha - B^{j_1}_{j_2} u^i \wedge \left( w^{j_2} \wedge \iota_{w_{j_1}} \alpha_i\right)\\
    & + \text{terms involving }u^{i_1} \wedge u^{i_2} \dots
\end{align*}
By requiring $\iota_T \varpi = 0$, we obtain the following condition
\begin{equation}
\label{eq:condition_symmetry}
 C^{i_2}_{i} \alpha_{i_2} + D^{j_1}_{i} \iota_{w_{j_1}} \alpha - B^{j_1}_{j_2} w^{j_2} \wedge \iota_{w_{j_1}} \alpha_i = 0\,.
\end{equation}
Now, observe that the condition $W^{\perp, k} = \{0\}$ reads as follows: If $v = A^i u_i + B^j w_j$ is a vector such that $(\iota_v\varpi)|_{W} = 0$, namely
\begin{equation}
\label{eq:condition_k_iso}
    B^j \iota_{w_j} \alpha + A^i \alpha_i = 0\,,
\end{equation}
{then we have} $A^i = B^j = 0$.

Now, let $f \in S^{2}(V^\ast) \otimes V$ be such that $f \in \mathfrak{g} \subseteq \mathfrak{g}_\varpi$. Again, since $f$ preserves $W$, we can write
    \[
    f = f^{i} (u, w) \otimes w_i + g^j(u) \otimes u_j\,.
    \]
    Contracting with $w_j$ and using that $f \in \mathfrak{g}_\varpi^{(2)}$, we obtain that $(\iota_{w_j} f^i) \otimes w_i \in \mathfrak{g}_\varpi$ which, by definition means $\iota_{(\iota_{w_j} f^i) \otimes w_i}\varpi = 0$. Computing this contraction and looking at the terms involving $u^i$ we obtain the condition
    \[
    (\iota_{u_k} \iota_{w_j} f^i) \iota_{w_i} \alpha = 0\,,
    \]
    so that $\iota_{u_k} \iota_{w_j}f^{i} = 0$ and we actually have
    $
    f^i(u, w) = \hat{f}^i(u) + \widetilde{f}^i(w)
    $. Hence, the original symmetry is
    \[
    f =( \hat{f}^i(u) + \widetilde{f}^i(w)) \otimes w_i + g^j(u) \otimes u_j\,.
    \]
    Contracting by $u_j$, checking the symmetry condition and looking for the term involving $u^i$ we obtain
    \[
    \iota_{u_j} \hat{f}^i \iota_{w_i} \alpha + \iota_{u_j} g^i \alpha_i = 0\,,
    \]
    so that, by Eq. \eqref{eq:condition_k_iso}, we have $\hat{f}^i = g^i = 0$, finishing the proof.
\end{enumerate}
\end{proof}

\begin{proof} [Proof of Theorem \ref{thm:casuistic_of_representation}] Assume that the representation is neither irreducible, nor decomposable into a direct sum of representations and such that it does not admit an invariant $j$-isotropic subspace.

Then, there exists a proper $G$-invariant subspace $\{0\} \subsetneq W \subsetneq V$ and there are none of these that are $k$-isotropic. Consider
\[
W^{\perp, k} := \{v \in V \colon \iota_v \iota_{w_k} \cdots \iota_{w_1}\varpi = 0 \,, \forall w_1, \dots, w_k \in W \}\,.
\]
Then, $W^{\perp, k}$ is $G$-invariant hence so is $W \cap W^{\perp, k}$. However, by construction $W \cap W^{\perp, k}$ is $k$-isotropic, so we necessarily have $W \cap W^{\perp, k} = \{0\}$. If we have $W^{\perp, k} \neq \{0\}$, we may consider $\widetilde W := W \oplus W^{\perp, k}$ and iterate the argument. Eventually, at a finite step, since the representation does not decompose as a sum of representations, we will obtain an invariant $W$ such that $W^{\perp, k} = \{0\}$, and by applying Lemma \ref{lemma:semi_finite_condition}, we finish the proof.
\end{proof}

We now give some results regarding multisymplectic representations with an invariant $W$ satisfying $W^{\perp, k} = \{0\}$.

\begin{proposition}
    \label{prop:injective_mapping} Let $\rho \colon G \rightarrow \GL(V)$ be a multisymplectic representation and suppose that there is an invariant subspace $W$ with $W^{\perp, k} = \{0\}$. Then, the map 
    \[
    \Phi \colon \mathfrak{g} \rightarrow \rho_W(\mathfrak{g})
    \]
    given by restriction is injective.
\end{proposition}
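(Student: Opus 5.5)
The plan is to show that the kernel of the restriction map $\Phi$ is trivial, working at the level of the Lie algebra $\mathfrak{g} \subseteq \mathfrak{g}_\varpi \subseteq V^\ast \otimes V$. Without loss of generality we identify $\mathfrak{g}$ with its image $\rho_\ast(\mathfrak{g})$, as in the proof of Lemma \ref{lemma:semi_finite_condition}. So take $T \in \mathfrak{g}$ with $T|_W = 0$; we must show $T = 0$. By Proposition \ref{prop:description_of_Lie_algebra}, $T$ satisfies $\iota_T \varpi = 0$. Since $W$ is invariant, $T(W) \subseteq W$, and by hypothesis $T(w) = 0$ for all $w \in W$.

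The key step is to evaluate $\iota_T \varpi$ on one arbitrary vector $v \in V$ together with $k$ vectors $w_1, \dots, w_k \in W$. In the sum
\[
0 = (\iota_T\varpi)(v, w_1, \dots, w_k) = \varpi(T v, w_1, \dots, w_k) + \sum_{i=1}^k \varpi(v, w_1, \dots, T w_i, \dots, w_k)\,,
\]
every term with $T$ applied to some $w_i$ vanishes. Therefore $\varpi(Tv, w_1, \dots, w_k) = 0$ for all $w_1, \dots, w_k \in W$. This says exactly that $\iota_{Tv \wedge w_1 \wedge \cdots \wedge w_k}\varpi = 0$ for all choices of $w_i \in W$, that is, $Tv \in W^{\perp, k}$.

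Since $W^{\perp, k} = \{0\}$, we get $Tv = 0$ for every $v$, hence $T = 0$, and $\Phi$ is injective.

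The only point needing care is the bookkeeping of the definition of $W^{\perp,k}$: with $\varpi$ of degree $k+1$, the contraction $\iota_{v \wedge w_1 \wedge \cdots \wedge w_k}\varpi$ is a scalar. Vanishing for all $w_i$ is therefore precisely the condition obtained above, up to an irrelevant sign from reordering the arguments. I do not expect any genuine obstacle beyond this. If one wants the group-level statement, one could add that injectivity on the Lie algebra gives injectivity of $\rho_W$ on the identity component, but the statement concerns $\mathfrak{g}$, so the argument above suffices.
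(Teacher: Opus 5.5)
Your proof is correct and is essentially the paper's argument. The paper writes $T$ in a basis $\{w_i, u_j\}$ adapted to $W$ and reads off the coefficient of $u^j \wedge (\cdot)$ in $\iota_T\varpi = 0$, which amounts to evaluating $\iota_T\varpi$ on $(u_j, w_1, \dots, w_k)$ to conclude $T(u_j) \in W^{\perp,k} = \{0\}$; you carry out the same evaluation basis-free for an arbitrary $v \in V$.
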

\begin{proof} Let $\{w_i, u_j\}$ be a basis of $V$ such that $W = \langle w_i\rangle$. Let 
\[
T = A^i_j w^j \otimes w_i + u^j \otimes \left(B_j^i u_i + C_j^i w_i\right) \in \mathfrak{g} \subseteq \mathfrak{g}_\varpi\,.
\]
Assume $\Phi(\xi) =0$, so that $A^i_j = 0$. Suppose that in this basis
\[
\varpi  = \alpha + u^i \wedge \alpha_i + \text{terms involving $u^i \wedge u^j$}
\]
and that $\iota_T \varpi = 0$. Then, we have 
\[
0 = u^j \wedge \left( C^i_j \iota_{w_i} \alpha + B^i_j \alpha_i\right) + \text{terms involving $u^i \wedge u^j$}\,.
\]
In particular we have $C^i_j \iota_{w_i} \alpha + B^i_j \alpha_i = 0$, for every $j$. This implies that the vectors $C^i_j w_i + B^i_j u_i \in W^{\perp, k} = \{0\}$, so that $C^i_j = B^i_j = 0$ and the result follows. 
\end{proof}

 Finally, it will be of interest to have an explicit description of the Lie algebra $\mathfrak{g}_\varpi$ in the presence of a $1$-isotropic invariant subspace:

\begin{proposition}
\label{prop:explicit_description_of_alegbra_1isotropic}
Let $\varpi \in \bigwedge^k V^\ast$ be of $1$-isotropic type (so that there is a $G_\varpi$-invariant and $1$-isotropic subspace $W$). Suppose that we can find a $k$-isotropic complement $L \oplus W = V$ (not necessarily $G_\varpi$-invariant). Define the following elements:
\begin{enumerate}[\rm (i)]
    \item We first define\[
    S := \{\alpha \in \bigwedge^{k-1} L^\ast \colon \alpha = \iota_v \varpi \,, \text{ for some } v \in W \}.
        \]
        and let \[
    \mathfrak{g}_S := \{T \in L^\ast \otimes L \colon \iota_T S \subseteq S\}
    \]
denote the Lie algebra of symmetries of $S$.
\item Let
$
\mathcal{A}_\varpi := \{\alpha \in L^\ast \otimes S \colon \partial \alpha = 0\}\,.
$ Here, $\partial$ denotes the Spencer differential.
\end{enumerate}
Then, by interpreting $\mathcal{A}_\varpi$ as an abelian Lie algebra, there is an isomorphism
\[
\mathfrak{g}_\varpi \cong \mathfrak{g}_S \ltimes \mathcal{A}_\varpi\,,
\]
where the action of $\mathfrak{g}_S$ on $\mathcal{A}_\varpi$ is given by contraction.
\end{proposition}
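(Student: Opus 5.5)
The plan is to write every $T\in\mathfrak{g}_\varpi$ in block form with respect to $V=L\oplus W$, expand $\iota_T\varpi=0$ by bidegree, and read off the two factors of the semidirect product from the resulting equations.

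\emph{Step 1: normal form of $\varpi$.} Since $L$ is $k$-isotropic, $\varpi$ vanishes when all its arguments lie in $L$. Since $W$ is $1$-isotropic, $\varpi$ vanishes when two of its arguments lie in $W$. Hence, under $\bigwedge V^\ast\cong\bigwedge L^\ast\otimes\bigwedge W^\ast$, the form $\varpi$ lies purely in $W^\ast\otimes\bigwedge^{k-1}L^\ast$. In particular $\varpi$ is determined by the linear map
\[
\flat\colon W\to S,\qquad w\mapsto (\iota_w\varpi)|_L .
\]
This map is surjective by the definition of $S$. It is injective because if $(\iota_w\varpi)|_L=0$, then $\iota_w\varpi=0$ (the $W$-components vanish by $1$-isotropy), and nondegeneracy gives $w=0$. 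So $\flat\colon W\cong S$, and I will use this identification throughout.

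\emph{Step 2: block decomposition and the symmetry equations.} Because $W$ is $G_\varpi$-invariant, every $T\in\mathfrak{g}_\varpi$ satisfies $T(W)\subseteq W$. We can therefore write $T=A+B+C$ with
\[
A\in W^\ast\otimes W,\qquad B\in L^\ast\otimes L,\qquad C\in L^\ast\otimes W .
\]
I expand $\iota_T\varpi$ using Proposition \ref{prop:description_of_Lie_algebra} and sort the terms by how many arguments lie in $W$.
\begin{itemize}
\item Terms with two or more $W$-arguments vanish identically, by $1$-isotropy and $T(W)\subseteq W$.
\item Terms with one $W$-argument give
\[
\flat(Aw)+\iota_B(\flat w)=0\qquad\text{for all } w\in W .
\]
Here $\iota_B$ denotes the derivation action of $B$ on $\bigwedge^{k-1}L^\ast$.
\item Terms with no $W$-argument give $\sum_i \pm\,\varpi(Cl_i,l_1,\dots,\widehat{l_i},\dots)=0$. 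By Proposition \ref{prop:explicit_description_of_partial}, this says exactly that $\partial(\flat\circ C)=0$ for $\flat\circ C\in L^\ast\otimes S$.
\end{itemize}
The one-$W$-argument equation says that $\iota_B S\subseteq S$, i.e.\ $B\in\mathfrak{g}_S$, and that $A=-\flat^{-1}\circ\iota_B\circ\flat$ is determined by $B$. The no-$W$-argument equation says $\flat\circ C\in\mathcal{A}_\varpi$.

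Conversely, take any $B\in\mathfrak{g}_S$, set $A$ by the formula above, and take any $C$ with $\flat\circ C\in\mathcal{A}_\varpi$. Then $A+B+C$ satisfies $\iota_{A+B+C}\varpi=0$. Hence
\[
T\mapsto (B,\flat\circ C)
\]
is a linear bijection $\mathfrak{g}_\varpi\to\mathfrak{g}_S\oplus\mathcal{A}_\varpi$.

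\emph{Step 3: brackets.} Let $\mathfrak{h}$ be the set of elements with $C=0$; it is the stabilizer of $L$ inside $\mathfrak{g}_\varpi$.
\begin{itemize}
\item $\mathfrak{h}$ is a subalgebra, since it is the intersection of $\mathfrak{g}_\varpi$ with the algebra preserving both $L$ and $W$. The map $B\mapsto A+B$ is a Lie algebra isomorphism $\mathfrak{g}_S\to\mathfrak{h}$, because $B\mapsto A$ is (minus) the induced representation of $\mathfrak{g}_S$ on $S\cong W$.
\item The $C$-parts form an abelian ideal: $C_1C_2=0$, since each $C_i$ kills $W$ and has image in $W$.
\item The mixed bracket is $[A+B,C]=AC-CB$. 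Composing with $\flat$ gives
\[
-\iota_B\circ(\flat C)-(\flat C)\circ B ,
\]
which is the natural contraction action of $B$ on $L^\ast\otimes S$.
\end{itemize}
That this action preserves $\mathcal{A}_\varpi$ needs no separate check: the bracket stays in $\mathfrak{g}_\varpi$, and Step 2 then puts its $C$-part in $\mathcal{A}_\varpi$. This yields $\mathfrak{g}_\varpi\cong\mathfrak{g}_S\ltimes\mathcal{A}_\varpi$.

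The main obstacle I expect is sign bookkeeping in Step 2. I need to check that the alternating sum arising from $\iota_C\varpi$ on $L^{\otimes k}$ matches the Spencer differential $\partial$ on $L^\ast\otimes S\subseteq L^\ast\otimes\bigwedge^{k-1}L^\ast$ exactly, and that the "contraction" action in the statement carries the correct sign on the $A$-term. I would settle both by testing on decomposable elements $\eta\otimes v$, using $\iota_{\eta\otimes v}\varpi=\eta\wedge\iota_v\varpi$.
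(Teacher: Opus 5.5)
Your proposal is correct and follows essentially the same route as the paper's proof. Both use the normal form $\varpi = p^j\wedge\alpha_j$ and the block decomposition of $T$ forced by invariance of $W$. Both split $\iota_T\varpi=0$ into $\iota_{\widetilde T}\alpha_j=-A^i_j\alpha_i$ and $\gamma^j\wedge\alpha_j=0$, take the bijection $T\mapsto(\widetilde T,\gamma^j\otimes\alpha_j)$ with injectivity from nondegeneracy, and finish with a bracket computation. What you do differently is presentation only: the basis-free identification $\flat\colon W\cong S$ and the structural handling of the brackets (block-diagonal subalgebra, square-zero ideal, mixed term $-\iota_B(\flat\circ C)-(\flat\circ C)\circ B$) stand in for the paper's coordinate computation.
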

\begin{proof} Denote by $\{l_i, p_j\}$ an adapted basis to the decomposition $V = L \oplus W$. Notice that since $W$ is $1$-isotropic and $L$ is $k$-isotropic we have that there are $(k-1)$-forms $\alpha_j \in \bigwedge^{k-1} L^\ast$ satisfying
\[
\varpi = p^j \wedge \alpha_j\,.
\]
Then, since $W$ is invariant, the most general element $T \in  \mathfrak{g}_\varpi$ is of the form
\[
T = \widetilde T + \gamma^j \otimes p_j + A^j_{i} p^i \otimes p_j\,,
\]
where $\gamma^j \in L^\ast$ and $\widetilde T \in L^\ast \otimes L$. We have that $\iota_T \varpi = 0$ is equivalent to
\[
\iota_{\widetilde T} \alpha_j = - A^i_j \alpha_i \, \qquad \gamma^j \wedge \alpha_j = 0\,.
\]
$\widetilde T$ satisfying the first condition is equivalent to $\widetilde T \in \mathfrak{g}_S$, while the family $\gamma^j$ satisfying the latter is equivalent to $\gamma^j \otimes \alpha_j \in \mathcal{A}_\varpi$. Let us define the map
\[
\Phi \colon \mathfrak{g}_\varpi \rightarrow \mathfrak{g}_S \oplus \mathcal{A}_\varpi \,, \qquad \Phi(T) = (\widetilde T,  \gamma^j \otimes \alpha_j)\,.
\]
Then $\Phi$ is a linear isomorphism. It is evidently surjective, and injectivity follows by non-degeneracy of $\varpi$. Now, let us compute how it relates to the commutator:
\begin{align*}
    \Phi([T_1, T_2]) &= \Phi\left([\widetilde T_1, \widetilde T_2] +  \iota_{\widetilde T_1} \widetilde \gamma^j \otimes p_j - \iota_{\widetilde T_2} \gamma^j \otimes p_j - \widetilde A^i_j \gamma^j \otimes p_i + A^i_j \widetilde \gamma^j \otimes p_i + (A^j_k \widetilde A^k_i - \widetilde A^j_k A^k_i) p^i \otimes p_j\right)\\
    &= \left([\widetilde T_1, \widetilde T_2],\iota_{\widetilde T_1} \widetilde \gamma^j \otimes \alpha_j - \iota_{\widetilde T_2} \gamma^j \otimes \alpha_j - A^i_j \widetilde \gamma^j \otimes \alpha_i + \widetilde A^i_j  \gamma^j\otimes \alpha_i  \right)\\
    &= \left([\widetilde T_1, \widetilde T_2] , \iota_{\widetilde T_1} \widetilde \gamma^j \otimes \alpha_j - \iota_{\widetilde T_2} \gamma^j \otimes \alpha_j + \widetilde \gamma^j \otimes \iota_{\widetilde T_1} \alpha_j - \gamma^j \otimes \iota_{\widetilde T_2} \alpha_j\right)\,,
\end{align*}
so that we obtain the semi-direct product structure, with the representation of $\mathfrak{g}_S$ on $\mathcal{A}_\varpi$ given by
\[
T \cdot (\gamma \otimes \alpha) = (\iota_T \gamma) \otimes \alpha + \gamma \otimes \iota_T \alpha = \iota_T(\gamma \otimes \alpha)\,.
\]
\end{proof}

The above result may be used to give an explicit computation of the extensions in a plethora of scenarios:

\begin{corollary}
\label{cor:extensions_in_1_isotropic_case}
In the hypotheses of the Proposition before, we have that
\[
\mathfrak{g}_\varpi^{(j)} \cong \mathfrak{g}_S^{(j)} \oplus \{\alpha \in S^{(j+1)}(L^\ast) \otimes S \colon \partial \alpha = 0\}\,.
\]
\end{corollary}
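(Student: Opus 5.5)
The plan is to work in an adapted basis $\{l_i, p_j\}$ of $V = L \oplus W$, as in the proof of Proposition \ref{prop:explicit_description_of_alegbra_1isotropic}. There $\varpi = p^j \wedge \alpha_j$ with $\alpha_j \in \bigwedge^{k-1} L^\ast$, and the $\alpha_j$ are linearly independent by non-degeneracy. Every $T \in \mathfrak{g}_\varpi$ has the form $\widetilde T + \gamma^j \otimes p_j + A^j_i p^i \otimes p_j$, subject to $\iota_{\widetilde T}\alpha_j = -A^i_j \alpha_i$ and $\gamma^j \wedge \alpha_j = 0$. By linear independence of the $\alpha_j$, the matrix $A = A(\widetilde T)$ is uniquely and linearly determined by $\widetilde T$. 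I will then define a map on $\mathfrak{g}_\varpi^{(j)}$ playing the role of the $\Phi$ of that proposition, and show it is a linear isomorphism.

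First I decompose a general $f \in \mathfrak{g}_\varpi^{(j)} \subseteq S^{j+1}(V^\ast)\otimes V$ as $f = f^L + f^W$ according to $V = L \oplus W$, and contract with $j$ arbitrary vectors. Since every element of $\mathfrak{g}_\varpi$ has $L$-component in $L^\ast \otimes L$, every $j$-fold contraction of $f^L$ has no $p^i$ terms. Hence $f^L \in S^{j+1}(L^\ast)\otimes L$, and since those contractions lie in $\mathfrak{g}_S$, we get $f^L \in \mathfrak{g}_S^{(j)}$.

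Next I expand $f^W = a^i(l)\otimes p_i + b^i(l,p)\otimes p_i$, where $b^i$ has degree at least one in $p$. If one of the contracting vectors is some $p_m$, then the $\widetilde T$-part of the contraction vanishes, so the $A$-part must vanish too. This kills every monomial of degree $\geq 2$ in $p$, so $b^i = p^m \odot c^i_m(l)$ with $c^i_m \in S^j(L^\ast)$. Contracting only with vectors of $L$ then forces $c$ to equal $A(f^L)$, the polynomial version of $A(\widetilde T)$; this is well defined because $A$ depends linearly on $\widetilde T$. Finally, the $\gamma$-parts of all contractions of $a^i \otimes p_i$ satisfy $\gamma^i\wedge\alpha_i = 0$. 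Using Proposition \ref{prop:explicit_description_of_partial}, which shows that $\partial$ commutes with contraction in the symmetric slots, this says exactly that $a^i \otimes \alpha_i \in S^{j+1}(L^\ast)\otimes S$ is $\partial$-closed. So I set $\Phi(f) = (f^L, a^i\otimes\alpha_i)$. Injectivity is immediate: $f^L$ and $a$ determine $c$, hence all of $f$.

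For surjectivity, given $(F, a^i \otimes \alpha_i)$, I define $f := F + a^i\otimes p_i + p^m \odot A(F)^i_m \otimes p_i$ and must check that every $j$-fold contraction lies in $\mathfrak{g}_\varpi$. Contractions with $j$ vectors of $L$ are fine by construction. The delicate case, which I expect to be the main obstacle, is a contraction involving at least one $p_m$. There the term $A(F)$ contributes a $\gamma$-type piece, namely the polynomial $A(F)^i_m$ contracted with the remaining vectors. I must show that it satisfies $\gamma^i \wedge \alpha_i = 0$, and also that contractions with two or more $p$'s vanish, which holds since $f$ is linear in $p$.

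My first attempt at the delicate check is to contract the identity $\iota_{F}\alpha_m = -A(F)^i_m\alpha_i$ appropriately and wedge. The condition amounts to $\sum_i \iota_u A(F)^i_m \, \alpha_i$ being compatible with the symmetry of $F$ in its arguments. I expect this to reduce to the symmetry of $F \in S^{j+1}(L^\ast)\otimes L$ together with the identity $\iota_{\eta \otimes v}\varpi = \eta\wedge\iota_v\varpi$. If this does not close, the fallback is to note that the symmetric-part constraint may force it anyway: $\iota_{p_m}$ of $f$ must equal the $p^m$-derivative, and that constraint should be automatic.
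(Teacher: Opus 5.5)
\paragraph{Verdict: genuine gap at the step you flag.} Everything before that step is correct. Step 1, the elimination of monomials of $p$-degree $\geq 2$, the identification of the $p$-linear part with $p^m\odot A(f^L)^i_m\otimes p_i$, and the closedness of $a^i\otimes\alpha_i$ together show that $\Phi(f)=(f^L,a^i\otimes\alpha_i)$ is a well-defined \emph{injection} of $\mathfrak g_\varpi^{(j)}$ into $\mathfrak g_S^{(j)}\oplus\{\alpha\in S^{j+1}(L^\ast)\otimes S\colon\partial\alpha=0\}$.

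Its image is the set of pairs $(F,a)$ that in addition satisfy, for every $m$,
\[
\partial\big(A(F)^i_m\otimes\alpha_i\big)=0 \qquad\text{in } S^{j-1}(L^\ast)\otimes \bigwedge^k L^\ast .
\]
This is exactly the condition that contractions with one $p_m$ and $j-1$ vectors of $L$ have $\gamma$-part satisfying $\gamma^i\wedge\alpha_i=0$. So the corollary is equivalent to this condition holding automatically for every $F\in\mathfrak g_S^{(j)}$, and you do not prove it. Your ``contract and wedge'' idea is left as an expectation. The fallback does not address the issue: agreement of $\iota_{p_m}f$ with the $p^m$-derivative is just the symmetry of $f$. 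You already used that symmetry to write the $p$-linear part as $p^m\odot A(F)^i_m$, and it imposes nothing on $\gamma^i\wedge\alpha_i$, which is a genuine relation between $F$ and the forms $\alpha_i$. The paper gives no argument here either. Since prolongation depends on the embedding $\mathfrak g_\varpi\subseteq V^\ast\otimes V$ and not only on the semidirect-product structure, this step is the real content of the corollary.

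\paragraph{The missing idea.} Read the defining relation of $A$ as an identity of polynomial differential forms on $L$, as follows.
\begin{enumerate}[\rm (i)]
\item Regard $F$ as a polynomial vector field and the $\alpha_i$ as constant-coefficient, hence closed, forms.
\item For constant $\alpha$, one has $(\pounds_F\alpha)|_l=\iota_{\d F(l)}\alpha$ with the derivation convention of Proposition \ref{prop:description_of_Lie_algebra}. Moreover $\d F(l)\in\mathfrak g_S$ for every $l$, because $F\in\mathfrak g_S^{(j)}$.
\item Hence $\pounds_F\alpha_m=-c^i_m\,\alpha_i$, where $c^i_m(l)=A(\d F(l))^i_m$ is, up to normalization, your $A(F)^i_m$.
\item Applying $\d$ gives $0=\pounds_F\d\alpha_m=\d\pounds_F\alpha_m=-\d c^i_m\wedge\alpha_i$.
\item By polarization, this is precisely the displayed condition, since the coboundary on $S^{\bullet}(L^\ast)\otimes\bigwedge^{\bullet}L^\ast$ is the de Rham differential on polynomial forms (cf.\ Remark \ref{remark:De_Rham_identification}).
\end{enumerate}
Algebraically, Cartan's formula says the derivation action of $F$ on $\alpha_m$ is the coboundary of the contraction of $F$ into $\alpha_m$. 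It is therefore closed because the coboundary squares to zero, and this is where the symmetry of $F$ actually enters. With this identity inserted, all contractions of $f=F+a^i\otimes p_i+p^m\odot A(F)^i_m\otimes p_i$ lie in $\mathfrak g_\varpi$, and $\Phi$ is an isomorphism.
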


\subsection{Flatness theorems}
\label{subsection:flatness_results}

In this section, we study flatness results of $G_\varpi$-structures. Equivalently, conditions on a form of constant linear type to be of constant coefficients.

In the general context of the theory of $G$-structures, one could state the following conjecture
\[
\text{ \textbf{Conjecture:} \qquad Every formally flat $G$-structure is flat}\,.
\]
If this conjecture holds, the differential conditions on a form $\varpi$ to admit a chart in which it has constant coefficients would be given by vanishing of the structure tensors taking values in the bundle with standard fiber $H^{2, j}(\mathfrak{g}_\varpi)$.

As far as we are aware, in the literature there is a debate regarding whether the previous conjecture is a proved result or not (see \cite{Cattafi2020AlmostStructures}). We are aware of several papers in which the authors claim they prove the previous conjecture to be true, namely \cite{Pollack1974, ButtinMolino1974, Goldschmidt1978}, and in the second one they only refer to the rest as proof attempts. The difficult step is the following: If $G$ does not act irreducibly, we know that there are induced $\rho_W(G)$ and $\rho_{V/W}(G)$-structures on the leaves and on the space of leaves, respectively. One may try an iterative argument to proving the previous conjecture and use that there exist coordinates on the leaves and on the leaf space (the local quotient by the foliation) that make their respective structures flat. These coordinates can be understood as a coordinate chart on $M$. However, they do not necessarily make the original $G$-structure flat. In general, it is hard to guarantee that there is a modification of the coordinates that proves flatness.

However, the case of structures induced by multisymplectic forms, this last step may be amended through the \emph{Moser trick}. Let $\varpi \in \bigwedge^{k+1} V^\ast$  and let $M$ be endowed with a $G_\varpi$-structure. Let $S := \{\iota_v \varpi \colon v \in V\} \subseteq \bigwedge^k V^\ast$ and define:
\[
G_S := \{g \in \GL(V) : g^\ast S = S\}\,.
\]
Notice that we have an inclusion $G_\varpi \subseteq G_S$. In fact, on $M$ there is an induced $G_S$-structure (which is the bundle $\{\iota_v \omega \colon v \in \T M\} \subseteq \bigwedge^k \T^\ast M$).

\begin{proposition}
\label{prop:Moser_trick}
A closed form $\omega \in \Omega^{k+1}(M)$ with constant linear type admits a coordinate chart in which it has constant coefficients if and only if there is a coordinate chart on $M$ in which the bundle \[
\{\iota_v \omega \colon v \in \T M \} \subseteq \bigwedge^k \T^\ast M
\]
is generated by forms of constant coefficients.
\end{proposition}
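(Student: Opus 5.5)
Necessity is immediate: if $\omega = \varpi$ in some chart (constant coefficients), then for the coordinate vector fields $\partial_{x^i}$ the forms $\iota_{\partial_{x^i}}\omega$ have constant coefficients and generate the bundle $\{\iota_v\omega\}$ pointwise. All the work is in the converse, which I will prove with the Moser trick.

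First I normalize the chart. Fix a chart $(x^i)$ around $x_0$ in which the bundle $\mathcal{S} = \{\iota_v\omega\}$ is spanned by constant-coefficient $k$-forms $\sigma_1,\dots,\sigma_r$. Using a linear change of the chart, I may assume $\omega|_{x_0} = \varpi$ in these coordinates; the spanning forms stay constant. Then $\mathcal{S}_x = S := \{\iota_v\varpi\}$ for every $x$, viewed as a fixed subspace of $\bigwedge^k V^\ast$. Set $\omega_0 := \varpi$, a constant, closed form on the chart domain, and $\omega_t := \omega_0 + t(\omega-\omega_0)$ for $t\in[0,1]$. Write $\omega - \omega_0 = \d\lambda$ on a ball by the Poincar\'e lemma, using the radial homotopy operator centred at $x_0$ so that $\lambda(x_0)=0$.

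Next comes the Moser equation. I look for a time-dependent vector field $X_t$ with $\iota_{X_t}\omega_t = -\lambda$; then the flow $\phi_t$ satisfies $\frac{d}{dt}\phi_t^\ast\omega_t = \phi_t^\ast\bigl(\d\iota_{X_t}\omega_t + \omega-\omega_0\bigr)=0$, so $\phi_1^\ast\omega=\varpi$ has constant coefficients. Two issues must be settled for this to work.
\begin{enumerate}[\rm (a)]
\item $\omega_t$ must remain non-degenerate (of the right type) near $x_0$. This holds by openness of non-degeneracy, since $\omega_t(x_0)=\varpi$, after shrinking the neighbourhood.
\item Solvability of $\iota_{X}\omega_t = -\lambda$ requires $\lambda$ to take values in $\mathcal{S}$ and $\iota_{(\cdot)}\omega_t$ to map onto $S$ for every $t$.
\end{enumerate}

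For the second point, note that $\omega$ and $\omega_0$ both satisfy $\iota_v(\cdot)\in S$ for all $v$, so $\omega_t$ does as well. Non-degeneracy makes $v\mapsto\iota_v\omega_t$ injective into $S$. It is onto $S$ because $\dim S = \dim\{\iota_v\omega\} = n$.

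The main obstacle is arranging that $\lambda$ itself lies in $S$ pointwise; the Poincar\'e primitive need not. I would exploit the structure: $\omega-\omega_0$ is a $(k+1)$-form all of whose contractions lie in the constant space $S$. I would try to show that the radial homotopy $\lambda = \int_0^1 s^{k}\,\iota_{x}(\omega-\omega_0)(sx)\,\d s$ (with $x$ the position vector) works. Each integrand term is a contraction $\iota_{x}(\omega-\omega_0)(sx)$, which lies in $S$ since $S$ is a constant linear subspace. Hence $\lambda$, an integral of $S$-valued terms, lies in $S$. This resolves the obstacle, provided the contraction-by-radial-vector form of the homotopy is used (rather than an arbitrary primitive).

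With $\lambda\in S$ and $\lambda(x_0)=0$, $X_t$ is smooth and vanishes at $x_0$. The flow therefore exists up to $t=1$ on a smaller neighbourhood, and $\phi_1$ gives the desired chart.
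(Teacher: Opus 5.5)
Your proof is correct and is essentially the paper's Moser-trick argument; the paper uses separate primitives $\theta,\vartheta$ of $\omega$ and $\varpi$ rather than one primitive of $\omega-\varpi$, which is immaterial. Your use of the radial homotopy operator, which is a contraction with the Euler field, supplies the justification the paper only asserts for why the primitive is $S$-valued and why $x_0$ is a fixed point of the flow. The one quibble is the parenthetical ``of the right type'' in (a): openness gives non-degeneracy of $\omega_t$ but not constancy of its linear type, and in any case only non-degeneracy is used in (b).
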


\begin{proof} If $\omega$ admits a chart in which it has constant coefficients, it follows that the bundle $\{\iota_v \omega \colon v \in \T M\}$ is generated by forms with constant coefficients. Conversely, suppose that the previous bundle is generated by forms of constant coefficients, say $\alpha_1, \dots, \alpha_n$ in some chart $(x^j)$ around a particular point in $M$, which we denote by $p$. Without loss of generality, we may assume that $\omega|_p = \varpi$. We apply the Moser trick now. By applying the Poincaré Lemma in the previous coordinates, we can find $k$-forms $\theta$, $\vartheta$ which take values in the bundle generated by $\alpha_i$ and such that $\omega = \d \theta$ and $\varpi = \d \vartheta$. Let us define \[
\omega_t := (1-t) \omega + t \varpi\,.
\]
Then, $\omega_t$ is multisymplectic in some neighborhood of $p$ and, more importantly, it satisfies that
\[
\{\iota_v \omega_t \colon v \in \T M\} = \langle \alpha_i \rangle\,,
\]
for $t \in [0,1]$ if this neighborhood is made smaller. It follows that there exists a unique time-dependent vector field such that
\[
\iota_{X_t} \omega_t = \theta - \vartheta\,.
\]
Now we employ the Moser trick \cite{mos1965trans.am.math.soc.} to show that the time $1$ flow of the previous vector field gives a diffeomorphism that flattens $\omega$. Indeed, let $\psi_t$ denote this flow. Since there is an equilibrium at $p$, by reducing the neighborhood further we may suppose that this flow is defined for $t \in [0,1]$. Now we compute the derivative:
\begin{align*}
    \dv{t} \left( \psi_t^\ast \omega_t\right) &= \psi_t^\ast \left(\pounds_{X_t} \omega_t + \dot \omega_t\right)\\
    &= \psi^\ast_t \left( \d (\theta - \vartheta) - \omega + \varpi\right) = 0\,,
\end{align*}
which shows that $\psi_1$ satisfies $\psi_1^\ast \varpi = \omega$, and finishes the proof.
\end{proof}

\begin{remark}[Moser's trick adapted to a distribution]
\label{remark:Moser_trick_adapted}
In some scenarios, it is sufficient to look for vector fields taking values in a particular distribution. Exactly the same argument follows when we are capable of justifying that the potential may be taken to have values in the bundle $\{\iota_v \omega \colon v \in \mathcal{D}\}$, where $\mathcal{D} \subseteq \T M$ is the distribution. 
\end{remark}



Hence, to flatten a multisymplectic form of constant linear type $\omega \in \Omega^{k+1}(M)$ it is enough to flatten its image. We now study the obstructions to achieve this in the casuistic given before: Let us first give a sufficient condition for finding flat coordinates in the $j$-isotropic case:

\begin{theorem}
\label{thm:sufficient_condition_flatness_general}
Let $\varpi \in \bigwedge^{k+1} V^\ast$ be of $j$-isotropic type, with $j$-isotropic subspace $W$ such that it admits a (not necessarily invariant) $(k+1-j)$-isotropic complement $L$. Let $\pi  \colon B \rightarrow M$ be a formally flat $G_\varpi$-structure on $M$. Suppose that there are coordinates $(w^i, x^\mu)$ satisfying:
\begin{enumerate}[\rm (i)]
    \item They are adapted to the foliation induced by $W$, namely
    \[
B[W] = \left \langle \pdv{w^i}\right \rangle
\]
    \item They make flat the geometry induced in the leaves and in the quotient.
    \item The distribution $\left\langle \pdv{x^\mu}\right\rangle$ is $(k+1-j)$-isotropic.
\end{enumerate}
Then, the $G_\varpi$-structure is flat.
\end{theorem}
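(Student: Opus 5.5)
The plan is to reduce the statement to the Moser trick in the form of Proposition \ref{prop:Moser_trick} and Remark \ref{remark:Moser_trick_adapted}. We work in the coordinates $(w^i,x^\mu)$ given by the hypotheses and look for a constant-coefficient model $\varpi_0$ to which $\omega$ can be deformed. A further coordinate change must then flatten $\omega$ without destroying the adapted structure.

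\textbf{Step 1: normal form of $\varpi$ and $\omega$.} Choose a basis adapted to $V=L\oplus W$. Since $W$ is $j$-isotropic, every monomial of $\varpi$ contains at most $j$ covectors $w^\ast$. Since $L$ is $(k+1-j)$-isotropic, every monomial contains at most $k+1-j$ covectors $l^\ast$. Hence every monomial has exactly $j$ factors from $W^\ast$ and $k+1-j$ from $L^\ast$, so
\[
\varpi\in \textstyle\bigwedge^{j}W^\ast\otimes\bigwedge^{k+1-j}L^\ast .
\]
Pointwise on $M$, hypotheses (i) and (iii) say that $\langle\partial_{w^i}\rangle$ is $j$-isotropic (it is $B[W]$) and $\langle\partial_{x^\mu}\rangle$ is $(k+1-j)$-isotropic. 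Therefore
\[
\omega=\tfrac{1}{j!(k+1-j)!}\,\omega_{I\mathcal{M}}(w,x)\,\d w^{I}\wedge\d x^{\mathcal{M}},
\]
with no other bidegree components. Hypothesis (ii) should mean that the induced structures on the leaves (from $\rho_W(G_\varpi)$) and on the quotient (from $\rho_{V/W}(G_\varpi)$) have constant coefficients in these coordinates. Concretely, the frames $\partial_{w^i}$ restricted to leaves, and $\partial_{x^\mu}$ projected to the quotient, lie in the respective reduced structures. The key point to extract is that, at each point, there exist $g_W$ in the $W$-part and $g_{V/W}$ in the quotient part, both determined only up to the respective stabilizers, such that
\[
\omega=(g_W\oplus g_{V/W})^\ast\varpi_0 ,
\]
where $\varpi_0$ is the constant-coefficient form of linear type $\varpi$ in these coordinates. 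Because $\omega$ has pure bidegree, the off-diagonal parts of a frame act trivially on it. So the block-diagonal data furnished by (ii) should force $\omega_{I\mathcal{M}}$ to be constant modulo the block-diagonal subgroup of $G_\varpi$. I would then argue, using that the image bundle $\{\iota_v\omega\}$ equals $\{\iota_v\varpi_0\}$, that the hypotheses of Proposition \ref{prop:Moser_trick} hold.

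\textbf{Step 2: Moser path.} Set $\omega_t=(1-t)\omega+t\varpi_0$. Both forms are closed (closedness of $\omega$ follows from formal flatness, since $\d\omega$ is part of the order-zero obstruction) and both have the same bidegree. Choose the Poincaré homotopy along the $w$-directions to obtain a primitive
\[
\omega-\varpi_0=\d\eta,\qquad \eta\in\textstyle\bigwedge^{j-1}\langle\d w\rangle\otimes\bigwedge^{k+1-j}\langle\d x\rangle .
\]
Then solve $\iota_{X_t}\omega_t=-\eta$ with $X_t$ tangent to $\langle\partial_{w^i}\rangle$; this is possible because contraction by $W$-vectors lands in exactly that bidegree, as in Remark \ref{remark:Moser_trick_adapted}. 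The flow of $X_t$ preserves the foliation and fixes the base point, so its time-one map flattens $\omega$.

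\textbf{Main obstacle.} The hard part is Step 2's solvability. We must show that the map $W\ni v\mapsto\iota_v\omega_t$, composed with the projection to the right bidegree, hits $\eta$. This requires that $\eta$, after a closed correction, lie in $\{\iota_v\omega_t: v\in W\}$, and this is where the flatness of the leaf and quotient geometries from (ii) must be used. If this fails in general, I would first try to correct $\eta$ by a closed form of the same bidegree, using the Spencer-type complex $\mathcal{I}^{l,j}$ of Proposition \ref{prop:cohomology_isomorphism} to identify the obstruction, and show that formal flatness kills it.
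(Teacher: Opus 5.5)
Your Step 1 matches the paper's opening move. In a basis adapted to $V=L\oplus W$ one has $\varpi\in\bigwedge^{j}W^\ast\otimes\bigwedge^{k+1-j}L^\ast$, and (i), (iii) force $\omega=\mathrm{d}w^{i_1}\wedge\cdots\wedge\mathrm{d}w^{i_j}\wedge\alpha_{i_1\dots i_j}$ with the $\alpha_{i_1\dots i_j}$ only in the $\mathrm{d}x^\mu$.

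The gap is the next step, which is what actually carries the theorem once Proposition~\ref{prop:Moser_trick} is available. You never show that, in this chart, the bundle $\{\iota_v\omega : v\in\T M\}$ is spanned by constant-coefficient forms. You say that (ii) ``should force'' $\omega_{I\mathcal{M}}$ to be constant modulo a block-diagonal subgroup, and that you ``would then argue'' the hypothesis of the proposition. But the two diagonal blocks at a point come from possibly different elements of $G_\varpi$, so their direct sum need not preserve $\varpi$. Pointwise you only learn that $\omega$ lies in the orbit of $\varpi_0$ under $\rho_{V/W}(G_\varpi)\times\rho_W(G_\varpi)$, and you never identify what this makes constant in the chart.

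The paper extracts a specific fact from (ii). The span of the $(k+1-j)$-forms $\iota_{w_1\wedge\cdots\wedge w_j}\varpi$, $w_i\in W$, descends to $V/W$ (by $j$-isotropy) and is $G_\varpi$-invariant, hence preserved by $\rho_{V/W}(G_\varpi)$. So in coordinates flattening the transverse geometry there are constant-coefficient forms $\widetilde\alpha_{i_1\dots i_j}$ with $\langle\alpha_{i_1\dots i_j}\rangle=\langle\widetilde\alpha_{i_1\dots i_j}\rangle$. From this the paper concludes that the image bundle is flat in the chart. It then invokes Proposition~\ref{prop:Moser_trick} directly, with the Moser field ranging over all of $\T M$.

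Your Step 2 does not fill this hole. Restricting $X_t$ to $\langle\partial_{w^i}\rangle$ requires the fibrewise Poincaré primitive $\eta$ to lie pointwise in $\{\iota_v\omega_t : v\in B[W]\}$. That is a subbundle of rank at most $\dim W$ inside the forms of bidegree $(j-1,k+1-j)$. For $j\geq 2$ it is in general a proper subbundle, and nothing in your construction puts $\eta$ there, as you acknowledge. For $j=1$ this is the relative Poincaré lemma with values in $\langle\alpha_i\rangle$ used for Theorem~\ref{thm:Darboux_1_isotropic}. Even there it relies on the $\alpha_i$ having constant coefficients, i.e.\ on the missing fact.

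The fallback through $\mathcal{I}^{l,j}$ and formal flatness is not an argument as stated. That complex consists of polynomial cochains on the model space $V$, and you do not explain how a class there would control the smooth primitive $\eta$ on $M$. What is needed is the chart-level statement above; once it is established from (ii), Proposition~\ref{prop:Moser_trick} applies and Step 2 is unnecessary.
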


\begin{proof} Let $(w^i, x^\mu)$ be coordinates such as above. Then, the form $\omega$ induced by $\varpi$ reads as
\[
\omega= \d w^{i_1} \wedge \cdots \wedge \d w^{i_j} \wedge \alpha_{i_1 \dots i_j}\,,
\]
where $\alpha_{i_1 \dots i_j}$ are only in $\d x^\mu$. Then, let us observe that, because $(x^\mu)$ are coordinates adapted to the geometry transverse to the leaves, we have that there are forms $\widetilde \alpha_{i_1, \dots, i_j}$ of constant coefficients such that
\[
\langle \alpha_{i_1, \dots, i_j}\rangle = \langle \widetilde  \alpha_{i_1, \dots, i_j}\rangle\,.
\]
In particular, these coordinates make the distribution $\{\iota_v \omega \colon v \in \T M\}$ flat, and so, by Proposition \ref{prop:Moser_trick}, there are flat coordinates for $\omega$ (equivalently, the $G_\varpi$-structure).
\end{proof}

In general, it is hard to find the explicit coordinates that make the form as above. In fact, we can give some necessary conditions for these coordinates. This will result useful in Section \ref{section:obstruction_of_fixed_order}, where we will build certain linear types $\varpi_j$ with obstructions to integrability of order $j$.

\begin{proposition}
\label{prop_sufficiency_condition_solving_flatnes}
Let $\varpi \in \bigwedge^{k+1} V^\ast$ be of $j$-isotropic type such that it admits a (not necessarily invariant) $(k+1-j)$-isotropic complement. Let $M$ be a manifold endowed with a formally flat $G_{\varpi}$-structure. Then, locally around any point we can find a sequence $\alpha_{i_1, \dots, i_m} \in \Omega^{k+1-m}(M)$ of forms, for $i_1 < \cdots < i_m$ and $i_l \in \{1, \dots, \dim W\}$, $m \leq j$, such that if the $G_\varpi$-structure is integrable then the following hold:
\begin{enumerate}[\rm (i)]
    \item The leaf geometry and geometry transversal to the leaves is integrable.
    \item The following PDE on functions $f^i\in C^\infty(M)$ for $i \in \{1, \dots, \dim W\}$ admits at least one solution:
    \[
    \sum_{l = 0}^{j - m} \binom{m+l}{m}\d f^{i_1} \wedge \cdots \wedge \d f^{i_l} \wedge \alpha_{i_1 \cdots i_{l+m+1}} = 0\,.
    \]
\end{enumerate}
\end{proposition}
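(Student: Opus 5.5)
We will construct the forms $\alpha_{i_1\dots i_m}$ from adapted coordinates and then read off conditions (i) and (ii) from a flat chart. Fix a point and choose coordinates $(w^i,x^\mu)$ adapted to the foliation $B[W]$, so that $B[W]=\langle \partial/\partial w^i\rangle$. Such coordinates exist because formal flatness gives vanishing intrinsic torsion, hence a torsionless adapted connection; this makes the invariant distribution involutive. Since $W$ is $j$-isotropic and admits a $(k+1-j)$-isotropic complement, the form $\omega$ in any such chart decomposes according to the number of $\d w$ factors:
\[
\omega=\sum_{m=0}^{j}\d w^{i_1}\wedge\cdots\wedge \d w^{i_m}\wedge \alpha_{i_1\dots i_m},
\]
summed over $i_1<\cdots<i_m$, where each $\alpha_{i_1\dots i_m}\in\Omega^{k+1-m}(M)$ is taken to contain no $\d w$. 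No term has more than $j$ factors $\d w$, by $j$-isotropy of $W$. These $\alpha$'s are the forms in the statement; only the defining chart is needed, not integrability.

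Now assume the $G_\varpi$-structure is integrable, and let $(\bar w^i,\bar x^\mu)$ be flat coordinates. We adapt them to $W$ and to the complement $L$. Since $B[W]$ is the flat image of $W$, which is invariant, the flat chart can be chosen with $B[W]=\langle\partial/\partial\bar w^i\rangle$ and $\langle \partial/\partial \bar x^\mu\rangle$ corresponding to $L$. Condition (i) then follows immediately: the flat chart restricted to leaves, and the functions $\bar x^\mu$ which are constant on leaves, give flat charts for the leaf geometry and for the quotient geometry.

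For (ii), both charts are adapted to the same foliation, so $\bar x^\mu=\bar x^\mu(x)$. We write $\bar w^i=g^i(w,x)$ and compare. In the flat chart, $\omega$ has only terms with exactly $j$ factors $\d\bar w$, since $L$ is $(k+1-j)$-isotropic. We then invert the change: on the leaf space the $x^\mu$ are functions of $\bar x$, and we pass to new functions $f^i$ defined so that the coordinates $(w^i - f^i\text{-type correction}, x)$ kill the lower-order terms. Concretely, we look for a modification $w^i\mapsto w^i+f^i$ with $f^i$ depending on the coordinates such that the new transverse distribution becomes $(k+1-j)$-isotropic. Substituting $\d w^i=\d\tilde w^i-\d f^i$ into the decomposition and expanding by the binomial theorem in the number of $\d f$ factors, the coefficient of $\d\tilde w^{i_1}\wedge\cdots\wedge\d\tilde w^{i_m}$ for $m<j$ is
\[
\sum_{l}\binom{m+l}{m}\d f\wedge\cdots\wedge\d f\wedge\alpha_{\cdots}
\]
up to sign conventions. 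Requiring these coefficients to vanish is exactly the PDE in (ii). The flat chart supplies such $f^i$: take $f^i$ to be the difference between $w^i$ and the flat $\bar w^i$, after identifying the two transverse coordinate systems.

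The main obstacle is the bookkeeping in this last step. Three things must be checked: that the flat chart can indeed be chosen with $\langle\partial/\partial \bar x^\mu\rangle$ $(k+1-j)$-isotropic, that the binomial coefficients and the index ranges $l\le j-m$ come out exactly as stated, and that the components of the $\alpha$'s containing $\d w$ are handled correctly. For the last point we first try to arrange, using the leaf-flat coordinates, that the $\alpha$'s depend only on $\d x$.
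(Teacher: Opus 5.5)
Your set-up agrees with the paper's. The $\alpha_I$ are the $\d w$-free components of $\omega$ in a chart adapted to $B[W]$; your torsion-free-connection argument for the existence of such a chart fills in something the paper takes for granted. The flat chart can be taken adapted to $L\oplus W$ as you say, and (i) is fine.

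The gap is in the step that produces $f^i$. Your binomial expansion is valid only if $f^i$ depends on $x$ alone. Then $\d w^i=\d\tilde w^i-\d f^i$ with $\d f^i$ purely in the $\d x^\mu$, and the $\alpha_I$ are unchanged. Your candidate, the difference between $w^i$ and $\bar w^i$, is not of this kind. Even after identifying $x$ with $\bar x$, $\bar w^i-w^i=g^i(w,x)-w^i$ depends on $w$, since the Jacobian $\partial g^i/\partial w^k$ is not the identity in general. So $\d f^i$ has $\d w$-components, and the coefficient of $\d\tilde w^{I}$ is not the expression in (ii). Conversely, with $f=f(x)$, your goal of making the whole transverse coordinate distribution $(k+1-j)$-isotropic on an open set is in general unattainable, because the coefficients of the $\alpha_I$ depend on $w$. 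For instance, write the flat form $\sum_I\d\bar p_I\wedge\d x^I$ of Example~\ref{example:Multicotangent_bundle} (multi-indices $I$ of length less than $\dim L$) in the adapted chart $\bar p_I=e^{\phi(x)}p_I$. After $p_I=\tilde p_I+f_I(x)$, the $\d\tilde p$-free part of $\omega$ is $e^{\phi}\sum_I\bigl(\d f_I+(\tilde p_I+f_I)\,\d\phi\bigr)\wedge\d x^I$. This cannot vanish for all $\tilde p$ when $\d\phi\neq0$.

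The missing idea, which is the paper's, is to work along a single transversal rather than on an open set. Let $\Sigma=\{\bar w=\bar w(q)\}$ be the plaque of the flat chart through the base point $q$. Its tangent spaces are $\langle\partial/\partial\bar x^\mu\rangle$, so it is a $(k+1-j)$-isotropic submanifold transverse to the leaves. Since $\partial g/\partial w$ is invertible, the implicit function theorem writes it as a graph $w^i=f^i(x)$. In the coordinates $\tilde w^i=w^i-f^i(x)$, $\tilde x=x$, one has $\Sigma=\{\tilde w=0\}$ and $T\Sigma=\langle\partial/\partial\tilde x^\mu\rangle$ along $\Sigma$. Isotropy then kills the coefficient of $\d\tilde w^{I}$ for $|I|<j$ \emph{at points of $\Sigma$}. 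This is (ii), with the coefficients of the $\alpha_I$ evaluated at $w=f(x)$; in the example above, $f_I=c_Ie^{-\phi}$. In particular, there is nothing to arrange about the $\alpha_I$ beforehand. They have no $\d w$-components by construction, and their $w$-dependent coefficients are simply evaluated along the graph. The factors $\binom{m+l}{m}$ are just what the expansion gives when the index sums are unrestricted and the $\alpha$'s are skew in their indices.
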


\begin{proof} Let $(w^i, x^\mu)$ be fibered coordinates on $M$ adapted to the foliation induced by the invariant $G_\varpi$ subspace $W$, namely such that
\[
B[W] = \left\langle \pdv{w^i}\right\rangle\,.
\]
Then, since $W$ is $j$-isotropic, there is a sequence of forms $\alpha_{i_1, \dots, i_m} \in \Omega^{k+1-m}(M)$ only depending on $\d x^i$ such that
\[
\omega = \alpha + \d w^i \wedge \alpha_i + \cdots + \d w^{i_1}  \wedge \cdots \wedge \d w^{i_j}\wedge \alpha_{i_1, \dots, i_j}\,.
\]
Let us first assume that the structure is flat: Then, in particular, there is a transversal submanifold to the foliation, say $L$, which is $(k+1-j)$-isotropic. Taking $\widetilde L$ to be the leaf corresponding to the coordinates $w^i = \text{constant}$ and by repeating the argument above, we obtain a diffeomorphism $f \colon \widetilde L \rightarrow L$ which, again, in coordinates reads as $w^i = f^i(x^\mu)$. Since $\widetilde L$ is $j$-isotropic, if we write the form in the coordinates $\widetilde w^i = w^i + f^i$, we obtain conditions
\begin{align*}
    \omega =& \alpha + \d f^i \wedge \alpha_i + \d f^{i_1} \wedge \d f^{i_2} \wedge \alpha_{i_1 i_2} + \cdots +  \d f^{i_1} \wedge \cdots \wedge \d f^{i_j} \wedge \alpha_{i_1 \dots i_j}\\
    & + \d\widetilde  w^i \wedge \left(  \alpha_i  + 2 \d f^{i_2} \wedge \alpha_{i i_2} + \cdots + j \d f^{i_2} \wedge \cdots \wedge \d f^{i_{j}} \wedge \alpha_{i i_2, \dots, i_j}\right)\\
    & \qquad  \vdots \\
    & +\d \widetilde w^{i_1} \wedge \cdots \wedge \d \widetilde w^{i_{j-1}} \wedge \left(\alpha_{i_2 \dots i_{j}} + j \d f^{i_1} \wedge \alpha_{i_1,\dots, i_{j}} \right)\\
    & + \d \widetilde w^{i_1} \wedge \cdots \wedge \d \widetilde w^{i_j} \wedge \alpha_{i_1, \dots, i_{j}}\,,
\end{align*}
which on $L$ forces every term to vanish, except the last one, finishing the proof.
\end{proof}

However, when $j = 1$, one is always capable of finding coordinates such as in Theorem \ref{thm:sufficient_condition_flatness_general} when the geometry of the leaves and quotient is flat.

\begin{theorem}
\label{thm:Darboux_1_isotropic}
Let $\varpi \in \bigwedge^{k+1} V^\ast$ be a form of $1$-isotropic type. Then, a formally flat $G_\varpi$-structure is flat if and only if the induced geometries on the leaves and quotient are flat.
\end{theorem}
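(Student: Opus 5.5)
The forward implication is immediate. If the $G_\varpi$-structure is flat, then in a flat chart the invariant subspace $W$ gives a foliation by affine subspaces. The induced structures on the leaves and on the local quotient are then given by constant-coefficient data in the induced coordinates, so they are flat.

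For the converse, I will verify the hypotheses of Theorem \ref{thm:sufficient_condition_flatness_general} with $j=1$ and then apply it. Nondegeneracy forces $W\cap W^{\perp,1}$ to be small, but the real requirement is a $k$-isotropic complement $L$ of $W$.

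\emph{Linear step.} Since $W$ is $1$-isotropic, $\iota_{w_1\wedge w_2}\varpi=0$. In an adapted basis $(w_i,u_\mu)$ this gives
\[
\varpi=\alpha+w^i\wedge\alpha_i ,
\]
with $\alpha\in\bigwedge^{k+1}\operatorname{Ann}(W)$ and each $\alpha_i$ a $k$-form in the $u^\mu$. The complement $L'=\{u+\phi(u)\}$, for a linear map $\phi\colon L\to W$, is $k$-isotropic exactly when $\alpha+\phi^\ast(w^i)\wedge\alpha_i=0$ on $L$. This amounts to choosing $1$-forms $\gamma^i=\phi^\ast w^i$ on $L$ with $\gamma^i\wedge\alpha_i=-\alpha$. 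This linear solvability is part of what the transversal flatness will be used to guarantee. If it is not automatic, I will argue it from the fact that $\alpha$ is closed and lies in the ideal generated by the $\alpha_i$ at the formal level.

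\emph{Geometric step.} Choose coordinates $(w^i,x^\mu)$ adapted to the foliation $B[W]$ that flatten both the leaf and quotient geometries. Then
\[
\omega=\alpha+\d w^i\wedge\alpha_i ,
\]
where the $\alpha_i$ involve only $\d x^\mu$. Closedness of $\omega$ forces $\alpha_i$ and $\alpha$ to depend only on $x$, modulo terms handled by the leaf flatness. Flatness of the quotient geometry lets us take $\langle\alpha_i\rangle$ to be spanned by constant-coefficient forms. Next I look for a change $\widetilde w^i=w^i+f^i(x)$ making $\langle\partial_{x^\mu}\rangle$ $k$-isotropic. By Proposition \ref{prop_sufficiency_condition_solving_flatnes} with $j=1$, this is the PDE
\[
\alpha+\d f^i\wedge\alpha_i=0 .
\]

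\emph{Main obstacle.} The key difficulty is solving this nonhomogeneous first-order linear PDE in $f$. The plan is as follows. First, write the $\alpha_i$ (after a frame change) with constant coefficients. Second, use $\d\omega=0$ to get $\d\alpha=0$ and $\d\alpha_i=0$, which gives the compatibility condition. Third, use formal flatness, specifically the vanishing of the structure tensors in $H^{2,j}(\mathfrak g_\varpi)$ computed via Corollary \ref{cor:extensions_in_1_isotropic_case}, to conclude that the system admits formal power series solutions. Finally, upgrade formal solvability to a smooth local solution using the constant-coefficient Malgrange-type existence result from Appendix \ref{Appendix:Malgrange_Theorem}.

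With $f$ in hand, coordinates $(\widetilde w,x)$ satisfy (i)–(iii), and Theorem \ref{thm:sufficient_condition_flatness_general} yields flatness. Equivalently, Proposition \ref{prop:Moser_trick} applies, since $\{\iota_v\omega\}$ is now spanned by constant-coefficient forms.
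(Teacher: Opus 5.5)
Your outline follows the paper's route: adapted coordinates flattening the leaf and quotient geometries, constant $\alpha_i$, reduction through a transversal $k$-Lagrangian graph $w=f(x)$ to a first-order constant-coefficient system, and the H\"ormander-type result of Appendix \ref{Appendix:Malgrange_Theorem}. The problem is how you assemble the key step. Write the system as $P[D]f=\alpha$ with $P[D]f:=-\d f^i\wedge\alpha_i$.

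The appendix theorem does not turn formal solvability into smooth solvability. It needs $Q[D]\alpha=0$ for the full compatibility operator $Q$ of $P$. Closedness only gives the component $\d\alpha=0$, and nothing in your argument controls the other components. For operators of this shape those components are genuinely extra conditions: the system in the proof of Theorem \ref{thm:explicit_counterexample} has this form with $\alpha$ closed, yet is solvable only under a higher-order condition on the coefficient $f(y^2)$.

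The missing link is exactly the content of the paper's proof:
\begin{enumerate}
\item $Q$ has some finite order $N$.
\item Formal flatness gives, at each point $x$, a chart that is flat to order $N+1$ at $x$.
\item In that chart, the model's $k$-isotropic complement is transverse to $B[W]$, so it is a graph $w=g(x)$ in your coordinates.
\item The restriction of $\omega$ to this graph is $\alpha-P[D]g$, and its $N$-jet vanishes at $x$.
\item Since $Q[D]P[D]=0$, this gives $Q[D]\alpha(x)=0$.
\end{enumerate}
This holds at every point of a convex chart domain, so the appendix then produces a smooth $f$. You do not need to compute $H^{2,j}(\mathfrak g_\varpi)$. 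Corollary \ref{cor:extensions_in_1_isotropic_case}, which is about prolongations, plays no role.

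Your linear step also cannot be carried out as you propose. Whether $W$ admits a $k$-isotropic complement is a property of the model $\varpi$ alone. Concretely, it asks whether, in $\varpi=\alpha+w^i\wedge\alpha_i$, the component $\alpha$ lies in $L^\ast\wedge\langle\alpha_i\rangle$; this is independent of the choice of $L$. Neither closedness of $\omega$ nor formal flatness can produce it, and ``$\alpha$ lies in the ideal generated by the $\alpha_i$'' just restates it, so that argument is circular.

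The paper takes this as a hypothesis: it is assumed in Lemma \ref{lemma:flatness_1_isotropic}, through which Theorem \ref{thm:Darboux_1_isotropic} is proved. Without it, even the flat model is not of the form $\d\tilde w^i\wedge\alpha_i$. The equation $\alpha+\d f^i\wedge\alpha_i=0$ then has no solution at any point, because $\d f^i\wedge\alpha_i$ always lies in the ideal; its right-hand side would have to be a suitable constant $(k+1)$-form instead of $0$. So state the existence of the complement as an assumption rather than trying to derive it.
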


Before proving this result, we will prove the following Lemma.

\begin{lemma}
\label{lemma:flatness_1_isotropic}
Let $\varpi \in \bigwedge^{k+1} V^\ast$ be of $1$-isotropic type with invariant $1$-isotropic subspace $W$ admitting a (not necessarily invariant) $k$-isotropic complement. Let $\pi \colon B \rightarrow M$ be a formally flat $G_\varpi$-structure on a manifold $M$. Then, locally around every point there are certain $k$-forms $\alpha_i$, for $i \in \{1, \dots, \dim W\}$ of constant coefficients (in a particular chart) and a $(k+1)$-form $\alpha$ (not necessarily of constant coefficients) such that the $G_\varpi$-structure is flat if and only if
\begin{enumerate}[\rm (i)]
    \item The leaf geometry and the geometry transversal to the leaves are flat,
    \item The PDE on maps $f \in C^\infty(M)$ constant on the leaves given by
    \[
    \alpha = -\d f^i \wedge \alpha_i
    \]
    has at least one solution.
\end{enumerate}
\end{lemma}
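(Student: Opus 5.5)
The plan is to work in coordinates adapted to the foliation and to exploit the special form of $\varpi$ when $j=1$. Since $W$ is $1$-isotropic and $L$ is a $k$-isotropic complement, in an adapted basis $\{l_\mu, p_i\}$ we have $\varpi = p^i \wedge \widetilde\alpha_i$ with $\widetilde\alpha_i \in \bigwedge^k L^\ast$. Formal flatness implies $c_0 = 0$, so there is a torsionless adapted connection and the distribution $B[W]$ is involutive. We therefore choose coordinates $(w^i, x^\mu)$ with $B[W] = \langle \partial/\partial w^i\rangle$.

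Assuming (i), we first use the flatness of the quotient geometry to choose the $x^\mu$ so that the transverse structure is flat. The transverse structure is $S=\{\iota_v\omega : v \in B[W]\}$, which consists of forms in the $\d x^\mu$ only, because $W$ is $1$-isotropic. Flatness of the quotient means that $S$ is spanned by constant-coefficient $k$-forms $\alpha_i$ in the $\d x^\mu$. Next we use flatness of the leaf geometry, which is of finite type by Lemma \ref{lemma:semi_finite_condition}(i), to reparametrize the $w^i$ leafwise, possibly depending on $x$. After this change, $\omega = \alpha + \d w^i \wedge \alpha_i$. Here $\alpha$ is a $(k+1)$-form in the $\d x^\mu$, and $\alpha$ may still depend on $w$; however, $\d\omega=0$ together with the constancy of the $\alpha_i$ gives $\partial_{w^i}\alpha = 0$. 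So $\alpha$ and the $\alpha_i$ are the forms named in the statement.

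For sufficiency, suppose $f$ is constant on the leaves and solves $\alpha = -\d f^i\wedge\alpha_i$. Setting $\widetilde w^i = w^i + f^i(x)$ gives
\[
\omega = \alpha + \d f^i\wedge\alpha_i + \d\widetilde w^i\wedge \alpha_i = \d\widetilde w^i \wedge\alpha_i .
\]
In the coordinates $(\widetilde w^i, x^\mu)$, conditions (i)--(iii) of Theorem \ref{thm:sufficient_condition_flatness_general} then hold: the distribution $\langle\partial_{x^\mu}\rangle$ is $k$-isotropic, and the geometry on the leaves and on the quotient is flat. That theorem, or directly Proposition \ref{prop:Moser_trick}, gives flatness; in fact $\omega$ already has constant coefficients.

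For necessity, the leaf and quotient geometries are induced by flat charts, so (i) holds. For (ii) we argue as in Proposition \ref{prop_sufficiency_condition_solving_flatnes} with $j=1$: a flat chart gives a $k$-isotropic transversal section $w^i = -f^i(x)$, and substituting forces $\alpha + \d f^i\wedge\alpha_i = 0$. One must check that the flat chart can be compared with our fixed chart. Any two charts in which $S$ has constant coefficients differ, on the quotient, by an affine map of $G_S$-type, and this map preserves the shape $\alpha + \d w\wedge\alpha_i$ up to a relabeling of the $\alpha_i$.

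The main obstacle is the normalization step. We need coordinates on the leaves that are flat \emph{simultaneously} with $\iota_{\partial_{w^i}}\omega$ equal exactly to the constant forms $\alpha_i$, rather than merely spanning $S$. This requires that the leafwise structure, which is essentially a frame of $W$ determined up to the finite-type group $\rho_W(G)$, glue with the transverse frame. I would try to show that $\iota_{\partial_w}\omega$ has the form $\alpha_i$ times a matrix $A^i_j(x,w)$ in the leaf structure group, and then absorb this matrix by a change of the $w$-coordinates, linear on each leaf with $x$-dependent coefficients. Closedness of $\omega$ should guarantee the integrability of this change.
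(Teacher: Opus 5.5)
Your architecture matches the paper's. You take a chart $(w^i,x^\mu)$ adapted to $B[W]$ in which the quotient geometry is flat, reduce to the normal form $\omega=\alpha+\d w^i\wedge\alpha_i$, and prove necessity via a transversal $k$-Lagrangian written as a graph. Your remark that $\d\omega=0$ forces $\partial_{w^i}\alpha=0$ is correct, and it is what makes the PDE for $f=f(x)$ meaningful. Your sufficiency argument, an explicit shift of the $w^i$ giving $\omega=\d\widetilde w^i\wedge\alpha_i$, is more direct than the paper's appeal to Moser's trick adapted to $B[W]$. Only the signs are off: with $\alpha=-\d f^i\wedge\alpha_i$ you need $\widetilde w^i=w^i-f^i$, and correspondingly the section in the necessity part is $w^i=f^i(x)$.

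\textbf{The gap: normalization.} The real gap is the normalization step, which you flag yourself and leave as a hope. The form you sketch, a change linear on each leaf obtained from leaf flatness and finite type, asks for more than is needed and is not justified as stated. The paper closes this step using only quotient flatness and closedness.
\begin{enumerate}[\rm (i)]
\item Since $\partial_{w^j}\in B[W]$, one has $\iota_{\partial_{w^j}}\omega\in\langle\alpha_i\rangle$. So $\omega=\alpha+A^i_j\,\d w^j\wedge\alpha_i$ with $A$ invertible but possibly $w$-dependent.
\item The component of $\d\omega=0$ containing two $\d w$'s is $\partial_{w^k}A^i_j\,\d w^k\wedge\d w^j\wedge\alpha_i=0$. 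The $\alpha_i$ are linearly independent, since $v\mapsto\iota_v\varpi$ is injective on $W$ by nondegeneracy. Hence $\partial_{w^k}A^i_j=\partial_{w^j}A^i_k$.
\item The Poincar\'e lemma along the leaves, with $x$ as parameters, gives $A^i_j=\partial g^i/\partial w^j$.
\item Then $\widetilde w^i=g^i(w,x)$ is a chart in which $\omega=\bigl(\alpha-\partial_{x^\mu}g^i\,\d x^\mu\wedge\alpha_i\bigr)+\d\widetilde w^i\wedge\alpha_i$. The correction to $\alpha$ is harmless, because $\alpha$ is not required to have constant coefficients.
\end{enumerate}
The map $g$ need not be linear on the leaves. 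Linearity would follow only a posteriori, from flatness of the $w^i$ along the leaves together with finite type of the leaf geometry. Even then you need closedness first, to know that $A$ is a Jacobian.

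\textbf{The chart comparison in necessity.} You claim that two charts in which $S$ has constant coefficients differ by an affine map of $G_S$-type. This is false in general, because $\mathfrak{g}_S$ is typically of infinite type. It is all of $\mathfrak{gl}(L)$ in Example~\ref{example:Multicotangent_bundle}, and block-triangular in the polarized case of Section~\ref{section:polarized_form}. So such charts may differ by arbitrary (fibered) diffeomorphisms. Fortunately no comparison is needed. A flat chart produces a $k$-Lagrangian submanifold $L$ transverse to the leaves, which is a chart-independent object. Write it as a graph $w^i=f^i(x)$ in your fixed normalized chart and use $\partial_{w}\alpha=0$. Then the condition $\omega|_L=0$ is exactly $\alpha+\d f^i\wedge\alpha_i=0$, which is the paper's argument.
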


\begin{proof}[Proof of Theorem \ref{thm:Darboux_1_isotropic}] Let $(w^i, x^\mu)$ denote coordinates adapted to the foliation induced by the distribution $B[W]$, in such a way that these coordinates make the induced geometric structures on the leaves and quotient flat. Then, there are forms $ \alpha$ and $\widetilde \alpha_i$, only depending on $\d x^i$ such that, if we denote by $\omega$ the form induced by the structure, we have
\[
\omega =  \alpha + \d w^i \wedge \widetilde \alpha_i\,.
\]
 Since, by Proposition \ref{prop:explicit_description_of_alegbra_1isotropic} the geometry on the quotient is induced by the bundle $\langle \widetilde \alpha_i \rangle$, and we are assuming that the coordinates $(x^\mu)$ make this geometry flat, we may assume that there are certain forms $\alpha_i$ of constant coefficients in this chart in such a way that
    $\widetilde \alpha_j = A^i_j \alpha_i$, for certain invertible matrix $A^i_j$. Hence,
    \[
    \omega = \alpha + A^i_j \d w^j \wedge \alpha_i\,,
    \]
    where each $\alpha_i$ is of constant coefficients. Since $\omega$ is closed (the $G_\varpi$ structure being formally flat), the functions $A^i_j$ satisfy $\pdv{A^i_j}{w^k} \d w^k \wedge \d w^j = 0$ so that, in particular, $A^i_j = \pdv{g^i}{w^j}$. Using these functions as a coordinate change for the $w^i$ we obtain that the form may be locally expressed as $\omega = \alpha + \d w^i \wedge \alpha_i$, where the $\alpha_i$ are of constant coefficients.

    If there was a chart in which $\omega$ has constant coefficients, there would exist a $k$-Lagrangian submanifold transverse to the leaves, say $L$. Now, let us take a particular leaf $\widetilde L$ induced by the chosen coordinates (namely corresponding to $w^i = \text{constant}$). Then both projections to the (locally defined) quotient manifold
    \[
    p|_L \colon L \rightarrow M/B[W] \,, \qquad p|_{\widetilde L} \colon \widetilde L \rightarrow M/ B[W]
    \]
    define local diffeomorphisms, which by making these manifolds smaller may be assumed to be diffeomorphisms. Let
    \[
    f := (p|_L)^{-1} \circ p|_{\widetilde L} \colon \widetilde L \rightarrow L
    \]
    denote the diffeomorphism defined by identifying both submanifolds with the quotient. In coordinates, this diffeomorphism corresponds to some family of functions
    \[
    w^i =  f^i(x^\mu)\,.
    \]
    Since $L$ is $k$-Lagrangian, we have $\omega |_L = 0$ and, in particular this map satisfies
    \[
    \alpha + \d f^i \wedge \alpha_i = 0\,.
    \]

    Conversely, if the PDE above admits a solution, we can build the transversal $k$-Lagrangian submanifold. Now we may apply Moser's trick adapted to the distribution $B[W]$ simply by noticing that the relative Poincaré Lemma guarantees the existence of a potential taking values in $\langle \alpha_i\rangle$, from which the result follows.
\end{proof}

\begin{proof}[Proof of Theorem \ref{thm:Darboux_1_isotropic}] In the coordinates given by Lemma \ref{lemma:flatness_1_isotropic}, it only remains to show that the PDE $\alpha = - \d f^i \wedge \alpha_i$ admits one solution. In the same local chart, this equation is equivalently expressed as
\[
P[D] \cdot f = \alpha\,,
\]
where $P$ is a matrix of polynomials of degree $1$, $D = {(\pdv{x_i})}_i$ is the vector of partial derivatives, and $\alpha \in C^\infty(M)^N$ is some collection of functions (representing the coefficients of the form $\alpha$). Since $P$ is a linear differential operator of constant coefficients, it has some associated compatibility operator $Q$ (given by Theorem \ref{thm:Hormander} in the Appendix) that guarantees that if $Q[D] \cdot \alpha = 0$, there exists a smooth local solution $f$. We will show that this is indeed the case.

Let $x \in M$, we will show $Q[D] \cdot \alpha = 0$. The operator $Q$ is of certain finite order, say $k$. Then, there are certain coordinates on $M$ that make the structure $(k+1)$-flat at $x$. These coordinates are given by certain choices of functions $g^i$ that solve
\[
\d g ^i \wedge \alpha_i = \alpha \qquad \text{at }x \text{ up to order $(k+1)$}\,.
\]
Then, $\alpha - \d g^i \wedge \alpha_i = O(k+1)$ at $x$. Hence:
\[
0 = Q[D] \cdot (\alpha - \d g^i \wedge \alpha_i) |_x= Q[D] \cdot \alpha|_x\,,
\]
since $\d g^i \wedge \alpha_i = P[D] \cdot g$, and $Q \cdot P = 0$.
\end{proof}

These last results, together with the alternative given in Theorem \ref{thm:casuistic_of_representation} may be used in a plethora of scenarios to prove a Darboux theorem by following a particular strategy:

\begin{enumerate}
    \item If $G_\varpi$ acts irreducibly. Then, by Theorem \ref{thm:irreducible_integrability}, formally flat already implies flat.
    \item If not, we have three cases, given by Theorem \ref{thm:casuistic_of_representation}:
    \begin{enumerate}[\rm (a)]
        \item If the action decomposes as a direct sum of representations $V = W_1 \oplus W_2$ then, being flat is equivalent to each of the induced structures on each leaf being flat. In some cases (mainly the ones presented in this text), the induced representations may be realized as
        \[
        \rho_{W_1}(G) = G_{\varpi_1} \,, \qquad \rho_{W_2}(G) = G_{\varpi_2}\,,
        \]
        for certain (not necessarily $k$) forms $\varpi_i \in \bigwedge W_i^\ast$. We then may iterate the argument and, at a finite step, arrive at a trivial case (e.g. a symplectic form, or volume element), which are known to satisfy that formally flat implies flat.
        \item If there is an invariant $j$-isotropic space, not much can be said, in general. However, in some scenarios (those involving classical fields, for instance), the subspace will actually be $1$-isotropic so that, in light of Theorem \ref{thm:Darboux_1_isotropic}, it is enough to find coordinates that make the $\rho_{V/W}(G_\varpi)$-structure flat.
        \item Finally, if there is an invariant subspace $W$ with $W^{\perp, k} = \{0\}$, the discussion above happens in the other way around, namely the geometry on the leaf-space is of finite type. 
    \end{enumerate}
\end{enumerate}

It should be noted that this strategy (when successfully applied) only shows that the tensorial conditions on a $G_\varpi$-structure to be flat exist (being those that guarantee it being formally flat), and reduces it to a matter of linear algebra.

In order to show the usefulness of this procedure we apply it to some examples:

\begin{example}[Multicontangent bundles]
\label{example:Multicotangent_bundle}
Let $L$ denote an arbitrary vector space and let $V := L \oplus \bigwedge^{k-1} L^\ast$, for $k \geq 3$. On $V$ there is a canonical $k$-form
\[
\varpi((l_1, \alpha_1), \dots, (l_{k}, \alpha_k)) := \sum_{j = 1}^{k} (-1)^{j-1} \alpha_j (l_1, \dots, \widehat{l}_j, \dots, l_k)\,.
\]
It is well known that the action of $G_\varpi$ on $V$ has an invariant subspace, which is
\[
W =\{0\} \oplus \bigwedge^{k-1}L^\ast\,.
\]
This subspace is actually $1$-isotropic and the structure induced on the quotient is a $\GL(L)$-structure. Hence, by Theorem \ref{thm:Darboux_1_isotropic}, any formally flat $G_\varpi$-structure is actually flat.
\end{example}

\begin{example}[Product type] Let $L_1$ and $L_2$ denote vector spaces of arbitrary dimension, and $\varpi_1 \in \bigwedge^k L_1^\ast$ and $\varpi_2 \in \bigwedge^k L_2^\ast$ denote multisymplectic $k$-forms, for $k\geq 3$. Define on $V = L_1 \oplus L_2$ the multisymplectic form $\varpi:= \varpi_1 + \varpi_2$. Then, a simple computation shows that the action of $G_\varpi$ on $V$ actually decomposes as the direct sum
\[
V = L_1 \oplus L_2\,.
\]
Furthermore, the induced representations on $L_1$ and $L_2$ are actually given by $G_{\varpi_1}$ and $G_{\varpi_2}$. Hence, we conclude that a manifold $M$ endowed with a multisymplectic form $\omega$ of linear type $\varpi$ which is formally flat is flat if and only if any pair of manifolds $(M_1, \omega_1)$ and $(M_2, \omega_2)$ endowed with multisymplectic forms of linear type $\varpi_1$ and $\varpi_2$ which are formally flat, are actually flat. A particular example, referred to in the literature as \textit{product type} (see \cite{Ryvkin-Darboux-type-theorems-2025}) is when $\varpi_1$ and $\varpi_2$ are volume forms, so that $G_{\varpi_i} = \operatorname{SL}(n)$, whose action is irreducible. Hence, by Theorem \ref{thm:irreducible_integrability}, any such structure which is formally flat, is flat. It should be noted that this follows simply by employing the Moser trick.
\end{example}


\begin{example}[Density valued symplectic forms \cite{leonid}]
\label{example:Density-valued-forms}
Let $V$ be a vector space of dimension $k +2n$, together with a basis $\{e_1, \dots, e_k, f_1, \dots, f_{2n}\}$. Define
\[
\varpi := e^1 \wedge \cdots \wedge e^k \wedge \left( f^1 \wedge f^2 + \cdots + f^{2n-1} \wedge f^{2n}\right) \in \bigwedge^{k+2} V^\ast\,.
\]
It can be shown that the action of $G_\varpi$ on $V$ has an invariant $2$-isotropic space,
\[
W := \{f_1, \dots, f_n\}\,.
\]
In this scenario, both the leaf geometry, which is the conformally symplectic group, and the transversal geometry, which is $\GL(r)$, are of infinite type. It is well known, however, that formal flatness of these two geometries implies flatness. Hence it is enough to justify that for any formally flat $G_\varpi$-structure, we can find coordinates in the hypotheses of Theorem \ref{thm:sufficient_condition_flatness_general}. However, simply by dimensional considerations, we have that any set of coordinates (in particular the coordinates adapted to each geometry) induces a $2$-isotropic and $(k-1)$-isotropic decomposition of $\T M$. Applying the result, we conclude that any formally flat $G_\varpi$-structure is actually flat.
\end{example}

\section{Forms with obstructions to integrability of a fixed order}

\label{section:obstruction_of_fixed_order}

It should be noted that flatness theorems for multisymplectic structures are usually of \emph{first order} (see \cite{Ryvkin-Darboux-type-theorems-2025, glrr2024revistadelarealacademiadecienciasexactasfisicasynaturales.seriea.matematicas}), namely that, in local coordinates, the conditions on a form $\omega \in \Omega^k(M)$ of constant linear type $\varpi \in \bigwedge^k V^\ast$ to have constant coefficients are of first order in the coefficients of the form in some chart. This motivates the following question:
\[
\text{Is there a bound for the order appearing on a `Darboux theorem' for a multisymplectic form?}
\]
In terms of the Spencer cohomology, this question can be expressed as the existence of an index, $j_0$, for which $H^{2, j}(\mathfrak{g}_\varpi) = \{0\}$, for all $j \geq j_0$ and for all $\varpi \in \bigwedge^{k} V^\ast$. As one important application of the theory of $G$-structures, we show that the answer is negative, namely for every $j\geq 1$, we build a $3$-form $\varpi_{j}$ on a vector space of dimension $2j+5$ that satisfies
\[
H^{2,j}(\mathfrak{g}_\varpi) \neq \{0\}\,.
\]
In particular, we obtain linear types of differential forms that have higher order obstructions to integrability, thus providing counterexamples for a Darboux theorem of first (or bounded) order. As introduced above, we employ Cartan's tableaux method, and the main constructions are based on Corollary \ref{corollary:injective_mapping_homology}, namely, we will define a family of tableaux $\mathcal{A}_{j}$ satisfying $H^{2,j}(\mathcal{A}_{j}) \neq \{0\}$. Then, we will identify forms $\varpi_{j}$ in such a way that $\mathcal{A}_{j}$ arises as a retraction of $\mathfrak{g}_{\varpi_j}$ in a particular way so that it satisfies the conditions of Corollary \ref{corollary:injective_mapping_homology}. Then, it will be clear that
\[
\{0\} \neq H^{2,j}(\mathcal{A}_{j}) \hookrightarrow H^{2,j}(\mathfrak{g}_{\varpi_j}) \,,
\]
so that $H^{2,j}(\mathfrak{g}_\varpi)$ is necessarily nonzero. Finally, we identify explicit $\omega_j \in \Omega^3(\mathbb{R}^{5 +2j})$ which has constant linear type $\varpi_j$ and such that the structure tensors satisfy $c_0 = \cdots = c_{j-1} = 0$, but $c_j \neq 0$.

\subsection{The definition and cohomology of \texorpdfstring{$\mathcal{A}_{j}$}{}}

Let us define the tableaux $\mathcal{A}_{j}$ as follows: Let $V = \langle y_1, y_2\rangle$ be a $2$-dimensional vector space and $W = \langle p_0, \dots, p_j\rangle$ be a $(j+1)$-dimensional vector space. Then, let us define
\[
\mathcal{A}_j := \left \langle y^1 \otimes p_0 + y^2 \otimes p_1, \dots, y^1 \otimes p_{i} + y^2 \otimes p_{i+1}, \dots, y^1 \otimes p_{j-1} + y^2 \otimes p_j \right \rangle\,.
\]
In terms of matrices (or their transpose for the presentation), we can also write:
\[
\mathcal{A}_j^{T} = \left\{
\begin{pmatrix}
    a_0 & a_1 & \cdots & a_j & 0\\
    0   & a_0 & \cdots & a_{j-1} & a_j
\end{pmatrix} \colon a_i \in \mathbb{R}
\right\}\,.
\]

Then we have the following
\begin{theorem}
\label{theorem:cohomology_in_degree_2}
The Spencer cohomology of $\mathcal{A}_{j}$ is given by
\[
H^{l', j'}(\mathcal{A}_{j}) = \begin{cases}
0 \qquad \text{if} \qquad (l', j') \neq (2, j)\\
\mathbb{R} \qquad \text{if} \qquad (l',j') = (2, j)
\end{cases}\,.
\]
\end{theorem}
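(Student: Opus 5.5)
The plan is to compute the prolongations $\mathcal{A}_j^{(m)}$ explicitly and show that $\mathcal{A}_j$ is of finite type, with $\dim \mathcal{A}_j^{(m)} = j-m$ for $0\le m\le j$ (so $\mathcal{A}_j^{(j)}=0$). Since $\dim V = 2$, each line of total degree is a short complex
\[
0\to C^{0,n}\xrightarrow{\partial} C^{1,n-1}\xrightarrow{\partial} C^{2,n-2}\to 0,
\]
and an Euler characteristic count together with injectivity of the first map and exactness at the middle will give the result.

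\textbf{Solutions of the associated PDE.} By the remark relating tableaux to PDEs, $\mathcal{A}_j^{(m)}$ is the space of homogeneous polynomial maps $f=\sum_i f^i p_i$ of degree $m+1$ with $\d f\in\mathcal{A}_j$ pointwise. Reading off the matrix description, this system is
\[
\partial_{y_2}f^0=0,\qquad \partial_{y_2}f^{i}=\partial_{y_1}f^{i-1}\ (1\le i\le j),\qquad \partial_{y_1}f^{j}=0 .
\]
I will identify $W$ with $\mathbb{R}[s]/(s^{j+1})$ via $p_i\leftrightarrow s^i$, so that the middle equations become $\partial_{y_2}f=s\,\partial_{y_1}f$. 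The natural solutions are then
\[
s^a(y^1+s\,y^2)^d .
\]
The boundary condition $\partial_{y_1}f^j=0$ holds precisely when $a+d\le j$. Indeed, if $a+d<j$ there is no $s^j$ term, and if $a+d=j$ the $s^j$ coefficient is a multiple of $(y^2)^d$. The condition $\partial_{y_2}f^0=0$ is automatic, since the $s^0$ coefficient only involves $y^1$.

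\textbf{Main obstacle: spanning.} The step I expect to be hardest is showing that these elements span all solutions in degree $d=m+1$. This would give the basis $\{s^a(y^1+sy^2)^{m+1}: 0\le a\le j-m-1\}$ of $\mathcal{A}_j^{(m)}$, of dimension $j-m$. I will argue by a downward recursion on $i$:
\begin{itemize}
\item $\partial_{y_1}f^j=0$ forces $f^j=c_j (y^2)^{d}$;
\item each equation $\partial_{y_1}f^{i-1}=\partial_{y_2}f^i$ then determines $f^{i-1}$ up to a term $c_{i-1}(y^2)^d$;
\item the remaining equation $\partial_{y_2}f^0=0$ imposes linear conditions on the constants $c_0,\dots,c_j$.
\end{itemize}
Subtracting a suitable combination of the explicit solutions kills the free constants one at a time, from the top index downwards. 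What remains to check is that exactly $\min(d,\,j+1)$ of these constraints are independent, which yields dimension $\max(j+1-d,0)=j-m$. This is essentially a triangularity argument in the coefficients of $(y^2)^d$, and it can be sanity-checked in the case $j=1$ computed by hand.

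\textbf{Cohomology.} Injectivity of $\partial: C^{0,n}\to C^{1,n-1}$ for $n\ge1$ is the standard fact that a homogeneous polynomial of positive degree with vanishing differential is zero. For exactness at $C^{1,\cdot}$, I will show that $\partial: V^\ast\otimes\mathcal{A}_j^{(m)}\to\Lambda^2V^\ast\otimes\mathcal{A}_j^{(m-1)}$ is surjective for $m\le j-1$. Since the target has dimension $j-m+1$, it suffices to exhibit preimages of the basis elements, for instance $y^1\otimes s^a(y^1+sy^2)^{m+1}$ and $y^2\otimes s^a(y^1+sy^2)^{m+1}$. Using Proposition~\ref{prop:explicit_description_of_partial} their images are $(m+1)$ times multiples of $y^2\wedge y^1\otimes s^{a+1}(\cdots)^m$ and $y^1\wedge y^2\otimes s^{a}(\cdots)^m$, which span the target.

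The Euler characteristic of each line vanishes:
\[
(j-j'-1)-2(j-j')+(j-j'+1)=0 \qquad (j'<j),
\]
and the lines with $j'>j$ have all terms equal to zero. Combined with injectivity and surjectivity, this gives $H^{0,\cdot}=H^{1,\cdot}=0$ and $H^{2,j'}=0$ for $j'\ne j$, with the low-degree terms involving $C^{\cdot,0}$ handled directly. Finally, for $j'=j$ we have $C^{1,j+1}=V^\ast\otimes\mathcal{A}_j^{(j)}=0$, so
\[
H^{2,j}=\Lambda^2V^\ast\otimes\mathcal{A}_j^{(j-1)}\cong\mathbb{R}.
\]
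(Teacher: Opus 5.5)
Your proof is correct, but it takes a different route from the paper in two places.

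\textbf{Prolongations.} The paper eliminates from the bottom index. The lowest nonzero component of a solution must be a multiple of $(y^1)^i$, so one subtracts the corresponding $\alpha_i^A$ (in your notation $\alpha_i^A=\frac{1}{i!}s^A(y^1+sy^2)^i$) and iterates. The final vanishing is left to ``a simple computation''.

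You go top-down instead, parametrizing solutions of all equations except $\partial_{y_2}f^0=0$ by the coefficients $c_i$ of $(y^2)^d$. Your explicit solutions account for $c_d,\dots,c_j$. The remaining check is in fact diagonal, not just triangular. If you normalize each antiderivative to have no $(y^2)^d$ term, then $c_i$ propagates to $c_i\binom{d}{i}(y^1)^i(y^2)^{d-i}$ in $f^0$. Hence $\partial_{y_2}f^0=0$ forces $c_i=0$ separately for every $i<\min(d,j+1)$.

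\textbf{Cohomology.} The paper computes the $1$-cocycles by hand. It solves $y^1\wedge\gamma_0=0$, $y^1\wedge\gamma_A+y^2\wedge\gamma_{A-1}=0$, $y^2\wedge\gamma_{j-i}=0$, recognizes the solutions as $\partial\alpha^A_{i+1}$, and kills the $2$-cochains with $\partial(y^2\otimes\alpha^A_{i+1})$.

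Your injectivity--surjectivity--Euler-characteristic argument avoids the cocycle computation. It makes $H^{1}=0$ a formal consequence of $\dim\mathcal{A}_j^{(m)}=j-m$. The price is that the exact dimension of every prolongation, not just a spanning set, becomes essential; your top-down argument does supply it.

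Using both $y^1\otimes$ and $y^2\otimes$ preimages is also slightly more careful than the paper. The top element $s^{j-m}(y^1+sy^2)^m$ of $\mathcal{A}_j^{(m-1)}$ is not of the form $\partial(y^2\otimes\cdot)$. It needs $y^1\otimes s^{j-m-1}(y^1+sy^2)^{m+1}$.

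\textbf{Caveat (shared with the paper).} Take $\mathcal{A}_j^{(-1)}=W$, which is needed for $H^{2,0}$ to be the intrinsic torsion. Then $H^{0,0}=W$ and $H^{1,0}\cong(V^\ast\otimes W)/\mathcal{A}_j$, both nonzero. So the degree-zero lines cannot be ``handled directly'' to give zero. The statement has to be read with $j'\geq 1$ when $l'\leq 1$, as the paper's proof tacitly does.
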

Notice that these diagrams embed onto one another. In fact, the proof will follow inductively, first computing the prolongations, then their cocycles, and finally their cohomology. More specifically, these Cartan tableaux will be such that $\mathcal{A}_{j}^{(j-1)} \cong \mathbb{R}$ and such that $\mathcal{A}_{j-1}^{(i)} = \{0\}$, for $i \geq j$.

Let us begin, as stated, by studying its prolongations. We have a natural element in $\mathcal{A}_j^{(j-1)}$, for every $j$, which we denote by $\alpha_j \in S^{j}(V^\ast) \otimes W$. In the basis above, it is defined as
\[
\alpha_j = \frac{(y^{1})^{j}}{j!} \otimes p_0 + \cdots + \frac{(y^1)^{j-k}}{(j-k)!} \frac{(y^2)^{k}}{k!} \otimes p_k + \cdots + \frac{(y^2)^{j}}{j!} \otimes p_j\,.
\]

Also notice that for $0 \leq A \leq j-i$ we have a natural embedding
\[
\mathcal{A}_i \xrightarrow{\mathfrak{i}_A} \mathcal{A}_j\,.
\]
This may be employed to define iteratively elements in $\mathcal{A}_j^{(i-1)}$. Indeed, if $\alpha_i \in \mathcal{A}_i^{(i-1)}$ denotes the element defined before we define
\[
\alpha_{i}^A := \mathfrak{i}_A (\alpha_i)\,.
\]
Algebraically, we have
\[
\alpha^A_i = \frac{(y^1)^i}{i!} \otimes p_A + \cdots + \frac{(y^1)^{i-k}}{(i-k)!} \frac{(y^2)^{k}}{k!} \otimes p_{A + 1 + k} + \cdots + \frac{(y^2)^{i}}{i!} \otimes p_{A+1+i}\,.
\]
\begin{remark}
\label{remark:computation_of_partial_alpha}
It follows immediately that we have
\[
\partial \alpha^A_i = y^1 \otimes \alpha^A_{i-1} + y^2 \otimes \alpha^{A+1}_{i-1}\,.
\]
\end{remark}
\begin{lemma}
\label{lemma:generators_prolongations}
We have that the prolongations of $\mathcal{A}_j$ are
\begin{enumerate}[\rm (i)]
    \item For $1 \leq i \leq j-1$, we have $\mathcal{A}_j^{(i-1)} = \langle \alpha_i^{0}, \dots, \alpha_i^{j-i} \rangle$.
    \item For $i \geq j$, we have $\mathcal{A}_j^{(i)} = \{0\}$.
\end{enumerate}
\end{lemma}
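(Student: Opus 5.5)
The plan is to read the prolongations as spaces of polynomial solutions, as in the remark on the interpretation of the prolongation. Then $\mathcal{A}_j^{(i-1)}$ is the space of homogeneous $W$-valued polynomials $f = f^k p_k$ of degree $i$ on $V$ whose differential lies in $\mathcal{A}_j$ at every point. First I will write down the PDE encoded by $\mathcal{A}_j$. An element $\sum_{k=0}^{j-1} a_k (y^1\otimes p_k + y^2 \otimes p_{k+1})$ has $p_k$-component $a_k y^1 + a_{k-1} y^2$, with the conventions $a_{-1} = a_j = 0$. Hence $\d f \in \mathcal{A}_j$ is equivalent to the chain of conditions
\[
\pdv{f^0}{y^2} = 0\,, \qquad \pdv{f^{k}}{y^2} = \pdv{f^{k-1}}{y^1} \quad (1 \leq k \leq j)\,, \qquad \pdv{f^j}{y^1} = 0\,.
\]
Everything then reduces to solving this triangular system degree by degree.

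Second, I will solve the system recursively in $k$ for homogeneous $f$ of degree $i$. The first equation gives $f^0 = c_0 (y^1)^i/i!$. Each equation $\partial_2 f^k = \partial_1 f^{k-1}$ determines $f^k$ up to a new free term $c_k (y^1)^i/i!$, namely its $\ker \partial_2$ part. By induction, $f^k = \sum_{m\le k} c_m \frac{(y^1)^{i-(k-m)}}{(i-(k-m))!}\frac{(y^2)^{k-m}}{(k-m)!}$, where terms with $k-m>i$ are understood to vanish. In other words, $f = \sum_m c_m\, \alpha_i^m$, where $\alpha_i^m$ denotes the shifted copy of $\alpha_i$ whose first component sits in $p_m$, as obtained through the embeddings $\mathfrak{i}_A$. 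I would double-check the index conventions of $\alpha_i^A$ against this formula, since the shift indexing in the displayed formula should match $p_{A+k}$. The compatibility of consecutive shifts is exactly the computation recorded in Remark \ref{remark:computation_of_partial_alpha}.

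Third, I will impose the last equation $\partial_1 f^j = 0$. The contribution of $c_m$ to $f^j$ is a multiple of $(y^1)^{i-(j-m)}(y^2)^{j-m}$. This term is killed by $\partial_1$ exactly when $j-m \geq i$. It is nonzero with nonzero $\partial_1$ when $j - m < i$ and $m \le j$. These monomials are distinct for distinct $m$, so no cancellation between different $m$ can occur. Hence the condition is equivalent to $c_m = 0$ for all $m > j-i$. This yields $\mathcal{A}_j^{(i-1)} = \langle \alpha_i^0,\dots,\alpha_i^{j-i}\rangle$, with linear independence clear from the leading $p_m$-components, which gives (i). For (ii), elements of $\mathcal{A}_j^{(i)}$ have degree $i+1 \ge j+1$. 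The same argument forces $c_m = 0$ for all $m > j-(i+1)$, that is, for all $m \ge 0$, so the prolongation vanishes.

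The only delicate step is the bookkeeping in the third step. One has to ensure that the free constants really give independent monomials in $f^j$ and handle the boundary cases $k-m>i$ correctly; the rest is a routine triangular integration.
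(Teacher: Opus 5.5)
Your proposal is correct and is essentially the paper's argument. The paper also exploits the triangular chain of conditions, peeling off multiples of $\alpha_i^A$ according to the first nonvanishing coefficient of $(y^1)^i\otimes p_A$; your explicit integration of $\partial f^k/\partial y^2=\partial f^{k-1}/\partial y^1$, followed by the terminal condition $\partial f^j/\partial y^1=0$, is a more explicit version of that argument, and in particular it supplies the ``simple computation'' the paper omits for leading index $B>j-i$. You are also right that the displayed formula for $\alpha_i^A$ should have $p_{A+k}$ in place of $p_{A+1+k}$; this is the version consistent with $\mathfrak{i}_A$ and with Remark~\ref{remark:computation_of_partial_alpha}.
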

\begin{proof} Indeed, let $\beta \in \mathcal{A}_j^{(i-1)}$. Then, when written in the natural basis of $S^{i}(V^\ast) \otimes W$ given above, namely $\{(y^1)^{k}(y^2)^{i-k} \otimes p_i\}$, there must be a first non-vanishing coefficient accompanying some $p_i$, say for the index $A$. Then, it is clear (as it is the first nonvanishing coefficient) that the monomial accompanying $p_A$ is necessarily, up to a constant, $(y^1)^i$. Suppose that this constant was $\frac{C^A}{i!}$. Then,
\[
\beta - C^A \alpha_i^A \in \mathcal{A}_j^{(i)}\,
\]
lies in the prolongation and has no monomial involving only $y^1$ and $p_A$. Iterating the argument up until $A \leq j-i$ we arrive at an element
\[
\beta - \sum_{A = 0}^{j-i} C^A \alpha_i^A \in \mathcal{A}_{j}^{(i)}
\]
which is such that there is no monomial in $y^1$ and $p_A$, for $0 \leq A \leq j-i$. It only remains to check that this implies that this element vanishes. Indeed, if it had a nonvanishing coefficient on, say $y^1 \otimes p_B$, for $B > j-i$, it would be necessarily proportional to $(y^1)^i \otimes p_A$. However, there is no such element in the prolongation, as a simple computation shows.
\end{proof}

Now we compute its cocycles:
\begin{lemma}
\label{lemma:Computation_of_cocycles}
There are no $0$-cocycles, and the $1$-cocycles are
\begin{enumerate}
    \item On degree $i < j$, we have
    \[
    \ker \partial = \langle \partial \alpha^A_{i+1}\,, A = 0, \dots, j-(i+1)\rangle \subseteq V^\ast \otimes \mathcal{A}_j^{(i-1)}\,.
    \]
    \item On degree $i = j$, we have $\ker \partial = \{0\} \subseteq V^\ast \otimes \mathcal{A}^{(j-1)}$.
\end{enumerate}
Finally, the $2$-cocycles on degree $1\leq i \leq j$ are
\[
y^1 \wedge y^2 \otimes \alpha_i^A \,, \qquad 1 \leq A \leq j+1-i\,.
\]
In particular, all $1$-cocycles are $1$-coboundaries.
\end{lemma}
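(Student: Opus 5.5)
The plan is a direct computation. We use the explicit bases of the prolongations from Lemma \ref{lemma:generators_prolongations} and the contraction rules
\[
\iota_{y_1}\alpha_i^A=\alpha_{i-1}^A,\qquad \iota_{y_2}\alpha_i^A=\alpha_{i-1}^{A+1},
\]
which are equivalent to Remark \ref{remark:computation_of_partial_alpha}. Here $\alpha_0^B$ is read as $p_B$, so that $\mathcal{A}_j^{(-1)}=W$. Since $\dim V=2$, only the degrees $l=0,1,2$ occur, and we treat them separately.

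\emph{$0$-cocycles.} For $\beta\in C^{0,i}$ with $i\ge1$, Proposition \ref{prop:explicit_description_of_partial} gives
\[
\partial\beta=y^1\otimes\iota_{y_1}\beta+y^2\otimes\iota_{y_2}\beta .
\]
This vanishes only if all first contractions of the polynomial $\beta$ vanish, which forces $\beta=0$. Hence there are no nonzero $0$-cocycles.

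\emph{$1$-cocycles.} Write $\gamma=y^1\otimes\beta_1+y^2\otimes\beta_2\in V^\ast\otimes\mathcal{A}_j^{(i-1)}$. Expand $\beta_1=\sum_A a_A\alpha_i^A$ and $\beta_2=\sum_A b_A\alpha_i^A$, with $0\le A\le j-i$. By Proposition \ref{prop:explicit_description_of_partial}, $\partial\gamma=0$ is equivalent to $\iota_{y_1}\beta_2=\iota_{y_2}\beta_1$, that is,
\[
\sum_A b_A\,\alpha_{i-1}^A=\sum_A a_A\,\alpha_{i-1}^{A+1}.
\]
The key point is that the elements $\alpha_{i-1}^B$ are linearly independent, because they have distinct leading terms $(y^1)^{i-1}\otimes p_B$ (this is the argument of Lemma \ref{lemma:generators_prolongations}). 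Comparing coefficients gives three conditions: $b_0=0$, $b_B=a_{B-1}$ for $1\le B\le j-i$, and $a_{j-i}=0$. The last one holds because the index $j-i+1$ appears only on the right-hand side.

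Therefore
\[
\gamma=\sum_{A=0}^{j-i-1}a_A\bigl(y^1\otimes\alpha_i^A+y^2\otimes\alpha_i^{A+1}\bigr)=\sum_{A=0}^{j-i-1}a_A\,\partial\alpha_{i+1}^A,
\]
by Remark \ref{remark:computation_of_partial_alpha}. The elements $\alpha_{i+1}^A$ belong to $\mathcal{A}_j^{(i)}$ precisely for $0\le A\le j-i-1$, so the cocycles are exactly the stated span, and every $1$-cocycle is a coboundary.

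For $i=j$ the only index is $A=0$, and the conditions read $a_0=0=b_0$. So $\ker\partial=\{0\}$ in that degree, which is consistent with $\mathcal{A}_j^{(j)}=\{0\}$ admitting no primitive.

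\emph{$2$-cocycles.} Since $\dim V=2$, we have $C^{3,\cdot}=0$, so every element of $C^{2,i}=\bigwedge^2V^\ast\otimes\mathcal{A}_j^{(i-1)}$ is a cocycle. Using the basis of Lemma \ref{lemma:generators_prolongations}, these are spanned by $y^1\wedge y^2\otimes\alpha_i^A$.

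The main obstacle is purely bookkeeping. One must verify the contraction identities with a consistent indexing of $\alpha_i^A$; the displayed formula in the paper seems to carry a shifted subscript, so we will rely on the embedding $\mathfrak{i}_A$ and Remark \ref{remark:computation_of_partial_alpha} instead. One must also track the index ranges, including the edge case $i=1$, where $\alpha_0^B=p_B$, and the range of $A$ in the statement for the $2$-cocycles, which must be matched to the index convention used in Lemma \ref{lemma:generators_prolongations}.
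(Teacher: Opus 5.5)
Your proposal is correct and is essentially the paper's own argument. Writing $\gamma_A = a_A y^1 + b_A y^2$, your conditions $b_0=0$, $b_B=a_{B-1}$, $a_{j-i}=0$ are exactly the paper's conditions $y^1\wedge\gamma_0=0$, $y^1\wedge\gamma_A+y^2\wedge\gamma_{A-1}=0$, $y^2\wedge\gamma_{j-i}=0$, and you handle the $0$- and $2$-cocycles by the same elementary observations. Your bookkeeping is also sound: the consistent convention puts the coefficients of $\alpha_i^A$ on $p_{A+k}$ with $0\le A\le j-i$, so the $2$-cocycles are $y^1\wedge y^2\otimes\alpha_i^A$ for $0\le A\le j-i$, and your triangular linear-independence argument via the leading term $(y^1)^{i-1}\otimes p_B$ makes precise a step the paper leaves implicit.
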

\begin{proof} That the $2$-cocycles are the ones written above is clear, by dimension considerations. The fact that there are no $0$-cocycles is a general fact for cohomologies with homogeneous polynomial coefficients. Now, for the case of $1$-cocycles, let $\beta \in V^\ast \otimes \mathcal{A}^{(i-1)}_j$ be such that $\partial \beta = 0$. Then, applying Lemma \ref{lemma:generators_prolongations}, $\beta$ may be written as
\[
\beta = \sum_{A = 0}^{j-i} \gamma_A \otimes \alpha^A_i\,.
\]
Now, from Remark \ref{remark:computation_of_partial_alpha}, we have that
\[
\partial \alpha^A_{i} = y^1 \otimes \alpha^{A}_{i-1} + y^2 \otimes \alpha^{A+1}_{i-1}\,.
\]
Hence, if $\beta$ is a coboundary,
\begin{align*}
    0 &= \partial \beta = \sum_{A= 0}^{j-i} \left( y^1 \wedge \gamma_A \otimes \alpha^A_{i-1} + y^2 \wedge \gamma_A \otimes \alpha^{A+1}_{i-1} \right)\\
    &= y^1 \wedge \gamma_0 \otimes \alpha^0_{i-1} +  \sum_{A = 1}^{j-(i+1)}\left( y^1 \wedge \gamma_A + y^2 \wedge \gamma_{A-1}\right)  \otimes \alpha^A_{i-1} + y^2 \wedge \gamma_{j-i} \otimes \alpha^{j-i}_{i-1}\,,
\end{align*}
(the sum in the middle assumed vanishing when $j = i$) so that we obtain the following conditions on the $1$-forms $\gamma_A$:
\begin{align*}
    &y^1 \wedge \gamma_0= 0\,,\\
    &y^1 \wedge \gamma_A + y^2 \wedge \gamma_{A-1}  = 0\,, \qquad \text{for}\qquad A = 1, \dots, j-i\,,\\
    &y^2 \wedge \gamma_{j-i} = 0\,.
\end{align*}
If $i <j-1$, it is easy to check that the space of solutions to this homogeneous linear system of equations is generated by the following
\begin{enumerate}[\rm (i)]
    \item $\gamma_0 = y^1$, $\gamma_1 = y^2$ and $\gamma_3= \cdots = \gamma_{j+1-i} = 0 $,
    \item $\gamma_0 = \cdots = \gamma_{A-1} = 0$, $\gamma_A = y^1$ and $\gamma_{A+1} = y^2$, $\gamma_{A + 2} = \cdots \gamma_{j-i} = 0$, for $A = 1, \dots, j-(i+1)$,
    \item $\gamma_1 = \cdots = \gamma_{j-1-i} = 0$, $\gamma_{j-(i+1)} = y^1$ and $\gamma_{j-i} = y^2$,
\end{enumerate}
corresponding to the $1$-cocycles
\[
y^1 \otimes \alpha^A_i + y^2 \otimes \alpha ^{A+1}_i \,, \qquad \text{for} \qquad A = 0, \dots, j-(i+1)\,.
\]
These generators simply correspond to the coboundaries $\partial \alpha_{i+1}^{A}$. Now, in degree $i = j$, if $\beta = \gamma \otimes \alpha_j$ we obtain the conditions $y^1 \wedge \gamma = 0$ and $y^2 \wedge \gamma = 0$, so that the unique possibility is $\gamma = 0$, showing that there are no $1$-cocycles on degree $j$, and finishing the proof.
\end{proof}

We are finally ready to give the computation of the cohomology of $\mathcal{A}_j$.

\begin{proof}[Proof of Theorem \ref{theorem:cohomology_in_degree_2}] From dimensional considerations, we clearly have $H^{l, j}(\mathcal{A}_j) = \{0\}$, for $l \geq 3$. By Lemma \ref{lemma:Computation_of_cocycles} it is clear that
\[
H^{0, j}(\mathcal{A}_j) = H^{1, j}(\mathcal{A}_j) = \{0\}\,, \qquad \forall j \geq 1\,.
\]
It only remains to show that $H^{2,j}(\mathcal{A}_j) \cong \mathbb{R}$ and that is generated by the cohomology class of $\alpha_j$. That $y^1 \wedge y^2 \otimes \alpha_j$ defines a nonvanishing cohomology class is clear, as, by Lemma \ref{lemma:generators_prolongations}, we have $\mathcal{A}_j^{(j)} =\{0\}$. Hence, we simply need to show that the cocycles
\[
y^1 \wedge y^2 \otimes \alpha^A_i\,, \qquad 0 \leq A \leq j-i \,, \qquad i < j\,,
\]
are actually coboundaries. In fact, in light of Remark \ref{remark:computation_of_partial_alpha} we have
\[
y^1 \wedge y^2 \otimes \alpha^A_i = \partial \left( y^2 \otimes \alpha^{A}_{i+1}\right)\,,
\]
which finishes the proof.
\end{proof}

\subsection{{Construction of the linear type}}

Here we employ the tableaux built in the previous section to obtain the following

\begin{theorem}
\label{thm:Theorem_counterexamples}
Let $j \geq 1$. Then, there exists a nondegenerate $3$-form $\varpi_j \in  \bigwedge^3 V^\ast$, where $V$ is a real vector space of dimension $\dim V = 2j +5$ such that
\[
H^{2, j}(\mathfrak{g}_{\varpi_j}) \neq 0.
\]
\end{theorem}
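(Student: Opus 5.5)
The plan is to realise $\mathcal{A}_j$ as a retraction of $\mathfrak{g}_{\varpi_j}$ for a suitable $3$-form of $1$-isotropic type. Then Corollary \ref{corollary:injective_mapping_homology} together with Theorem \ref{theorem:cohomology_in_degree_2} gives $\{0\}\neq H^{2,j}(\mathcal{A}_j)\hookrightarrow H^{2,j}(\mathfrak{g}_{\varpi_j})$.

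\textbf{The form.} Take $V=L\oplus W$ with $L=\langle y_1,y_2,z_{-1},z_0,\dots,z_j\rangle$ (dimension $j+4$) and $W=\langle p_0,\dots,p_j\rangle$ (dimension $j+1$), so that $\dim V=2j+5$. Define
\[
\varpi_j:=\sum_{k=0}^{j} p^k\wedge\beta_k\,,\qquad \beta_k:=y^1\wedge z^{k-1}+y^2\wedge z^{k}\,.
\]
The choice of the $\beta_k$ is dictated by the tableau. An element $\sum_k a_k\,(y^1\otimes p_k+y^2\otimes p_{k+1})$ of $\mathcal{A}_j$ corresponds to the family $\gamma^k=a_k y^1+a_{k-1}y^2$. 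The condition $\sum_k\gamma^k\wedge\beta_k=0$ then reduces to
\[
y^1\wedge\beta_k+y^2\wedge\beta_{k+1}=y^1\wedge y^2\wedge z^k+y^2\wedge y^1\wedge z^k=0\,,
\]
which holds. So $\mathcal{A}_j\subseteq\mathfrak{g}_{\varpi_j}$ under the identification of Proposition \ref{prop:explicit_description_of_alegbra_1isotropic}.

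\textbf{Structural checks.} First I will verify three linear-algebra facts:
\begin{enumerate}[\rm (a)]
\item $\varpi_j$ is nondegenerate. Contracting with each basis vector gives a nonzero $2$-form, and a short check shows that no nontrivial combination is killed.
\item $W$ is $1$-isotropic and $L$ is $2$-isotropic. This is immediate, since every term has exactly one $p$-factor.
\item $W$ is $G_{\varpi_j}$-invariant. I expect to characterise $W$ intrinsically, for instance as the set of $v$ with $\iota_v\varpi_j$ of rank $\le 4$, or as the radical of a canonical bilinear construction.
\end{enumerate}
With these facts, Proposition \ref{prop:explicit_description_of_alegbra_1isotropic} gives
\[
\mathfrak{g}_{\varpi_j}\cong\mathfrak{g}_S\ltimes\mathcal{A}_{\varpi_j}\,,\qquad \mathcal{A}_{\varpi_j}=\Big\{\textstyle\sum_k\gamma^k\otimes p_k:\ \sum_k\gamma^k\wedge\beta_k=0\Big\}\,.
\]

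\textbf{The retraction.} I will apply Lemma \ref{lemma:sufficient_retraction_condition} with the choices
\[
\widetilde V^\ast=\langle y^1,y^2\rangle\,,\quad \widehat V^\ast=\langle z^{-1},\dots,z^j,p^0,\dots,p^j\rangle\,,\quad \widetilde W=W\,,\quad \widehat W=L\,.
\]
Two things must be shown.
\begin{enumerate}[\rm (1)]
\item $\mathfrak{g}_{\varpi_j}\cap(\widetilde V^\ast\otimes W)=\mathcal{A}_j$. If $\gamma^k\in\langle y^1,y^2\rangle$ and $\sum_k\gamma^k\wedge\beta_k=0$, then comparing the coefficients of $y^1\wedge y^2\wedge z^k$ forces $\gamma^k=a_ky^1+a_{k-1}y^2$ with $a_{-1}=a_{j+1}=0$. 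This is exactly $\mathcal{A}_j$.
\item Every $T\in\mathfrak{g}_{\varpi_j}$ splits as an element of $\mathcal{A}_j$ plus an element of $\mathfrak{g}_{\varpi_j}$ with no $\widetilde V^\ast\otimes\widetilde W$ component. Write the $L^\ast\otimes W$ part of $T$ as $\sum_k\gamma^k\otimes p_k$ and decompose each $\gamma^k$ into its $y$-part $\gamma^k_y$ and its $z$-part. The closure equation splits by the number of $y$-factors. The $y^1\wedge y^2\wedge z$-component involves only the $\gamma^k_y$ and is exactly the $\mathcal{A}_j$ condition, so $\sum_k\gamma^k_y\otimes p_k$ lies in $\mathcal{A}_j\subseteq\mathfrak{g}_{\varpi_j}$. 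Subtracting it leaves an element of $\mathfrak{g}_{\varpi_j}$ whose projection to $\widetilde V^\ast\otimes W$ vanishes.
\end{enumerate}

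\textbf{Main obstacle.} The step I expect to be hardest is (2), and specifically the $\mathfrak{g}_S$ part and the $p^i\otimes p_k$ part of $T$. These may a priori have components in $\widetilde V^\ast\otimes W$, or mix with the $\gamma^k_y$ through the equation $\iota_{\widetilde T}\beta_k=-A^i_k\beta_i$. By construction, $\mathfrak{g}_S\subseteq L^\ast\otimes L$ and the $A$-part lies in $W^\ast\otimes W$, so neither touches $\widetilde V^\ast\otimes\widetilde W$. Hence I only need that the $\gamma$-equation is linear and splits by $y$-degree, as in (2). If the splitting fails in some degree, I will modify the complement $\widehat V^\ast$, e.g.\ by adapting the $z$-directions. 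The invariance of $W$ in (c) is the other point needing care.
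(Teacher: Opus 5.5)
\textbf{There is a gap in step (c), but it is repairable.} Your form is the paper's $\varpi_j$ after relabelling $z^{k-1}=x^k$. Your retraction data $\widetilde V^\ast=\langle y^1,y^2\rangle$, $\widetilde W=\langle p_k\rangle$, $\widehat W=L$ are also exactly the paper's, so the overall strategy is right. The problem is that $W=\langle p_0,\dots,p_j\rangle$ is \emph{not} $G_{\varpi_j}$-invariant. Since $z^{-1}$ occurs only in $\beta_0$, you get $\iota_{z_{-1}}\varpi_j=p^0\wedge y^1$. Hence for $T=p^0\otimes z_{-1}$,
\[
\iota_T\varpi_j=p^0\wedge\iota_{z_{-1}}\varpi_j=p^0\wedge p^0\wedge y^1=0,
\]
so $T\in\mathfrak{g}_{\varpi_j}$, yet $T(p_0)=z_{-1}\notin W$. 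Moreover $\exp(tT)=\mathrm{id}+tT$ lies in $G_{\varpi_j}$ and moves $p_0$ off $W$. Similarly $p^j\otimes z_j\in\mathfrak{g}_{\varpi_j}$.

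The same computation rules out your proposed intrinsic characterisation: $\iota_{z_{-1}}\varpi_j$ has rank $2\le 4$, but $z_{-1}\notin W$. Consequently Proposition~\ref{prop:explicit_description_of_alegbra_1isotropic} cannot be invoked, and the decomposition $\mathfrak{g}_{\varpi_j}\cong\mathfrak{g}_S\ltimes\mathcal{A}_{\varpi_j}$ is false, because the algebra has a nonzero $W^\ast\otimes L$ block. Your ``main obstacle'' paragraph assumes the only blocks are $L^\ast\otimes L$, $W^\ast\otimes W$ and $L^\ast\otimes W$, so it rests on a false premise.

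\textbf{The fix is to drop (c) entirely, which is what the paper does.} The paper writes a completely general $T$, including the terms sending the $p$-directions into $\langle x_m,y_a\rangle$, and observes that the $\gamma$-equation decouples. Concretely, split an arbitrary $T\in V^\ast\otimes V$ into its $L^\ast\otimes L$, $W^\ast\otimes W$, $W^\ast\otimes L$ and $L^\ast\otimes W$ blocks, and use $\iota_{\eta\otimes v}\varpi_j=\eta\wedge\iota_v\varpi_j$.
\begin{itemize}
\item The $L^\ast\otimes L$ and $W^\ast\otimes W$ blocks land in $W^\ast\wedge\bigwedge^2L^\ast$.
\item The $W^\ast\otimes L$ block lands in $\bigwedge^2W^\ast\wedge L^\ast$.
\item Only the $L^\ast\otimes W$ block, $\sum_k\gamma^k\otimes p_k$, contributes to $\bigwedge^3L^\ast$, and its contribution is exactly $\sum_k\gamma^k\wedge\beta_k$.
\end{itemize}
Hence $\sum_k\gamma^k\wedge\beta_k=0$ for every $T\in\mathfrak{g}_{\varpi_j}$. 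Conversely, every element of $L^\ast\otimes W$ satisfying this equation lies in $\mathfrak{g}_{\varpi_j}$. Your $y$-degree splitting then yields the projection condition of Lemma~\ref{lemma:sufficient_retraction_condition} verbatim. Step (1) never needed invariance in the first place.

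\textbf{A small indexing slip in (1).} The coefficient of $y^1\wedge y^2\wedge z^{j}$ is $a_j$, so the boundary conditions are $a_{-1}=a_j=0$, not $a_{-1}=a_{j+1}=0$. With your version you would get a $(j+1)$-dimensional space rather than $\mathcal{A}_j$.
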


Hence, we find a linear type of a $3$-form whose flatness depends \emph{strictly} on conditions of order $j$ on the $G_\varpi$-structure. Indeed, we later find a $3$-form $\omega \in \Omega^3(M)$ on a manifold of dimension $2j +5$ with linear type $\varpi_j$ that is flat up to order $(j-1)$ on a particular point $x_0 \in M$ but not to order $j$ on $x_0$. In particular, we obtain

\begin{corollary} The conditions for the flatness of a multisymplectic structure may be of $j$-th order, for arbitrary $j \geq 1$.
\end{corollary}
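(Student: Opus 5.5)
The plan is to realise $\mathcal{A}_j$ as a retraction of $\mathfrak{g}_{\varpi_j}$ in the sense of Lemma \ref{lemma:sufficient_retraction_condition}. Then Corollary \ref{corollary:injective_mapping_homology} together with Theorem \ref{theorem:cohomology_in_degree_2} gives $\{0\}\neq H^{2,j}(\mathcal{A}_j)\hookrightarrow H^{2,j}(\mathfrak{g}_{\varpi_j})$.

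\emph{Construction.} Take $V = Y\oplus Z\oplus P$ with
\[
Y=\langle y_1,y_2\rangle\,,\qquad Z=\langle z_0,\dots,z_{j+1}\rangle\,,\qquad P=\langle p_0,\dots,p_j\rangle\,,
\]
so that $\dim V = 2+(j+2)+(j+1)=2j+5$. Define
\[
\varpi_j := \sum_{i=0}^{j} p^i\wedge\left(y^1\wedge z^i + y^2\wedge z^{i+1}\right)\,.
\]
This is of the shape $\varpi = p^i\wedge\alpha_i$ used in Proposition \ref{prop:explicit_description_of_alegbra_1isotropic}: the subspace $P$ is $1$-isotropic, and $L=Y\oplus Z$ is a $k$-isotropic complement.

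\emph{Nondegeneracy.} I would check this directly. For $v=a^ay_a+b^kz_k+c^ip_i$, the $p^i$-components of $\iota_v\varpi_j$ are $a^1z^i+a^2z^{i+1}-b^iy^1-b^{i+1}y^2$. Requiring these to vanish for every $i$ kills all $a^a$ and $b^k$. The remaining $P$-free part is $c^i(y^1\wedge z^i+y^2\wedge z^{i+1})$, and it vanishes only if every $c^i=0$. This is routine bookkeeping.

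\emph{Tableau computation.} Consider elements $T=\sum_i\gamma^i\otimes p_i\in Y^\ast\otimes P$ with $\gamma^i=a_iy^1+b_iy^2$. The condition $\iota_T\varpi_j=0$ reads $\sum_i\gamma^i\wedge\alpha_i=0$, which is
\[
\sum_k \left(a_{k-1}-b_k\right)y^1\wedge y^2\wedge z^k=0\,,
\]
with the conventions $a_{-1}=0$ and $b_{j+1}=0$. Hence $b_0=0$, $a_j=0$ and $b_k=a_{k-1}$, so that
\[
\mathfrak{g}_{\varpi_j}\cap(Y^\ast\otimes P)=\left\langle y^1\otimes p_i+y^2\otimes p_{i+1}\right\rangle_{i=0}^{j-1}=\mathcal{A}_j\,.
\]

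\emph{Retraction hypothesis.} I apply Lemma \ref{lemma:sufficient_retraction_condition} with $\widetilde V^\ast=Y^\ast$, $\widehat V^\ast=Z^\ast\oplus P^\ast$, $\widetilde W=P$ and $\widehat W=Y\oplus Z$. What must be shown is that for every $T\in\mathfrak{g}_{\varpi_j}$, its $Y^\ast\otimes P$-component $T_0$ lies in $\mathcal{A}_j$; then $T-T_0\in\mathfrak{g}_{\varpi_j}$ lies in the complementary piece. Since $\varpi_j$ is linear in the $p^i$, the argument runs as follows.
\begin{enumerate}[\rm (i)]
\item Components of $T$ with target in $Y\oplus Z$ contribute to $\iota_T\varpi_j$ only through terms containing some $p^i$.
\item Components with target in $P$ contribute $P$-free terms: those with source $P^\ast$ give terms still containing a $p^\ast$; those with source $Z^\ast$ give terms of type $z\wedge y\wedge z$; those with source $Y^\ast$ give terms of type $y^1\wedge y^2\wedge z$.
\end{enumerate}
So the $y^1\wedge y^2\wedge z^k$ coefficients of $\iota_T\varpi_j$ come only from $T_0$. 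Their vanishing is exactly the computation above, which forces $T_0\in\mathcal{A}_j$. Hence $\mathcal{A}_j$ is a retraction of $\mathfrak{g}_{\varpi_j}$, and Corollary \ref{corollary:injective_mapping_homology} concludes.

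The main obstacle is this last step: the type-by-type separation of the terms of $\iota_T\varpi_j$ must be done carefully enough that no cross-contribution to the $y^1\wedge y^2\wedge z^k$ coefficients is missed. If a stray term did appear, I would modify the complement $\widehat W$, or add a $P$-free term to $\varpi_j$ to remove it.
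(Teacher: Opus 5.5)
Your cohomological argument is correct, and it is the paper's proof of Theorem~\ref{thm:Theorem_counterexamples}. You use the same $\varpi_j$, the same splittings $\widetilde V^\ast=Y^\ast$, $\widetilde W=P$, and the same combination of Lemma~\ref{lemma:sufficient_retraction_condition}, Corollary~\ref{corollary:injective_mapping_homology} and Theorem~\ref{theorem:cohomology_in_degree_2}. Your check of the retraction hypothesis sorts the terms of $\iota_T\varpi_j$ by type: only the $Y^\ast\otimes P$ block feeds the $y^1\wedge y^2\wedge z^k$ components, so $\iota_{T_0}\varpi_j=0$. This is cleaner than the paper's computation, because it needs neither a full description of $\mathfrak{g}_{\varpi_j}$ nor the vanishing of the $C_{im}$ in Lemma~\ref{lemma:necessary_lemma}. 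You also verify nondegeneracy, which the paper leaves implicit.

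What is missing is the step from $H^{2,j}(\mathfrak{g}_{\varpi_j})\neq\{0\}$ to the corollary itself.
\begin{itemize}
\item Nonvanishing only says that the $j$-th structure tensor $c_j$ is a section of a nonzero bundle.
\item To conclude that flatness genuinely involves a condition of order $j$, you must exhibit a $G_{\varpi_j}$-structure with $c_0=\dots=c_{j-1}\equiv 0$ near a point but $c_j\neq 0$ there. Otherwise $c_j=0$ could hold automatically.
\item This is not a formality. Perturbing the flat frame by a homogeneous polynomial of degree $j+1$ gives a structure that is $j$-flat at the origin, but in general not on a neighbourhood, so Guillemin's criterion does not apply to it.
\end{itemize}
The paper supplies this step with the explicit form $\omega_j$ of Theorem~\ref{thm:explicit_counterexample}.

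If you add that step, note that the obstruction there is $\partial^{j+1}f/\partial(y^2)^{j+1}$, not $\partial^{j}f/\partial(y^2)^{j}$ as stated in the paper. The system $\partial_{y^2}f^0=0$, $\partial_{y^1}f^i=\partial_{y^2}f^{i+1}$, $\partial_{y^1}f^j=f$ has the single compatibility condition $\partial^{j+1}f/\partial(y^2)^{j+1}=0$. It is solvable when $f$ is a polynomial of degree at most $j$. For instance, with $j=1$ and $f=y^2$, the coordinates $\widetilde p_0=p_0-\tfrac12(y^1)^2$, $\widetilde p_1=p_1-y^1y^2$ already make $\omega_1$ constant. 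So take, e.g., $f=(y^2)^{j+1}$. At each point the Taylor polynomial of $f$ of degree at most $j$ can be absorbed by $\widetilde p_i=p_i-f_i(y)$, which gives $j$-flatness everywhere. The compatibility condition then obstructs the next order, as in the paper's argument.
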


The proof of Theorem \ref{thm:Theorem_counterexamples} is constructive, and we will simply build a $3$-form $\varpi_j$ such that the \emph{multisymplectic tableau} $\mathfrak{g}_\varpi \subseteq V^\ast \otimes V$ has $\mathcal{A}_{j}$ (defined in the previous section) as a retraction. In particular, in light of Corollary \ref{corollary:injective_mapping_homology}, we will necessarily have $H^{2, j}(\mathfrak{g}_\varpi) \neq \{0\}$. This construction is based on the following lemma.

\begin{lemma}
\label{lemma:necessary_lemma}
Let $V$ be a $(j+4)$-dimensional vector space, with a basis $(x_0, \dots, x_{j+1}, y_0, y_1)$ for $V$. Let $\alpha_0, \dots \alpha_j$ be $2$-forms given in this basis as
\[
\alpha^i := y^1 \wedge x ^i + y^2 \wedge x^{i+1}\,,
\]
where $(x^0, \dots, x^{j+1}, y^1, y^2)$ denotes the dual basis. Define \[
\mathcal{A} := \{T \in V^\ast \otimes \langle \alpha^i\rangle_{i = 0, \dotsm j} \colon \partial T = 0\}
\]
where $\partial \colon V^\ast \otimes \bigwedge^{2} V^\ast \rightarrow \bigwedge^3 V^\ast$ is given by exterior multiplication (hence, the Spencer coboundary). Then, we have
\[
\mathcal{A} = \left\langle y^1 \otimes \alpha^0 + y^2 \otimes \alpha^1\,, \dots \,, y^1 \otimes \alpha^i + y^2 \otimes \alpha^{i+1}\,, \dots\,,  y^1 \otimes \alpha^{j-1} + y^2 \otimes \alpha^j  \right \rangle\,.
\]
\end{lemma}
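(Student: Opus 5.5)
The plan is a direct coordinate computation. Throughout, I read the dual basis of the $y$'s as $(y^1,y^2)$, matching the definition of the $\alpha^i$. An arbitrary element of $V^\ast\otimes\langle\alpha^i\rangle$ can be written uniquely as $T=\sum_{i=0}^j\gamma_i\otimes\alpha^i$, because the $\alpha^i$ are linearly independent. Here the $\gamma_i\in V^\ast$ are arbitrary, and $\partial T=\sum_i\gamma_i\wedge\alpha^i$. I expand
\[
\gamma_i=a_i\,y^1+b_i\,y^2+\sum_{k=0}^{j+1}c_i^k\,x^k .
\]
I will then sort the $3$-form $\sum_i\gamma_i\wedge\alpha^i$ into components along the basis $3$-forms of three types: $y^1\wedge y^2\wedge x^m$, $y^1\wedge x^k\wedge x^l$ and $y^2\wedge x^k\wedge x^l$. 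There are no $x\wedge x\wedge x$ terms, since every $\alpha^i$ contains a $y$. Each type gives an independent system of linear equations.

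\emph{The $y^1\wedge y^2\wedge x$ components.} The $y$-part of $\gamma_i$ contributes $a_i\,y^1\wedge y^2\wedge x^{i+1}-b_i\,y^1\wedge y^2\wedge x^i$. So the coefficient of $y^1\wedge y^2\wedge x^m$ gives $a_{m-1}=b_m$, with the boundary conventions $b_0=0$ (for $m=0$) and $a_j=0$ (for $m=j+1$). Hence $\gamma_i=a_i y^1+a_{i-1}y^2+(\text{$x$-part})$, with $a_{-1}=a_j=0$. Once the $x$-parts are shown to vanish, this gives exactly $T=\sum_{i=0}^{j-1}a_i\,(y^1\otimes\alpha^i+y^2\otimes\alpha^{i+1})$. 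Conversely, each listed generator is closed, either by this same computation or because its image is $y^1\wedge y^2\wedge x^{i+1}-y^1\wedge y^2\wedge x^{i+1}=0$. That proves the claimed equality.

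\emph{The $x$-parts, which I expect to be the main obstacle (it is bookkeeping of index ranges).} The $y^1$-terms are $\sum_{i\le j,\,k\le j+1}c_i^k\,x^k\wedge y^1\wedge x^i$. Their vanishing gives two things: $c_i^k=c_k^i$ for $0\le i,k\le j$, and $c_i^{j+1}=0$ for all $i$, since $x^{j+1}$ never appears paired with $y^1$ in the $\alpha^i$. The $y^2$-terms are $\sum c_i^k\,x^k\wedge y^2\wedge x^{i+1}$. Setting $m=i+1\in\{1,\dots,j+1\}$, their vanishing gives $c_i^{k+1}=c_k^{i+1}$ for $0\le i,k\le j$, and $c_i^0=0$ for all $i$. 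Combining the two symmetries, $c_i^{k+1}=c_k^{i+1}=c_{i+1}^k$, so $c_i^k$ depends only on $i+k$, i.e.\ it is a Hankel array $c_i^k=f(i+k)$ on the relevant ranges. Then $c_i^0=0$ gives $f(s)=0$ for $s\le j$, and $c_i^{j+1}=0$ gives $f(s)=0$ for $j+1\le s\le 2j+1$. Hence all $c_i^k=0$.

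The one point needing care is that the Hankel propagation stays within the index ranges where both relations hold. I would check this by an induction on $k$: start from $c_i^0=0$, and use $c_i^{k+1}=c_{i+1}^k$ for $i<j$. For $i=j$, use instead the boundary condition $c_j^{k+1}=c_{k+1}^j$ (symmetry, valid for $k+1\le j$) together with $c_j^{j+1}=0$. With that verified, the $x$-parts vanish and the description of $\mathcal{A}$ follows.
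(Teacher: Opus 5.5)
Your proposal is correct and follows essentially the same route as the paper: the same expansion $T=\sum_i\gamma_i\otimes\alpha^i$, the same decoupled conditions ($b_0=a_j=0$ and $a_{i-1}=b_i$; then symmetry, $c_i^{j+1}=0$, $c_i^0=0$ and the shifted relation $c_i^{k+1}=c_k^{i+1}$), with your anti-diagonal (Hankel) argument a cleaner packaging of the paper's row-by-row induction. One small fix to your closing induction check: for $i=j$, use the $y^2$-relation directly, $c_j^{k+1}=c_k^{j+1}=0$, because the entry $c_{k+1}^j$ that you reach via symmetry is not yet covered by an induction on $k$; your Hankel argument already handles this case anyway.
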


\begin{proof} Indeed, let
\[
T = \sum_{i = 0}^{j} \gamma_i\otimes \alpha^i \in V^\ast \otimes \langle \alpha^i \rangle_{i = 0, \dots, j}\,,
\]
where
$
\gamma_i = A_i y^1 + B_i y^2 + C_{im} x^m
$.
Then, we have that
\begin{align*}
    \partial T = & \sum_{i = 0}^{j} \left( A_i y^1 + B_i y^2 + C_{im} x^m\right) \wedge \left( y^1 \wedge x^i + y^2 \wedge x^{i+1}\right)\\
    =& - B_0 y^1 \wedge y^2 \wedge x^0 + \sum_{i = 1}^{j} (A_{i-1} - B_i) y^1 \wedge y^2 \wedge x^i + A_j y^1 \wedge y^2 \wedge x^{j+1}\\
    & +\sum_{i = 0}^j \sum_{m = 0}^{j+1} C_{im}(y^1 \wedge x^i \wedge x^m + y^2 \wedge x^{i+1} \wedge x^m)\,.
\end{align*}
Reorganizing elements we get
\begin{align*}
\partial T =
    & - B_0 y^1 \wedge y^2 \wedge x^0 + \sum_{i = 1}^{j} (A_{i-1} - B_i) y^1 \wedge y^2 \wedge x^i + A_j y^1 \wedge y^2 \wedge x^{j+1}\\
    &  +\sum_{i = 0}^j \sum_{m = 0}^j C_{im} y^1 \wedge x^i \wedge x^m + \sum_{i = 0}^j C_{i\, j+1} y^1 \wedge x^i \wedge x^{j+1}\\
    & + \sum_{i = 0}^j C_{i0} y^2 \wedge x^{i+1} \wedge x^0 +  \sum_{i = 0}^j \sum_{m = 1}^{j} C_{im} y^2 \wedge x^{i+1} \wedge x^m\,.
\end{align*}
If $\partial T = 0$, we obtain the following conditions on the coefficients: First, $B_0 = 0$, $A_j = 0$ and $A_{i-1} = B_i$. The generators of the solutions of this system of equations are the ones prescribed above. Finally, we obtain the following conditions on the $C_{im}$:
\begin{enumerate}[\rm (i)]
    \item For $i = 0, \dots, j$ and $m = 0, \dotsm j$ we have $C_{im} = C_{mi}$.
    \item For $i = 0, \dots, j$ we have $C_{i \, j+1 } = 0$.
    \item For $i = 0, \dots, j$ we have $C_{i0} = 0$.
    \item For $i = 0, \dots, j$ and $ m = 1, \dots, j+1$ we have $C_{im} = C_{m-1 \,i+1}$.
\end{enumerate}
The proof finishes once we show that these conditions imply $C_{im} = 0$, for all $i = 1, \dots, j$ and $m = 0, \dots, j+1$. Indeed, for $m = 1, \dots, j$ and $i = 0, \dots, j$ by combining conditions {\rm (i)} and {\rm (iv)} we have that $C_{im} = C_{i+1\, m-1}$. Now we may show iteratively that all coefficients vanish. Indeed, for $i = 0$, we have that
\[
C_{0m} \stackrel{\text{{\rm (i)}}}{=} C_{m0} \stackrel{\text{{\rm (iii)}}}{=} 0\,,\qquad \text{for $m \leq j$}
\]
and $C_{0 j+1} = 0$ because of {\rm (ii)}. Now suppose that $C_{im} = 0$ for all $i \leq i_0$. Then, by employing the above formula
\[
C_{i_0 +1 \, m}  = C_{i_0\, m} = 0\,, \qquad \text{for $m \leq j$}\,,
\]
the case of $m = j+1$ following from {\rm (ii)}. This finishes the inductive step and concludes the proof.
\end{proof}

Now we explicitly define $\varpi_j$ for $j \geq 1$.

\begin{definition}
\label{def:definition_of_varpi_j}
Let $V$ be a $(2j +5)$-dimensional vector space, together with a basis \[\{x_0, \dots, x_{j+1}, p^0, \dots, p^j, y_1, y_2\}\,,\] and define
\[
\varpi_j := \sum_{i = 0}^j p_i \wedge \alpha^i \,,
\]
where $\{x^0, \dots, x^{j+1}, p_0, \dots, p_j, y^1, y^2\}$ denotes the dual basis and $\alpha^i$ are such as in Lemma \ref{lemma:necessary_lemma}.
\end{definition}

\begin{proof}[Proof of Theorem \ref{thm:Theorem_counterexamples}] Let $V$ and $\varpi \in \bigwedge^3 V^\ast$ be as in Definition \ref{def:definition_of_varpi_j}. We will show that $\mathcal{A}_j$ arises as a retraction of $\mathfrak{g}_{\varpi_j}$. Define
\[
\widetilde V^\ast = \langle y^1, y^2\rangle \subseteq V^\ast \qquad \text{and} \qquad \widetilde W := \langle p^0, \dots, p^j\rangle \subseteq W\,.
\]
We will prove that $\widetilde{\mathcal{A}}:=\mathcal{A} \cap (\widetilde V^\ast \otimes W) \cong \mathcal{A}_j$ and that the natural decompositions
\begin{align*}
    &V^\ast = \widetilde V^\ast \oplus \widehat V^\ast \,, \qquad \text{where} \qquad \widehat{V}^\ast = \langle x^0, \dots, x^{j+1}, p_0, \dots, p_j\rangle \qquad \text{and}\\
    &W = \widetilde W \oplus\widehat W \,, \qquad \text{where} \quad \widehat W= \langle x_0, \dots, x_{j+1}, y_0, y_1 \rangle\,,
\end{align*}
are in the hypotheses of Lemma \ref{lemma:sufficient_retraction_condition} so that the induced mapping
\[
H^{2,j}(\mathcal{A}_j) \xrightarrow{(i_1, i_2)_\ast} H^{2,j}(\mathfrak{g}_{\varpi_j})
\]
is injective. Since, in light of Theorem \ref{theorem:cohomology_in_degree_2} we have $H^{2,j}(\mathcal{A}_j) = \mathbb{R}$, we will obtain the result. Indeed,
\begin{enumerate}[\rm (i)]
    \item \underline{$\widetilde{\mathcal{A}} \cong \mathcal{A}_j$}: By definition, we have that \[\widetilde{\mathcal{A}} = \{T \in \langle y^1, y^2\rangle \otimes \langle p_0, \dots, p_j\rangle \colon \iota_T \varpi_j = 0\}\,.\]
    Let
    $
    T =\sum_{i = 0}^j \gamma_i \otimes p_i \in \widetilde{\mathcal{A}}\,.
    $
    Then we should have
    \[
    0 = \iota_T \varpi_j = \sum_{i = 0}^{j} \gamma_i \wedge \alpha^i\,.
    \]
    Applying Lemma \ref{lemma:necessary_lemma} we obtain
    $
    \widetilde{\mathcal{A}} = \langle y^1 \otimes p_0 + y^2 \otimes p_1\,, \dots\,, y^1 \otimes p_{j-1} + y^2 \otimes p_j\rangle\,,
    $
    which is evidently isomorphic to $\mathcal{A}_j$.
    \item \underline{$\widetilde{\mathcal{A}}$ is a retraction of $\mathcal{A}$}: By Lemma \ref{lemma:sufficient_retraction_condition} it is enough to show that
    \begin{equation}
    \label{eq:condition_on_retraction}
            \mathcal{A}  = \widetilde{\mathcal{A}} \oplus \mathcal{A} \cap \left(\widehat V^\ast \otimes \widetilde W + \widehat{V}^\ast \otimes \widehat W + \widetilde V^\ast \otimes \widehat{W} \right)\,.
    \end{equation}
    To do this, we simply compute $\mathfrak{g}_{\varpi_j}$. Let
    \[
    T = \widetilde T + \gamma_j \otimes p^j + p_i \otimes \left(A^i_j p^j + B^{im} x_m + C^{i}y_0 + D^i y_1 \right)
    \]
    such that $\iota_T \varpi = 0$. Here $\widetilde T$ denotes a tensor entirely on the $x$ and $y$ and $\gamma_j$ denotes a $1$-form on $x$ and $y$. Then, a straightforward computation yields
    \begin{align*}
        0 = \iota_T \varpi =& p_j \wedge \iota_{\widetilde T} \alpha^j + A^{j}_i p_j \wedge \alpha^i + \gamma_j \wedge \alpha^j\\
        &- B^{im}p_i \wedge p_j \wedge (\delta_m^{j} y^1
        + \delta_{m}^{j+1} y^2) + p_i \wedge p_j \wedge ( C^i x_j + D^i x_{j+1})\,,
    \end{align*}
    and we obtain the conditions
    \begin{align*}
        &\iota_{\widetilde T} \alpha_i = -A^j_i \alpha_j \,,  &\gamma_j \wedge \alpha_j = 0\,,\\
        &- B^{im}p_i \wedge p_j \wedge (\delta_m^{j} y^1
        + \delta_{m}^{j+1} y^2) = 0\,,   &p_i \wedge p_j \wedge ( C^i x_j + D^i x_{j+1}) = 0\,.
    \end{align*}
    Since the part on the $\gamma_j$ is not involved in any other equation, we have that Eq.\eqref{eq:condition_on_retraction}, holds finishing the proof.
\end{enumerate}
\end{proof}

\subsection{A multisymplectic manifold with nonvanishing structure tensor of order \texorpdfstring{$j$}{j}}

Here we finish the study by explicitly constructing a multisymplectic form on $M = \mathbb{R}^{5 + 2j}$ with constant linear type $\varpi_j$ for which the structure tensors $c_0, \dots, c_{j-1}$ vanish, but $c_j$ does not. If we endow $\mathbb{R}^{5 + 2j}$ with coordinates $(y^1, y^2, x^0, \dots, x^{j+1}, p_0, \dots, p_j)$, the form will be

\[
\omega_j = \sum_{i = 0}^j \d p_i \wedge \left(\d y^1 \wedge \d x^{i} + \d y^2 \wedge \d x^{i+1}\right) - f(y^2) \d y^1 \wedge \d y^2 \wedge \d x^{j+1}\,,
\]
for some smooth function $f \in C^\infty(\mathbb{R})$.

\begin{remark} The $3$-form $\omega_j$ is of constant linear type $\varpi_j$, since we can write it as
\[
\omega_j = \sum_{i = 0}^{j-1} \d p_i \wedge \left( \d y^1 \wedge \d x^i + \d y^2 \wedge \d x^{i+1}\right) + (\d p_j - f(y^2) \d y^1) \wedge (\d y^1 \wedge \d x^j + \d y^2 \wedge \d x^{j+1})\,.
\]
In fact, this implies that the following frame
\[
    P^i := \pdv{p_i}\,, \qquad
    X_i := \pdv{x^i}\,, \qquad
    Y_1 := \pdv{y^1} + f(y^2) \pdv{p_{j}}\,, \qquad
    Y_2 := \pdv{y^2}\,,
\]
is an adapted frame to the $G_{\varpi_j}$-structure.
\end{remark}

In fact, we show:

\begin{theorem}
\label{thm:explicit_counterexample}
The structure tensors of order $0$ to $j-1$ of the multisymplectic form $\omega_j$ vanish, and the last structure tensor is (represented by the cohomology class of) $0$ if and only if $\pdv{^j f}{(y^2)^j} = 0$.
\end{theorem}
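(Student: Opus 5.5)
The plan is to reduce everything to the tableau $\mathcal{A}_j$ and read off the structure tensors order by order, using Guillemin's theorem ($c_k(x)=0$ iff a $k$-flat structure is $(k+1)$-flat at $x$). The frame in the preceding remark shows that $\omega_j$ differs from the flat model $\varpi_j$ (constant coefficients in the chart $(y,x,p)$) only by the term $-f(y^2)\,\d y^1\wedge\d y^2\wedge\d x^{j+1}$. So I look for changes of coordinates of the form $\widetilde p_i = p_i + g_i(y^1,y^2)$, $i=0,\dots,j$, with all other coordinates unchanged. This is the natural ansatz, because the retraction in the proof of Theorem \ref{thm:Theorem_counterexamples} isolates exactly the block $\widetilde V^\ast\otimes\widetilde W=\langle y^1,y^2\rangle\otimes\langle p^0,\dots,p^j\rangle$.

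First I compute $\sum_i \d g_i\wedge(\d y^1\wedge\d x^i+\d y^2\wedge\d x^{i+1})$. Requiring that it cancel the extra term gives the system
\[
\partial_{y^2} g_0 = 0,\qquad \partial_{y^2} g_i = \partial_{y^1} g_{i-1}\ (1\le i\le j),\qquad \partial_{y^1} g_j = f(y^2).
\]
This is precisely the inhomogeneous PDE associated to the tableau $\mathcal{A}_j$. Equivalently, the vector $(g_0,\dots,g_j)$ must satisfy $\d g\in\mathcal{A}_j$ up to the source term $f\,y^1\wedge y^2\otimes(\text{top component})$. Next I Taylor-expand $f$ at a point $x_0$ and solve the system degree by degree in the $y$-variables. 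At each degree, the obstruction to extending a solution by one order is a $2$-cocycle in $\bigwedge^2\langle y^1,y^2\rangle\otimes\mathcal{A}_j^{(i-1)}$. By Lemma \ref{lemma:Computation_of_cocycles} and the proof of Theorem \ref{theorem:cohomology_in_degree_2}, this cocycle is a combination of the classes $y^1\wedge y^2\otimes\alpha_i^A$, and for $i<j$ these are coboundaries, namely $\partial(y^2\otimes\alpha^A_{i+1})$. Hence the Taylor coefficients of $f$ below the critical order can always be absorbed. This produces jets of diffeomorphisms making $\omega_j$ flat to every order below $j$, so $c_0=\cdots=c_{j-1}=0$ by Guillemin's theorem.

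At the critical degree the source term contributes a multiple of $y^1\wedge y^2\otimes\alpha_j$, with coefficient proportional to the $j$-th derivative of $f$. That derivative appears because iterating the system gives $\partial_{y^2}^{\,k} g_j=\partial_{y^1}^{\,k} g_{j-k}$, and $g_0$ is forced to be independent of $y^2$. Since $\mathcal{A}_j^{(j)}=\{0\}$, this class generates $H^{2,j}(\mathcal{A}_j)\cong\mathbb{R}$, and it cannot be removed within the ansatz. The main obstacle is to show that it cannot be removed by \emph{arbitrary} jets of coordinate changes either. That is, I must identify $c_j(x_0)$ with the image of this class under $(i_1,i_2)_\ast\colon H^{2,j}(\mathcal{A}_j)\hookrightarrow H^{2,j}(\mathfrak{g}_{\varpi_j})$.

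For that identification I would argue as follows. Represent $c_j$ via the torsion of a holonomic connection on the $(j-1)$-flat frame bundle built from the adapted frame. The only non-constant structure function is $f$, so the representative cocycle lies in $\bigwedge^2\widetilde V^\ast\otimes\mathfrak{g}_{\varpi_j}^{(j-1)}$ and is $i_\ast$ of $(\partial_{y^2}^{\,j}f)(x_0)\,y^1\wedge y^2\otimes\alpha_j$, up to a nonzero constant. Corollary \ref{corollary:injective_mapping_homology} then gives that $c_j(x_0)=0$ iff $\partial_{y^2}^{\,j}f(x_0)=0$. I expect the delicate points to be bookkeeping. One is checking the exact index shift between Taylor order in $f$ and the order $j$ of the Spencer group. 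The other is verifying that the lower-order coboundary corrections from the previous step do not feed back into the degree-$j$ component. This should hold because the retraction $(r_1,r_2)$ kills all components outside $\widetilde V^\ast\otimes\widetilde W$.
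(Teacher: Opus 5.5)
The step that fails is the ``index shift'' you set aside as bookkeeping. At the critical degree, the coefficient of $y^1\wedge y^2\otimes\alpha_j$ is proportional to $\partial_{y^2}^{j+1}f(x_0)$, not to $\partial_{y^2}^{j}f(x_0)$.

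Your own iteration already shows this. From $\partial_{y^2}^{k}g_j=\partial_{y^1}^{k}g_{j-k}$ you get $\partial_{y^2}^{j}g_j=\partial_{y^1}^{j}g_0$, hence $\partial_{y^2}^{j}f=\pm\partial_{y^1}^{j+1}g_0$. The system does not constrain this quantity: it only imposes $\partial_{y^2}g_0=0$. (Incidentally, with $\widetilde p_i=p_i+g_i$ the last equation is $\partial_{y^1}g_j=-f$.) You need one more $\partial_{y^2}$ to use $\partial_{y^2}g_0=0$, and that gives $\partial_{y^2}^{j+1}f=0$.

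In Spencer terms, $f$ enters only after one differentiation, since $\mathrm{d}(\mathrm{d}p_j-f\,\mathrm{d}y^1)=f'\,\mathrm{d}y^1\wedge\mathrm{d}y^2$ for the adapted coframe. So a Taylor term of $f$ of degree $d\geq1$ produces a $2$-cochain of polynomial degree $d-1$. Its obstruction lies in $H^{2,d-1}(\mathcal{A}_j)$, which vanishes unless $d=j+1$.

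Concretely, take $f=(y^2)^j/j!$. The change $\widetilde p_i=p_i-\frac{(y^1)^{j+1-i}}{(j+1-i)!}\frac{(y^2)^i}{i!}$, for $0\le i\le j$ with the other coordinates fixed, turns $\omega_j$ exactly into $\sum_i\mathrm{d}\widetilde p_i\wedge(\mathrm{d}y^1\wedge\mathrm{d}x^i+\mathrm{d}y^2\wedge\mathrm{d}x^{i+1})$. For $j=1$ and $f=y^2$ this is just $\widetilde p_1=p_1-y^1y^2$, $\widetilde p_0=p_0-(y^1)^2/2$. So $\omega_j$ is flat and $c_j=0$, although $\partial_{y^2}^jf=1$. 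The equivalence as stated is therefore false, and no completion of your sketch can prove it. What your argument can establish is that $c_0=\dots=c_{j-1}=0$ for every $f$, and that $c_j(x_0)=0$ if and only if $\partial_{y^2}^{j+1}f(x_0)=0$.

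Apart from this, your route is the paper's: the same ansatz, justified there through Proposition \ref{prop_sufficiency_condition_solving_flatnes}, and the same system solved order by order (the paper's $f^i$ are your $g_i$ up to sign). The paper's sketch contains the same off-by-one error:
\begin{itemize}
\item Its explicit monomial solutions work verbatim for $m=0$, i.e.\ for $f=(y^2)^j/j!$.
\item Its compatibility identity uses $\partial_{y^1}f^0$, which the system leaves free.
\end{itemize}
The genuine compatibility relation has order $j+1$. Put $E_{-1}=\partial_{y^2}g_0$, $E_m=\partial_{y^1}g_m-\partial_{y^2}g_{m+1}$ for $0\le m<j$, and $E_j=\partial_{y^1}g_j$. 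Then $-\partial_{y^1}^{j+1}E_{-1}+\sum_{m=0}^{j}\partial_{y^1}^{j-m}\partial_{y^2}^{m+1}E_m$ vanishes identically, which forces $\partial_{y^2}^{j+1}f=0$.

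Your additional step is worth keeping: pushing the obstruction forward along the injective map $(i_1,i_2)_\ast\colon H^{2,j}(\mathcal{A}_j)\to H^{2,j}(\mathfrak{g}_{\varpi_j})$. It is what turns ``not removable within the ansatz'' into $c_j(x_0)\neq0$ for arbitrary jets. The paper's appeal to Proposition \ref{prop_sufficiency_condition_solving_flatnes}, which concerns genuine flatness, does not provide this. In your sketch, however, the step is only asserted. You still have to show that the order-$j$ structure tensor computed in your $j$-flat chart is $(i_1,i_2)_\ast$ of that class, with coefficient a nonzero multiple of $\partial_{y^2}^{j+1}f(x_0)$.
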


\begin{proof}
    We simply sketch the computations, which are somewhat straightforward. Let us first notice that the linear type
    \[
    \varpi_j = \sum_{i = 0}^j p_i \wedge (y^1 \wedge x^i + y^2 \wedge x^{i+1})
    \]
    is of $2$-isotropic type with $W = \langle x_i, p^i \rangle$. Indeed, a quick computation shows that an element in $\mathfrak{g}_{\varpi_j}$ cannot have summands of the type $p_i \otimes y_i$ or $x^i \otimes y_i$. Hence, we are in the hypotheses of Proposition \ref{prop_sufficiency_condition_solving_flatnes} and if it were flat, there would be functions $f_i(y)$ and $g^i(y)$ such that the coordinate change
    \[
    \widetilde y^i = y^i\,, \qquad \widetilde p_i = p_i - f_i \,, \qquad \widetilde x^i = x^i - g^i
    \]
    makes $\omega$ flat. Imposing that this coordinate change takes $\omega$ to
    \[
    \omega = \sum_{i = 0}^j \d \widetilde p_i \wedge (\d \widetilde y^1 \wedge \d \widetilde x^{i} + \d \widetilde y^2 \wedge \d \widetilde x^{i+1})\,,
    \]
    we obtain the conditions $\dd g^i = 0$ (so that we may assume $g^i = 0$) and the conditions on the $f^i$:
    \[
    \pdv{f^0}{y^2} = 0\,, \pdv{f^0}{y^1} - \pdv{f^1}{y^2} = 0\,, \dots\,, \pdv{f^i}{y^1} - \pdv{f^{i+1}}{y^2} = 0\,, \dots, \pdv{f^{j-1}}{y^1} - \pdv{f^j}{y^2} = 0\,, \pdv{f^j}{y^1} = f(y^2)\,.
    \]
    This equation can be solved up to order $j-1$ with no difficulties, as the choices $f^{i} = \frac{(y^1)^{j+1-i}}{(j+1-i)!} \frac{(y^2)^{i-m}}{(i-m)!}$, for $i \geq m$ and $f^i = 0$, for $i < m$ solve the equations for the monomial $f(y^2) = \frac{(y^2)^{j-m}}{(j-m)!}$, when $m >1$. However, in general, there is no solution to order $j$. Indeed, notice that
    \begin{align*}
        \pdv{^j}{(y^1)^j} \left( - \pdv{f^0}{y^1}\right) + \pdv{^j}{(y^1)^{j-1}y^2} \left( \pdv{f^0}{y^1} - \pdv{f^1}{y^2}\right) + \cdots + \pdv{^j}{(y^1)^{j-i}(y^2)^i} \left( \pdv{f^i}{y^1} - \pdv{f^{i+1}}{y^2}\right)\\
        + \pdv{^j}{(y^1)(y^2)^{j-1}} \left( \pdv{f^j}{y^1} - \pdv{f^{j-1}}{y^2}\right) + \pdv{^j}{(y^2)^j} \left( \pdv{f^j}{y^1}\right) = 0\,,
    \end{align*}
    and if $f^i$ are solutions this would imply
    \[
    \pdv{^{j} f}{(y^2)^j} = 0\,,
    \]
    which proves the result.
\end{proof}

\section{Polarized multisymplectic forms}

\label{section:polarized_form}

In this section we give a complete characterization of $G_\varpi$, for the particular kind of $\varpi$ appearing in classical field theories, which are a generalization of the group appearing in Example \ref{example:Multicotangent_bundle}. We focus on giving a explicit description of this Lie group and computing all of the extensions. We will see that these forms, which are parametrized by three parameters $\varpi_{m,n,r}$ are of $1$-isotropic type so that (in light of Lemma \ref{lemma:semi_finite_condition}), the leaf geometry is actually of finite type.

The forms are defined as follows. Let $\pi: Y \rightarrow X$ be a surjective linear mapping between two vector spaces of dimension, $n+m$ and $n$, respectively. Take
\[V := Y \oplus \bigwedge^n_r Y^\ast\] and define the canonical multisymplectic form of degree $n$ on $V$ as
\[
\varpi_{n,m,r}((y_1, \alpha_1), \dots, (y_{n+1}, \alpha_{n+1})) := \sum_{j = 1}^n (-1)^j \alpha_j(y_1, \dots, \hat{y}_j, \dots, y_{n+1}).
\]

\begin{definition}[The multisymplectic group of type $(m,n,r)$] The \textbf{multisymplectic group of type $(m,n,r)$} is defined as \[
\Mult(m,n,r) := G_{\varpi_{m,n,r}} = \{g \in \GL(V) \colon g^\ast \varpi_{m,n,r} = \varpi_{m,n,r}\}\,.
\]
\end{definition}

\begin{definition}[The multisymplectic Lie algebra of type $(m,n,r)$] The associated Lie algebra to $\Mult(m,n,r)$ is denoted by $\mult(m,n,r)$.
\end{definition}

By Proposition \ref{prop:description_of_Lie_algebra}, we have
\[
\mult(m,n,r) = \{T \in V^\ast \otimes V \colon \iota_{T} \varpi_{m,n,r} = 0\}\,.
\]

The rest of this section is devoted to give an explicit description of $\mult(m , n , r)$, for arbitrary values of $(m , n , r)$. It should be noted beforehand that there will be a fundamental difference between $r \leq m$ and $r > m$, essentially distinguishing between the `fibered' and `multicontangent' type. Furthermore, there are some `degenerate' cases, those corresponding to $r = 0,1$, and $n= 1 , r \geq 2$. Let us first state the result, and then move to discuss each case:

\begin{theorem}[Non degenerate cases]
\label{theorem:non-degenerate-cases}
In the nondegenerate case we have the following:
\begin{enumerate}[\rm (i)]
    \item For $n \geq 2$ and $r > m$, we have that $\mult(m,n ,r)$ is an abelian extension of $\gl(n+m)$.
    \item For $n \geq 2$ and $1 < r \leq m$, $\mult(m , n, r)$ is an abelian extension of $\block(n,m)$.
\end{enumerate}
\end{theorem}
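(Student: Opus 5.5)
The plan is to apply Proposition \ref{prop:explicit_description_of_alegbra_1isotropic} to $\varpi_{m,n,r}$. This needs three inputs: an invariant $1$-isotropic subspace, a $k$-isotropic complement, and an identification of the resulting algebra $\mathfrak{g}_S$.

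Here $\bigwedge^n_r Y^\ast$ is understood as the $n$-forms on $Y$ vanishing when $r$ of their arguments are $\pi$-vertical. The natural candidate for the $1$-isotropic subspace is $W := \{0\} \oplus \bigwedge^n_r Y^\ast$. Isotropy is immediate: $\varpi$ is linear in the $\alpha$-entries, so inserting two vectors of $W$ gives zero. The complement $L := Y \oplus \{0\}$ is $n$-isotropic, since $\varpi$ vanishes when all entries have zero form component. Thus $\varpi = p^j\wedge \alpha_j$ exactly as in the proof of Proposition \ref{prop:explicit_description_of_alegbra_1isotropic}.

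The invariance of $W$ under $\mult(m,n,r)$ must be proved, and I would characterize $W$ intrinsically. Write $T\in\mult(m,n,r)$ in block form with respect to $V=L\oplus W$, and check that the block $W^\ast\otimes L$ must vanish. Pairing $\iota_T\varpi=0$ with one vector of $W$ and $n$ vectors of $L$, that block contributes $\varpi(T w, \cdot)$ paired against $L$-entries, while every other block contributes terms with at least one form entry. Nondegeneracy of the pairing $Y\times\bigwedge^n_rY^\ast$ then forces it to vanish. This step uses $n\ge 2$ and $r\ge 2$, and I expect the degenerate cases $r=0,1$ and $n=1$ to fail precisely here.

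With the structure in place, Proposition \ref{prop:explicit_description_of_alegbra_1isotropic} gives $\mult(m,n,r)\cong\mathfrak{g}_S\ltimes\mathcal{A}_\varpi$. Here $S=\{\iota_v\varpi: v\in W\}$ is, up to sign, $\bigwedge^n_rY^\ast$ viewed inside $\bigwedge^nL^\ast=\bigwedge^nY^\ast$, and $\mathcal{A}_\varpi$ is abelian. The statement therefore reduces to computing
\[
\mathfrak{g}_S=\{T\in\gl(Y):\iota_T\textstyle\bigwedge^n_rY^\ast\subseteq\bigwedge^n_rY^\ast\}.
\]

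For case (i), $r>m$: every $n$-form on $Y$ vanishes on $r>m=\dim\ker\pi$ vertical vectors, so $\bigwedge^n_rY^\ast=\bigwedge^nY^\ast$ and $\mathfrak{g}_S=\gl(n+m)$.

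For case (ii), $1<r\le m$: I would show $\mathfrak{g}_S$ is exactly the algebra preserving $\ker\pi$, which I take to be $\block(n,m)$. Let $K=\ker\pi$ and $U$ a complement. The subspace $\bigwedge^n_rY^\ast$ is spanned by the decomposable forms with at most $r-1$ factors from $\operatorname{Ann}(U)\cong K^\ast$, the rest from $\operatorname{Ann}(K)$. A derivation $\iota_T$ preserving $K$ maps $\operatorname{Ann}(K)$ to itself, so it cannot raise the number of $K^\ast$-factors and preserves the space. Conversely, suppose $T$ has a nonzero component $K\to U$. Then $T^\ast$ sends some element of $\operatorname{Ann}(K)$ to something with a nonzero $K^\ast$-part. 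Take a form with exactly $r-1$ vertical factors and the remaining $n-r+1$ horizontal factors; this is possible since $r-1\le m$ and $n-r+1\ge 1$. Applying $\iota_T$ to one horizontal factor yields $r$ vertical factors. This lies outside $\bigwedge^n_rY^\ast$ once one checks that no cancellation with the other terms occurs, which I would do by choosing basis elements.

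This converse direction is the main obstacle. The delicate bookkeeping is in choosing the test form so that the offending component survives, and it is also where $r\ge 2$ and $n\ge 2$ enter.
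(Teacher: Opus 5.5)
\textbf{Gap: the invariance of $W$.} Your strategy is the paper's. You apply Proposition~\ref{prop:explicit_description_of_alegbra_1isotropic} with $W=\{0\}\oplus\bigwedge^n_rY^\ast$ and $L=Y\oplus\{0\}$, prove that $W$ is invariant (Lemma~\ref{lemma:W_invarant}), and identify $\mathfrak{g}_S$. Your computation of $\mathfrak{g}_S$ is correct and more explicit than the paper's. The invariance step, however, fails as written.

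Write $B\colon W\to L$ for the block of $T$ you want to kill. Evaluate $\iota_T\varpi$ on one $w\in W$ and $l_1,\dots,l_n\in L$. The $B$-block contributes $\varpi(Bw,l_1,\dots,l_n)=0$, because all $n+1$ entries lie in $L$ and, as you noted yourself, $L$ is $n$-isotropic. The terms that survive come from the $W\to W$ and $L\to L$ blocks, and they give exactly the defining condition of $\mathfrak{g}_S$. So this evaluation says nothing about $B$, and nondegeneracy of the pairing has nothing to act on. The mechanism cannot be right anyway: for $n=1$, $r\ge 2$ the pairing $Y\times Y^\ast$ is perfectly nondegenerate, yet $W=Y^\ast$ is not invariant under $\mathfrak{sp}(m+1)$.

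\textbf{What is needed instead.} The block $B$ is only visible with two $W$-entries; this is the paper's ``terms involving two $p$'s''. For $w_1,w_2\in W$ and $l_i\in L$ one gets
\[
(\iota_T\varpi)(w_1,w_2,l_1,\dots,l_{n-1})=w_2(Bw_1,l_1,\dots,l_{n-1})-w_1(Bw_2,l_1,\dots,l_{n-1}),
\]
so the condition is the symmetry $\iota_{Bw_1}w_2=\iota_{Bw_2}w_1$. This condition admits nonzero $B$ in degenerate situations. For $n=1$ it only says $B$ is symmetric, and for $r=1$ the space $W$ is a line and the condition is vacuous. So a genuine argument using $n\ge 2$ and $r\ge 2$ is required.

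One way to finish is the following. Put $\nu=x^1\wedge\cdots\wedge x^n$ and $p^\mu_i=y^i\wedge\iota_{x_\mu}\nu$; these lie in $W$ because $r\ge 2$.
\begin{enumerate}[\rm (i)]
\item The pairs $(\nu,p^\mu_i)$ force $B\nu$ to be vertical and make the horizontal part of $Bp^\mu_i$ equal to $y^i(B\nu)\,x_\mu$.
\item The pairs $(p^\mu_i,p^\rho_k)$ with $\mu\neq\rho$ then give $B\nu=0$ and $Bp^\mu_i=0$.
\end{enumerate}
Both steps need two distinct horizontal indices, i.e.\ $n\ge 2$. Hence $B$ vanishes on $\bigwedge^n_2Y^\ast$. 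For arbitrary $w_1\in W$ and $w_2\in\bigwedge^n_2Y^\ast$ the symmetry now reads $\iota_{Bw_1}w_2=0$. Only at this point does nondegeneracy of $Y\times\bigwedge^n_2Y^\ast\to\bigwedge^{n-1}Y^\ast$ give $B=0$.

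\textbf{A smaller point.} Your test form in case (ii) needs $n-r+1\ge 1$, i.e.\ $r\le n$, which is not among the hypotheses. If $n<r\le m$, then $\bigwedge^n_rY^\ast=\bigwedge^nY^\ast$ and $\mathfrak{g}_S=\mathfrak{gl}(n+m)$. So (ii) tacitly requires $r\le n$; the paper's ``straightforward to check'' glosses over the same point.
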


\begin{remark} Here, $\block(n, m)$ denotes the Lie algebra of $\operatorname{Block}(n, m)$, the group of linear isomorphisms of $\mathbb{R}^{n + m}$ that preserve the projection $\pi \colon \mathbb{R}^{n+m} \rightarrow \mathbb{R}^n$. Equivalently, lower block-triangular matrices.
\end{remark}

\begin{remark} In Remark \ref{remark:explicit_description_extension}, we will give explicit description of the abelian extensions, which will be closely related to the kernels of the Spencer differentials
\[
\partial \colon Y^\ast \otimes \bigwedge^{k} Y^\ast \rightarrow \bigwedge^{k+1} Y^\ast\,.
\]
\end{remark}

\begin{theorem}[Degenerate cases]
\label{Thm:Classification_Lie_Algebras}
In the degenerate cases we have the following:
\begin{enumerate}[\rm (i)]
    \item \label{Thm:Classification_Lie_Algebras:r = 0} $\mult(m,n,0) = \gl(m+n)$.
    \item \label{Thm:Classification_Lie_Algebras: r = 1}$\mult(m , n , 1)$ is an abelian extension of $\mathfrak{sl}(n+1) \times \gl(m)$.
    \item \label{Thm:Classification_Lie_Algebras: n = 1}$\mult(m,1,r) = \mathfrak{sp}(m+1)$, for $r \geq 2.$
\end{enumerate}
\end{theorem}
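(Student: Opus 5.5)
The plan is to treat the three cases separately. In each one I will identify $\bigwedge^n_r Y^\ast$ concretely and then recognise $\varpi_{m,n,r}$ as a form whose stabiliser is classical. I read $\bigwedge^n_r Y^\ast$ as the $n$-forms on $Y$ that vanish whenever $r$ of their arguments lie in $K:=\ker\pi$, which is $m$-dimensional; if the paper's convention differs, the identifications below need adjusting. Throughout, the Lie algebra is computed with Proposition \ref{prop:description_of_Lie_algebra}, $\mult(m,n,r)=\{T\colon \iota_T\varpi_{m,n,r}=0\}$.

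\textbf{Case $r=0$.} Here the vanishing condition is vacuous, since no argument at all needs to be vertical, so $\bigwedge^n_0 Y^\ast=\{0\}$. Hence $V=Y$ has dimension $n+m$ and $\varpi_{m,n,0}=0$. Every $T$ satisfies $\iota_T 0=0$, so $\mult(m,n,0)=\gl(m+n)$.

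\textbf{Case $n=1$, $r\geq 2$.} A $1$-form has a single argument, so the condition on $r\geq 2$ vertical arguments is empty and $\bigwedge^1_r Y^\ast=Y^\ast$. Then $V=Y\oplus Y^\ast$ and $\varpi_{m,1,r}\bigl((y_1,\alpha_1),(y_2,\alpha_2)\bigr)=\pm\bigl(\alpha_2(y_1)-\alpha_1(y_2)\bigr)$, which is the canonical symplectic form on $Y\oplus Y^\ast$ with $\dim Y=m+1$. Its stabiliser is $\operatorname{Sp}(m+1)$, so $\mult(m,1,r)=\mathfrak{sp}(m+1)$.

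\textbf{Case $r=1$.} This is the only case needing more than identification. The space $\bigwedge^n_1 Y^\ast$ consists of the $n$-forms vanishing as soon as one argument is vertical, that is, the basic forms $\pi^\ast\bigwedge^n X^\ast$. This space is one-dimensional, spanned by $\pi^\ast\mathrm{vol}_X$. Writing $V=Y\oplus\mathbb{R}$ with dual coordinate $e$ on the $\mathbb{R}$ factor, I will check that $\varpi=\pm\, e\wedge\pi^\ast\mathrm{vol}_X$. This is a degenerate $(n+1)$-form, and I will show its kernel $\{v\colon\iota_v\varpi=0\}$ is exactly $K\oplus 0$. The quotient $V/K\cong X\oplus\mathbb{R}$ has dimension $n+1$, and the induced form $\varpi_0$ on it is a volume form, so $\mathfrak{g}_{\varpi_0}=\mathfrak{sl}(n+1)$.

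Next I invoke the short exact sequence from the proof of Proposition \ref{prop:cohomology_isomorphism},
\[
0\to V^\ast\otimes K\to\mathfrak{g}_\varpi\to\mathfrak{g}_{\varpi_0}\to 0 .
\]
This sequence identifies $\mathfrak{g}_\varpi$ with the endomorphisms $T$ that satisfy $T(K)\subseteq K$ and whose induced map on $V/K$ lies in $\mathfrak{sl}(n+1)$. Choosing a complement $L$ to $K$ in $V$, such a $T$ is block lower triangular with diagonal blocks in $\gl(K)\cong\gl(m)$ and in $\mathfrak{sl}(n+1)$, and an off-diagonal block in $L^\ast\otimes K$. The diagonal blocks form the subalgebra $\mathfrak{sl}(n+1)\times\gl(m)$. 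The off-diagonal blocks form an ideal with zero bracket, because the product of two strictly lower triangular blocks vanishes. This exhibits $\mult(m,n,1)$ as the abelian extension
\[
0\to (V/K)^\ast\otimes K\to\mult(m,n,1)\to\mathfrak{sl}(n+1)\times\gl(m)\to 0 .
\]

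The main care, rather than a genuine obstacle, is pinning down the definition of $\bigwedge^n_r Y^\ast$ and the sign convention in $\varpi_{m,n,r}$ in the degenerate ranges. This matters especially for $r=1$, where I must verify that the kernel of $\varpi$ is exactly $K$ and no larger.
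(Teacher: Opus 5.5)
Your proposal is correct, and your reading of $\bigwedge^n_r Y^\ast$ agrees with how the paper uses it: $\bigwedge^n_0Y^\ast=0$, $\bigwedge^n_1Y^\ast\cong\bigwedge^nX^\ast$, and $\bigwedge^1_rY^\ast=Y^\ast$ for $r\geq 2$. You handle cases (i) and (iii) exactly as the paper does.

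For case (ii) your route differs from the paper's.
\begin{itemize}
\item The paper fixes an adapted basis $\{x_\mu,y_i,p\}$ with $\varpi_{m,n,1}=p\wedge x^1\wedge\cdots\wedge x^n$. It writes a general element $T=\widetilde T+y^i\otimes(A^\mu_ix_\mu+B_ip+C^j_iy_j)+\gamma^i\otimes y_i$ and reads off from $\iota_T\varpi=0$ that $A^\mu_i=B_i=0$ and $\widetilde T\in\mathfrak{sl}(n+1)$.
\item You never contract anything. Instead you use the kernel--quotient reduction for degenerate forms from the proof of Proposition~\ref{prop:cohomology_isomorphism}, with $K=\ker\varpi$ and the induced volume form on $V/K$.
\end{itemize}
The two arguments establish the same facts. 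The condition $A=B=0$ is your $T(K)\subseteq K$, the free coefficients $C^j_i$ are your $\mathfrak{gl}(m)$ block, and the terms $\gamma^i\otimes y_i$ make up your abelian ideal $(V/K)^\ast\otimes K$.

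Your version is coordinate-free and describes the quotient map $T\mapsto(\bar T,T|_K)$ intrinsically. It also shows that the extension splits, via the block-diagonal subalgebra. The paper's computation, by contrast, is self-contained. Yours relies on an exact sequence that the paper only asserts without proof.

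To close that gap, add the two one-line facts the sequence rests on:
\begin{itemize}
\item For $w\in K$ one has $(\iota_T\varpi)(w,v_2,\dots,v_{n+1})=\varpi(Tw,v_2,\dots,v_{n+1})$, so every $T\in\mathfrak{g}_\varpi$ preserves $K$.
\item $\varpi=p^\ast\varpi_0$ with $p^\ast$ injective. Hence for any $T$ preserving $K$ one has $\iota_T\varpi=p^\ast(\iota_{\bar T}\varpi_0)$, which vanishes if and only if $\bar T\in\mathfrak{sl}(n+1)$.
\end{itemize}
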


\begin{remark} The case when $n = 1$ is used for the extended formalism of classical mechanics and thus recovers the symplectic group. The non-degenerate cases are the ones with importance for the study of classical field theories. It is usual to take $r = 2$, $n$ to be the dimension of spacetime (and thus is natural to restrict to the case $n \geq 2$), and $m$ to be the dimension of the fibers (the number of field variables). Then, we have two cases. If $m = 1$ (corresponding to a real scalar field) $\mult(1, n , 2)$ is an abelian extension of $\gl(n + 1).$ We will see that this reflects the fact that there are symmetries of a real scalar theory which do not project onto the spacetime. On the other hand, if $m \geq 2$, then $\mult(m ,n , 2)$ is an abelian extension of $\block(n , m)$ and, contrary to the previous case, this will imply that \textit{every} symmetry of the theory projects onto a symmetry of spacetime.
\end{remark}

Let us first prove the immediate cases:

\begin{proof}[Proof of \cref{Thm:Classification_Lie_Algebras}]
Both {\rm (i)} and {\rm (iii)} follow in a straightforward manner. In the first scenario we have $V = Y$ and $\varpi_{m,n,0} = 0$. In the latter,we have $V = Y \oplus Y^\ast$ and the canonical form is the natural symplectic form on $V$. The least obvious case is {\rm (ii)}, but we may compute it directly. Indeed, if ${x_\mu, y_i}$ denote a basis on $Y$ adapted to the epimorphism $\pi \colon Y \rightarrow X$, we have that $V = Y \oplus \bigwedge^{n}_1 Y^\ast \cong Y \oplus \bigwedge^n X^\ast$, so that we have coordinates $\{x_\mu, y_i, p\}$ on $V$ and
\[
\varpi_{m,n,1} = p \wedge x^1 \wedge \cdots \wedge x^n \,,
\]
where $\{x^\mu, y^i, p\}$ denote the dual basis. Then, if
\[
T = \widetilde T + y^i \otimes \left( A_i^\mu x_\mu + B_i p + C_i^j y_j \right) \in V^\ast \otimes V + \gamma^i \otimes y_i\,,
\]
where $\widetilde T$ is a $(1,1)$ tensor on $\{x_\mu, p\}$ only and $\gamma^i$ is a $1$-form on $\{x_\mu, p\}$, we have that $T \in \mult(m,n,1)$ if and only if
\[
0 = \iota_T \varpi_{m,n,1} = \iota_{\widetilde T} (p \wedge x^1 \wedge \cdots \wedge x^n) + B_i y^i \wedge x^1 \wedge \cdots \wedge x^n - (-1)^{\mu} A_i^\mu y^i \wedge p \wedge  x^1 \wedge \cdots \wedge \hat{x}^\mu \wedge \wedge x^n = 0\,.
\]
In particular, $\widetilde T \in \mathfrak{sl}(n+1)$, and $A_i^\mu = B_i = 0$. It follows that $\mult(m,n,1)$ is an abelian extension of $\mathfrak{sl}(n+1) \times \gl(m)$.
\end{proof}

In order to prove the nondegenerate cases, it will (mostly) suffice to observe the following:

\begin{lemma}
\label{lemma:W_invarant}
Define
$
W := \{0\} \oplus \bigwedge^k_r Y^\ast \subsetneq V\,.
$
Then, $W$ is an invariant under the action of $\Mult(m,n,r)$, in the nondegenerate case. Furthermore, if
$
\widetilde W := \ker \pi \oplus \bigwedge^k_r Y^\ast\,,
$
and $1 < r \leq m$, then $\widetilde W$ is also $\mult(m,n, r)$-invariant.
\end{lemma}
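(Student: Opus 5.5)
The plan is to prove invariance at the Lie algebra level and then pass to the group. $\Mult(m,n,r)$ may be disconnected, so for the group statement I will instead aim for an intrinsic characterization of $W$ (and of $\widetilde W$) in terms of $\varpi_{m,n,r}$ alone. Any such characterization is automatically preserved by every $g$ with $g^\ast\varpi_{m,n,r}=\varpi_{m,n,r}$.

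\textbf{Candidate characterization of $W$.} I will use
\[
W=\{v\in V\colon \iota_{v\wedge w}\varpi_{m,n,r}=0\ \text{for all } w\in W\},
\]
that is, $W=W^{\perp,1}$. The inclusion $\supseteq$ of $W$ in $W^{\perp,1}$, i.e. that $W$ is $1$-isotropic, is immediate: $\varpi_{m,n,r}$ is linear in the $\bigwedge^n_rY^\ast$-components, so contracting two elements of $W$ gives zero. For the converse, take $v=(y,\beta)$ with $\iota_{v\wedge \alpha}\varpi_{m,n,r}=0$ for all $\alpha\in\bigwedge^n_rY^\ast$. This reduces to $\iota_y\alpha=0$ as an $(n-1)$-form on $Y$, for every $\alpha\in \bigwedge^n_rY^\ast$. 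In the nondegenerate case $r\geq 2$, $n\geq 2$, the space $\bigwedge^n_rY^\ast$ contains $\bigwedge^n_1Y^\ast\cong\bigwedge^nX^\ast$ and all $n$-forms with one vertical leg. For $y\notin\ker\pi$, a basic volume form on $X$ already gives $\iota_y\alpha\neq0$. For $0\neq y\in\ker\pi$, the form $y^\flat\wedge x^1\wedge\cdots\wedge x^{n-1}$ works, where $y^\flat$ is any covector with $y^\flat(y)=1$ (here $n\geq2$ is needed so that the contraction is not a scalar but is still nonzero, and $r\geq 2$ so that one vertical leg is allowed). Hence $y=0$ and $v\in W$. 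Since $W^{\perp,1}$ is defined from $W$ itself, this alone is circular. So I will instead show that $W$ is the unique $1$-isotropic subspace of its dimension containing all vectors $v$ for which $\iota_v\varpi_{m,n,r}$ is decomposable of the appropriate rank. If that becomes messy, I fall back on the Lie algebra argument below together with a connectedness or rescaling argument.

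\textbf{Infinitesimal argument.} Write $T\in\mult(m,n,r)$ in blocks and let $A\colon W\to Y$ be the off-diagonal block. I evaluate $\iota_T\varpi_{m,n,r}=0$ on $(\alpha,\beta,y_1,\dots,y_{n-1})$ with $\alpha,\beta\in W$. Every term in which $T$ acts on some $y_i$ has two $W$-arguments and therefore vanishes, leaving the symmetry condition
\[
\iota_{A\alpha}\beta=\iota_{A\beta}\alpha\qquad\text{for all }\alpha,\beta\in\textstyle\bigwedge^n_rY^\ast .
\]
I want to deduce $A=0$. Fix $\alpha$ and suppose $y:=A\alpha\neq 0$. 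Choose $\beta$ decomposable with $\iota_y\beta\neq 0$, built from covectors vanishing on a complement to the leg directions of $\alpha$. Then $\iota_{A\beta}\alpha$ lies in the span of contractions of $\alpha$, while $\iota_y\beta$ can be chosen outside that span; this requires $\dim Y=n+m\geq n+1$ and enough freedom in the horizontal or vertical legs. I expect this step to be the main obstacle. When $r>m$ and $\alpha$ is a top-degree-type form, the freedom is limited, so I will first try a dimension count: $\beta\mapsto\iota_{A\beta}\alpha$ has image of dimension at most $\dim Y$, while $\beta\mapsto\iota_y\beta$ has a much larger image once $n\geq2$.

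\textbf{Invariance of $\widetilde W$ when $1<r\leq m$.} With $W$ known to be invariant, it suffices to show that the $X$-component of $T|_{\ker\pi}$ vanishes, i.e. $\pi\circ T(\ker\pi)\subseteq 0$ modulo $W$. I evaluate $\iota_T\varpi_{m,n,r}$ on $(u,\alpha,y_1,\dots,y_{n-1})$ with $u\in\ker\pi$ and $\alpha\in W$. Using invariance of $W$, the terms collapse to a condition of the form $\alpha(Tu,\dots)+(\text{terms linear in }\alpha\text{ with } u \text{ as an argument})=0$. The hypothesis $r\leq m$ supplies forms $\alpha\in\bigwedge^n_rY^\ast$ that vanish whenever $r$ vertical vectors are inserted, yet have $r-1$ vertical legs. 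Choosing $u$ among those vertical directions and the $y_i$ so as to fill the remaining $r-2$ vertical slots kills the $u$-terms. This isolates $\alpha(\pi$-part of $Tu,\dots)$, which forces the horizontal component to vanish. The same choice of test forms then also gives an intrinsic, group-level description of $\widetilde W$ as the set of $v$ for which $\iota_{v\wedge\alpha}\varpi_{m,n,r}$ has at least the corresponding number of vertical legs.
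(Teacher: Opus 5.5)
Your reduction is correct and is the paper's own route. Evaluating $\iota_T\varpi_{m,n,r}$ on two elements of $W$ isolates the terms with two $p$'s, and the condition $\iota_{A\alpha}\beta=\iota_{A\beta}\alpha$ you obtain is right. The gap is the deduction $A=0$: you leave it open, and both routes you propose already fail for $r=2$.

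Take an adapted basis $(x_\mu,y_i)$ of $Y$, set $\alpha=x^1\wedge\cdots\wedge x^n$, and suppose $a:=A\alpha$ is vertical. For every $\beta\in\bigwedge^n_2Y^\ast$ one has $\iota_a\beta\in\operatorname{span}\{\iota_{x_\mu}\alpha\}$. So $\iota_a\beta$ can never be chosen outside the contractions of $\alpha$. Moreover, $\beta\mapsto\iota_a\beta$ has image of dimension $n<n+m=\dim Y$, so the dimension count fails as well. Worse, with this $\alpha$ fixed, all equations for the pairs $(\alpha,\beta)$ are satisfied once the horizontal part of $A(\iota_{x_\mu}\alpha\wedge y^i)$ is $(-1)^{n-1}a^ix_\mu$. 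Hence no argument based on a single $\alpha$ can work.

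What is missing is to solve the system for all pairs simultaneously. For $r=2$ this goes as follows:
\begin{itemize}
\item The pairs $(\alpha,\iota_{x_\mu}\alpha\wedge y^i)$ force the horizontal part of $a$ to vanish and fix the horizontal parts of $A(\iota_{x_\mu}\alpha\wedge y^i)$ as above.
\item The pairs $(\iota_{x_\mu}\alpha\wedge y^i,\iota_{x_\nu}\alpha\wedge y^j)$ with $\mu\neq\nu$ (this is where $n\geq 2$ enters) give $a^i=-a^i$ when $i=j$.
\item The purely horizontal components of those same pair equations kill the vertical parts of $A(\iota_{x_\mu}\alpha\wedge y^i)$.
\end{itemize}
For larger $r$ the same bookkeeping must be done according to the number of vertical legs.

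The argument for $\widetilde W$ also fails as described. Write $B$ and $D$ for the $Y\to Y$ and $W\to W$ blocks of $T$. The equation on $(u,\alpha,y_1,\dots,y_{n-1})$ is
$\alpha(Bu,y_1,\dots)+D\alpha(u,y_1,\dots)+\sum_i\alpha(u,\dots,By_i,\dots)=0$.
This is just $D\alpha=-\iota_B\alpha$ evaluated at these arguments. Its only content is that $\iota_B\alpha\in\bigwedge^n_rY^\ast$, and that is visible only on arguments containing $r$ vertical vectors.

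Your choice puts $u$ on one of the $r-1$ vertical legs of $\alpha$ and adds $r-2$ further vertical $y_i$, giving only $r-1$ vertical arguments. Then $D\alpha(u,\dots)$ and $\alpha(u,\dots,By_i,\dots)$ survive, and $\alpha(Bu,\dots)$ detects a vertical component of $Bu$ rather than $\pi(Bu)$.

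The choice that works is:
\begin{itemize}
\item $\alpha=y^1\wedge\cdots\wedge y^{r-1}\wedge\eta$ with $\eta$ a horizontal $(n-r+1)$-form;
\item $y_1,\dots,y_{r-1}$ filling all vertical legs, and the other $n-r$ arguments horizontal;
\item $u=y_r$, a vertical vector with $\iota_u\alpha=0$ (this is where $r\le m$ is used).
\end{itemize}
Then $D\alpha(u,\dots)=0$ because there are $r$ vertical arguments, every term $\alpha(u,\dots)$ vanishes, and only $\pm\eta(Bu,x_{\nu_1},\dots,x_{\nu_{n-r}})$ remains, forcing $\pi(Bu)=0$.

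This needs $\eta\neq 0$, i.e.\ $r\le n$. That is not an artefact of the method. For $n<r\le m$ one has $\bigwedge^n_rY^\ast=\bigwedge^nY^\ast$, the algebra contains $\mathfrak{gl}(Y)$, and $\widetilde W$ is not invariant. So the second claim should be read with $1<r\le\min(m,n)$.

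Finally, your group-level discussion is not carried through: $W^{\perp,1}$ is circular, and the decomposability characterization is unproven. The paper's argument is likewise only infinitesimal, and only $\mathfrak g$-invariance is used afterwards.
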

\begin{proof} We simply take an arbitrary element in $T \in V^\ast \otimes T$ and impose $\iota_T \varpi_{k,m,r} = 0$. Take an adapted basis $(x_\mu, y_i)$ and let $p^{\mu_1, \dots, \mu_m i_{m+1} \dots i_n}$ denote $p^{\mu_1, \dots, \mu_m i_{m+1} \dots i_n} = x^{\mu_1} \wedge \cdots \wedge x^{\mu_m} \wedge y^{i_1} \wedge \cdots \wedge y^{i_n}$, for $m = n+1-r, \dots n$, this defines a basis of $V$ in which the multisymplectic form is expressed as
\[
\varpi_{m,n,r} = p^{1, \dots, n} x^{1} \wedge \cdots \wedge x^n + \cdots + p_{\mu_1 \dots \mu_{n+1-r} i_{n+2-r} \dots i_n} \wedge x^{\mu_1} \wedge \cdots \wedge x^{\mu_{n+1-r}} \wedge y^{i_{n+2-r}} \wedge \cdots \wedge y^{i_n}
\]
From this expression, it is not hard (although a bit tedious) to find that if $T \in V^\ast \otimes V$ is such that $\iota_T \varpi_{m,n,r} = 0$, we have $T(W) \subseteq W$. The general idea is to look for the terms involving two $p$'s, and finding that the vanishing of these terms implies the vanishing of the coefficients on $T$ that would make $T(W) \not \subseteq W$.
\end{proof}

From this, we can prove Theorem \ref{theorem:non-degenerate-cases}:

\begin{proof}[Proof of Theorem \ref{theorem:non-degenerate-cases}] By taking $L = Y \oplus \{0\}$, and in light of Lemma \ref{lemma:W_invarant} we have that the decomposition $V = L \oplus W$ is in the hypotheses of Proposition \ref{prop:explicit_description_of_alegbra_1isotropic}. In particular, we obtain a semi-direct product decomposition
\[
\mathfrak{g}_{\varpi_{m,n,r}} \cong \mathfrak{g}_S \ltimes \mathcal{A}_{\varpi_{m,n,r}}\,.
\]
By definition, $\mathfrak{g}_{\varpi_{m,n,r}}$ is an abelian extension of $\mathfrak{g}_S$ which is:
\begin{enumerate}[\rm (i)]
    \item If $r > m$, we have that $S = \bigwedge^{k-1} Y^\ast$ so that $\mathfrak{g}_S = \gl(Y) \cong \gl(n+m)$.
    \item If $1 < r \leq m$, we have that $S = \bigwedge^{k-1}_r Y^\ast$, and it is a straightforward to check that $\mathfrak{g}_S$ is precisely the space of endomorphisms $T \colon Y \rightarrow Y$ commuting with $\pi$. In particular, $\mathfrak{g}_S \cong \block(m, n)$.
\end{enumerate}
\end{proof}

\begin{remark}
\label{remark:explicit_description_extension}
In fact, from the previous proof, we can see that we have en explicit description of the Lie algebra in the non-degenerate cases:
\[
\mult(m,n, r) \cong \begin{cases}
\block(m,n) \ltimes \ker \partial_r \,, & 1 \leq r \leq m\\
\gl(m+n) \ltimes \ker \partial \,, & r > m
\end{cases}\,,
\]
where $\partial$ denotes the usual Spencer differential and
\[
\partial_r \colon Y^\ast \otimes \bigwedge^{k}_r Y^\ast \rightarrow \bigwedge^{k+1}_{r+1} Y^\ast
\]
denotes the restriction to the subspace $\bigwedge^k_r Y^\ast$ of the Spencer differential.
\end{remark}

This, in turn, may be used to compute all prolongations of this Lie algebra which, which we can employ to give a characterization of all Hamiltonian forms of a flat polarized multisymplectic structure. In fact, by Corollary \ref{cor:extensions_in_1_isotropic_case} we have that:

\begin{proposition} For the nondegenerate cases we have
\[
\mult(m,n,r)^{(j)} \cong \begin{cases}
\block(m,n)^{(j)} \oplus \ker \partial_r \,, & 1 \leq r \leq m\\
\gl(m+n)^{(j)} \oplus \ker \partial \,, &r > m
\end{cases}\,,
\]
where $\partial$, identified as a map
\[
\partial \colon S^{j+1} (Y^\ast) \otimes \bigwedge^n Y^\ast \rightarrow S^{j}(Y^\ast) \otimes \bigwedge^{n+1} Y^\ast\,,
\]
represents the Spencer coboundary and $\partial_r$ denotes its restriction to $S^{j}(Y^\ast) \otimes \bigwedge^n_r Y^\ast$.
\end{proposition}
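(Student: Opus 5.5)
The plan is to apply Corollary \ref{cor:extensions_in_1_isotropic_case} to the semi-direct decomposition $\mult(m,n,r) \cong \mathfrak{g}_S \ltimes \mathcal{A}_{\varpi_{m,n,r}}$ found in the proof of Theorem \ref{theorem:non-degenerate-cases}. We have already seen there that the hypotheses of Proposition \ref{prop:explicit_description_of_alegbra_1isotropic} hold with $L = Y \oplus \{0\}$ and $W = \{0\} \oplus \bigwedge^n_r Y^\ast$. Indeed, $W$ is invariant by Lemma \ref{lemma:W_invarant}, $W$ is $1$-isotropic, and $L$ is $n$-isotropic, because $\varpi_{m,n,r}$ vanishes whenever two of its arguments lie in $W$ or all of them lie in $Y$.

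Corollary \ref{cor:extensions_in_1_isotropic_case} then gives $\mult(m,n,r)^{(j)} \cong \mathfrak{g}_S^{(j)} \oplus \{\alpha \in S^{j+1}(L^\ast)\otimes S : \partial\alpha = 0\}$. It remains to identify both summands. For the first, I would quote the proof of Theorem \ref{theorem:non-degenerate-cases}: $\mathfrak{g}_S = \gl(Y)$ when $r>m$, and $\mathfrak{g}_S = \block$ when $1<r\le m$. Since prolongation only depends on the tableau, this gives $\gl(m+n)^{(j)}$ and $\block^{(j)}$ respectively.

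For the second summand, I would compute $S = \{\iota_w \varpi : w \in W\} \subseteq \bigwedge^{n} L^\ast$. Up to sign, $\iota_{(0,\beta)}\varpi = \beta$, so $S = \bigwedge^n_r Y^\ast$. When $r>m$ the horizontality condition is vacuous, so $S = \bigwedge^n Y^\ast$. The abelian part is then the kernel of
\[
\partial \colon S^{j+1}(Y^\ast)\otimes \textstyle\bigwedge^n_r Y^\ast \to S^{j}(Y^\ast)\otimes \textstyle\bigwedge^{n+1} Y^\ast ,
\]
which is exactly $\ker\partial_r$, or $\ker \partial$ when $r>m$.

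Corollary \ref{cor:extensions_in_1_isotropic_case} is itself not proved in the text, so I would also justify it. Write an element of $S^{j+1}(V^\ast)\otimes V$ and contract $j$ vectors. Using the invariance of $W$, each contraction has the block form $T = \widetilde T + \gamma^i\otimes p_i + A^i_k p^k\otimes p_i$, with no $W^\ast\otimes L$ component.

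The crucial point is that the $p^k$-dependence must vanish in positive degree. By Lemma \ref{lemma:semi_finite_condition}(i), $\mathfrak{g}^{(1)}\cap(S^2V^\ast\otimes W)=\{0\}$, and the same argument applied to the $p$-components shows that the coefficients $A^i_k$ become polynomials depending only on $L$. Once this holds, the coefficients $A^i_k$ are determined by $\widetilde T$ through $\iota_{\widetilde T}\alpha_k = -A^i_k\alpha_i$, using the nondegeneracy of $\varpi$. The $\widetilde T$-part is then exactly an element of $\mathfrak{g}_S^{(j)}$, and the $\gamma$-part is a symmetric polynomial $\alpha\in S^{j+1}(L^\ast)\otimes S$ whose contractions are all closed, i.e.\ $\partial\alpha=0$.

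I expect the main obstacle to be this decoupling step. One has to show that the $A$-block contributes nothing new in higher degree, and that the map to $\mathfrak{g}_S^{(j)}\oplus\ker\partial$ is bijective rather than merely injective. I would first try to prove it by induction on $j$, contracting one vector at a time and reusing the degree-one computation of Proposition \ref{prop:explicit_description_of_alegbra_1isotropic}.
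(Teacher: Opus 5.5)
Your reduction is the paper's own argument. The paper proves this proposition in one line, by invoking Corollary~\ref{cor:extensions_in_1_isotropic_case} with the identifications of $S$ and $\mathfrak{g}_S$ from the proof of Theorem~\ref{theorem:non-degenerate-cases}. Your computation $S=\bigwedge^n_rY^\ast$ is correct, and so is your use of $S^{j+1}$ in place of the misprinted $S^{j}$ in the domain of $\partial_r$.

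The gap is in the justification of the Corollary that you add. You cite Lemma~\ref{lemma:semi_finite_condition}(i) for $\mathfrak{g}^{(1)}\cap(S^2V^\ast\otimes W)=\{0\}$. That statement is false here, and it would in fact remove the very summand you want. Take $\beta=x^1\wedge\cdots\wedge x^n\in\bigwedge^n_rY^\ast$, built from horizontal covectors, and $F=\tfrac12(x^1)^2\otimes(0,\beta)$. Its contractions are multiples of $x^1\otimes(0,\beta)$, and $\iota_{x^1\otimes(0,\beta)}\varpi_{m,n,r}=x^1\wedge\iota_{(0,\beta)}\varpi_{m,n,r}=\pm x^1\wedge\beta=0$. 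So $0\neq F\in\mathfrak{g}^{(1)}\cap(S^2V^\ast\otimes W)$, and under the Corollary it corresponds, up to sign, to the element $\tfrac12(x^1)^2\otimes\beta$ of $\ker\partial_r$. If the lemma held, contracting would force the whole $\ker\partial_r$ summand to vanish for every $j\geq1$.

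What the lemma's proof actually gives is weaker: there are no terms quadratic in the $W$-directions. That is the fact you need, and there is a more direct route to it. Since $W$ is invariant, the $L$-component $\widetilde F$ of $F\in\mathfrak{g}^{(j)}$ depends only on the $L$-variables, and hence so does $\widetilde T=D\widetilde F$. The $\alpha_i$ are linearly independent by nondegeneracy, so the relation $\iota_{\widetilde T}\alpha_k=-A^i_k\alpha_i$ determines $A^i_k=\partial G^i/\partial p^k$ as a polynomial on $L$ alone. Thus $G^i=A^i_k\,p^k+g^i$ with $A^i_k,g^i$ polynomials on $L$. The free datum is therefore $g^i\otimes\alpha_i$, not the whole $\gamma$-part, which also contains $p^k\,\mathrm{d}A^i_k$.

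The step you leave open, surjectivity, then needs no induction on $j$. Let $\widetilde F\in\mathfrak{g}_S^{(j)}$. The polynomial form $\iota_{D\widetilde F}\alpha_k$ on $L$ equals $\pounds_{\widetilde F}\alpha_k$ and lies pointwise in $S$, so it defines $A^i_k$ through $\pounds_{\widetilde F}\alpha_k=-A^i_k\alpha_i$. Since $\alpha_k$ has constant coefficients, $\mathrm{d}A^i_k\wedge\alpha_i=\mathrm{d}(A^i_k\alpha_i)=-\pounds_{\widetilde F}\,\mathrm{d}\alpha_k=0$. This is exactly the condition $\gamma^i\wedge\alpha_i=0$ for the $p$-linear part. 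Hence $\widetilde F+(A^i_k\,p^k+g^i)\otimes p_i\in\mathfrak{g}^{(j)}$ for every $g^i\otimes\alpha_i\in\ker\partial$, and $F\mapsto(\widetilde F,\,g^i\otimes\alpha_i)$ is a bijection.

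One caveat is inherited from the paper. Identifying $\mathfrak{g}_S$ with the $\ker\pi$-preserving (block) algebra also requires $r\leq n$. For $n<r\leq m$ one has $\bigwedge^n_rY^\ast=\bigwedge^nY^\ast$, and you are in the $\mathfrak{gl}(Y)$ case instead.
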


\begin{remark} In particular, we have that for the flat multisymplectic structure $\varpi_{m,n,r}$ on $V = Y^\ast \oplus \bigwedge^n_2 Y^\ast$ a Hamiltonian $(n-1)$-form $\alpha \in \Omega^{n-1}_H(V)$ can be written, up to a closed term, as
\[
\alpha = \left(p A^\mu(x) + p^\mu_i B^i(x, y) \right) \dd^{n-1}x_\mu - p^\mu_i A^{\nu} \dd y^i \wedge \dd^{n-2} x_{\mu \nu} + \beta\,,
\]
where $\beta \in \Omega^{n-1}_{1}(Y)$ is arbitrary.
\end{remark}


\section{Conclusions and further work}
\label{section:conclusions}

In this work, we have approached multisymplectic structures of constant linear type $\varpi \in \bigwedge^{k+1} V^\ast$ from the point of view of their associated $G_\varpi$-structures. As a consequence, we have obtained a framework that improves our understanding of multisymplectic geometry and of the differential conditions ensuring flatness, namely the existence of a chart in which the multisymplectic form has constant coefficients. Some of the main results are the following:

\begin{enumerate}[\rm (i)]
    \item We show how to construct the structure tensors of multisymplectic manifolds with constant linear type. The vanishing of these tensors is a necessary condition for flatness (which is also sufficient in the analytic setting) and we give several instances in which the converse holds in the smooth setting. We conjecture that this is the case for any smooth multisymplectic structure.
    \item We achieved a rough classification of the linear multisymplectic representations (see Theorem \ref{thm:casuistic_of_representation}), which allows for a systematic study of Darboux theorem for multisymplectic forms $\omega \in \Omega^{k+1}(M)$. We showed that by taking into account this classification (into irreducible, product, $j$-isotropic, and semifinite types), we can extract a lot of information for the study of flatness conditions (see Theorem \ref{thm:sufficient_condition_flatness_general}, Proposition \ref{prop_sufficiency_condition_solving_flatnes} and Theorem \ref{thm:Darboux_1_isotropic}). This was illustrated through several examples appearing in the literature.
    \item We have constructed new models of multisymplectic geometry with properties distinct from those found in the literature (see Theorems \ref{thm:Theorem_counterexamples} and \ref{thm:explicit_counterexample}). Indeed, we have shown that there are geometries for which the flatness conditions are not algebraic conditions on the first-order derivatives of the multisymplectic form (such as requiring it to be closed), but instead involve derivatives of any prescribed order $j \in \mathbb{N}$. These can be realized by the explicit $3$-forms constructed in Theorem \ref{thm:explicit_counterexample}.
    \item Finally, we presented the computations of the Lie algebras and the extensions in the particular case relevant for field theories.
\end{enumerate}

From a more general viewpoint, we would like to highlight the following. From the lens of the theory of $G$-structures, `Multisymplectic geometry' is not a single geometry but a plethora of them. Indeed, very distinct isotropy groups and representations may arise. These representations, as exemplified by the linear types $\varpi_j$, have very different characteristics (order of the geometry, finite vs. infinite type, and irreducible vs. reducible representations). This is in clear opposition to more classic geometries, such as Riemannian, complex, symplectic, Kähler, etc. Thus, we believe that the difficulty of dealing with multisymplectic geometry in full generality is high. However, the theory of $G$-structures is well suited for dealing with this complexity. In particular, the Spencer complex provides systematic tools to understand the local structure of these manifolds. Nevertheless this paper leaves several questions open:
\begin{enumerate}[\rm (i)]
    \item One of the main questions that this work leaves unsolved is the problem of the integrability of $G$-structures and, more particularly, the case where $G = G_\varpi$, for some $\varpi \in \bigwedge^{k+1} V^\ast$. 
    
    \item A better understanding of the ``higher form symmetries'' of field theories and their pointwise and local properties \cite{SciPostPhysLectNotes.74}. These symmetries are described by multivectors or, dually, Hamiltonian forms of degree $< n-1$, where $n$ is the degree of the multisymplectic form, in the line of \cite{di2025j.math.phys.}. Indeed, for the pointwise study, one should understand the relationship between the representation of $G$ on $V$ and its induced representations on exterior powers. The analysis of these forms will also shed light on the local structure of the Hamilton--de-Donder--Weyl equations in field theory, which can be encoded in a higher Poisson algebra.
    
    \item One of the main topics in multisymplectic field theories is the problem of reduction. Although much progress has been made \cite{bla2021lett.math.phys., CALLIES2016954, ms2012advancesinmathematics}, there are still some issues that need to be clarified. A study `per linear type', as presented in this text, could help clarify these issues. The theory of $G$-structures and Spencer cohomology can help to understand the existence of homomorphisms between multisymplectic manifolds.

    \item In a more geometric vein, in symplectic geometry there are normal forms for Lagrangian submanifolds \cite{wei1971adv.math.}. Although similar results have been proven for specific linear types \cite{aitor1}, the study of normal forms in arbitrary multisymplectic manifolds is usually ignored. This problem could be dealt with by studying which subspaces $W \subseteq V$ of a particular multisymplectic vector space lie in the same orbit under the group $G_\varpi$. It is clear (unlike in symplectic geometry) that the classification of subspaces into $j$-isotropic, $j$-coisotropic and $j$-Lagrangian is insufficient. Moreover, this finer classification can be useful for advancing the problem of integrability of multisymplectic structures.

     \item Among the zoo of multisymplectic geometries that can be defined, probably not all of these will be relevant in field theories. In order to see which ones appear in this setting, we can follow two complementary routes. One is to analyze relevant multisymplectic $G$-structures appearing in known field theories, such as gauge theories, regular field theories of arbitrary order, and outstanding singular theories. The other is the study of the formal properties that render them unphysical, such as determining which ones have the expected amount of solutions to the Hamilton--de-Donder--Weyl equations, say, to make the Cauchy problem well-posed. 
\end{enumerate}

\begin{appendices}

\section{Smooth solutions of inhomogeneous systems of partial differential equations with constant coefficients}

\label{Appendix:Malgrange_Theorem}

Inhomogeneous compatible linear systems of PDEs are known to have solutions. The Malgrange--Ehrempreis theorem~\cite{Malgrange1956,Ehrenpreis1954} state that it has a Green function, thus one is able to construct solutions on several function spaces (see~\cite{Palamodov1970}).  We have been unable to find a clear statement in the literature in the smooth setting. Nevertheless, it is a consequence of the $C^k$ version found in
\cite[Theorems~7.6.13--7.6.14]{Hormander1990}. These results say as follows:
\begin{theorem}[Hörmander, Theorem 7.6.13]
\label{thm:Hormander}
Let \(\Omega\subset\mathbb R^n\) be open and convex. Then there is an
integer \(N\) such that the system
\[
    P(D)u=f
\]
has a solution
\[
    u\in C^\nu(\Omega)^K
\]
for every
\[
    f\in C^{\nu+N}(\Omega)^J
\]
satisfying the compatibility equations
\[
    Q(D)f=0.
\]
\end{theorem}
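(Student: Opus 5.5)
The plan is to prove the statement in two stages. First I would solve the system on relatively compact convex subdomains with a fixed, finite loss of derivatives. Then I would pass to all of $\Omega$ by a Mittag-Leffler argument, using density of solutions of the homogeneous system. Throughout I would regard $P(D)$ as a $J\times K$ matrix of polynomials in $D$. I would take $Q(D)$ to be the compatibility operator built from a finite set of generators of the module of polynomial relations: the row vectors $q$ with $q\,P=0$, which is finitely generated by Hilbert's basis theorem.

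The first and central step is the algebraic-analytic \emph{fundamental principle} of Ehrenpreis--Palamodov, which I would use in dual form. Let $\Omega'\Subset\Omega$ be convex and let $K=\overline{\Omega'}$. On the Fourier--Laplace side, a compactly supported distribution of order at most $m$ supported in $K$ corresponds, by the Paley--Wiener--Schwartz theorem, to an entire function with bounds $C(1+|\zeta|)^m e^{H_K(\operatorname{Im}\zeta)}$, where $H_K$ is the supporting function of $K$. The polynomial matrix ${}^tP(-\zeta)$ acts on such functions. I need the following division statement with bounds. If $\hat v$ is an entire vector function with such bounds and $\hat v$ is annihilated by the syzygies of ${}^tP$ in the appropriate sense, i.e.\ it lies pointwise in the module generated by ${}^tP(-\zeta)$, then $\hat v={}^tP(-\zeta)\hat w$ for some entire $\hat w$ satisfying bounds of the same type, with $m$ replaced by $m+N$. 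Here $N$ depends only on $P$. I would obtain this from Hörmander's $L^2$ estimates for $\bar\partial$ with plurisubharmonic weights, combined with the local Oka--Cartan division theorem for the module generated by the columns. Convexity of $K$ is exactly what makes $H_K(\operatorname{Im}\zeta)$ an admissible plurisubharmonic weight. The integer $N$ is the polynomial loss in the weights. I expect this step to be the main obstacle: it requires a semi-global division theorem with uniform polynomial bounds for modules (Hörmander's Theorem 7.6.11-type statement), and everything else is functional-analytic bookkeeping.

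By duality, the division estimate gives the following. The adjoint ${}^tP(-D)\colon \mathcal{E}'(K)^J\to\mathcal{E}'(K)^K$ has closed range, with a controlled loss of order. Its range is the annihilator of $\ker Q(D)$ tested against finite-order distributions. The Hahn--Banach theorem then yields, for each $f\in C^{\nu+N}(\Omega)^J$ with $Q(D)f=0$, a solution $u\in C^{\nu}(\Omega')^K$ of $P(D)u=f$ on $\Omega'$. The estimate is $\|u\|_{C^\nu(\Omega')}\le C\,\|f\|_{C^{\nu+N}(\Omega'')}$ for a slightly larger convex $\Omega''$.

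Finally, I would exhaust $\Omega$ by increasing convex sets $\Omega_1\Subset\Omega_2\Subset\cdots$ and take local solutions $u_k$ on each $\Omega_k$. The difference $u_{k+1}-u_k$ solves the homogeneous system $P(D)w=0$ on $\Omega_k$. By the approximation theorem, which also follows from the fundamental principle, solutions of $P(D)w=0$ on $\Omega_{k+1}$ are dense in the $C^\nu(\overline{\Omega_{k-1}})$-topology among solutions on $\Omega_k$. One can even use exponential-polynomial solutions. So I can correct each $u_{k+1}$ by a global homogeneous solution that makes $\|u_{k+1}-u_k\|_{C^\nu(\Omega_{k-1})}<2^{-k}$. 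The corrected sequence converges in $C^\nu$ on compacts to the desired $u\in C^\nu(\Omega)^K$ with $P(D)u=f$.
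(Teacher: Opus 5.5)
The paper gives no proof of this statement. It quotes it from H\"ormander and uses it as a black box in the appendix. Your outline is essentially the route behind that citation: semi-global division with bounds from $L^2$ estimates for $\bar\partial$ plus Oka--Cartan, then Paley--Wiener--Schwartz, duality, and exhaustion via the approximation theorem. The architecture is sound, and your last step is the same Mittag-Leffler device the paper uses in its appendix to reach $C^\infty$.

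As written, however, the one step where the hypothesis $Q(D)f=0$ is actually used is wrong. The range of ${}^tP(-D)$ lies in $\mathcal{E}'(K)^K$, while the annihilator of $\ker Q(D)$ lies in $\mathcal{E}'(K)^J$, so ``its range is the annihilator of $\ker Q(D)$'' does not parse. Hahn--Banach applied to $L({}^tP(-D)\mu):=\langle\mu,f\rangle$ needs two separate facts.
\begin{enumerate}[\rm (i)]
\item Continuity: every $\lambda\in{}^tP(-D)\,\mathcal{E}'(K)^J$ has a preimage whose order and norm are controlled by those of $\lambda$. This is your division by ${}^tP$, and its hypothesis is automatic here.
\item Well-definedness: $\ker{}^tP(-D)\cap\mathcal{E}'(K)^J={}^tQ(-D)\,\mathcal{E}'(K)^L$, so that $\langle\mu,f\rangle=\langle\sigma,Q(D)f\rangle=0$ (regularize $f$ by convolution if $\sigma$ has high order).
\end{enumerate}
Fact (ii) needs division with bounds by ${}^tQ$, not ${}^tP$. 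Its hypothesis is that an entire $\hat\mu$ with ${}^tP(-\zeta)\hat\mu=0$ has its germ in ${}^tQ(-\zeta)\mathcal{O}_\zeta^L$ at every $\zeta$. This follows from exactness of $\mathbb{C}[\zeta]^L\to\mathbb{C}[\zeta]^J\to\mathbb{C}[\zeta]^K$ (your choice of $Q$) together with flatness of $\mathcal{O}_\zeta$ over $\mathbb{C}[\zeta]$. You never state this step.

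Relatedly, the hypothesis of your division theorem must be germwise membership in the submodule. Neither ``annihilated by the syzygies'' nor ``values in the image at each point'' suffices. Take the scalar case ${}^tP=\zeta_1^2$ and $\hat v=\zeta_1$: there are no nonzero syzygies, and $\hat v(\zeta)$ lies in the image of multiplication by $\zeta_1^2$ at every point, yet $\hat v\notin\zeta_1^2\mathcal{O}$.

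Two smaller repairs are also needed.
\begin{enumerate}[\rm (i)]
\item Hahn--Banach produces a continuous functional on a space of distributions, i.e.\ an element of a bidual, not a $C^\nu$ function. Work instead with the weighted $L^2$ (Sobolev) norms that the $L^2$ method produces anyway. This gives $u\in H^s_{\mathrm{loc}}$, and Sobolev embedding then gives $C^\nu$. Put this loss, and the roughly $n+1$ derivatives lost in inverting Paley--Wiener, into $N$. All these losses are independent of $K$, which your exhaustion step requires.
\item Convexity does not enter through plurisubharmonicity: $\zeta\mapsto H_K(\operatorname{Im}\zeta)$ is plurisubharmonic for every compact $K$. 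It enters through Paley--Wiener--Schwartz, which only confines supports to the convex hull. The preimages $\mu,\sigma$ stay inside $\Omega$ precisely because $\Omega$ is convex.
\end{enumerate}
With these corrections the argument goes through.
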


Here, by exponential solution it is meant a solution of the form
\[
    u(x)=e^{i\langle x,\zeta\rangle}u_0(x),
    \qquad \zeta\in\mathbb C^n,
\]
where the components of \(u_0\) are polynomials. Indeed, we only need the fact that there is a family of dense smooth solutions of the homogeneus problem. With this, we can construct a $C^\infty$ version of the theorem.
\begin{theorem}[Hörmander, Theorem 7.6.14]
Let \(\Omega\subset\mathbb R^n\) be open and convex. Then the closure in
\(C^\nu(\Omega)^K\) of the linear combinations of exponential solutions of
the homogeneous system
\[
    P(D)u=0
\]
is equal to the space of all \(C^\nu(\Omega)^K\)-solutions of this system.
\end{theorem}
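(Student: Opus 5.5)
The plan is to argue by duality (Hahn--Banach). The space $E$ of $C^\nu(\Omega)^K$-solutions of $P(D)u=0$ is a closed subspace of the Fréchet space $C^\nu(\Omega)^K$, and the exponential solutions lie in $E$. Density therefore follows once we show that every continuous linear functional vanishing on all exponential solutions also vanishes on $E$. Such a functional is given by a $K$-tuple $\mu$ of distributions of order at most $\nu$ with compact support in $\Omega$. Since $\Omega$ is convex, we may replace $\operatorname{supp}\mu$ by its closed convex hull $\mathcal{K}\subset\Omega$. The Fourier--Laplace transform $\hat\mu(\zeta)=\mu(e^{-i\langle\cdot,\zeta\rangle})$ is then an entire function satisfying Paley--Wiener bounds governed by the supporting function of $\mathcal{K}$.

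The first key step is algebraic: translate the annihilation hypothesis into a statement about $\hat\mu$. Pairing $\mu$ with $e^{i\langle x,\zeta\rangle}u_0(x)$, where $u_0$ is polynomial, produces derivatives in $\zeta$ of $\hat\mu$ paired against the polynomial data. The condition that $e^{i\langle x,\zeta\rangle}u_0$ solves $P(D)u=0$ is, via the primary decomposition of the module $\mathbb{C}[\zeta]^K/\,{}^tP(\zeta)\mathbb{C}[\zeta]^J$, described by finitely many Noetherian operators on the characteristic varieties. This is the Ehrenpreis--Palamodov fundamental principle. The hypothesis therefore says that $\hat\mu$ satisfies all the Noetherian conditions. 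By the algebraic part of the fundamental principle, $\hat\mu$ then lies locally (as a germ at every point) in the submodule generated by the rows of ${}^tP(-\zeta)$.

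The second step, which I expect to be the main obstacle, is global division with estimates. We must produce an entire $\hat\nu$, still of Paley--Wiener type for the same convex set $\mathcal{K}$, with $\hat\mu={}^tP(-\zeta)\hat\nu$. For this I would invoke Hörmander's division theorem for modules with bounds (Theorem 7.6.11-type estimates, proved via $L^2$ estimates for $\bar\partial$ and the coherence of the relevant module sheaf). Its content is that local membership plus Paley--Wiener growth implies global membership with only a polynomial loss in the bounds, and without enlarging the supporting function. By Paley--Wiener--Schwartz, $\nu$ is then a compactly supported distribution with $\operatorname{supp}\nu\subseteq\mathcal{K}$ and $\mu={}^tP(-D)\nu$. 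Its order may exceed $\nu$.

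Finally, let $u\in E$. Take a mollifier $\chi_\varepsilon$ with support so small that $\mathcal{K}+\operatorname{supp}\chi_\varepsilon\subset\Omega$. Then $u_\varepsilon=u\ast\chi_\varepsilon$ is smooth near $\mathcal{K}$ and still satisfies $P(D)u_\varepsilon=0$ there. Hence
\[
\mu(u_\varepsilon)=\nu\big(P(D)u_\varepsilon\big)=0 ,
\]
and since $u_\varepsilon\to u$ in $C^\nu$ near $\mathcal{K}$ while $\mu$ has order $\le\nu$, we obtain $\mu(u)=0$. This proves density; the reverse inclusion is trivial because $E$ is closed.
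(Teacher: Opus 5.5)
The paper gives no proof of this result; it cites it from H\"ormander, and your outline is the standard duality proof given there. That proof runs through Hahn--Banach, the Noetherian operators giving local membership of $\hat\mu$ in the module generated by ${}^tP(-\zeta)$ (each Noetherian condition at a point is the pairing of $\mu$ with an exponential solution, which justifies your ``therefore''), division with Paley--Wiener bounds yielding $\mu={}^tP(-D)\nu$ with $\operatorname{supp}\nu$ in the convex hull of $\operatorname{supp}\mu$, and the final regularization, so the proposal is correct apart from two cosmetic slips: the module is spanned by the columns (not rows) of the $K\times J$ matrix ${}^tP(-\zeta)$, and you use $\nu$ both for the order of differentiability and for the quotient distribution.
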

\begin{theorem}
Let $\Omega\subset \mathbb R^n$ be an open convex set. Consider the
constant-coefficient system
\[
    P(D)u=f,
    \qquad
    P(D)=\bigl(P_{jk}(D)\bigr)_{1\leq j\leq J,\ 1\leq k\leq K}.
\]
Let \(Q(D)\) denote the corresponding compatibility operator, so that
\(Q(D)P(D)=0\). If
\[
    f\in C^\infty(\Omega)^J,
    \qquad
    Q(D)f=0,
\]
then there exists
\[
    u\in C^\infty(\Omega)^K
\]
such that
\[
    P(D)u=f.
\]
\end{theorem}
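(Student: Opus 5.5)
Write $f = (f^1,\dots,f^K)$ and $P(D)=(P_{jk}(D))$. The plan is to reduce to the finite-regularity result, Theorem~7.6.13, and then use the density statement, Theorem~7.6.14, to upgrade to a smooth solution through an exhaustion and Mittag-Leffler type argument. First exhaust $\Omega$ by an increasing sequence of relatively compact open convex sets $\Omega_1\Subset\Omega_2\Subset\cdots$ with $\bigcup_\ell\Omega_\ell=\Omega$; for instance, $\Omega_\ell=\{x\in\Omega:|x|<\ell,\ \operatorname{dist}(x,\partial\Omega)>1/\ell\}$ are convex. Since $f$ is $C^\infty$, it lies in $C^{\nu+N}(\Omega)^J$ for every $\nu$, where $N$ is the integer given by Theorem~7.6.13, and it satisfies $Q(D)f=0$. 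Hence for every $\ell$ we get a solution $u_\ell\in C^{\ell}(\Omega)^K$ of $P(D)u_\ell=f$ on $\Omega$, and in particular on $\Omega_{\ell+1}$.

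Next, correct these solutions inductively so that they converge in $C^\infty$. Suppose $v_\ell\in C^\ell(\Omega)^K$ with $P(D)v_\ell=f$ has been chosen. Then $u_{\ell+1}-v_\ell$ is a $C^{\ell}$ solution of the homogeneous system $P(D)w=0$. By Theorem~7.6.14 with $\nu=\ell$, it is approximated in $C^\ell(\Omega)$, hence in $C^\ell(\overline{\Omega_\ell})$, by a finite linear combination $e_\ell$ of exponential solutions. These are real-analytic, hence smooth, and satisfy $P(D)e_\ell=0$. Choose $e_\ell$ so that $\|u_{\ell+1}-v_\ell-e_\ell\|_{C^\ell(\overline{\Omega_\ell})}<2^{-\ell}$, and set $v_{\ell+1}:=u_{\ell+1}-e_\ell$. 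Then $v_{\ell+1}\in C^{\ell+1}(\Omega)^K$, $P(D)v_{\ell+1}=f$, and $\|v_{\ell+1}-v_\ell\|_{C^\ell(\overline{\Omega_\ell})}<2^{-\ell}$. One caveat: the exponential solutions are complex-valued. If $P$ and $f$ are real we take real parts at the end, which is harmless because $P(D)$ has real coefficients in our application.

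Finally, pass to the limit. For fixed $m$ and fixed compact $K\subset\Omega_{m}$, the tail $\sum_{\ell\ge m}(v_{\ell+1}-v_\ell)$ converges in $C^m(K)$, since for $\ell\ge m$ the $C^\ell$ norm on $\Omega_\ell\supset\Omega_m$ dominates the $C^m$ norm on $K$. So $v_\ell\to u$ in $C^m$ on compacts for every $m$, and $u\in C^\infty(\Omega)^K$. Since $P(D)$ has order $\le N'$ and $P(D)v_\ell=f$ for all $\ell$, passing to the limit in $C^{N'}$ gives $P(D)u=f$.

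The main obstacle I expect is bookkeeping rather than ideas. The nontrivial input, density of exponential solutions in the $C^\nu$ solution space (Theorem~7.6.14), must be applied on the whole of $\Omega$, while the estimates are only needed on each compact $\overline{\Omega_\ell}$. One must also make sure the topology in Theorem~7.6.14 is the Fréchet topology of uniform $C^\nu$ convergence on compacts. If it is, the argument above goes through as written.
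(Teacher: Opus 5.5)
Your proposal is correct and is essentially the paper's argument: both apply Theorem~7.6.13 at increasing regularity, correct the difference between consecutive solutions by a finite combination of exponential solutions from Theorem~7.6.14 so that consecutive terms differ by a summable amount in $C^\ell$ on the $\ell$-th set of an exhaustion, and then pass to the limit in every $C^m$ on compacts. The differences are cosmetic. The paper takes $\nu=i+p$ so that each $P(D)u_i=f$ holds classically, whereas you tolerate distributional solutions for small $\ell$. Your remark about taking real parts when $P(D)$ is a real operator usefully makes explicit a point the paper leaves implicit, since its limit $u$ is $\mathbb{C}^K$-valued while the application needs real solutions.
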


\begin{proof}
We use the usual Fréchet topology on \(C^\infty(\Omega)^K\) of uniform convergence of compact sets of all the derivatives: a sequence
\(w_\ell\) converges to \(w\) if and only if, for every compact set
\(L\subset \Omega\) and every \(m\in\mathbb N\),
\begin{equation}
    \max_{1\leq k\leq K}
    \max_{|\alpha|\leq m}
    \sup_{x\in L}
    \left|
        \partial^\alpha (w_{\ell,k}-w_k)(x)
    \right|
    \longrightarrow 0.
\end{equation}

Equivalently, it is enough to use any increasing sequence of compact sets
whose interiors cover \(\Omega\), since each compact set hast to be contained in one of the sequence.

We now construct such a sequence explicitly. Fix \(a\in\Omega\). Choose $0<\rho<\operatorname{dist}(a,\mathbb R^n\setminus\Omega)$,
and let
\begin{equation}
    E_i
    :=
    \left\{
        x\in\mathbb R^n:
        |x-a|\leq i,\quad
        \operatorname{dist}(x,\mathbb R^n\setminus\Omega)
        \geq \frac{\rho}{i+1}
    \right\}.
\end{equation}
Then each \(E_i\) is compact and convex. Indeed, the condition $\operatorname{dist}(x,\mathbb R^n\setminus\Omega)
    \geq \delta$
is equivalent to $\overline{B(x,\delta)}\subset \Omega$,
and the set of such points is convex because \(\Omega\) is convex. Intersecting
with the closed ball \(\{|x-a|\leq i\}\) gives a compact convex set.

Moreover, $E_i\subset E_{i+1}$,
and the interiors of the \(E_i\)'s cover \(\Omega\). Indeed, if \(x\in\Omega\),
then we let $d=\operatorname{dist}(x,\mathbb R^n\setminus\Omega)>0$,
so for \(i\) large enough we have
\[
    |x-a|<i,
    \qquad
    \frac{\rho}{i+1}<d,
\]
hence \(x\) belongs to the interior of \(E_i\).

For \(w\in C^m(\Omega)^K\), define the seminorm
\[
    \|w\|_{m,E_i}
    :=
    \max_{1\leq k\leq K}
    \max_{|\alpha|\leq m}
    \sup_{x\in E_i}
    |\partial^\alpha w_k(x)|.
\]
These seminorms define the \(C^\infty\)-topology above.

Let $p:=\max_{j,k}\operatorname{ord}P_{jk}$.
Fix \(\epsilon>0\), and put $\epsilon_i:=2^{-i}\epsilon$.

We shall construct a sequence of functions
$u_i\in C^{i+p}(\Omega)^K$
such that $P(D)u_i=f$ and $u_{i+1}-u_i\|_{i,E_i}<\epsilon_i$. Finally, we will see that these converges to a smooth solution on $\Omega$.

For \(i=1\), \cite[Theorem 7.6.13]{Hormander1990} applied with $\nu=1+p$ gives $u_1\in C^{1+p}(\Omega)^K$
such that $P(D)u_1=f$.

This is applicable because \(f\in C^\infty(\Omega)^J\) and \(Q(D)f=0\).

Assume that \(u_i\) has already been constructed. Applying Theorem 7.6.13
with  $\nu=i+1+p$,
we obtain $v_{i+1}\in C^{i+1+p}(\Omega)^K$
such that $P(D)v_{i+1}=f$.

Define
\begin{equation}
    r_i:=u_i-v_{i+1}.
\end{equation}
Then, $r_i\in C^i(\Omega)^K$ and
\[
    P(D)r_i
    =
    P(D)u_i-P(D)v_{i+1}
    =
    f-f
    =
    0.
\]
By Theorem 7.6.14, the finite linear combinations of exponential solutions
of the homogeneous system
\[
    P(D)s=0
\]
are dense in the space of \(C^i\)-solutions of \(P(D)s=0\), with respect to
the \(C^i(\Omega)^K\)-topology. Hence there exists such an exponential (thus smooth)
solution \(s_i\) satisfying
\[
    \|s_i-r_i\|_{i,E_i}<\epsilon_i .
\]
Now set
\[
    u_{i+1}:=v_{i+1}+s_i .
\]
Since \(s_i\) is smooth and satisfies \(P(D)s_i=0\), we have $u_{i+1}\in C^{i+1+p}(\Omega)^K$
and
\[
    P(D)u_{i+1}
    =
    P(D)v_{i+1}+P(D)s_i
    =
    f.
\]
Furthermore,
\[
\begin{aligned}
    \|u_{i+1}-u_i\|_{i,E_i}
    &=
    \|v_{i+1}+s_i-u_i\|_{i,E_i}      \\
    &=
    \|s_i-(u_i-v_{i+1})\|_{i,E_i}    \\
    &=
    \|s_i-r_i\|_{i,E_i}              \\
    &<
    \epsilon_i .
\end{aligned}
\]
This completes the induction.

We now pass to the limit. Let \(L\subset\Omega\) be compact and let
\(m\in\mathbb N\). Since the interiors of the \(E_i\)'s cover \(\Omega\),
and since the \(E_i\)'s are increasing, there exists an index \(\ell\) such
that  $L\subset E_\ell$.
If $j>k\geq \max\{\ell,m\}$,
then all terms \(u_i\), \(i\geq k\), have at least \(m\) derivatives on
\(\Omega\), and
\[
\begin{aligned}
    \|u_j-u_k\|_{m,L}
    &\leq
    \sum_{i=k}^{j-1}
    \|u_{i+1}-u_i\|_{m,L}             \\
    &\leq
    \sum_{i=k}^{j-1}
    \|u_{i+1}-u_i\|_{i,E_i}           \\
    &<
    \sum_{i=k}^{j-1}
    \epsilon_i  =     \sum_{i=1}^{\infty}2^{-i}\epsilon
    =
    \epsilon.
\end{aligned}
\]
Thus, the tail of \((u_i)\) is Cauchy in \(C^m(L)^K\).

Taking \(m=0\), we first obtain a continuous function $u:\Omega\longrightarrow \mathbb C^K$

such that
\[
    u_i\longrightarrow u
\]
uniformly on every compact subset of \(\Omega\). Now fix \(m\geq 1\). The
same argument shows that, on every compact \(L\subset\Omega\), the tail of
\((u_i)\) converges in \(C^m(L)^K\) to some \(C^m\)-limit. Since convergence
in \(C^m(L)^K\) implies convergence in \(C^0(L)^K\), this \(C^m\)-limit must
be the same function \(u\). Hence $u\in C^m(\Omega)^K$ for every $m$. Therefore
$u\in C^\infty(\Omega)^K$, and $u_i\longrightarrow u$
in \(C^m(L)^K\) for every compact \(L\subset\Omega\) and every \(m\).

Finally, let \(m\geq p\), where $p=\max_{j,k}\operatorname{ord}P_{jk}$.
For every compact \(L\subset\Omega\), we have
    $u_i\longrightarrow u
    \quad\text{in }C^m(L)^K.$

Since \(P(D)\) has order at most \(p\), it follows that
\[
    P(D)u_i\longrightarrow P(D)u
    \quad\text{in }C^{m-p}(L)^J.
\]
But
\[
    P(D)u_i=f
\]
for every \(i\). Therefore
\[
    P(D)u=f
\]
on \(L\). Since \(L\subset\Omega\) was arbitrary, we conclude that
\[
    P(D)u=f
    \quad\text{in }\Omega .
\]\end{proof}

\end{appendices}

\section*{Acknowledgements}
The authors acknowledge financial support from the Spanish Ministry of Science, Innovation and Universities under grants PID2022-137909NB-C21 and the Severo Ochoa Program for Centers of Excellence in R\&D (CEX2023-001347-S). Rubén Izquierdo-López whises to thank the Spanish Ministry of Science, Innovation and Universities for the contract FPU24/02636.

\phantomsection
\addcontentsline{toc}{section}{References}
\printbibliography
\end{document}